\newcommand{\E}[1]{{\mathrm{E}\left[#1\right]}}
\newcommand{\PP}[1]{\mathrm{P}\left( #1 \right)}
\newcommand{\Var}[1]{{\mathrm{Var}\left(#1\right)}}
\newcommand{\Cov}[1]{{\mathrm{Cov}[#1]}}
\newcommand{\T}{{\mathrm{T}}}
\newcommand{\onevec}[1]{{\boldsymbol{\mathrm{v}}_{\hspace{-.01in}#1}\hspace{-.01in}}}
\numberwithin{equation}{section}
\numberwithin{table}{section}
\numberwithin{figure}{section}
\theoremstyle{plain}
\newtheorem{theorem}{Theorem}[section]
\newtheorem{lemma}[theorem]{Lemma}
\newtheorem{corollary}[theorem]{Corollary}
\newtheorem{proposition}[theorem]{Proposition}
\theoremstyle{definition}
\newtheorem{definition}{Definition}[section]
\theoremstyle{remark}
\begin{document}

\title{An Ephemerally Self-Exciting Point Process}
\author{
  Andrew Daw \\ Marshall School of Business, Data Sciences and Operations  \\  University of Southern California
\\ 307A Bridge Hall, Los Angeles, CA 90089 \\  andrew.daw@usc.edu  \\
\and
  Jamol Pender \\ School of Operations Research and Information Engineering \\ Cornell University
\\ 228 Rhodes Hall, Ithaca, NY 14853 \\  jjp274@cornell.edu  \\
 }

\maketitle

\abstract{Across a wide variety of applications, the self-exciting Hawkes process has been used to model phenomena in which the history of events influences future occurrences. However, there may be many situations in which the past events only influence the future as long as they remain active. For example, a person spreads a contagious disease only as long as they are contagious. In this paper, we define a novel generalization of the Hawkes process that we call the \textit{ephemerally self-exciting process}. In this new stochastic process, the excitement from one arrival lasts for a randomly drawn activity duration, hence the ephemerality.
Our study includes exploration of the process itself as well as connections to well-known stochastic models such as branching processes, random walks, epidemics, preferential attachment, and Bayesian mixture models. Furthermore, we prove a batch scaling construction of general, marked Hawkes processes from a general ephemerally self-exciting model, and this novel limit theorem both provides insight into the Hawkes process and motivates the model contained herein as an attractive self-exciting process in its own right.}\\


\section{Introduction}\label{Intro}


\textit{What's past is prologue} -- unavoidably, the present is shaped by what has already occurred. The current state of the world is indebted to our history. Our actions, behaviors, and decisions are both precursory and prescriptive to those that follow, and this can be observed across a variety of different scenarios. For example, the spread of an infectious disease is accelerated as more people become sick and dampened as they recover. In finance, a flurry of recent transactions can prompt new buyers or sellers to enter a market. On social media platforms, as more and more users interact with a post it  can become trending or viral and thus be broadcast to an even larger audience.


Self-exciting processes are an intriguing family of stochastic models in which the history of events influences the future.  \citet{hawkes1971spectra} introduced the concept of self-excitement -- defining what is now known as the Hawkes process, a model in which ``the current intensity of events is determined by events in the past.''  That is, the Hawkes process is a stochastic intensity point process that depends on the history of the point process itself. The rate of new event occurrences increases as each event occurs. As time passes between occurrences, the intensity is governed by a deterministic excitement kernel. Most often, this kernel is specified so that the intensity jumps upward at event epochs and strictly decreases in the interim. In this way, occurrences beget occurrences; hence the term ``self-exciting.'' Unlike the Poisson process, disjoint increments are not independent in sample paths of Hawkes process. Instead, they are positively correlated and, by definition, the events of the former influence the events of the latter. Furthermore, the Hawkes process is known to be over-dispersed -- meaning that its variance is larger than its mean -- which is commonly found in real world data, whereas the Poisson process has equal mean and variance.

Because of the practical relevance of these model features, self-exciting processes have been used in a wide variety of applications, many of which are quite recent additions to the literature. Seismology was among the first domains to incorporate these models, such as in \citet{ogata1988statistical}, as the occurrence of an earthquake increases the risk of subsequent seismic activity in the form of aftershocks. Finance has since followed as a popular application and is now perhaps the most prolific area of work. In these studies, self-excitement is used to capture the often contagious nature of financial activity, see e.g. \citet{errais2010affine,bacry2013some,bacry2014hawkes,da2014hawkes,ait2015modeling,azizpour2016exploring,rambaldi2017role,gao2018transform,wu2019queue}. Similarly, there have been many recent internet and social media scenarios that have been modeled using self-exciting processes, drawing upon the virality of modern web traffic. For example, see \citet{farajtabar2017fake,rizoiu2017expecting,rizoiu2018sir}. Notably, this also includes use of Hawkes processes for constructing data-driven methods in the artificial intelligence and machine learning literatures, such as \citet{du2015dirichlet,mei2017neural,xu2017learning}. In an intriguing area of work, self-excitement has also been used to model inter-personal communication; for example in application to conversation audio recordings in \citet{masuda2013self} or in studying email correspondence in \citet{malmgren2008poissonian,halpin2013modelling}. Hawkes processes have also recently been used to represent arrivals to service systems in queueing models, e.g. in \citet{gao2018functional,gao2018large,koops2017infinite,daw2017queues}. This is of course not an exhaustive list of works in these areas, nor is it a complete account of all the modern applications of self-excitement. Examples of other notable uses include neuroscience \cite{truccolo2005point,krumin2010correlation}, environmental management \cite{gupta2018discrete}, public health \cite{zino2018modeling}, movie trailer generation \cite{xu2015trailer}, energy conservation \cite{li2018energy}, and industrial preventative maintenance \cite{yan2013towards}.

As the variety of uses for self-excitement has continued to grow, the number of Hawkes process generalizations has kept pace. By modifying the definition of the Hawkes process in some way, the works in this generalized self-exciting process literature provide new perspectives on these concepts while also empowering and enriching applications. For example, \citet{bremaud1996stability} introduce a non-linear Hawkes process that adapts the definition of the process intensity to feature a general, non-negative function of the integration over the process history, as opposed to the linear form given originally. Similarly, the quadratic Hawkes process model given by \citet{blanc2017quadratic} allows for excitation kernels that have quadratic dependence on the process history, rather than simply linear. This is also an example of a generalization motivated by application, as the authors seek to capture time reversal asymmetry observed in financial data. As another finance-motivated generalization, \citet{dassios2011dynamic} propose the dynamic contagion process. This model can be thought of as a hybrid between a Hawkes process and a shot-noise process, as the stochastic intensity of the model features both self-excited and externally excited jumps. The authors take motivation from an application in credit risk, in which the dynamics are shaped by both the process history and by exogenous shocks. The affine point processes studied in e.g. \citet{errais2010affine,zhang2009rare,zhang2015affine} are also motivated by credit risk applications. The models in these works combine the self-exciting dynamics of Hawkes process with those of an affine jump-diffusion process, imbedding modeling concepts of feedback and dependency into the process intensity. An exact simulation procedure for the Hawkes process with CIR intensity, a generalization of the Hawkes process that is a special case of the affine point process, is shown in \citet{dassios2017efficient}. In that case, the authors discuss an application to portfolio loss processes.

There have also been several Hawkes process generalizations proposed in social media and data analytics contexts. For example, \citet{rizoiu2018sir} introduces a finite population Hawkes process that couples self-excitement dynamics with those of the Susceptible-Infected-Recovered (SIR)  process. Drawing upon the use of the SIR process for the spread of both disease and ideas, the authors propose this SIR-Hawkes process as a method of studying information cascades. Similarly, \citet{mei2017neural} introduce the neural Hawkes process as a new point process model in the machine learning literature. As the name suggests, this model combines self-excitement with concepts from neural networks. Specifically, a recurrent neural network effectively replaces the excitation kernel, governing the effect of the past events on the rate of future occurrences. In the literature for Bayesian nonparametric models, \citet{du2015dirichlet} present the Dirichlet-Hawkes process for topic clustering in document streams. In this case, the authors combine a Hawkes process and a Dirichlet process, so that the intensity of the stream of new documents is self-exciting while the type of each new document is determined by the Dirichlet process, leading to a preferential attachment structure among the document types.

In this paper, we propose the \textit{ephemerally self-exciting process}, a novel generalization of the Hawkes process. 
Rather than regulating the excitement through the gradual, deterministic decay provided by an excitation kernel function, we instead incorporate randomly timed down-jumps. We will refer to this random length of time as the arrival's activity duration. The down-jumps are equal in size to the up-jumps, and between events the arrival rate does not change. Thus, this process increases in arrival rate upon each occurrence, and these increases are then mirrored some time later by decreases in the arrival rate once the activity duration expires.  In this way, the self-excitement is ephemeral: it is only in effect as long as the excitement is active. Much of the body of this work will discuss how this ephemeral, piece-wise constant self-excitement compares to the eternal but ever-decaying notion from Hawkes's original definition. As we will see in our analysis, this new process is both a promising model of self-excitement and an explanation of its origins in natural phenomena.


\subsection{Practical Relevance}

While this paper will not be focused on any one application, in this subsection we summarize several domain areas in which the models in this work can be applied. A natural example is in public health and the management of epidemics. For example, consider influenza. When a person becomes sick with the flu, she increases the rate of spread of the virus through her contact with others. This creates a self-exciting dynamic of the spread of the virus. However, a person only spreads a disease as long as she is contagious; once she has recovered she no longer has a direct effect on the rate of new infections. From a system-level perspective, the ephemerally self-exciting process can thus be thought of as modeling the arrivals of new infections, capturing the self-exciting and ephemeral nature of sick patients.  This motivates the use of this model as an arrival process to queueing models for healthcare, as the rate of \textit{arrivals to} clinics serving patients with infectious diseases should depend on the number of people currently infected. The health-care service can also be separately modeled, as an infinite server queue may be a fitting representation for the number of infected individuals but the clinic itself likely has limited capacity. This concept of course extends to the modeling and management of any other viral disease, including the novel coronavirus that has caused the COVID-19 pandemic. 

Of course, epidemic models need not be exclusively applied to disease spread. These same ideas can be used for information spread and product adoption, such as in the aforementioned Hawkes-infused  models in \citet{rizoiu2018sir} and \citet{zino2018modeling}. In these contexts, one can think of the duration in system as being the time a person actively promotes a concept or product. A single person only affects the self-excitement of the idea or product spread as long as she is in the system, which distinguishes this model from those in the aforementioned works. Epidemic models have also been used to study social issues, such as the contagious nature of imprisonment demonstrated by \citet{lum2014contagious}. We discuss the relevance of ephemeral self-excitement for epidemics in detail in Subsection~\ref{epidemics} by relating this model to the Susceptible-Infected-Susceptible (SIS) process through a convergence in distribution. In fact, throughout Section~\ref{secRelate} we establish connections from this process to other relevant stochastic models. This includes classical processes such as branching processes and random walks, as well as models popular both in Bayesian nonparametrics and in preferential attachment settings, such as the Dirichlet process and the Chinese restaurant process.

In the context of service systems, self-excitement can be motivated by the same rationale that inspires restaurants to seat customers at the tables by the windows. Potential new customers could choose to dine at the establishment because they can see others already eating there, taking an implicit recommendation from those already being served. This same example also motivates the ephemerality. After a customer seated by the window finishes her dinner and departs, any passing potential patron only sees an empty table; the implicit recommendation vanishes with the departing customer. A similar dynamic can be observed in online streaming platforms. For example on popular music streaming services like Spotify and Apple Music, users can see what songs and albums have been recently played by their friends.  If a user sees that many of her friends have listened to the same album recently, she may be more inclined to listen to it as well. However, this applies only as long as the word ``recently'' does. If her friends don't play the album within a certain amount of time,  the platform will no longer promote the album to her in that fashion. Again, this displays the ephemerality of the underlying self-excitement: the album grows more attractive as more users listen to it, but only as long as those listens are ``recent'' enough.

In finance, limit order books (LOB's) are among the many concepts that have been modeled using Hawkes process, such as in \citet{rambaldi2017role,bacry2016estimation}. LOB's have also been studied through queueing models, where one can model the state of the LOB (or, more specifically, the number of unresolved bids and asks) as the length of a queueing process. Moreover, there has been recent work that models this process as not just a queue, but a queue with Hawkes process arrivals; for example see \citet{guo2015dynamics,gao2018functional}. Conceptually, the self-excitement may arise from traders reacting to the activity of other traders, creating runs of transactions. However, the desire to not act on stale information may mean that this excitement only lasts as long as trades are actively being conducted. In fact, the idea of the self-excitement in LOB models being ``queue-reactive'' has just very recently been considered by \citet{wu2019queue}, a related work to this one.

One can also consider failures in a mechanical system as an application of this model. For example, consider a network of water pipes. When one pipe breaks or bursts, it can place stress on the pipes connected to it. This stress may then cause further failures within the pipe network. However once the pipe is  properly repaired it should no longer place strain on its surrounding components. Thus, the increase in pipe failure rate caused by a failure is only in effect until the repair occurs, inducing ephemeral self-excitement. The self-excitement (albeit without the ephemerality) arising in this scenario was modeled using Hawkes processes in \citet{yan2013towards}, which includes an empirical study. A similar problem for electrical systems is considered in \citet{ertekin2015reactive}. The reactive point process considered in that work is perhaps the model most similar to the ones studied herein, as the rate of new power failures both increases at the prior failure times and decreases upon inspection or repair. However, a key difference is that in \cite{ertekin2015reactive}, the authors treat the inspection times as controlled by management, whereas in this paper the model is fully stochastic and thus the repair durations are random. Regardless, that work is an excellent example of how generalized self-exciting processes  can be used to shape practical policy. Because power outages have significant and wide-reaching consequences, it is critical to understand the inter-dependency between these events and to study the resulting ephemerally self-exciting process that arises in these electrical grid failures.



\subsection{Organization and Contributions of Paper}

%


Let us now detail the remainder of this paper's organization, as well as the contributions therein.
\begin{itemize}
\item In Section~\ref{secModelDef}, we define the ephemerally self-exciting process (ESEP), a Markovian model for self-excitement that lasts for only a finite amount of time. After defining the model, we develop fundamental distributional quantities and compare the ESEP to the Hawkes process.
\item In Section~\ref{secRelate}, we relate the ESEP to many other important and well-known stochastic processes. This includes branching processes, which gives us further comparisons between the Hawkes process and the ESEP, models for preferential attachment and Bayesian statistics, and epidemic models. The lattermost of these motivates the ESEP as a representation for the times of infection within an epidemic, and this also provides a formal link between the conceptually similar concepts of epidemics and self-excitement.
\item In Section~\ref{secGESEP}, we broaden our exploration of ephemeral self-excitement to non-Markovian models with general activity durations and batches of arrivals. In this general setting, we establish a limit theorem providing an alternate construction of general Hawkes processes. This batch scaling limit thus yields intuition for the observed occurrence of self-excitement in natural phenomena and stands as a fundamental cornerstone for studying such processes.
\end{itemize}
In addition to these main avenues of study, we also have extensive auxiliary analysis housed in this paper's appendix. Appendix~\ref{secLemmas} contains lemmas and side results that support our analysis but are outside the main narrative. In Appendix~\ref{secHESEP}, we explore a model that is a hybrid between the ESEP and the Hawkes process, in that it regulates the excitement with both down-jumps and decay. Appendix~\ref{subsecBlock} is devoted to a finite capacity version of the ESEP, in which arrivals that would put the active number in system above the capacity are blocked from occurring. Finally, Appendix~\ref{pgfProof} contains an algebraically cumbersome proof of a result from Section~\ref{secModelDef}.

\section{Modeling Ephemeral Self-Excitement}\label{secModelDef}

We begin this paper by defining our ephemerally self-exciting model and conducting an initial analysis of some fundamental quantities. These quantities include the transient moment generating function and the steady-state distribution. Before doing so though, let us first review the Hawkes process, which is the original self-exciting probability model.

\subsection{Preliminary Models and Concepts}\label{subsecHawkes}

Introduced and pioneered through the series of papers \citet{hawkes1971spectra,hawkes1971point,hawkes_oakes_1974}, the Hawkes process is a stochastic intensity point process in which the current rate of arrivals is dependent on the history of arrival process itself. Formally, this is defined as follows: let $(\lambda_t, N_{t,\lambda})$ be an intensity and counting process pair such that
\begin{align*}
\PP{N_{t+\Delta,\lambda} - N_{t,\lambda}  = 1 \mid \mathcal{F}^N_t} &= \lambda_t \Delta + o(\Delta), \\
\PP{N_{t+\Delta,\lambda} - N_{t,\lambda}  > 1 \mid \mathcal{F}^N_t} &= o(\Delta), \\
\PP{N_{t+\Delta,\lambda} - N_{t,\lambda}  = 0 \mid \mathcal{F}^N_t} &= 1 - \lambda_t \Delta + o(\Delta),
\end{align*}
where $\mathcal{F}^N_t$ is the filtration of $N_{t,\lambda}$ up to time $t$ and $\lambda_t$ is given by
$$
\lambda_t = \lambda^* + \int_{-\infty}^t g(t-u) \mathrm{d}N_{u,\lambda},
$$
where $\lambda^* > 0$ and $g : \mathbb{R}^+ \to \mathbb{R}^+$ is such that $\int_0^\infty g(x) \mathrm{d}x < 1$. Through this definition, the intensity $\lambda_t$ captures the history of the arrival process up to time $t$. Thus,  $\lambda_t$ encapsulates the sequence of past events and uses it to determine the rate of future occurrences.  We refer to $\lambda^*$ as the baseline intensity and $g(\cdot)$ as the excitation kernel. The baseline intensity represents an underlying stationary arrival rate and the excitation kernel governs the effect that the history of the process has on the current intensity. A common modeling choice is to set $g(x) = \alpha e^{-\beta x}$, where $\beta > \alpha > 0$. This is often referred to as the ``exponential'' kernel and it is perhaps the most widely used form of the Hawkes process. In this case, $(\lambda_t, N_{t,\lambda})$ is a Markov process obeying the stochastic differential equation
$$
\mathrm{d}\lambda_t = \beta(\lambda^* - \lambda_t)\mathrm{d}t + \alpha \mathrm{d}N_{t,\lambda}
.
$$
That is, at arrival epochs $\lambda_t$ jumps upward by amount $\alpha$ and the $N_{t,\lambda}$ increases by 1; between arrivals $\lambda_t$ decays exponentially at rate $\beta$ towards the baseline intensity $\lambda^*$. Thus, each arrival increases the likelihood of additional arrivals occurring soon afterwards -- hence, it self-excites. This form of the Hawkes process is also often alternatively stated with an initial value for $\lambda_t$, say $\lambda_0 \geq \lambda^*$. In this case, the intensity can be expressed
$$
\lambda_t
=
\lambda^*
+
(\lambda_0 - \lambda^*)e^{-\beta t}
+
\alpha\int_{0}^t e^{-\beta (t-u)} \mathrm{d}N_{u,\lambda}
.
$$
Additional overview of the Hawkes process with the exponential kernel can be found in Section 2 of \cite{daw2017queues}. Another common choice for excitation kernel is the ``power-law'' kernel $g(x) = \frac{k}{(c+x)^p}$, where $k > 0$, $c > 0$, and $p > 0$. This kernel was originally popularized in seismology \cite{ogata1988statistical}.

\subsection{Defining the Ephemerally Self-Exciting Process}

As we have discussed in the introduction, a plethora of natural phenomena exhibit self-exciting features but only for a finite amount of time. This prompts the notion of ephemeral self-excitement. By comparison to the traditional Hawkes process we have reviewed in Subsection~\ref{subsecHawkes}, we seek a model in which a new occurrence increases the arrival rate only so long as the newly entered entity remains active in the system.  Thus, we now define the \textit{ephemerally self-exciting process} (ESEP), which trades the Hawkes process's eternal decay for randomly drawn expiration times. Moreover, in the following Markovian model, exponential decay is replaced with exponentially distributed durations. In Section~\ref{secGESEP}, we extend these concepts to generally distributed service. As another generalization, in Appendix~\ref{secHESEP} we consider a Markovian model with both decay and down-jumps. For now, we explore the effects of ephemerality through the ESEP model in Definition~\ref{esepDef}.

\begin{definition}[Ephemerally self-exciting process]\label{esepDef}
For times $t \geq 0$, a baseline intensity $\eta^* > 0$, intensity jump size $\alpha > 0$, and expiration rate $\beta > 0$, let $N_t$ be a  counting process with stochastic intensity $\eta_t$ such that
\begin{align}
\eta_t = \eta^* + \alpha Q_t
,
\end{align}
where $Q_t$ is incremented with $N_t$ and then is depleted at unit down-jumps according to the rate $\beta Q_t$. Then, we say that $(\eta_t, N_t)$ is an \textit{ephemerally self-exciting process} (ESEP).
\end{definition}

We will assume that $\eta_0$ and $Q_0$ are known initial values such that $\eta_0 = \eta^* + \alpha Q_0$. In addition to this definition, one could also describe the ESEP through its dynamics. In particular, the behavior of this process can be summarily cast through the life cycle of its arrivals:
\begin{enumerate}[i)]
\item At each arrival, the arrival rate $\eta_t$ increases by $\alpha$.
\item Each arrival remains active for an activity duration drawn from an i.i.d.~sequence of exponential random variables with rate $\beta$.
\item At the expiration of a given activity duration, $\eta_t$ decreases by $\alpha$.
\end{enumerate}
The ephemerality of the ESEP is embodied by this cycle. Because arrivals only contribute to the intensity for the length of their activity duration, their effect on the process's excitation vanishes when this clock expires. Furthermore, there is an affine relationship between the number of active ``exciters'' -- meaning unexpired arrivals still causing excitation -- and the intensity, i.e.~$\eta_t = \eta^* + \alpha Q_t$. Thus, we could also track the arrival rate through $Q_t$ in place of $\eta_t$ and still have full understanding of this process. This also means that results are readily transferrable between these two processes; we will often make use of this fact.

Because the ESEP is quite parsimonious, there are many alternative perspectives we could take to gain additional understanding of it. For example, one could consider $Q_t$ a Markovian queueing system with infinitely many servers and a state dependent arrival rate. Equivalently, one could also describe the ESEP as a Markov chain on the non-negative integers where transitions at state $i$ are to $i+1$ at rate $\eta^* + \alpha i$ and to $i-1$ at rate $\mu i$, with the counting process then defined as the epochs of the upward jumps in this chain. This Markov chain perspective certifies the existence and uniqueness of Definition~\ref{esepDef}. A visualization of this linear  birth-death-immigration process is given in Figure~\ref{ESEPchain}. Stability for this chain occurs when $\beta > \alpha$; we will assume this hereforward although it of course is not necessary for transient results. One could also view the ESEP as a generalization of Hawkes' original definition where the excitation kernel function $g(\cdot)$ is replaced with a randomly drawn indicator function that is different for each arrival. Each indicator function compares time to an independently drawn exponential random variable, and this perspective is closely aligned with our analysis in Section~\ref{secRelate}. 
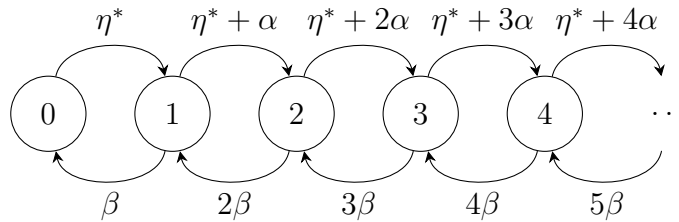
\begin{figure}[h]
\begin{center}
\begin{tikzpicture}[scale=.825,line cap=round,line join=round,>={Stealth[width=1.5mm,length=1.5mm]},x=1.0cm,y=1.0cm]

 \node (zero) [draw, circle, minimum size=1cm] at (-2,1cm) {$0$};
 \node (one) [draw, circle, minimum size=1cm] at (0,1cm) {$1$};
 \node (two) [draw, circle, minimum size=1cm] at (2,1cm) {$2$};
 \node (three) [draw, circle, minimum size=1cm] at (4,1cm) {$3$};
  \node (four) [draw, circle, minimum size=1cm] at (6,1cm) {$4$};
 \node (dots)  [draw, circle, minimum size=1cm,color=white] at (8,1cm) {$\mathbf{\textcolor{black}{\dots}}$};

\draw [->] (zero) .. controls +(.25,1.25) and +(-.25,1.25).. node [midway, above] {$\eta^*$} (one);
 \draw [->] (one) .. controls +(.25,1.25) and +(-.25,1.25).. node [midway, above] {$\eta^* + \alpha$} (two);
 \draw [->] (two) .. controls +(.25,1.25) and +(-.25,1.25) ..  node [midway, above] {$\eta^* + 2\alpha$}  (three);
 \draw [->] (three) .. controls +(.25,1.25) and +(-.25,1.25) ..  node [midway, above] {$\eta^* + 3 \alpha$}  (four);
  \draw [->] (four) .. controls +(.25,1.25) and +(-.25,1.25) ..  node [midway, above] {$\eta^* + 4 \alpha$}  (dots);

 \draw [->] (dots) .. controls +(-.25,-1.25) and +(.25,-1.25) .. node [midway, below] {$5 \beta$} (four);
  \draw [->] (four) .. controls +(-.25,-1.25) and +(.25,-1.25) .. node [midway, below] {$4 \beta$} (three);
 \draw [->] (three) .. controls +(-.25,-1.25) and +(.25,-1.25) .. node [midway, below] {$3 \beta$} (two);
 \draw [->] (two) .. controls +(-.25,-1.25) and +(.25,-1.25) .. node [midway, below] {$2 \beta$} (one);
 \draw [->] (one) .. controls +(-.25,-1.25) and +(.25,-1.25) .. node [midway, below] {$\beta$} (zero);
\end{tikzpicture}
\end{center}
\vspace{-.2in}
\caption{The transition diagram of the Markov chain for $Q_t$.}\label{ESEPchain}
\end{figure}

In the remainder of this subsection, let us now develop a few fundamental quantities for this stochastic process, particularly its intensity and active number in system as these capture the self-exciting behavior of the process. First, in Proposition~\ref{AQHtransmgf} we compute the transient moment generating function for the intensity $\eta_t$. As we have noted, this can also be used to immediately derive the same transform for $Q_t$, and the proof makes use of this fact.

\begin{proposition}\label{AQHtransmgf}
Let $\eta_t = \eta^* + \alpha Q_t$ be the intensity of an ESEP with baseline intensity $\eta^* > 0$, intensity jump $\alpha > 0$, and expiration rate $\beta > \alpha$. Then, the moment generating function  for $\eta_t$ is given by
\begin{align*}
\E{e^{\theta \eta_t}}
&=
\left(
\frac{\beta - \alpha e^{\alpha\theta} - \beta (1-e^{\alpha\theta})e^{-(\beta - \alpha)t}}
{\beta - \alpha e^{\alpha\theta} - \alpha (1-e^{\alpha\theta})e^{-(\beta - \alpha)t}}
\right)^{\frac{\eta_0 - \eta^*}{\alpha}}
\\
&\qquad \cdot
\left(
\frac{\beta e^{\alpha \theta}}{\beta - \alpha e^{\alpha\theta}}
-
\frac{\alpha e^{\alpha \theta}}{\beta - \alpha e^{\alpha\theta}}
\left(
\frac{\beta - \alpha e^{\alpha\theta} - \beta (1-e^{\alpha\theta})e^{-(\beta - \alpha)t}}
{\beta - \alpha e^{\alpha\theta} - \alpha (1-e^{\alpha\theta})e^{-(\beta - \alpha)t}}
\right)
\right)^{\frac{\eta^*}{\alpha}}
,
\end{align*}
for all $t \geq 0$ and $\theta < \frac{1}{\alpha}\log\left(\frac{\beta}{\alpha}\right)$.
\begin{proof}
We will approach this through the perspective of the active number in system, $Q_t$. Using Lemma~\ref{fubinifundQ}, we have that the probability generating function for $Q_t$, say $\mathcal{P}(z,t) = \E{z^{Q_t}}$ for $z \in [0,1]$, is given by the solution to the following partial differential equation:
\begin{align*}
\frac{\partial}{\partial t}\E{z^{Q_t}}
&
=
\E{\left(\eta^* + \alpha Q_t\right)\left(z^2 - z\right)z^{Q_t - 1} + \beta Q_t \left(1 - z\right)z^{Q_t - 1}}
,
\intertext{which is equivalently expressed}
\frac{\partial}{\partial t}\mathcal{P}(z,t)
&
=
\eta^* \left(z - 1\right)\mathcal{P}(z,t)
+
\left(
\alpha \left(z^2 - z\right)
+
\beta \left(1 - z\right)
\right)
\frac{\partial}{\partial z}\mathcal{P}(z,t)
,
\end{align*}
with initial condition $\mathcal{P}(z,0) = z^{Q_0}$. The solution to this initial value problem is given by
\begin{align*}
\mathcal{P}(z,t)
&
=
\left(
\frac{\beta - \alpha z - \beta (1-z)e^{-(\beta - \alpha)t}}
{\beta - \alpha z - \alpha (1-z)e^{-(\beta - \alpha)t}}
\right)^{Q_0}
\left(
\frac{\beta}{\beta - \alpha z}
-
\frac{\alpha}{\beta - \alpha z}
\left(
\frac{\beta - \alpha z - \beta (1-z)e^{-(\beta - \alpha)t}}
{\beta - \alpha z - \alpha (1-z)e^{-(\beta - \alpha)t}}
\right)
\right)^{\frac{\eta^*}{\alpha}}
,
\end{align*}
yielding the probability generating function for $Q_t$. By setting $z = e^{\theta}$ we receive the moment generating function. Finally, using the affine relationship $\eta_t = \eta^* + \alpha Q_t$, we have that
$$
\E{e^{\theta \eta_t}}
=
\E{e^{\theta \left(\eta^* + \alpha Q_t\right)}}
=
e^{\theta \eta^*}
\E{e^{\alpha \theta  Q_t}}
,
$$
with $\eta_0 = \eta^* + \alpha Q_0$.
\end{proof}
\end{proposition}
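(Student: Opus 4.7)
My plan is to exploit the affine identity $\eta_t = \eta^* + \alpha Q_t$, which gives $\E{e^{\theta \eta_t}} = e^{\theta \eta^*}\E{e^{\alpha \theta Q_t}}$ and reduces the problem to computing the probability generating function $\mathcal{P}(z,t) = \E{z^{Q_t}}$; then setting $z = e^{\alpha \theta}$ and multiplying by $e^{\theta \eta^*}$ recovers the stated MGF. The hypothesis $\theta < \frac{1}{\alpha}\log(\beta/\alpha)$ translates exactly to $z < \beta/\alpha$, which is precisely the condition needed to keep the denominators positive and to make sense of the fractional power $(\cdot)^{\eta^*/\alpha}$.

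To derive an equation for $\mathcal{P}$, I would apply the infinitesimal generator of the birth-death-immigration chain in Figure~\ref{ESEPchain} to the test function $f(q) = z^q$: up-jumps at state $q$ contribute $(\eta^* + \alpha q)(z-1)z^q$ and down-jumps contribute $\beta q(1-z)z^{q-1}$. Taking expectations, interchanging $\partial_t$ with $\E{\cdot}$ via Lemma~\ref{fubinifundQ}, and using $\E{Q_t z^{Q_t-1}} = \partial_z \mathcal{P}$, I would arrive at the first-order linear PDE
$$\partial_t \mathcal{P}(z,t) = \eta^*(z-1)\mathcal{P}(z,t) + (z-1)(\alpha z - \beta)\partial_z \mathcal{P}(z,t),$$
subject to $\mathcal{P}(z,0) = z^{Q_0}$.

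I would solve this by the method of characteristics. The characteristic ODE $dz/dt = -(z-1)(\alpha z - \beta)$ separates via the partial fraction $\frac{1}{(z-1)(\alpha z - \beta)} = \frac{1}{\alpha - \beta}\bigl[\frac{1}{z-1} - \frac{\alpha}{\alpha z - \beta}\bigr]$, yielding the conserved quantity $\xi := \frac{1-z}{\beta - \alpha z}e^{-(\beta-\alpha)t}$. Inverting $z \mapsto \xi$ at $t=0$ gives the starting point $z_0(\xi) = \frac{1 - \beta \xi}{1 - \alpha \xi}$, so that the initial-condition contribution along the characteristic is $z_0(\xi)^{Q_0}$. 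For the source term, one has $d\mathcal{P}/dt = \eta^*(z(s)-1)\mathcal{P}$ along a characteristic, and the key observation is that along the characteristic $z(s) - 1$ equals $\frac{1}{\alpha}$ times the logarithmic derivative of $u(s) := 1 - \alpha \xi e^{(\beta-\alpha)s}$. This collapses the source integral into $\frac{1}{\alpha} \log(u(t)/u(0))$, so the source factor becomes $(u(t)/u(0))^{\eta^*/\alpha}$, a rational function of $z$ and $e^{-(\beta-\alpha)t}$ raised to the power $\eta^*/\alpha$.

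I expect the main obstacle to be purely algebraic bookkeeping rather than conceptual. After substituting $\xi = \frac{1-z}{\beta - \alpha z}e^{-(\beta-\alpha)t}$, both factors must be rewritten to share the common denominator $\beta - \alpha z - \alpha(1-z)e^{-(\beta-\alpha)t}$; the source factor simplifies directly to $(\beta - \alpha)/[\beta - \alpha z - \alpha(1-z)e^{-(\beta-\alpha)t}]$, which one then recognizes as equal to $\frac{\beta}{\beta - \alpha z} - \frac{\alpha}{\beta - \alpha z}$ times the first factor (the stated form). Once $\mathcal{P}(z,t)$ is verified, substituting $z = e^{\alpha \theta}$, multiplying by $e^{\theta \eta^*}$, and rewriting $Q_0 = (\eta_0 - \eta^*)/\alpha$ yields the formula as claimed.
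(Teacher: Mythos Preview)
Your proposal is correct and follows essentially the same approach as the paper: derive the PDE for $\mathcal{P}(z,t)=\E{z^{Q_t}}$ via the generator (Lemma~\ref{fubinifundQ}), solve it, and then pass to $\eta_t$ via the affine relation. The paper simply states the PDE solution without showing the characteristics computation, whereas you supply those details (and correctly identify that the second factor collapses to $(\beta-\alpha)/[\beta-\alpha z-\alpha(1-z)e^{-(\beta-\alpha)t}]$), so your write-up is in fact more complete on that step.
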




As we have mentioned, this Markov chain can be shown to be stable for $\beta > \alpha$ through standard techniques. Thus, using the moment generating function from Proposition~\ref{AQHtransmgf}, we can find the steady-state distributions of the intensity and the active number in system by taking the limit of $t$. We can quickly observe that this leads to a negative binomial distribution, as we state in Theorem~\ref{negbinthm}. Because of the varying definitions of the negative binomial distribution, we state the probability mass function explicitly.

\begin{theorem}\label{negbinthm}
Let $\eta_t = \eta^* + \alpha Q_t$ be an ESEP with baseline intensity $\eta^* > 0$, intensity jump $\alpha > 0$, and expiration rate $\beta > \alpha$. Then, the active number in system in steady-state follows a negative binomial distribution with parameters $\frac{\alpha}{\beta}$ and $\frac{\eta^*}{\alpha}$, which is to say that the steady-state probability mass function is
\begin{align}
\PP{Q_\infty = k}
&=
\frac{\Gamma\left(k + \frac{\eta^*}{\alpha}\right)}{\Gamma\left(\frac{\eta^*}{\alpha}\right) k! }
\left(\frac{\beta - \alpha}{\beta}\right)^{\frac{\eta^*}{\alpha}}
\left(\frac{\alpha}{\beta}\right)^k
.
\end{align}
Consequently, the steady-state distribution of the intensity is given by a shifted and scaled negative binomial with parameters $\frac{\alpha}{\beta}$ and $\frac{\eta^*}{\alpha}$, shifted by $\eta^*$ and scaled by $\alpha$.
\begin{proof}
Using Proposition~\ref{AQHtransmgf}, we can see that the steady-state moment generating function of $Q_t$ is given by
$$
\lim_{t\to\infty} \E{e^{\theta Q_t}}
=
\left(\frac{\beta-\alpha}{\beta - \alpha e^{\theta}}\right)^{\frac{\eta^*}{\alpha}}
.
$$
We can observe that this steady-state moment generating function is equivalent to that of a negative binomial. By the affine transformation $\eta_t = \eta^* + \alpha Q_t$, we find the steady-state distribution for the intensity.
\end{proof}
\end{theorem}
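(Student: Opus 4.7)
The plan is to leverage Proposition~\ref{AQHtransmgf}, which already provides the transient moment generating function of $\eta_t$ (equivalently, the probability generating function $\mathcal{P}(z,t) = \E{z^{Q_t}}$ for $Q_t$ displayed in its proof). All that remains is to take $t\to\infty$ and identify the limiting generating function. I would work at the PGF level for $Q_t$ rather than the MGF for $\eta_t$, since the affine relation $\eta_t = \eta^* + \alpha Q_t$ lets us translate at the end.

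Under the stability assumption $\beta > \alpha$, the factor $e^{-(\beta-\alpha)t}$ in $\mathcal{P}(z,t)$ tends to $0$. The bracket raised to the $Q_0$ power then collapses to $\frac{\beta-\alpha z}{\beta-\alpha z} = 1$, so the initial condition disappears in the limit, which is the sanity check one expects for a stable chain. The remaining factor simplifies cleanly to
\begin{equation*}
\lim_{t\to\infty}\mathcal{P}(z,t)
=
\left(\frac{\beta}{\beta-\alpha z} - \frac{\alpha}{\beta-\alpha z}\right)^{\frac{\eta^*}{\alpha}}
=
\left(\frac{\beta-\alpha}{\beta-\alpha z}\right)^{\frac{\eta^*}{\alpha}}.
\end{equation*}
I would then recognize this as the PGF $\left(\frac{1-p}{1-pz}\right)^r$ of a negative binomial distribution with $r = \eta^*/\alpha$ failures and success probability $p = \alpha/\beta$, after dividing numerator and denominator by $\beta$. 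To produce the closed-form PMF claimed in the statement, I would factor out $\left(\frac{\beta-\alpha}{\beta}\right)^{\eta^*/\alpha}$ and expand $\left(1 - \frac{\alpha}{\beta}z\right)^{-\eta^*/\alpha}$ via the generalized binomial series, reading off the coefficient of $z^k$ as $\frac{\Gamma(k+\eta^*/\alpha)}{\Gamma(\eta^*/\alpha)\,k!}\left(\frac{\alpha}{\beta}\right)^k$, which matches the displayed expression. The statement about $\eta_\infty$ then follows immediately from the affine transformation $\eta_\infty = \eta^* + \alpha Q_\infty$.

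Given Proposition~\ref{AQHtransmgf}, no step is genuinely hard; the only subtlety is justifying that pointwise convergence of the PGF implies convergence in distribution of $Q_t$ to a proper law. This is where I would be most careful. Two easy routes are available: invoke continuity of generating functions, noting the pointwise limit on a neighborhood of $0$ is itself a PGF of a proper distribution (total mass $1$ at $z=1$), hence $Q_t \Rightarrow Q_\infty$; or alternatively argue directly that the linear birth-death-immigration chain in Figure~\ref{ESEPchain} is irreducible on $\mathbb{Z}_{\geq 0}$ and positive recurrent whenever $\beta>\alpha$ via a Foster-Lyapunov argument using $V(i)=i$, whose drift is $\eta^* - (\beta-\alpha) i$. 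Either route then delivers the displayed negative binomial PMF as the stationary distribution.
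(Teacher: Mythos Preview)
Your proposal is correct and follows essentially the same route as the paper: take the transient generating function from Proposition~\ref{AQHtransmgf}, let $t\to\infty$ so that the $e^{-(\beta-\alpha)t}$ terms vanish and the $Q_0$-dependent factor collapses to $1$, recognize the limit $\left(\frac{\beta-\alpha}{\beta-\alpha z}\right)^{\eta^*/\alpha}$ as a negative binomial generating function, and then transfer to $\eta_\infty$ via the affine relation. The paper's proof is terser, simply stating the limiting MGF and identifying it, whereas you additionally spell out the binomial-series extraction of the PMF and address the continuity/positive-recurrence justification for convergence in distribution; these are welcome elaborations but not a different argument.
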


The negative binomial distribution presented in Theorem~\ref{negbinthm} allows for $\frac{\eta^*}{\alpha}$ to take on any positive real value; integrality is not required. If $\frac{\eta^*}{\alpha}$ is in fact an integer, then the gamma functions will match the corresponding factorial functions and this ratio of factorials will simplify to a binomial coefficient, reproducing the most familiar form of the negative binomial distribution. In this case, $\frac{\eta^*}{\alpha}$ can be interpreted as the number of failures upon which an experiment is stopped, where the success probability is $\frac{\alpha}{\beta}$ and $k + \frac{\eta^*}{\alpha}$ is the total number of trials.

Let us pause to note that this explicit characterization of the steady-state intensity is already an advantage of the ESEP over the traditional Markovian Hawkes process, for which there is not a closed form intensity stationary distribution available. As a consequence of Theorem~\ref{negbinthm}, we can observe that the steady-state mean of the intensity is $\eta_\infty := \frac{\beta \eta^*}{\beta - \alpha}$. Interestingly, this would also be the steady-state mean of the Hawkes process when given the same baseline intensity, the same intensity jump size, and an exponential decay rate equal to the rate of expiration. This leads us to ponder how the processes would otherwise compare when given equivalent parameters. In Proposition~\ref{momentOrder} we find that although this equivalence of means extends to transient settings, for all higher moments the ESEP dominates the Hawkes process in terms of both the intensity and the counting process.

\begin{proposition}\label{momentOrder}
Let $(\eta_t, N_{t,\eta})$ be an ESEP intensity and counting process pair with jump size $\alpha > 0$, expiration rate $\beta > \alpha$, and baseline intensity $\eta^* > 0$. Similarly, let $(\lambda_t, N_{t,\lambda})$ be a Hawkes process intensity and counting process pair with jump size $\alpha > 0$, decay rate $\beta > 0$, and baseline intensity $\eta^* > 0$. Then, if the two processes have equal initial values, their means will satisfy
\begin{align}
&\E{\lambda_t} = \E{\eta_t},
&&\E{N_{t,\lambda}} = \E{N_{t,\eta}}
,
\end{align}
and for $m \geq 2$ their $m^\text{th}$ moments are ordered such that
\begin{align}
&\E{\lambda_t^m} \leq \E{\eta_t^m},
&&\E{N_{t,\lambda}^m} \leq \E{N_{t,\eta}^m}
,
\end{align}
for all time $t \geq 0$.
\begin{proof}
Let us start with the means. For the intensities, we can note that these are given by the solutions to
\begin{align*}
\frac{\mathrm{d}}{\mathrm{d}t}\E{\lambda_t}
=
\alpha \E{\lambda_t} - \beta(\E{\lambda_t} - \eta^*)
\qquad
\text{and}
\qquad
\frac{\mathrm{d}}{\mathrm{d}t}\E{\eta_t}
=
\alpha\E{\eta_t} - \beta \alpha\left( \frac{\E{\eta_t} - \eta^*}{\alpha} \right)
,
\end{align*}
and through simplification one can quickly observe that these two ODE's are equivalent. Thus, because we have assumed that the processes have the same initial values, we find that $\E{\lambda_t} = \E{\eta_t}$. Since
$$
\frac{\mathrm{d}}{\mathrm{d}t}\E{N_{t,\lambda}} = \E{\lambda_t}
\qquad
\text{and}
\qquad
\frac{\mathrm{d}}{\mathrm{d}t}\E{N_{t,\eta}} = \E{\eta_t}
,
$$
this equality immediately extends to the means of the counting processes as well. We will now use these equations for the means as the base cases for inductive arguments, beginning again with the intensities. For the inductive step, we will assume that the intensity moment ordering holds for moments 1 to $m-1$. The $m^\text{th}$ moment of the Hawkes process intensity is thus given by the solution to
\begin{align*}
\frac{\mathrm{d}}{\mathrm{d}t}\E{\lambda_t^m}
&=
\sum_{k=0}^{m-1} {m \choose k} \E{\lambda_t^{k+1}} \alpha^{m-k}
-
m \beta \E{\lambda_t^m}
+
m \beta \eta^* \E{\lambda_t^{m-1}}
:=
f_\lambda(t, \E{\lambda_t^{m}})
,
\end{align*}
where $f_\lambda(t, \E{\lambda_t^{m}})$ is meant to capture that this ODE depends on the value of the $m^\text{th}$ moment and of the lower moments, which by the inductive hypothesis we take as known functions of the time $t$. Then, the $m^\text{th}$ moment of the ESEP intensity will satisfy
\begin{align*}
\frac{\mathrm{d}}{\mathrm{d}t}\E{\eta_t^m}
&=
\sum_{k=0}^{m-1} {m \choose k} \E{\eta_t^{k+1}} \alpha^{m-k}
+
\frac{\beta}{\alpha}
\sum_{k=0}^{m-1} {m \choose k} \E{\eta_t^{k+1}} (-\alpha)^{m-k}
-
\frac{\beta \eta^*}{\alpha}
\sum_{k=0}^{m-1} {m \choose k} \E{\eta_t^{k}} (-\alpha)^{m-k}
.
\end{align*}
By pulling the $k = m-1$ terms off the top of each summation, we can re-express this ODE as
\begin{align*}
\frac{\mathrm{d}}{\mathrm{d}t}\E{\eta_t^m}
&=
\sum_{k=0}^{m-1} {m \choose k} \E{\eta_t^{k+1}} \alpha^{m-k}
-
m\beta \E{\eta_t^{m}}
+
m\beta \eta^* \E{\eta_t^{m-1}}
\\
&
\quad
+
\frac{\beta}{\alpha}
\sum_{k=0}^{m-2} {m \choose k}  (-\alpha)^{m-k}
\left(
\E{\eta_t^{k+1}}
-
\eta^* \E{\eta_t^{k}}
\right)
,
\end{align*}
and through the definition of $f_\lambda(\cdot)$ and the inductive hypothesis, we can find the following lower bound:
\begin{align*}
\frac{\mathrm{d}}{\mathrm{d}t}\E{\eta_t^m}
&\geq
f_\lambda(t, \E{\eta_t^m})
+
\frac{\beta}{\alpha}
\sum_{k=0}^{m-2} {m \choose k}  (-\alpha)^{m-k}
\left(
\E{\eta_t^{k+1}}
-
\eta^* \E{\eta_t^{k}}
\right)
.
\end{align*}
This right-most term can then be expressed
\begin{align*}
\frac{\beta}{\alpha}
\sum_{k=0}^{m-2} {m \choose k}  (-\alpha)^{m-k}
\left(
\E{\eta_t^{k+1}}
-
\eta^* \E{\eta_t^{k}}
\right)
=
\frac{\beta}{\alpha}
\E{
(\eta_t - \eta^*)
\left(
(\eta_t - \alpha)^m
-
\eta_t^m
+
m \alpha \eta_t^{m-1}
\right)
}
,
\end{align*}
and we can now reason about the quantity inside the expectation. By definition, we have that $\eta_t \geq \eta^*$ surely, and furthermore we can observe that if $\eta_t - \eta^* > 0$, then $\eta_t \geq \eta^* + \alpha > \alpha$. Thus, let us consider $(\eta_t - \alpha)^m
-
\eta_t^m
+
m \alpha \eta_t^{m-1}$ assuming $\eta_t > \alpha$. Dividing through by $\eta_t^m$, we have the expression
\begin{align}
\left(1 - \frac{\alpha}{\eta_t}\right)^m
-
1
+
\frac{m \alpha}{ \eta_t}
.
\label{innerEq}
\end{align}
Since $(1-x)^m - 1 + mx$ is equal to 0 at $x = 0$ and is non-decreasing on $x \in [0,1)$ via a first-derivative check, we can note that \eqref{innerEq} is non-negative for all $\eta_t > \eta^*$. Thus, we have that
$$
\E{
(\eta_t - \eta^*)
\left(
(\eta_t - \alpha)^m
-
\eta_t^m
+
m \alpha \eta_t^{m-1}
\right)
}
\geq
0
,
$$
and by consequence,
$$
\frac{\mathrm{d}}{\mathrm{d}t}\E{\eta_t^m}
\geq
f_\lambda(t, \E{\eta_t^m})
,
$$
completing the proof of the intensity moment ordering via Lemma~\ref{complemma}. For the counting processes, let us again assume as an inductive hypothesis that the moment ordering holds for moments $1$ through $m-1$, with the mean equality serving as the base case. Then, the $m^\text{th}$ moment of the ESEP counting process will satisfy
\begin{align*}
\frac{\mathrm{d}}{\mathrm{d}t}\E{N_{t,\eta}^m}
&=
\sum_{k=0}^{m-1} {m \choose k} \E{\eta_t N_{t,\eta}^k}
,
\end{align*}
and the ODE for the $m^\text{th}$ Hawkes counting process moment is analogous. By the FKG inequality, we can observe that
$$
\sum_{k=0}^{m-1} {m \choose k} \E{\eta_t N_{t,\eta}^k}
\geq
\sum_{k=0}^{m-1} {m \choose k} \E{\eta_t} \E{N_{t,\eta}^k}
.
$$
By the inductive hypothesis, we have that $\E{N_{t,\eta}^k} \geq \E{N_{t,\lambda}^k}$ for each $k \leq m - 1$, and thus we can observe that
$$
\sum_{k=0}^{m-1} {m \choose k} \E{\eta_t} \E{N_{t,\eta}^k}
\geq
\sum_{k=0}^{m-1} {m \choose k} \E{\lambda_t} \E{N_{t,\lambda}^k}
.
$$
Finally, by another application of Lemma~\ref{complemma}, we have $\E{N_{t,\lambda}^m} \leq \E{N_{t,\eta}^m}$.
\end{proof}
\end{proposition}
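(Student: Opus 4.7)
The plan is to proceed by strong induction on the moment order $m$. The base case $m = 1$ asks for equality of means, which I would derive by writing down the first-moment ODEs through the respective infinitesimal generators. For the Hawkes process the drift in the intensity is $\alpha \lambda_t - \beta(\lambda_t - \eta^*)$, while for the ESEP the down-jumps of size $\alpha$ occur at rate $\beta(\eta_t - \eta^*)/\alpha$, yielding the algebraically identical drift $\alpha \eta_t - \beta(\eta_t - \eta^*)$ up to renaming. Both means therefore satisfy the same linear ODE and so must coincide for all $t \geq 0$ given equal initial values. The equality $\E{N_{t,\lambda}} = \E{N_{t,\eta}}$ then follows by integrating, since for either process the expected counting process is the time integral of the expected intensity.

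For the inductive step on the intensities, assume the ordering $\E{\eta_t^k} \geq \E{\lambda_t^k}$ holds for $k = 1, \dots, m-1$. Applying the two generators to $x \mapsto x^m$, the Hawkes ODE takes the form $\frac{\mathrm{d}}{\mathrm{d}t}\E{\lambda_t^m} = f_\lambda(t, \E{\lambda_t^m})$, a linear function of $\E{\lambda_t^m}$ whose coefficients depend on the lower-order moments given by the inductive hypothesis. The ESEP generator contains these same terms plus a residual that arises precisely because its compensating jumps are of finite size $\alpha$ rather than being an infinitesimal decay. Binomially expanding $(\eta_t - \alpha)^m$ and canceling the $\eta_t^m$ and $\eta_t^{m-1}$ terms against the Hawkes drift, the leftover simplifies to $\tfrac{\beta}{\alpha}\E{(\eta_t - \eta^*)\bigl((\eta_t - \alpha)^m - \eta_t^m + m\alpha\eta_t^{m-1}\bigr)}$.

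The main obstacle is showing that this residual is non-negative. Surely $\eta_t \geq \eta^*$, and whenever $\eta_t > \eta^*$ one has $\eta_t \geq \eta^* + \alpha > \alpha$, so on the set where the integrand is nontrivial I can divide through by $\eta_t^m$ and reduce the inner quantity to $\phi(x) := (1 - x)^m - 1 + m x$ evaluated at $x = \alpha/\eta_t \in (0,1]$. Since $\phi(0) = 0$ and $\phi'(x) = m\bigl(1 - (1-x)^{m-1}\bigr) \geq 0$ on $[0,1]$, it follows that $\phi \geq 0$, and therefore $\frac{\mathrm{d}}{\mathrm{d}t}\E{\eta_t^m} \geq f_\lambda(t, \E{\eta_t^m})$. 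Applying the ODE comparison Lemma~\ref{complemma}, combined with the inductive hypothesis that the lower-moment coefficients feeding into $f_\lambda$ are themselves ordered in the desired direction, yields $\E{\eta_t^m} \geq \E{\lambda_t^m}$.

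For the counting processes I would run a parallel induction with the mean equality as the base case. The differential $\mathrm{d}N_t^m = \sum_{k=0}^{m-1}\binom{m}{k}N_{t-}^k\,\mathrm{d}N_t$ combined with the compensator of $N_t$ gives $\frac{\mathrm{d}}{\mathrm{d}t}\E{N_{t,\eta}^m} = \sum_{k=0}^{m-1}\binom{m}{k}\E{\eta_t N_{t,\eta}^k}$ and analogously for the Hawkes process. Both the intensity and $N_t^k$ are non-decreasing functionals of the arrival history, so the FKG inequality lower-bounds each ESEP cross-moment by $\E{\eta_t}\E{N_{t,\eta}^k}$; chaining this with the equality of intensity means and the inductive hypothesis $\E{N_{t,\eta}^k} \geq \E{N_{t,\lambda}^k}$ bounds the ESEP derivative below by the corresponding Hawkes quantity, at which point a final invocation of Lemma~\ref{complemma} closes the induction.
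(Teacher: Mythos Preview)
Your proposal is correct and follows essentially the same approach as the paper's own proof: induction on $m$ via the infinitesimal-generator ODEs, the same identification of the ESEP residual as $\tfrac{\beta}{\alpha}\E{(\eta_t-\eta^*)\bigl((\eta_t-\alpha)^m-\eta_t^m+m\alpha\eta_t^{m-1}\bigr)}$, the same convexity argument through $\phi(x)=(1-x)^m-1+mx$, and the same FKG-plus-comparison-lemma step for the counting processes. The organization and the key ideas match the paper almost line for line.
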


The fact that the ESEP variance dominates the Hawkes variance should not be surprising, since the presence of both up and down jumps means that the ESEP sample paths should be subject to more abrupt changes. Nevertheless, this also shows that the ESEP is more  over-dispersed than the Hawkes process is. This may be an attractive feature for data modeling. It is worth noting that matrix computations are available for all moments of these intensities via \cite{daw2019matrix}, through which one could use the method of moments to fit the processes to data.

\subsection{The Ephemerally Self-Exciting Counting Process}\label{subsecAffCount}

Thus far we have studied the intensity of the ESEP, as this process is by definition tracking the self-excitement. However, this excitation is manifested in the actual arrivals from the process, which are counted in $N_t$. We now turn our attention to developing fundamental quantities for this counting process. To begin, we give the probability generating function of the counting process in closed form below in Proposition~\ref{Npgf}. One can note that by comparison, the generating functions of the Hawkes process are instead only expressible as functions of ordinary differential equations with no known closed form solutions, see for example Subsection 3.5 of \cite{daw2017queues}.

\begin{proposition}\label{Npgf}
Let $N_t$ be the number of arrivals by time $t \geq 0$ in an ESEP with baseline intensity $\eta^* > 0$, intensity jump $\alpha > 0$, and expiration rate $\beta > \alpha$. Then,  for $z \in [0,1]$ the probability generating function of $N_t$ is given by
\begin{align}
&
\E{z^{N_t}}
=
e^{\frac{\eta^*(\beta-\alpha)}{2\alpha}t}
\left(
\frac{
2
e^{
\frac{t}{2} \sqrt{(\beta+\alpha)^2 - 4\alpha\beta z}}
}
{
1-\frac{ \beta+\alpha-2\alpha z}{\sqrt{(\beta+\alpha)^2 - 4\alpha\beta z}}
+
\left(
1 + \frac{ \beta+\alpha-2\alpha z}{\sqrt{(\beta+\alpha)^2 - 4\alpha\beta z}}
\right)
e^{
t \sqrt{(\beta+\alpha)^2 - 4\alpha\beta z}}
}
\right)^{\frac{\eta^*}{\alpha}}
\nonumber
\\
&
\quad
\cdot
\left(
\frac{\beta + \alpha}{2\alpha}
+
\frac{\sqrt{(\beta+\alpha)^2 - 4\alpha\beta z}}{2\alpha}
\left(
\frac{
1-\frac{ \beta+\alpha-2\alpha z}{\sqrt{(\beta+\alpha)^2 - 4\alpha\beta z}}
-
\left(
1 + \frac{ \beta+\alpha-2\alpha z}{\sqrt{(\beta+\alpha)^2 - 4\alpha\beta z}}
\right)
e^{
t \sqrt{(\beta+\alpha)^2 - 4\alpha\beta z}}
}
{
1-\frac{ \beta+\alpha-2\alpha z}{\sqrt{(\beta+\alpha)^2 - 4\alpha\beta z}}
+
\left(
1 + \frac{ \beta+\alpha-2\alpha z}{\sqrt{(\beta+\alpha)^2 - 4\alpha\beta z}}
\right)
e^{
t \sqrt{(\beta+\alpha)^2 - 4\alpha\beta z}}
}
\right)
\right)^{Q_0}
\end{align}
where $Q_0$ is the active number in system at time 0.
\begin{proof}
Due to the cumbersome length of some equations, please see Appendix~\ref{pgfProof} for the proof.
\end{proof}
\end{proposition}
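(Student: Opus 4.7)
The plan is to compute the joint probability generating function $G(z,w,t) := \E{z^{N_t} w^{Q_t}}$ and recover $\E{z^{N_t}}$ by setting $w = 1$. Since $(Q_t, N_t)$ is a continuous-time Markov chain with transitions $(q,n)\to(q+1,n+1)$ at rate $\eta^* + \alpha q$ and $(q,n)\to(q-1,n)$ at rate $\beta q$, applying the infinitesimal generator to $f(q,n) = z^n w^q$ together with the identity $\E{Q_t z^{N_t} w^{Q_t}} = w\,\partial_w G$ produces the first-order linear PDE
\begin{align*}
\partial_t G \;=\; \eta^*(zw - 1)\,G \;+\; \bigl[\alpha w(zw - 1) + \beta(1 - w)\bigr]\,\partial_w G,
\end{align*}
subject to $G(z, w, 0) = w^{Q_0}$. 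This is the natural extension to the bivariate state of the argument already used in the proof of Proposition~\ref{AQHtransmgf}.

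Next I would solve by the method of characteristics, treating $z$ as a fixed parameter. The characteristic ODE is the Riccati equation $\frac{dw}{ds} = -\alpha z\,w^2 + (\alpha + \beta) w - \beta$, whose right-hand side factors as $-\alpha z(w - w_+)(w - w_-)$ with roots $w_{\pm} = \frac{(\alpha + \beta) \pm \Delta}{2\alpha z}$ and $\Delta := \sqrt{(\alpha+\beta)^2 - 4\alpha\beta z}$. Separating variables yields the closed-form M\"obius flow $\frac{w(s) - w_+}{w(s) - w_-} = e^{-\Delta s}\,\frac{w(0) - w_+}{w(0) - w_-}$, which I would invert to express $w(0)$ as a rational function of $w$, $\Delta$, and $e^{-\Delta t}$. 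Along the same characteristic, $\frac{d}{ds}\log G = \eta^*(z w(s) - 1)$, and the identity $\frac{d}{ds}\log(w(s) - w_-) = -\alpha z(w(s) - w_+)$ lets me integrate this in closed form to obtain $\eta^*\!\int_0^t (zw(s) - 1)\,ds = \eta^*(z w_+ - 1)t - \frac{\eta^*}{\alpha}\log\frac{w - w_-}{w(0) - w_-}$.

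Combining these ingredients with the initial data gives a product of the form $G(z, w, t) = w(0)^{Q_0} \cdot \left(\frac{w(0) - w_-}{w - w_-}\right)^{\eta^*/\alpha} \cdot e^{\eta^*(z w_+ - 1)t}$. Using $z w_+ - 1 = \frac{(\beta - \alpha) + \Delta}{2\alpha}$, the exponential prefactor splits as $e^{\eta^*(\beta - \alpha)t/(2\alpha)}\cdot \bigl(e^{\Delta t/2}\bigr)^{\eta^*/\alpha}$, so the second piece can be absorbed into the $\eta^*/\alpha$-power factor while the first survives as the explicit prefactor appearing in the statement. Setting $w = 1$ and rewriting the ratios symmetrically in $(\beta + \alpha - 2\alpha z)/\Delta$ and $e^{\pm \Delta t/2}$ should then recover the stated expression, with a sanity check at $z = 1$ giving $G = 1$. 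The main obstacle is not conceptual but purely algebraic: the M\"obius inversion of $w(0)$, its substitution into both the $Q_0$-exponent and the $\eta^*/\alpha$-exponent, and the recombination into the compact numerator/denominator form in the theorem require tedious bookkeeping, which is exactly why the authors defer the detailed verification to Appendix~\ref{pgfProof}.
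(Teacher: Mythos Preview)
Your approach is correct and is essentially the same method as the paper's: derive a first-order linear PDE for a joint probability generating function via the infinitesimal generator and solve it by the method of characteristics, where the $w$-characteristic is a Riccati equation with explicit roots $w_\pm$.

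The only substantive difference is the choice of joint variable. The paper first proves an auxiliary result (Proposition~\ref{QDpgf}) for $\E{z_1^{Q_t} z_2^{D_t}}$, where $D_t$ is the departure count, and expresses the characteristic solution in terms of $\tanh$ and $\cosh$. Since $N_t=Q_t+D_t$, the proof of Proposition~\ref{Npgf} then sets $z_1=z_2=z$ and spends the bulk of Appendix~\ref{pgfProof} translating the hyperbolic form into the stated exponential/rational form via the identities for $\tanh$, $\cosh$, and $\tanh^{-1}$. Your route tracks $(N_t,Q_t)$ directly, so setting $w=1$ immediately yields $\E{z^{N_t}}$ without the detour through $D_t$ or hyperbolic functions; the M\"obius flow and the identity $\frac{d}{ds}\log(w-w_-)= -\alpha z(w-w_+)$ are exactly the algebraic counterparts of the paper's $\tanh$/$\cosh$ manipulations. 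Your parametrization in $w_\pm$ and $\Delta$ is arguably cleaner, and the remaining work---inverting $w(0)$ in terms of $w=1$, $\Delta$, $e^{\pm\Delta t/2}$ and matching the displayed numerator/denominator---is, as you say, bookkeeping rather than a new idea.
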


In addition to calculating the probability generating function, we can also find a matrix calculation for the transient probability mass function of the counting process. To do so, we recognize that the time until the next arrival occurs can be treated as the time to absorption in a continuous time Markov chain. By building from this idea to construct a transition matrix for several successive arrivals, we find the form for the distribution given in Proposition~\ref{countPMF}.

\begin{proposition}\label{countPMF}
Let $N_t$ be the number of arrivals by time $t$ in an ESEP with baseline intensity $\eta^* > 0$, intensity jump $\alpha > 0$, and expiration rate $\beta > \alpha$. Further, let $Q_0 = k$ be the initial active number in system. Then for $i \in \mathbb{N}$, define the matrices $\mathbf{D}_i \in \mathbb{R}^{k + i + 1 \times k + i + 1}$ and $\mathbf{S}_i \in \mathbb{R}^{k+i+1 \times k + i+2}$ as
\begin{align*}
\mathbf{D}_i
&
=
\begin{bmatrix}
-(\eta^* + (k+i)(\alpha + \beta)) & (k+i)\beta & & &\\
 & -(\eta^*+ (k+i-1)(\alpha + \beta)) & & &\\
 & & \ddots &&\\
 &&&-(\eta^*+ \alpha+\beta)&\beta\\
 &&&&-\eta^*
\end{bmatrix}
,
\end{align*}
and
\begin{align*}
\mathbf{S}_i
&
=
\begin{bmatrix}
\eta^* + \alpha(k+i) & & & & & 0\\
 & \eta^* + \alpha(k+i-1) & & && 0\\
 & & \ddots &&& \vdots\\
 &&&\eta^* + \alpha&& 0\\
 &&&&\eta^* & 0
\end{bmatrix}
.
\end{align*}
Further, let $\mathbf{Z}_n \in \mathbb{R}^{\hat d_n \times \hat d_n}$ for $\hat d_n = \frac{n(n+1)}{2} + (n+1)(k+1)$ be a matrix such that
$$
\mathbf{Z}_n
=
\begin{bmatrix}
\mathbf{D}_0 & \mathbf{S}_0 &&&&\\
& \mathbf{D}_1 & \mathbf{S}_1 &&&\\
&& \ddots & \ddots &&\\
&&& \ddots & \mathbf{S}_{n-2} &\\
&&&& \mathbf{D}_{n-1} & \mathbf{S}_{n-1}\\
&&&&&\mathbf{D}_n
\end{bmatrix}
.
$$
Then, the probability that $N_t = n$ is given by
\begin{align}
\PP{N_t = n}
=
\onevec{1}^\T e^{\mathbf{Z}_n t} \onevec{:}
\end{align}
where $\onevec{j} \in \mathbb{R}^{\hat d_n}$ is the unit column vector for the $j^\text{th}$ coordinate and  $\onevec{:} = \sum_{j = 0}^{k+n} \onevec{\hat d_n - j}$.
\begin{proof}
This follows directly from viewing $\mathbf{Z}_n$ as a sub-matrix of the generator matrix of a continuous time Markov chain (CTMC), much like one can do to calculate probabilities of phase-type distributions. Specifically, the sub-generator matrix is defined on the state space $\mathcal{S} = \bigcup_{i=0}^n \{(0,i), (1,i), \dots, (k+i-1,i),(k+i,i)\}$. In this scenario, the state $(s_1, s_2)$ represents having $s_1$ entities in system and having seen $s_2$ arrivals since time 0. Then, $\mathbf{D}_i$ is the sub-generator matrix for transitions among the sub-state space $\{(k+i,i), (k+i-1,i), \dots, (1,i),(0,i)\}$ to itself (where the states are ordered in that fashion). Similarly, $\mathbf{S}_i$ is for transitions from states in $\{(k+i,i), (k+i-1,i), \dots, (1,i),(0,i)\}$ to states in $\{(k+i+1,i+1), (k+i,i+1), \dots, (1,i+1),(0,i+1)\}$. Then, one can consider this from an absorbing CTMC perspective since if $n+1$ arrivals occur it is not possible to transition back to any state in which $n$ arrivals had occurred. Hence, we only need to use the matrix $\mathbf{Z}_n$ to consider up to $n$ arrivals. Then, $e^{\mathbf{Z}_n t}$ is the sub-matrix for probabilities of transitions among states in $\mathcal{S}$, where the rows will sum to less than 1 as it is possible that the chain has experienced more than $n$ arrivals by time $t$. Finally, because $Q_0 = k$ we know that the chain states in state $(k,0)$; further, because we are seeking the probability that there have been exactly $n$ arrivals by time $t$ we want the probability of transitions from $(k,0)$ to any of the states in $\{(k+n,n), (k+n-1,n), \dots, (1,n),(0,n)\}$.
\end{proof}
\end{proposition}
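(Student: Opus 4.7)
The plan is to construct a bivariate continuous-time Markov chain on the joint state $(Q_t, N_t)$ rather than on $Q_t$ alone, and then read off $\PP{N_t=n}$ as a sum of transient occupation probabilities of this chain. Observing that $N_t$ is non-decreasing and starts at $0$, while $Q_t$ starts at $k$ and can grow by at most one per arrival, the reachable states with at most $n$ arrivals live in the finite set $\mathcal{S}_n = \bigcup_{i=0}^n \{(s_1, i) : 0 \le s_1 \le k+i\}$. Since any arrival from a state with $s_2 = n$ moves the chain out of $\mathcal{S}_n$ forever and cannot contribute to $\PP{N_t=n}$, those transitions can be treated as absorbing exits, and the relevant dynamics are captured by a sub-generator restricted to $\mathcal{S}_n$.

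Next I would identify the transition rates. From state $(s_1, s_2)$, arrivals occur at rate $\eta^* + \alpha s_1$ and move the chain to $(s_1+1, s_2+1)$, while departures occur at rate $\beta s_1$ and move it to $(s_1 - 1, s_2)$. Ordering the states lexicographically by $s_2$ in the outer index and by decreasing $s_1$ in the inner index, the intra-level transitions (departures) populate diagonal blocks that are upper bidiagonal, with negative total-rate-out entries on the main diagonal and rates $\beta\cdot s_1$ just above; the inter-level transitions (arrivals) populate super-diagonal blocks whose main diagonal holds $\eta^* + \alpha s_1$ (since $(s_1, i)\to(s_1+1, i+1)$ preserves position under the chosen ordering) and whose final column is zero (because $(0, i+1)$ is unreachable from level $i$). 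I would verify directly that these blocks match $\mathbf{D}_i$ and $\mathbf{S}_i$ as defined in the statement, and that the total size is $\hat d_n = \sum_{i=0}^n (k+i+1) = (n+1)(k+1) + n(n+1)/2$.

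With this correspondence in hand, $\mathbf{Z}_n$ is exactly the sub-generator of this absorbing CTMC on $\mathcal{S}_n$, and the standard phase-type identity gives $(e^{\mathbf{Z}_n t})_{ij}$ as the probability of being in state $j$ at time $t$ starting from state $i$, with rows summing to less than one to account for absorption by time $t$. Because $Q_0 = k$ corresponds to the first state of $\mathcal{S}_n$ under our ordering, left-multiplication by $\onevec{1}^\T$ fixes the initial condition. Right-multiplication by $\onevec{:} = \sum_{j=0}^{k+n}\onevec{\hat d_n - j}$ sums across the final $k+n+1$ coordinates, which are exactly the states $\{(s_1, n) : 0 \le s_1 \le k+n\}$, yielding $\PP{N_t=n}$.

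The main obstacle is bookkeeping rather than any deep idea: one must be careful with the ordering convention (decreasing $s_1$ within each $s_2$-level) so that the sub- and super-diagonal positions of $\mathbf{D}_i$ and $\mathbf{S}_i$ line up correctly with the departure and arrival transitions respectively, and confirm the absence of a final column in $\mathbf{S}_i$ reflects the unreachability of $(0, i+1)$ from level $i$. Once that matching is done, the conclusion is an immediate application of the matrix-exponential solution to Kolmogorov's forward equations on a finite sub-generator.
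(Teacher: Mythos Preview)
Your proposal is correct and follows essentially the same approach as the paper: both construct the bivariate CTMC on $(Q_t,N_t)$, restrict to the finite state space $\mathcal{S}_n$ with at most $n$ arrivals ordered by decreasing $s_1$ within each $s_2$-level, identify $\mathbf{D}_i$ and $\mathbf{S}_i$ as the intra- and inter-level blocks of the resulting sub-generator, and read off $\PP{N_t=n}$ as the appropriate entry-sum of the matrix exponential. Your write-up is in fact slightly more explicit about the bookkeeping (the zero final column of $\mathbf{S}_i$, the computation of $\hat d_n$), but there is no substantive difference in method.
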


With these fundamental quantities in hand, let us now turn to explore more nuanced connections between the ESEP and other stochastic processes in the following section. Doing so will provide further comparison between the Hawkes process and the ESEP, and moreover will formally connect the notion of self-excitement to similar concepts such as contagion, virality, and rich-get-richer effects.

\section{Relating Ephemeral Self-Excitement to Branching Processes, Random Walks, and Epidemics}\label{secRelate}

Aside from the original definition, the most frequently utilized result for Hawkes processes is perhaps the immigration-birth representation first shown in \citet{hawkes_oakes_1974}. By viewing a portion of arrivals as immigrants -- externally driven and stemming from a homogenous Poisson process -- and then viewing the remaining portion as offspring -- excitation-driven descendants of the immigrants and the prior offspring -- one can take new perspectives on self-exciting processes. From this position, if an arrival is a descendant then it has a unique parent, the excitement of which spurred this arrival into existence. Every entity has the potential to generate offspring. This viewpoint takes on added meaning in the context of ephemeral self-excitement, as an entity only has the opportunity to generate descendants so long as it remains in the system.  In this section, we will use this idea to connect self-exciting processes to well-known stochastic models that have applications ranging from public health to Bayesian statistics. Furthermore, these connections will also help us form comparisons between the Hawkes process and the model we have introduced, the ESEP. The different dynamics are at the forefront of this process comparison, as the branching structure is dictated by the self-excitation caused by a single arrival. For the Hawkes process, this increase in the arrival rate is eternal but ever-diminishing, whereas in the ESEP the jump is ephemeral but constant when it does exist.

\subsection{Discrete Time Perspectives through Branching Processes}\label{subsecBranch}

Let us first view these processes through a discrete time lens as branching processes. In this subsection we will interpret classical branching processes results in application to these self-exciting processes. Taking the immigration-birth representation as inspiration, we start by considering the distribution of the total number of offspring of a single arrival. That is, we want to calculate the probability mass function for the number of arrivals that are generated directly from the excitement caused by the initial arrival. To constitute the \textit{total} number of offspring, we will consider all the children of this initial entity across all time. For the ESEP, this equates to the number of arrivals generated by the entity throughout its duration in the system; in the Hawkes process this counts the number of arrivals spurred by the entity as time goes to infinity. Given that the stability conditions are satisfied throughout, in Proposition~\ref{offProp} we calculate these distributions by way of inhomogeneous Poisson processes, yielding a Poisson mixture form for each.

\begin{proposition}\label{offProp}
Let $\beta > \alpha > 0$. Let $X^\eta$ be the number of new arrivals generated by the excitement caused by an arbitrary initial arrival throughout its duration in the system in an ESEP with jump size $\alpha$ and expiration rate $\beta$. Then, this offspring distribution is geometrically distributed with probability mass function
\begin{align}
\PP{X^\eta = k} &= \left(\frac{\beta}{\alpha+\beta}\right)\left(\frac{\alpha}{\alpha+\beta}\right)^k.
\end{align}
Similarly, let $X^\lambda$ be the number of new arrivals generated by the excitement caused by an arbitrary initial arrival in a Hawkes process with jump size $\alpha$ and decay rate $\beta$. This offspring distribution is then Poisson distributed with probability mass function
\begin{align}
\PP{X^\lambda = k} &= \frac{e^{-\frac{\alpha}\beta}}{k!}\left(\frac{\alpha}\beta\right)^k.
\end{align}
where all $k \in \mathbb{N}$.
\begin{proof}
Without loss of generality, we assume that the initial arrival in each process occurred at time 0. Then, at time $t \geq 0$ the excitement generated by these initial arrivals has intensities given by $\alpha e^{-\beta t}$ and $\alpha \mathbf{1}\{t < S\}$ for the Hawkes and ESEP processes, respectively, where $S \sim \mathrm{Exp}(\beta)$. Using~\citet{daley2007introduction}, one can note that the offspring distributions across all time can then be expressed as
$$
X^\lambda \sim \mathrm{Pois}\left(\alpha \int_0^\infty e^{-\beta t} \mathrm{d}t\right)
\qquad
\text{and}
\qquad
X^\eta \sim \mathrm{Pois}\left(\alpha \int_0^\infty \mathbf{1}\{t < S\} \mathrm{d}t\right),
$$
which are equivalently stated $X^\lambda \sim \mathrm{Pois}\left(\alpha\slash\beta\right)$ and $X^\eta \sim \mathrm{Pois}\left(\alpha S\right)$. This now immediately yields the stated distributions for $X^\lambda$ and $X^\eta$, as the Poisson-Exponential mixture is known to yield a geometric distribution, see for example the overview of Poisson mixtures in \citet{karlis2005mixed}.
\end{proof}
\end{proposition}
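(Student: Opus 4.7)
The plan is to exploit the immigration-birth (cluster) representation of self-exciting processes alluded to at the start of Section~\ref{secRelate}: under this decomposition, each existing arrival in the process contributes to the overall intensity an autonomous ``excitement stream,'' and the direct offspring of that arrival are exactly the points generated by its own stream. Conditioning on the relevant randomness, each such stream is an inhomogeneous Poisson process, so the total offspring count is Poisson with parameter equal to the integral of the contributed rate. My job is to identify those rates for the two processes and then do the appropriate mixing.

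First, I would center time at the initial arrival. For the Hawkes process with exponential kernel, the contribution of this arrival to the future intensity is the deterministic function $\alpha e^{-\beta t}$, so by the standard thinning/cluster argument (\citet{daley2007introduction}) the total number of direct offspring across all time is $\mathrm{Pois}\bigl(\int_0^\infty \alpha e^{-\beta t}\,dt\bigr) = \mathrm{Pois}(\alpha/\beta)$. Substituting into the Poisson PMF yields the formula for $X^\lambda$.

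For the ESEP, the same reasoning applies once I condition on the activity duration $S \sim \mathrm{Exp}(\beta)$ attached to the initial arrival. The rate contributed by that arrival to the intensity is $\alpha \mathbf{1}\{t < S\}$, so conditionally $X^\eta \mid S \sim \mathrm{Pois}(\alpha S)$. I then marginalize, evaluating
\begin{align*}
\PP{X^\eta = k}
= \int_0^\infty \frac{(\alpha s)^k e^{-\alpha s}}{k!}\,\beta e^{-\beta s}\,ds
= \frac{\alpha^k \beta}{k!}\cdot\frac{k!}{(\alpha+\beta)^{k+1}}
= \left(\frac{\beta}{\alpha+\beta}\right)\left(\frac{\alpha}{\alpha+\beta}\right)^k,
\end{align*}
using the elementary Gamma integral. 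Equivalently, I would invoke the well-known fact that a Poisson with Exponential mean parameter mixes to a geometric distribution (as in \citet{karlis2005mixed}).

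The only point requiring genuine justification — as opposed to calculation — is the claim that a single arrival's contribution really does generate offspring according to an \emph{independent} inhomogeneous Poisson process with the stated rate. For the Hawkes process this is the classical cluster representation of \citet{hawkes_oakes_1974}. For the ESEP, the analogous statement follows directly from Definition~\ref{esepDef}: given the durations $\{S_i\}$ of the active exciters, the contribution $\alpha Q_t$ to the intensity is a sum of independent rate-$\alpha$ indicators, and by Poisson superposition the offspring triggered by each exciter can be read off as an independent thinned stream. I would flag this as the conceptual step, after which the distributional identities reduce to the one-line integrals above.
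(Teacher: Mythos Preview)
Your proposal is correct and follows essentially the same approach as the paper's proof: both identify the single-arrival contribution to the intensity as $\alpha e^{-\beta t}$ (Hawkes) or $\alpha\mathbf{1}\{t<S\}$ (ESEP), invoke the inhomogeneous-Poisson cluster representation to get $\mathrm{Pois}(\alpha/\beta)$ and $\mathrm{Pois}(\alpha S)$ respectively, and then recognize the Poisson--Exponential mixture as geometric. Your version is slightly more explicit in two harmless ways---you write out the Gamma integral rather than just citing \citet{karlis2005mixed}, and you spell out the superposition/thinning justification for independence of the ESEP offspring stream---but the argument is the same.
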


We now move towards considering the total progeny of an initial arrival, meaning the total number of arrivals generated by the excitement of an initial arrival \textit{and} the excitement of its offspring, and of their offspring, and so on across all time. It is important to note that by comparison to the  number of offspring, the  progeny includes the initial arrival itself. As we will see, the stability of the self-exciting processes implies that this total number of descendants is almost surely finite. This demonstrates the necessity of immigration for these processes to survive. 
From the offspring distributions in Proposition~\ref{offProp},  the Hawkes descendant process is a Poisson branching process and, similarly, the ESEP is a geometric branching process. These are well-studied models in branching processes, so we have many results available to us. In fact, we now use a result for random walks with potentially multiple simultaneous steps forward to derive the progeny distributions for these two processes. This is through the well-known hitting time theorem, stated below in Lemma~\ref{hitThm}.

\begin{lemma}[Hitting Time Theorem]\label{hitThm}
The total progeny $Z$ of a branching process with descendant distribution equivalent to $X_1$ is
$$
\PP{Z = k} = \frac{1}{k}\PP{X_1 + X_2 + \dots + X_k = k -1},
$$
where $X_1, \dots, X_k$ are i.i.d. for all $k \in \mathbb{Z}^+$.
\begin{proof}
See \citet{otter1949multiplicative} for the original statement and proof in terms of random walks; a review and elementary proof are given in the brief note \citet{van2008elementary}.
\end{proof}
\end{lemma}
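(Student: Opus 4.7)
My plan is to prove the hitting-time theorem via the queue/exploration construction of the branching tree together with the cycle lemma of Dvoretzky--Motzkin, which is essentially the standard route. I would first set up the connection between total progeny and a random walk by exploring the tree one vertex at a time: initialize a queue of size one (the root), and at step $i$ remove one vertex and append its $X_i$ offspring, where $X_1, X_2, \dots$ are i.i.d.~copies of the descendant variable. The queue size after $n$ exploration steps is then
\[
Q_n \;=\; 1 + \sum_{i=1}^n (X_i - 1) \;=\; 1 + S_n - n,
\]
where $S_n = X_1 + \cdots + X_n$. The total progeny $Z$ equals the first time the queue empties, so the event $\{Z = k\}$ is exactly the event $\bigl\{S_j \geq j \text{ for all } j < k\bigr\} \cap \{S_k = k-1\}$.

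Next I would condition on $\{S_k = k-1\}$ and invoke the cycle lemma: for any deterministic nonnegative integer sequence $(x_1,\dots,x_k)$ with $\sum_{i=1}^k x_i = k-1$, exactly one of the $k$ cyclic rotations satisfies the strict partial-sum condition $\sum_{j=1}^i x_{\sigma(j)} \geq i$ for every $i < k$. I would sketch the proof of this combinatorial fact in the standard way, by picking the rotation that begins immediately after the position attaining the strict minimum of the walk $j \mapsto \sum_{i=1}^j (x_i - 1)$ over $j \in \{0,\dots,k-1\}$; uniqueness follows because any other starting point leaves part of the walk below that minimum. Because the joint law of $(X_1,\dots,X_k)$ is invariant under cyclic permutations, averaging over the $k$ rotations gives the identity
\[
\PP{Z = k} \;=\; \PP{S_j \geq j \text{ for all } j < k,\ S_k = k-1} \;=\; \tfrac{1}{k} \PP{S_k = k-1},
\]
which is the claim.

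The main obstacle, such as it is, lies in the cycle lemma itself: one has to argue carefully that the uniquely distinguished rotation always exists and is unique, including in degenerate cases where many partial sums attain the minimum simultaneously. Choosing the \emph{first} index achieving the minimum (rather than any such index) avoids this ambiguity and is the cleanest way to make the bijection between ``good'' rotations and arbitrary sequences summing to $k-1$ rigorous. Since the remainder of the argument is just linearity of expectation applied to indicators of rotations, once the cycle lemma is in hand the theorem follows in one line. I would note in passing that this is precisely the approach taken in the elementary treatment of \citet{van2008elementary} that is cited, and no generating-function machinery or Lagrange inversion is required.
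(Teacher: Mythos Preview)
Your argument is correct and is precisely the queue/exploration plus cycle-lemma proof given in \citet{van2008elementary}, which is exactly what the paper defers to; the paper itself supplies no independent proof of this lemma, only the citations.
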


We  now use the hitting time theorem to give the total descendants distributions for the Hawkes process and the ESEP in Proposition~\ref{clusterSize}. This is a common technique for branching processes, and it now yields valuable insight into these two self-exciting models.


\begin{proposition}\label{clusterSize}
Let $\beta > \alpha > 0$. Let $Z^\eta$ be a random variable for the total progeny of an arbitrary arrival in an ESEP with intensity jump $\alpha$ and expiration rate $\beta$. Likewise, let $Z^\lambda$ be a random variable for the total progeny of an arbitrary arrival in a Hawkes process with intensity jump $\alpha$ and decay rate $\beta$.   Then, the probability mass functions for $Z^\eta$ and $Z^\lambda$ are given by
\begin{align}
&
\PP{Z^\eta = k}
=
\frac{1}{k}
{2k-2 \choose k-1}
\left(
\frac{\beta}{\beta + \alpha}
\right)^k
\left(
\frac{\alpha}{\beta + \alpha}
\right)^{k-1}
\text{and}
&\PP{Z^\lambda = k}
=
\frac{e^{-\frac{\alpha}{\beta} k}}{k!}\left(\frac{\alpha k}{\beta}\right)^{k-1}
,
\end{align}
where $k \in \mathbb{Z}^+$.
\begin{proof}
This follows by applying Lemma~\ref{hitThm} to Proposition~\ref{offProp}. Because the sum of independent Poisson random variables is Poisson distributed with the sum of the rates, we have that
$$
\frac{1}{k}\PP{X^\lambda_{1} + X^\lambda_{2} + \dots X^\lambda_{k} = k -1}
=
\frac{1}k \PP{K_1 = k -1}
,
$$
where $K_1 \sim \mathrm{Pois}\left(\frac{\alpha k}{\beta}\right)$. This now yields the expression for the probability mass function for $Z^\lambda$. Similarly for $Z_\eta$ we note that the sum of independent geometric random variables has a negative binomial distribution, which implies that
$$
\frac{1}{k}\PP{X^\eta_{1} + X^\eta_{2} + \dots X^\eta_{k} = k -1}
=
\frac{1}k \PP{K_2 = k -1}
,
$$
where $K_2 \sim \mathrm{NegBin}\left(k,\frac{\alpha}{\beta + \alpha}\right)$, and this completes the proof.
\end{proof}
\end{proposition}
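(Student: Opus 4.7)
The plan is to invoke the Hitting Time Theorem (Lemma~\ref{hitThm}) directly, feeding it the offspring distributions already computed in Proposition~\ref{offProp}. Because both offspring distributions are infinitely divisible families closed under convolution (Poisson for the Hawkes process, geometric for the ESEP), the sum $X_1 + \cdots + X_k$ of $k$ i.i.d.\ offspring variables has a clean closed form in each case, and plugging $k-1$ into that closed form and dividing by $k$ will produce the two displayed probability mass functions.

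For the Hawkes case, I would first note that $X^\lambda_i \sim \mathrm{Pois}(\alpha/\beta)$, so standard convolution gives $X^\lambda_1 + \cdots + X^\lambda_k \sim \mathrm{Pois}(k\alpha/\beta)$. Lemma~\ref{hitThm} then yields
\begin{align*}
\PP{Z^\lambda = k}
= \frac{1}{k}\PP{\mathrm{Pois}(k\alpha/\beta) = k-1}
= \frac{1}{k} \cdot \frac{e^{-k\alpha/\beta}(k\alpha/\beta)^{k-1}}{(k-1)!}
= \frac{e^{-k\alpha/\beta}}{k!}\!\left(\frac{\alpha k}{\beta}\right)^{\!k-1},
\end{align*}
which is exactly the stated Borel--Tanner-type distribution. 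This step is essentially a one-line computation once the Hitting Time Theorem is in hand.

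For the ESEP case, I would recall that $X^\eta_i \sim \mathrm{Geom}(\beta/(\alpha+\beta))$ supported on $\{0,1,2,\ldots\}$, so the sum of $k$ independent copies is negative binomially distributed: $X^\eta_1 + \cdots + X^\eta_k \sim \mathrm{NegBin}(k, \beta/(\alpha+\beta))$, counting the number of failures before the $k$-th success with success probability $\beta/(\alpha+\beta)$. The PMF at $k-1$ is therefore
\begin{align*}
\PP{X^\eta_1 + \cdots + X^\eta_k = k-1}
= \binom{2k-2}{k-1}\!\left(\frac{\beta}{\alpha+\beta}\right)^{\!k}\!\left(\frac{\alpha}{\alpha+\beta}\right)^{\!k-1},
\end{align*}
and dividing by $k$ produces the displayed formula for $\PP{Z^\eta = k}$.

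The only real pitfall will be bookkeeping around parameter conventions — specifically making sure the geometric distribution from Proposition~\ref{offProp} is read as ``number of failures before first success'' (support $\{0,1,\ldots\}$) rather than ``trial of first success'' (support $\{1,2,\ldots\}$), and correspondingly using the version of the negative binomial whose PMF carries the binomial coefficient $\binom{n+k-1}{k-1}$ at $n$ failures. With that convention fixed, the identification $n = k-1$ turns $\binom{n+k-1}{k-1}$ into $\binom{2k-2}{k-1}$ automatically. No delicate estimates, analytic continuation, or generating-function manipulations are needed; the entire argument is algebraic substitution into Lemma~\ref{hitThm}.
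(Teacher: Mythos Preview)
Your proposal is correct and follows essentially the same approach as the paper: apply the Hitting Time Theorem (Lemma~\ref{hitThm}) to the offspring distributions of Proposition~\ref{offProp}, using that a sum of i.i.d.\ Poissons is Poisson and a sum of i.i.d.\ geometrics is negative binomial. Your write-up is slightly more explicit about the algebra and the geometric/negative-binomial conventions, but the argument is identical in substance.
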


For a visual comparison of the descendants in the ESEP and the Hawkes process, we plot these two progeny distributions for equivalent parameters in Figure~\ref{descend}. As suggested by the variance ordering in Proposition~\ref{momentOrder}, the tail of the ESEP progeny distribution is heavier than that of the Hawkes process.

 \begin{figure}[h]
\begin{center}	
\includegraphics[width=.75\textwidth]{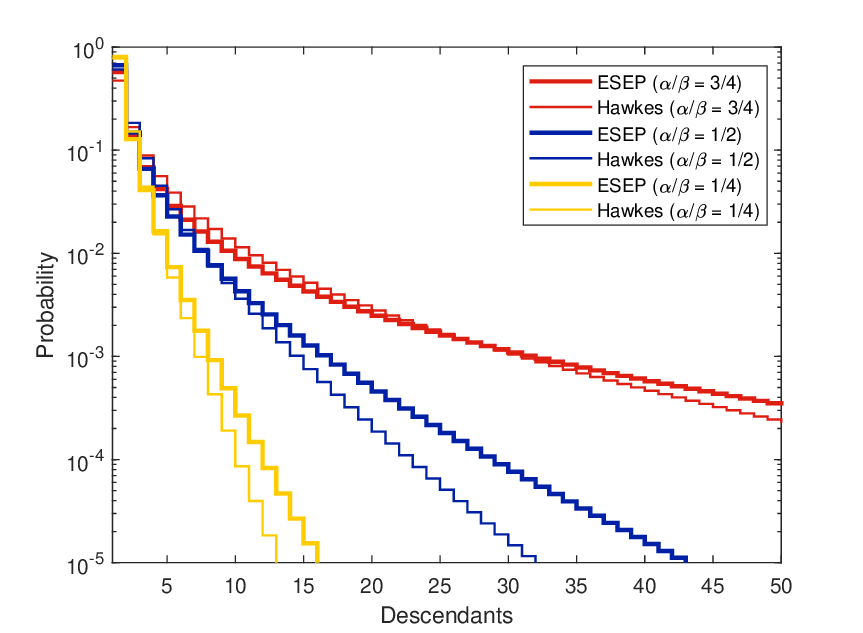}
\caption{Progeny distributions for the ESEP and the Hawkes process with matching parameters.} \label{descend}
\end{center}
\end{figure}

We can note that while one can calculate the mean of each progeny via the probability mass functions in Proposition~\ref{clusterSize}, they can also easily be found using Wald's identity. Through standard infinitesimal generator approaches, one can calculate that the expected number of arrivals (including by immigration) in the ESEP is
\begin{align*}
\E{N_t} = \frac{\beta \eta^* t}{\beta - \alpha} + \frac{\eta_0 - \eta_\infty}{\beta - \alpha}(1-e^{-(\beta - \alpha)t}).
\end{align*}
However, using these branching process representations, we can also express this as
\begin{align*}
\E{N_t} = \E{\sum_{i=1}^{M_t} Z_{i}(t)},
\end{align*}
where $M_t$ is a Poisson process with rate $\eta^*$ and $Z_i(t)$ are the total progeny up to time $t \geq 0$ that descend from the $i^\text{th}$ immigrant arrival. Now, by applying Wald's identity to the limit of $\frac{1}{t}\E{N_t}$ as $t \to \infty$, we see that
$$
\frac{\beta \eta^* }{\beta - \alpha} = \lim_{t \to \infty} \frac{\E{N_t}}{t} = \lim_{t\to\infty}\frac{1}{t}\E{\sum_{i=1}^{M_t} Z_{i}(t)} = \eta^* \E{Z^\eta} ,
$$
and so $\E{Z^\eta} = \frac{\beta}{\beta - \alpha}$. By analogous arguments for the Hawkes process, we see that $\E{Z^\lambda} = \frac{\beta}{\beta-\alpha}$.

As a final branching process comparison between these two processes, we calculate the distribution of the total number of generations of descendants of an initial arrival in the ESEP and the Hawkes process. That is, let the first entity be the first generation, its offspring be the second generation, their offspring the third, and so on. In Proposition~\ref{genProp} we find the probability mass function for the ESEP in closed form and a recurrence relation for the cumulative distribution function for the Hawkes process.

\begin{proposition}\label{genProp}
Let $\beta > \alpha > 0$. Let $\mathcal{G}^\eta$ be the number of distinct arrival generations across in full the progeny of an initial arrival in an ESEP with intensity jump $\alpha$ and service rate $\beta$. Then, the probability mass function for $\mathcal{G}^\eta$ is given by
\begin{align}
\PP{\mathcal{G}^\eta = k}
=
\frac{\alpha^{k-1}(\beta-\alpha)}{\beta^k-\alpha^k}
-
\frac{\alpha^k(\beta-\alpha)}{\beta^{k+1} - \alpha^{k+1}}
.
\end{align} Likewise, let $\mathcal{G}^\lambda$ be the number of distinct arrival generations in the full progeny of an initial arrival for a Hawkes process with intensity jump $\alpha  > 0$ and decay rate $\beta$. Then, $\mathcal{G}^\lambda$ has cumulative distribution function $F_{\mathcal{G^\lambda}}(k) = \PP{\mathcal{G}^\lambda \leq k}$ satisfying the recursion
\begin{align}
F_{\mathcal{G^\lambda}}(k) = e^{-\frac{\alpha}{\beta}\left(1 - F_{\mathcal{G^\lambda}}(k-1)\right)} ,
\end{align}
 where $F_{\mathcal{G^\lambda}}(0) = 0$ and all $k \in \mathbb{Z}^+$.
\begin{proof}
Let $Y_k^\lambda$ and $Y_k^\eta$ be Galton-Watson branching processes  defined as
\begin{align}
&Y_k^\lambda = \sum_{i=1}^{Y_{k-1}^\lambda} X_{\lambda, i}^{(k)}
,
&Y_k^\eta = \sum_{i=1}^{Y_{k-1}^\eta} X_{\eta, i}^{(k)}
,
\end{align}
with $X_{\lambda, i}^{(k)} \stackrel{i.i.d.}{\sim} \mathrm{Pois}\left(\frac{\alpha}{\beta}\right)$, $X_{\eta, i}^{(k)} \stackrel{i.i.d.}{\sim} \mathrm{Geo}\left(\frac{\alpha}{\alpha+\beta}\right)$, and $Y_0^\lambda = Y_0^\eta = 1$. These processes then have probability generating functions
$$
\mathcal{P}_k^\lambda(z)
=
\sum_{j=0}^\infty z^j \PP{Y_k^\lambda = j}
\quad
\text{ and }
\quad
\mathcal{P}_k^\eta(z)
=
\sum_{j=0}^\infty z^j \PP{Y_k^\eta = j}
,
$$
that are given by the recursions $\mathcal{P}_{k+1}^\lambda(z) = \mathcal{P}_{X^\lambda}\left(\mathcal{P}_{k}^\lambda(z)\right)$ and $\mathcal{P}_{k+1}^\lambda(z) = \mathcal{P}_{X^\eta}\left(\mathcal{P}_{k}^\eta(z)\right)$ with $\mathcal{P}_{1}^\lambda(z) = \mathcal{P}_{X^\lambda}(z)$ and $\mathcal{P}_{1}^\eta(z) = \mathcal{P}_{X^\eta}(z)$, where $\mathcal{P}_{X^\lambda}(z)$ and $\mathcal{P}_{X^\eta}(z)$ are the probability generating functions of $X_{\lambda,1}^{(1)}$ and $X_{\eta,1}^{(1)}$, respectively; see e.g. Section XII.5 of \citet{feller1957introduction}. One can then use induction to observe that
$$
\mathcal{P}_k^\eta(z)
=
1 - \frac{\alpha^k(1-z)}{\beta^k + \sum_{j=1}^k\alpha^j \beta^{k-j}(1-z)}
,
$$
whereas $\mathcal{P}_k^\lambda(z) = e^{-\frac{\alpha}{\beta}\left(1 - \mathcal{P}^\lambda_{k-1}(z)\right)}$, with $\mathcal{P}^\lambda_1(z) = e^{-\frac{\alpha}{\beta}(1-z)}$. Because of their shared offspring distribution constructions, the number of the progeny in the $k^\text{th}$ arrival generations of the Hawkes process and the ESEP are equivalent in distribution to $Y_k^\lambda$ and $Y_k^\eta$, respectively. In this way, we can express $\mathcal{G}^\lambda$ and $\mathcal{G}^\eta$ as
$$
\mathcal{G}^\lambda
=
\inf\{ k \in \mathbb{Z}^+ \mid Y_k^\lambda = 0 \}
\quad
\text{ and }
\quad
\mathcal{G}^\eta
=
\inf\{ k \in \mathbb{Z}^+ \mid Y_k^\eta = 0 \}
.
$$
This leads us to observe that the events $\{\mathcal{G}^\lambda = j\}$ and $\{Y_j^\lambda = 0, Y_{j-1}^\lambda > 0\}$ are equivalent, as are $\{\mathcal{G}^\eta = j\}$ and $\{Y_j^\eta = 0, Y_{j-1}^\eta > 0\}$. Focusing for now on $\mathcal{G}^\lambda$, we have that
\begin{align*}
\PP{Y_j^\lambda = 0, Y_{j-1}^\lambda > 0}
=
\sum_{i=1}^\infty \PP{X_{\lambda,1}^{(1)} = 0}^i\PP{Y_{j-1}^\lambda = i}
=
\mathcal{P}^\lambda_{j-1}\left(\PP{X_{\lambda,1}^{(1)} = 0}\right)
-
\PP{Y_{j-1}^\lambda = 0}
,
\end{align*}
and since $\PP{K = 0} = \mathcal{P}(0)$ for any non-negative discrete random variable $K$ with probability generating function $\mathcal{P}(z)$, this yields
$$
\PP{\mathcal{G}^\lambda = j} = \mathcal{P}^\lambda_{j}(0) - \mathcal{P}^\lambda_{j-1}(0) .
$$
Using $\mathcal{P}^\lambda_{0}(0) = 0$, this telescoping sum now produces the stated form of the cumulative distribution function for $\mathcal{G}^\lambda$. By analogous arguments for $\mathcal{G}^\eta$, we complete the proof.
\end{proof}
\end{proposition}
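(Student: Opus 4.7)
The plan is to translate the generation count into the extinction time of a Galton–Watson branching process whose offspring distribution has already been identified in Proposition~\ref{offProp}. Concretely, let $Y_0=1$ and define $Y_k=\sum_{i=1}^{Y_{k-1}}X_i^{(k)}$ with the $X_i^{(k)}$ i.i.d.~copies of $X^\lambda$ or $X^\eta$ as appropriate. Since the progeny tree generated by an initial arrival has offspring numbers that are i.i.d.~with the law computed in Proposition~\ref{offProp} (each entity excites independently over its lifetime), the size of the $k^\text{th}$ generation has the same distribution as $Y_k$, and therefore $\mathcal{G}=\inf\{k\ge 1:Y_k=0\}$. The crucial simplification is that extinction is absorbing, so $\{\mathcal{G}\le k\}=\{Y_k=0\}$ and hence $F_\mathcal{G}(k)=\mathcal{P}_k(0)$, where $\mathcal{P}_k$ is the generating function of $Y_k$.

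Next I would use the standard Galton–Watson composition rule $\mathcal{P}_k=\mathcal{P}_{k-1}\circ\mathcal{P}_X=\mathcal{P}_X\circ\mathcal{P}_{k-1}$ (with $\mathcal{P}_0(z)=z$). For the Hawkes case, plugging in $\mathcal{P}_{X^\lambda}(z)=e^{-(\alpha/\beta)(1-z)}$ immediately gives the recursion $\mathcal{P}_k^\lambda(z)=e^{-(\alpha/\beta)(1-\mathcal{P}_{k-1}^\lambda(z))}$, and evaluating at $z=0$ yields the stated recursion for $F_{\mathcal{G}^\lambda}(k)$ with $F_{\mathcal{G}^\lambda}(0)=\mathcal{P}_0^\lambda(0)=0$.

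For the ESEP, the offspring generating function is $\mathcal{P}_{X^\eta}(z)=\beta/(\alpha+\beta-\alpha z)$. I would show by induction on $k$ that
\[
\mathcal{P}_k^\eta(z)=1-\frac{\alpha^k(1-z)}{\beta^k+\sum_{j=1}^k\alpha^j\beta^{k-j}(1-z)}.
\]
The base case $k=1$ is a direct rewriting of $\mathcal{P}_{X^\eta}$. For the inductive step one substitutes the formula for $\mathcal{P}_{k}^\eta$ into $\mathcal{P}_{X^\eta}$; the $1-\mathcal{P}_k^\eta(z)$ inside the fraction produces a clean numerator $\alpha^k(1-z)$, and after collecting terms over a common denominator the resulting expression matches the template with $k$ replaced by $k+1$. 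This is the one algebraic bookkeeping step that carries the proof and is where I expect the main obstacle to lie; the indices have to be shifted carefully so that $\sum_{j=1}^k\alpha^j\beta^{k-j}$ becomes $\sum_{j=1}^{k+1}\alpha^j\beta^{k+1-j}$ after multiplication by the right factor.

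Finally, setting $z=0$ collapses the denominator to the finite geometric sum $\sum_{j=0}^k\alpha^j\beta^{k-j}=(\beta^{k+1}-\alpha^{k+1})/(\beta-\alpha)$, giving $F_{\mathcal{G}^\eta}(k)=\mathcal{P}_k^\eta(0)=1-\alpha^k(\beta-\alpha)/(\beta^{k+1}-\alpha^{k+1})$. Taking the first difference $F_{\mathcal{G}^\eta}(k)-F_{\mathcal{G}^\eta}(k-1)$ then delivers the closed-form probability mass function claimed in the proposition, completing the argument.
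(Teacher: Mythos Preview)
Your proposal is correct and follows essentially the same approach as the paper: both identify $\mathcal{G}$ as the extinction time of a Galton--Watson process with the offspring laws from Proposition~\ref{offProp}, use the iterated generating-function recursion $\mathcal{P}_k=\mathcal{P}_X\circ\mathcal{P}_{k-1}$, prove the closed form for $\mathcal{P}_k^\eta$ by induction, and read off the distribution via $F_{\mathcal{G}}(k)=\mathcal{P}_k(0)$. Your presentation is in fact slightly cleaner, since you invoke the absorbing property directly to get $\{\mathcal{G}\le k\}=\{Y_k=0\}$, whereas the paper reaches the same telescoping identity through an intermediate conditioning argument.
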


In the following subsection we  focus on the ESEP, using the insight we have now gained from branching processes to connect this process to stochastic models for preferential attachment that are popular in the Bayesian nonparametric and machine learning literatures.

\subsection{Similarities with Preferential Attachment and Bayesian Statistics Models}

In the branching process perspective of the ESEP, consider the total number of active families at one point in time. That is, across all the entities present in the system at a given time, we are interested in the number of distinct progeny to which these entities belong. As each  arrival occurs, the new entity either belongs to one of the existing families, meaning that the entity is a descendant, or it forms a new family, which is to say that it is an immigrant. If the entity is joining an existing family, it is more likely to join families that have more presently active family members.

We can note that these dynamics are quite similar to the definition of the Chinese Restaurant Process (CRP), see Chapter 11 in \citet{aldous1985exchangeability}. The CRP models the successive arrival of customers to the restaurant that has infinitely many tables that each have infinitely many seats. Each arriving customer chooses which table to join based on the decisions of those before. Specifically, the $n^\text{th}$ customer to arrive joins table $i$ with probability $\frac{c_i}{n-1+\lambda}$ or otherwise starts a new table with probability $\frac{\lambda}{n-1+\lambda}$, where $c_i$ is the number at table $i$ and $\lambda > 0$. As the number seated at table $i$ grows larger, it is increasingly likely that the next customer will choose to sit at table $i$. In the ESEP, a new arrival at time $t \geq 0$ was generated as part of  active excitement family $i$ with probability $\frac{\alpha Q_{t,i}}{\alpha Q_t + \eta^*}$ and otherwise was an externally generated arrival with probability $\frac{\eta^*}{\alpha Q_t + \eta^*}$, where $Q_{t,i}$ is the number of active exciters in the system at time $t$ in the $i^\text{th}$ excitement family with $Q_t = \sum_{i} Q_{t,i}$. By normalizing the numerator and denominator of these probabilities by $\frac{1}\alpha$, we see that these dynamics match the CRP almost exactly. The difference is hardly a novel idea for restaurants -- in the ESEP diners eventually leave. This departure then decreases the number of customers at the table, making it less attractive to the next person to arrive.

In addition to being an intriguing stochastic model, the CRP is also of interest for Bayesian statistics and machine learning through its connection to Bayesian nonparametric mixture models, specifically Dirichlet process mixtures. By consequence, the CRP then also has commonality with urn models and models for preferential attachment, see e.g. \citet{blackwell1973ferguson}. The CRP is also established enough to have its own generalizations, such as the distance dependent CRP in \citet{blei2011distance}, in which the probability a customer joins a table is dependent on a distance metric, and the recurrent CRP in \citet{ahmed2008dynamic}, in which the restaurant closes at the end of each day forcing all of that day's customers to simultaneously depart. Drawing inspiration from the CRP and from the branching process perspectives of the ESEP, we investigate the distribution of the number of active families in the ESEP. Equivalently stated, this is the number of active tables in a continuous time CRP in which customers leave after their exponentially distributed meal durations. To begin, we first find the expected amount of time until a newly formed table becomes empty.



\begin{proposition}\label{familyDuration}
Suppose that an ESEP receives an initial arrival at time 0. Let $X_t$ be the number of entities in the system at time $t \geq 0$ that are progeny of the initial arrival and let $\tau$ be a stopping time such that $\tau = \inf \{t \geq 0 \mid X_t = 0\}$. Then, the expected value of $\tau$ is
\begin{align}
\E{\tau}
=
\frac{1}{\alpha}\log\left(\frac{\beta}{\beta - \alpha}\right)
,
\end{align}
where $\alpha > 0$ is the intensity jump size and $\beta > \alpha$ is the expiration rate.
\end{proposition}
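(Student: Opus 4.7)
The plan is to recognize $X_t$ as a classical linear birth--death process that evolves autonomously inside the ESEP, read off its transient extinction distribution, and then compute $\E{\tau}$ by integrating the survival function.

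First I would argue that $X_t$ is a continuous-time Markov chain on $\mathbb{N}$ started from $X_0 = 1$, with transitions $k \to k+1$ at rate $\alpha k$ and $k \to k-1$ at rate $\beta k$. In the immigration--birth view of the ESEP introduced at the start of Section~\ref{secRelate}, each of the $X_t$ currently active progeny of the original arrival is a single exciter contributing $\alpha$ to the overall intensity, and the arrivals attributable to that exciter are its direct descendants in the branching representation; each such descendant joins $X_t$ at individual rate $\alpha$ and independently expires at individual rate $\beta$. Immigrants and members of unrelated excitement families do not feed into $X_t$, so the subfamily evolves in isolation as a subcritical ($\beta > \alpha$) Markovian linear birth--death process.

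Next I would write down the transient distribution at $z = 0$. Specializing the probability generating function from the proof of Proposition~\ref{AQHtransmgf} to the case $\eta^* = 0$, $Q_0 = 1$ (or equivalently solving the Kolmogorov forward equation for $\mathcal{P}(z,t) = \E{z^{X_t}}$ with initial datum $\mathcal{P}(z,0) = z$) yields
\begin{equation*}
\PP{\tau \leq t} = \PP{X_t = 0} = \frac{\beta\bigl(1 - e^{-(\beta - \alpha)t}\bigr)}{\beta - \alpha e^{-(\beta - \alpha)t}},
\end{equation*}
which equals $0$ at $t = 0$ and tends to $1$ as $t \to \infty$, confirming almost-sure extinction under the subcritical assumption and so the finiteness of $\E{\tau}$.

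Finally I would apply the tail identity $\E{\tau} = \int_0^\infty \PP{\tau > t}\, dt$. A direct simplification gives
\begin{equation*}
\PP{\tau > t} = \frac{(\beta - \alpha)\, e^{-(\beta - \alpha)t}}{\beta - \alpha e^{-(\beta - \alpha)t}},
\end{equation*}
and the substitution $u = e^{-(\beta - \alpha)t}$ converts the integral to $\int_0^1 (\beta - \alpha u)^{-1}\, du$, which evaluates to $\frac{1}{\alpha}\log\!\left(\frac{\beta}{\beta - \alpha}\right)$. The only substantive step is the first paragraph -- justifying that the count of progeny of a fixed initial arrival decouples from immigrants and other families to form a pure linear birth--death chain -- after which the derivation is a one-line change of variables.
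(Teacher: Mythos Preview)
Your proposal is correct and shares the paper's first step: recognizing that the subfamily $X_t$ evolves autonomously as a linear birth--death chain with rates $\alpha k$ up and $\beta k$ down, started at $1$ and absorbed at $0$. The difference lies in how $\E{\tau}$ is computed from there. The paper applies standard first-step analysis for absorption times in a birth--death chain, obtaining
\[
\E{\tau} = \sum_{i=1}^\infty \frac{1}{\alpha i}\left(\frac{\alpha}{\beta}\right)^i,
\]
which is recognized as the Mercator series for $\frac{1}{\alpha}\log\!\bigl(1/(1-\alpha/\beta)\bigr)$. You instead pull the closed-form extinction probability from the PGF already computed in Proposition~\ref{AQHtransmgf} (specialized to $\eta^*=0$, $Q_0=1$) and integrate the survival function via the tail formula $\E{\tau}=\int_0^\infty \PP{\tau>t}\,\mathrm{d}t$, landing on the same logarithm after a one-line substitution. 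The paper's route is self-contained and does not rely on the earlier transient PGF; yours reuses that machinery and avoids the first-step recursion entirely. Both are short and standard once the birth--death identification is made.
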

\begin{proof}
To observe this, we note that $X_t$ can be viewed as the state of an absorbing continuous time Markov chain on the non-negative integers. State 0 is the single absorbing state and in any other state $j$ the two possible transitions are to $j+1$ at rate $\alpha j$ and to $j-1$ at rate $\mu j$, as visualized below.
\begin{center}
\begin{tikzpicture}[scale=.825,line cap=round,line join=round,>={Stealth[width=1.5mm,length=1.5mm]},x=1.0cm,y=1.0cm]

 \node (zero) [draw, circle, minimum size=1cm] at (-2,1cm) {$0$};
 \node (one) [draw, circle, minimum size=1cm] at (0,1cm) {$1$};
 \node (two) [draw, circle, minimum size=1cm] at (2,1cm) {$2$};
 \node (three) [draw, circle, minimum size=1cm] at (4,1cm) {$3$};
  \node (four) [draw, circle, minimum size=1cm] at (6,1cm) {$4$};
 \node (dots)  [draw, circle, minimum size=1cm,color=white] at (8,1cm) {$\mathbf{\textcolor{black}{\dots}}$};

 \draw [->] (one) .. controls +(.25,1.25) and +(-.25,1.25).. node [midway, above] {$\alpha$} (two);
 \draw [->] (two) .. controls +(.25,1.25) and +(-.25,1.25) ..  node [midway, above] {$2\alpha$}  (three);
 \draw [->] (three) .. controls +(.25,1.25) and +(-.25,1.25) ..  node [midway, above] {$3 \alpha$}  (four);
  \draw [->] (four) .. controls +(.25,1.25) and +(-.25,1.25) ..  node [midway, above] {$4 \alpha$}  (dots);

 \draw [->] (dots) .. controls +(-.25,-1.25) and +(.25,-1.25) .. node [midway, below] {$5 \beta$} (four);
  \draw [->] (four) .. controls +(-.25,-1.25) and +(.25,-1.25) .. node [midway, below] {$4 \beta$} (three);
 \draw [->] (three) .. controls +(-.25,-1.25) and +(.25,-1.25) .. node [midway, below] {$3 \beta$} (two);
 \draw [->] (two) .. controls +(-.25,-1.25) and +(.25,-1.25) .. node [midway, below] {$2 \beta$} (one);
 \draw [->] (one) .. controls +(-.25,-1.25) and +(.25,-1.25) .. node [midway, below] {$\beta$} (zero);

\end{tikzpicture}
\end{center}
Then, $\tau$ is the time of absorption into state 0 when starting in state 1 and so $\E{\tau}$ can be calculated by standard first step analysis approaches, yielding
$$
\E{\tau}
=
\sum_{i=1}^\infty \frac{1}{\alpha i} \prod_{j=1}^{i} \frac{\alpha j}{\beta j} = \frac{1}{\alpha}\sum_{i=1}^\infty \frac{1}{i}\left(\frac{\alpha}{\beta}\right)^i = \frac{1}{\alpha}\log\left(\frac{1}{1-\frac{\alpha}{\beta}}\right),
$$
and this simplifies to the stated result.
\end{proof}

Proposition~\ref{familyDuration} gives the expectation of the total time of an excitement family is active in the system. Using this, in Proposition~\ref{activeProp} we now employ a classical queueing theory result to find the exact distribution of the number of active families simultaneously in the system  in steady-state.

\begin{proposition}\label{activeProp}
Let $B$ be the number of distinct excitement families that have progeny active in the system in steady-state of an ESEP with baseline intensity $\eta^* > 0$, intensity jump $\alpha > 0$, and expiration rate $\beta > \alpha$. Then, $B \sim \mathrm{Pois}\left(\frac{\eta^*}{\alpha}\log \left(\frac{\beta}{\beta - \alpha}\right)\right)$.
\begin{proof}
We first note that new excitement families are started when a baseline-generated arrival occurs, which follows a Poisson process with rate $\nu^*$. The duration excitement family's time in system then has mean given by Proposition~\ref{familyDuration}. Because there is no limitation on the number of possible families in the system at once, this is equivalent to an infinite server queue with Poisson process arrivals and generally distributed service, an $M/G/\infty$ queue in Kendall notation. This process is known to have Poisson distributed steady-state distribution, see e.g. \citet{eick1993physics}, with mean given by the product of the arrival rate and the mean service duration, which yields the stated form for $B$.
\end{proof}
\end{proposition}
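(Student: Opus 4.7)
The plan is to identify the number of active excitement families with the number of jobs in an $M/G/\infty$ queue, so that the classical Poisson limiting law of \citet{eick1993physics} applies directly. The arrivals to this queue should be the immigrant arrivals of the ESEP, i.e., those arrivals whose birth is attributable to the baseline rate $\eta^*$ rather than to the excitement of another arrival. By the standard immigration-birth decomposition reviewed at the start of Section~\ref{secRelate}, these immigrant arrivals form a homogeneous Poisson process with rate $\eta^*$, independently of all other sources of arrivals, and each such immigrant is the root of exactly one excitement family.

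Next, I would argue that the sequence of family lifetimes is i.i.d. and independent of the immigrant Poisson process. Given only its own root, the family evolves as the absorbing birth-death chain in the proof of Proposition~\ref{familyDuration}, since descendants are generated only by the family's own active members at rate $\alpha$ per member and leave at rate $\beta$ per member; these dynamics do not depend on any other family or on the immigration stream. Hence the lifetime distributions are i.i.d., with mean
\begin{equation*}
\E{\tau} \;=\; \tfrac{1}{\alpha}\log\!\left(\tfrac{\beta}{\beta-\alpha}\right)
\end{equation*}
from Proposition~\ref{familyDuration}, and the process counting how many families are simultaneously active is exactly an $M/G/\infty$ queue with arrival rate $\eta^*$ and service distribution equal to the family lifetime.

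Invoking the well-known result that an $M/G/\infty$ queue has a Poisson steady-state occupation distribution with mean equal to the product of arrival rate and mean service time (see \citet{eick1993physics}), we obtain
\begin{equation*}
B \;\sim\; \mathrm{Pois}\!\left(\eta^*\cdot \tfrac{1}{\alpha}\log\!\left(\tfrac{\beta}{\beta-\alpha}\right)\right),
\end{equation*}
which is the claimed distribution. The main obstacle is the independence step: I need to justify that distinct families truly evolve as independent copies of the birth-death chain in Proposition~\ref{familyDuration}, and in particular that the total intensity $\eta^* + \alpha Q_t$ can be cleanly decomposed so that the per-family transition rates do not see one another. This follows from the thinning/coloring property of the Poisson-driven construction of the ESEP, where each potential arrival is labeled by the parent whose excitement caused it (or labeled ``immigrant''), and each member's exponential clock depletes only its own family's count. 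Once that decomposition is in hand, stability ($\beta>\alpha$, which guarantees $\E{\tau}<\infty$) ensures the $M/G/\infty$ queue has a proper steady state and the conclusion follows.
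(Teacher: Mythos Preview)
Your proposal is correct and follows essentially the same approach as the paper: identify family initiations as a rate-$\eta^*$ Poisson immigration stream, take family lifetimes from Proposition~\ref{familyDuration}, and recognize the count of active families as an $M/G/\infty$ queue with Poisson steady state. Your write-up is in fact more careful than the paper's, as you explicitly flag and justify the independence of the family evolutions via thinning, which the paper leaves implicit.
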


An interesting consequence of the number of active families being Poisson distributed and the total number in system being negative binomially distributed is that it suggests that the number of simultaneously active family members is logarithmically distributed. We observe this via the known compound Poisson representation of the negative binomial distribution \cite{willmot1986mixed}. For $B \sim \mathrm{Pois}\left(\frac{\eta^*}{\alpha}\log \left(\frac{\beta}{\beta - \alpha}\right)\right)$, $Q \sim \mathrm{NegBin}\left(\frac{\alpha}{\beta}, \frac{\eta^*}{\alpha}\right)$, and $L_i \stackrel{\mathrm{iid}}{\sim} \mathrm{Log}\left(\frac{\alpha}{\beta} \right)$, then one can observe that
$$
Q \stackrel{D}{=} \sum_{i=1}^B L_i,
$$
where $\PP{L_1 = k} = \left(\frac{\alpha}{\beta}\right)^k\left(k \log\left(\frac{\beta}{\beta-\alpha}\right)\right)^{-1}$ for all $k \in \mathbb{Z}^+$. Thus, the idea that the number of active members of each family is logarithmically distributed follows from the fact that this is a sum of positive integer valued random variables, of which there are as many as there are active families, and this sum is equal to the total number in system.

\subsection{Connections to Epidemic Models}\label{epidemics}

As a final observation regarding the ESEP and its connections to other stochastic models, consider disease spread. As we discussed in the introduction to this paper, when a person becomes sick with a contagious disease she increases the rate of new infection through her contact with others. Furthermore when a person recovers from a disease such as the flu, she is no longer contagious and thus she no longer contributes to the rate of disease spread. While we have discussed in the introduction that this scenario has the hallmarks of self-excitement qualitatively, a classic model for studying this phenomenon is the Susceptible-Infected-Susceptible (SIS) process.

In the SIS model there is a finite population of $N \in \mathbb{Z}^+$ individuals. Each individual takes on one of two states, either infected or susceptible. Let $I_t$ be the number infected at time $t \geq 0$ and $S_t$ be the number susceptible. In the continuous time stochastic SIS model, each infected individual recovers after an exponentially distributed duration of the illness. Once a person recovers from the disease, she becomes susceptible again. Because there is a finite population, the rate of new infection depends on both the number infected and the number susceptible; a new person falls ill at a rate proportional to $ I_t\cdot \frac{S_t}{N}$. Because this CTMC would be absorbed into state $I_t = 0$, it is common to include an exogenous infection rate proportional to just $\frac{S_t}{N}$. We will refer to this model as the stochastic SIS with exogenous infections, and Figure~\ref{SISfig} shows rate diagram for the transitions from infected to susceptible and from susceptible to infected. For the sake of comparison, we set the exogenous infection rate as $\eta^*$, the epidemic infection rate as $\alpha$, and the recovery rate as $\beta$.

\begin{figure}[h]
\centering
\begin{tikzpicture}[scale=1,line cap=round,line join=round,>={Stealth[width=2.5mm,length=2.5mm]},x=1.0cm,y=1.0cm]

\draw  (-3.5,1.5) rectangle (-2.5,0.5);
\node at (-3,1) {$S_t$};
\draw  (-1,1.5) rectangle (0,0.5);
\node at (-0.5,1) {$I_t$};
\node (topS) at (-3,1.35) {};
\node (topI) at (-0.5,1.35) {};
\node (bottomI) at (-0.5,0.65) {};
\node (bottomS) at (-3,0.65) {};
 \draw [->] (topS) .. controls +(.35,1.25) and +(-.35,1.25) .. node [midway, above] {$\left(\eta^* + \alpha I_t \right)\frac{S_t}{N}$} (topI);
 \draw [->] (bottomI) .. controls +(-.35,-1.25) and +(.35,-1.25) .. node [midway, below] {$\beta I_t$} (bottomS);
\end{tikzpicture}
\caption{Stochastic SIS model with exogenous infections}\label{SISfig}
\end{figure}
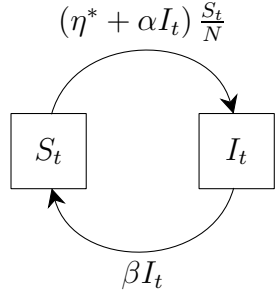

One can note that there are immediate similarities between this process and the ESEP. That is, new infections increase the infection rate while recoveries decrease it, and infections can be the result of either external or internal stimuli. However, the primary difference between these two models is that the SIS process has a finite population, whereas the ESEP does not. In Proposition~\ref{sisConvProp} we find that as this population size grows large the difference between these models fades, yielding that the distribution of the number infected in the exogenously driven SIS model converges to the distribution of the queue length in the ESEP.

\begin{proposition}\label{sisConvProp}
Let $I_t$ be the number of infected individuals at time $t \geq 0$ in an exogenously driven stochastic SIS model with population size $N \in \mathbb{Z}^+$, exogenous infection rate $\eta^* > 0$, epidemic infection rate $\alpha > 0$, and recovery rate $\beta > 0$. Then, as $N \to \infty$
$$
I_t
\stackrel{D}{\Longrightarrow}
Q_t
,
$$
where $Q_t$ is the active number in system at time $t$ for an ESEP with baseline intensity $\eta^*$, intensity jump $\alpha$, and expiration rate $\beta$.
\begin{proof}
Because the SIS model is a Markov process, one can use the infinitesimal generator approach to find a time derivative for the moment generating function of the number of infected individuals at time $t \geq 0$. Thus, by noting that $S_t = N - I_t$ we have that
\begin{align*}
\frac{\mathrm{d}}{\mathrm{d}t}
\E{e^{\theta I_t}}
&
=
\E{
\frac{\alpha I_t S_t}{N} \left( e^{\theta}-1\right) e^{\theta I_t}
+
\beta I \left(e^{-\theta} - 1 \right) e^{\theta I_t}
+
\frac{\eta^* S_t}{N} \left(e^{\theta} - 1\right) e^{\theta I_t}
}
\\
&
=
\E{
\frac{\alpha I_t (N - I_t)}{N} \left( e^{\theta}-1\right) e^{\theta I_t}
}
+
\E{
\beta I_t \left(e^{-\theta} - 1 \right) e^{\theta I_t}
}
+
\E{
\frac{\eta^* (N- I_t)}{N} \left(e^{\theta} - 1\right) e^{\theta I_t}
}
,
\end{align*}
which we can re-express in partial differential equation form as
\begin{align*}
\frac{\partial \E{e^{\theta I_t}}}{\partial t}
&
=
\left(
\alpha   \left( e^{\theta}-1\right)
+
\beta \left(e^{-\theta} - 1 \right)
-
\frac{\eta^* }{N} \left(e^{\theta} - 1\right)
\right)
\frac{\partial \E{e^{\theta I_t}}}{\partial \theta}
-
\frac{\alpha }{N} \left( e^{\theta}-1\right)
\frac{\partial^2 \E{e^{\theta I_t}}}{\partial \theta^2}
\\
&
\quad
+
\eta^*  \left(e^{\theta} - 1\right)
\E{
e^{\theta I_t}
}
.
\end{align*}
Now as the population size $N \to \infty$, this converges to
\begin{align*}
\frac{\partial \E{e^{\theta I_t}}}{\partial t}
&
=
\left(
\alpha   \left( e^{\theta}-1\right)
+
\beta \left(e^{-\theta} - 1 \right)
\right)
\frac{\partial \E{e^{\theta I_t}}}{\partial \theta}
+
\eta^*  \left(e^{\theta} - 1\right)
\E{
e^{\theta I_t}
}
,
\end{align*}
which we can recognize as the partial differential equation for the moment generating function of the ESEP through its own infinitesimal generator.
\end{proof}
\end{proposition}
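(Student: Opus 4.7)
The plan is to prove convergence in distribution via the infinitesimal generator method, matching the PDE satisfied by the moment generating function of $I_t^{(N)}$ (in the limit $N \to \infty$) with the PDE satisfied by the MGF of $Q_t$. Both processes are continuous-time Markov chains on $\mathbb{Z}_+$, so their one-dimensional marginals are fully determined by such PDEs together with initial conditions; convergence of MGFs on a neighborhood of $\theta = 0$ then yields the stated weak convergence by the standard continuity theorem.

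Concretely, I would first apply the SIS generator to $f(i) = e^{\theta i}$ and take expectations, using $S_t = N - I_t$, to obtain an evolution equation for $M_t^{(N)}(\theta) := \E{e^{\theta I_t^{(N)}}}$. The three transition contributions (endogenous infections, recoveries, exogenous infections) produce terms involving $\E{I_t e^{\theta I_t}}$ and $\E{I_t^2 e^{\theta I_t}}$, which I would rewrite as $\partial_\theta M_t^{(N)}$ and $\partial_\theta^2 M_t^{(N)}$ respectively. The endogenous infection term $\alpha I_t(N - I_t)/N$ produces an $O(1)$ contribution $\alpha(e^\theta - 1)\partial_\theta M_t^{(N)}$ plus an $O(1/N)$ correction $-\tfrac{\alpha}{N}(e^\theta - 1)\partial_\theta^2 M_t^{(N)}$, and the exogenous infection term splits analogously. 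Next, I would derive the PDE satisfied by $\E{e^{\theta Q_t}}$ directly from the ESEP generator $\mathcal{L}g(q) = (\eta^* + \alpha q)(g(q+1) - g(q)) + \beta q(g(q-1) - g(q))$ applied to $g(q) = e^{\theta q}$, and observe that it coincides exactly with the formal $N \to \infty$ limit of the SIS PDE. Uniqueness of solutions, together with matched initial conditions $I_0^{(N)} = Q_0$, then identifies the limit as $\E{e^{\theta Q_t}}$.

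The hard part is making the limiting passage rigorous rather than merely formal. Two points demand care: (i) establishing that $M_t^{(N)}$ and its first two $\theta$-derivatives are uniformly bounded in $N$ on compact $(t, \theta)$ sets, so that the residual $O(1/N)$ terms genuinely vanish; and (ii) ensuring that the MGF exists uniformly in $N$ for $\theta$ in some fixed neighborhood of the origin. Both can be handled by a Lyapunov argument: $I_t^{(N)}$ is stochastically dominated by the ESEP itself (since $(N - I_t)/N \leq 1$ bounds the infection acceleration), whose MGF is known in closed form from Proposition~\ref{AQHtransmgf} and has a positive radius of convergence whenever $\beta > \alpha$. Alternatively, one can sidestep the PDE route and invoke a Trotter--Kurtz generator convergence theorem applied to bounded functions on $\mathbb{Z}_+$, combined with tightness of $\{I_t^{(N)}\}_N$ derived from the same stochastic domination; the pointwise generator convergence $\mathcal{L}_N f(i) \to \mathcal{L}_\infty f(i)$ is immediate from $(N-i)/N \to 1$ for each fixed $i$.
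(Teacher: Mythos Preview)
Your proposal is correct and follows essentially the same route as the paper: apply the generator to $e^{\theta i}$, rewrite the resulting evolution equation for the MGF as a PDE in $(t,\theta)$, and observe that the $O(1/N)$ terms vanish so that the limiting PDE coincides with that of the ESEP. The paper's proof stops at the formal PDE limit and does not justify why this implies convergence in distribution; your additional discussion of uniform bounds via stochastic domination by the ESEP (and the Trotter--Kurtz alternative) goes beyond what the paper provides and is precisely what would be needed to make the argument fully rigorous.
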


 As a demonstration of this convergence, we plot the empirical steady-state distribution of the SIS process for increasing population size below in Figure~\ref{sisconv}. Note that in this example the distributions appear fairly close for populations of size $N = 1,000$. On the scale of the populations of cities or even some larger high schools, this is quite small. At a medium university size of $N = 10,000$, the distributions are essentially indistinguishable.

 \begin{figure}[h]
\begin{center}	
\includegraphics[width=.8\textwidth]{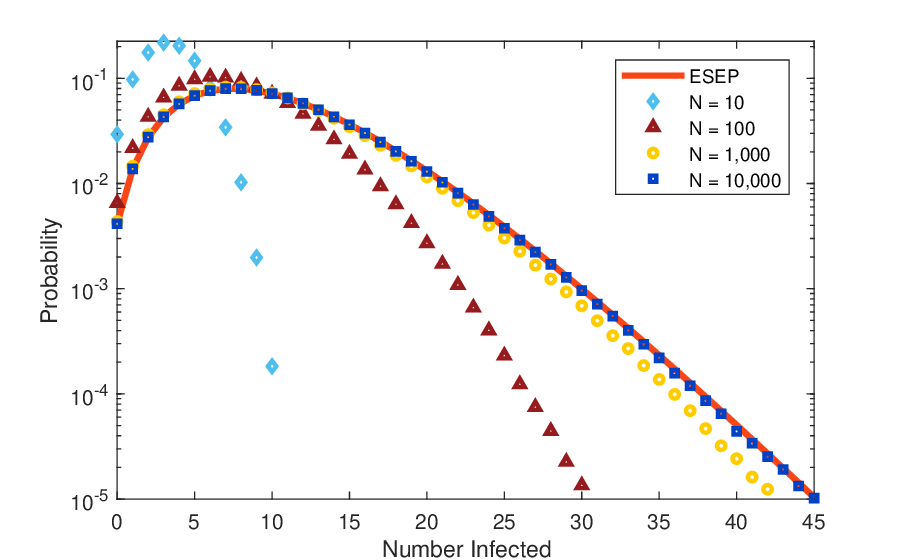}
\caption{Steady-state distribution of the number infected in the exogenously driven SIS model for increasing population size $N$, where $\eta^* = 10$, $\alpha = 2$, and $\beta = 3$.} \label{sisconv}
\end{center}
\end{figure}

 We would be remiss if we did not note that connections from epidemic models to birth-death processes are not new. For example, \citet{ball1983threshold} demonstrated that epidemic models converge to birth-death processes, and \citet{singh2014outbreak} even noted that the exogenously driven Susceptible-Infected-Recovered (SIR) model -- that is, people cannot become re-infected -- converges to a linear birth-death-immigration process; however, these works did not outright form connections to self-exciting processes. In \citet{rizoiu2018sir}, the similarities between the Hawkes process and the SIR process are shown and formal connections are made, although this is through a generalization of the Hawkes process defined on a finite population rather than through increasing the epidemic model population size. Regardless, the topics considered in these prior works serve to expand the practical relevance of the ESEP, as they note that these epidemic models are also of use outside of public health. For example, the contagious nature of these models has also been used to study topics like product adoption, idea spread, and social influence. These all also naturally relate to the concept of self-excitement, and in Proposition~\ref{sisConvProp} we observe that this connection can be formalized. 
 
 These connections also take on an added importance in the contemporaneous context of the COVID-19 public health crisis. It is worth noting that the convergence of distributions in Proposition~\ref{sisConvProp} does not require our commonly assumed stability condition $\beta > \alpha$. Thus, this convergence also covers potentially pandemic scenarios in which $\alpha \geq \beta$. In such settings, one can quickly observe that the mean number infected by time $t$ is such that $\E{N_t} \in O(e^{(\alpha - \beta)t})$, reproducing the exponential growth exhibited by this novel coronavirus in these early stages. The ESEP arrival process then captures the times of new infections, some portion of which may then be used as the arrival process to a queueing model for the cases that require hospitalization. 

\section{Constructing Eternal Self-Excitement from Ephemeral Self-Excitement}\label{secGESEP}

Thus far we have exclusively considered a Markovian model for ephemeral self-excitement. However, just as the Hawkes process need not be Markovian, we do not have to restrict ourselves to settings in which the Markov property exists. Recall from Subsection~\ref{subsecHawkes} that the general definition from \citet{hawkes1971spectra} described the intensity as
$$
\lambda_t
= 
\lambda^* + \int_{-\infty}^t g(t-u)\mathrm{d}N_{u,\lambda}
=
\lambda^* 
+
\sum_{i=1}^{N_t} g(t - A_i)
,
$$
where $g: \mathbb{R}^+ \to \mathbb{R}^+$ and $\int_0^\infty g(x) \mathrm{d}x < 1$ for stability. One could also consider a marked Hawkes process that draws jump sizes from a sequence of i.i.d.~positive random variables $\{M_i \mid i \in \mathbb{Z}^+\}$, in which case the summation form of intensity would instead be expressed
$$
\lambda_t
=
\lambda^* 
+
\sum_{i=1}^{N_t} M_i g(t - A_i)
.
$$
The ESEP model has provided us a natural comparison for the popular Markovian case where $g(x) = \alpha e^{-\beta x}$ (with no marks), but let us now consider more general excitation kernels and jump sizes. To do so, we will make two main generalizations while preserving the other key elements of the ESEP, such as the affine relationship between the intensity and the number of active exciters. First, we will change the activity duration to be a general distribution, mimicking the general excitation kernel. Secondly, we also change from solitary arrivals to batch arrivals, meaning groups of events that occur simultaneously at each arrival epoch. The size of these batches may be drawn from an i.i.d.~sequence of positive integer random variables, so we will use the parameter $n$ to represent the relative size of this batch through the mean of the batch size distribution. This leads us to the \textit{$n^\text{th}$ general ephemerally self-exciting process} ($n$-GESEP), defined now in Definition~\ref{gesepDef}.

\begin{definition}[$n^\text{th}$  general ephemerally self-exciting process]\label{gesepDef}
For times $t \geq 0$, a baseline intensity $\eta^* > 0$, cumulative distribution function $G:\mathbb{R}^+ \to [0,1]$, i.i.d.~sequence of positive random variables $\{B_i \mid i \in \mathbb{Z}^+\}$, and $n \in \mathbb{Z}^+$ such that $\E{B_i} \in O(n)$, let $N_t(n)$ be a  counting process for the arrival epochs occurring according to the stochastic intensity $\eta_t(n)$, defined such that
\begin{align}
\eta_t(n) = \eta^* + \frac{\alpha}{n} Q_t(n)
,
\end{align}
where $Q_t(n)$ is incremented by $B_i(n)$ at the $i^\text{th}$ arrival epoch in $N_t(n)$ and then is depleted at unit down-jumps at the expiration of each individual arrival's activity duration, which are i.i.d.~draws from the distribution $G(\cdot)$ across all batches and all epochs. Then, we say that $(\eta_t(n), N_t(n))$ is the \textit{$n^\text{th}$ general ephemerally self-exciting process} ($n$-GESEP).
\end{definition}

It is important to note that in this definition, a batch of size $n$ occurring at the current time would increase the present arrival rate by $\alpha$. However, each of these $n$ activity durations are mutually independent. Thus, despite the common arrival epoch, each expiration should cause an instantaneous decrease of just ${\alpha}\slash{n}$ if the activity duration distribution is continuous. It is also worth noting that this $n$-GESEP model encapsulates the simple generalization of the ESEP with general service durations, as this is given by $(\eta_t(1), N_t(1))$ with $\PP{B_1(1) = 1}$. Like how the ESEP could be viewed as a Markovian infinite server queue with a state dependent arrival rate, the $n$-GESEP can seen as an $M^B/G/\infty$ queue with a state dependent arrival rate. This perspective implies the existence and uniqueness of Definition~\ref{gesepDef}.

Just as it can often be beneficial to think of the length of an infinite server queue as a sum over all customers that remain in service, it will be quite useful for us to think about the number active within the $n$-GESEP as the sum of all arrivals that have not yet expired. For $A_i$ as the $i^\text{th}$ arrival epoch and $S_{i,j}$ as the $j^\text{th}$ activity duration within the $i^\text{th}$ batch, this can be expressed
$$
Q_t(n) = \sum_{i=1}^{N_t(n)} \sum_{j=1}^{B_i} \mathbf{1}\{t < A_i + S_{i,j}\}
,
$$
or equivalently for the intensity,
$$
\eta_t = \eta^* + \frac{\alpha}{n} \sum_{i=1}^{N_t(n)} \sum_{j=1}^{B_i} \mathbf\{t < A_i + S_{i,j}\}
.
$$
A nice consequence of this representation is that the ephemerality is at the forefront. Because the event within the indicator is that the current time $t$ is less than the sum of a given arrival time and activity duration, this indicator counts whether that particular excitement is currently active. Thus,  this indicator will switch from 1 to 0 when the present time passes a given expiration time, causing the intensity to drop by $\alpha \slash n$. From this point forward, the excitement brought by this particular arrival no longer has a direct effect on the system.

We will now use the $n$-GESEP to establish a main result of this work, in which we connect this general form of ephemeral self-excitement to general forms of the traditional, eternal notion of self-excitement. In Theorem~\ref{hawkconv}, we prove a scaling limit that incorporates random batch distributions and general service to construct marked, general decay Hawkes processes. We refer to this limit as a ``batch scaling,'' as we are letting the relative batch size $n$ grow large, which simultaneously shrinks the size of the excitement generated by each individual entity within a batch of arrivals.  Thus, while the effect of one individual exciter shrinks, the collective effect of a batch arrival remains fixed. In the limit, this provides an alternate construction of the Hawkes process.

\begin{theorem}\label{hawkconv}
For $t \geq 0$ and $n \in \mathbb{Z}^+$, let $(\eta_t(n), N_t(n))$ be a $n$-GESEP with baseline intensity $\eta^* > 0$, intensity jump size $\alpha > 0$, activity duration CDF $G(\cdot)$, and i.i.d. batch size sequence of non-negative, discrete random variables $\{B_i \mid i \in \mathbb{Z}^+\}$. Additionally, let $\eta_0(n) = \eta^*$, i.e.~$Q_0(n) = 0$. Then, for all $t \geq 0$, this $n$-GESEP process is such that
\begin{align}
 \eta_t(n)
\stackrel{D}{\Longrightarrow}
\lambda_t
\text{ and }
N_t(n)
\stackrel{D}{\Longrightarrow}
N_{t,\lambda}
\end{align}
as $n \to \infty$, where $(\lambda_t, N_{t,\lambda})$ is the general Hawkes process intensity and counting process pair such that
\begin{align}
\lambda_t
=
\eta^*
+
\sum_{i=1}^{N_{t,\lambda}}
M_i
\bar G(t-A_i)
,
\label{Hawkesdef}
\end{align}
where $\{A_i \mid i \in \mathbb{Z}^+\}$ are the Hawkes process arrival epochs, $\bar G(x) = 1 - G(x)$ for all $x \geq 0$, and $\{M_i \mid i \in \mathbb{Z}^+\}$ is an i.i.d.~sequence of positive real random variables such that ${\alpha B_1}\slash{n} \stackrel{D}{\Longrightarrow} M_1$ and ${B_1}\slash{n^2} \stackrel{p}{\longrightarrow} 0$.
\begin{proof}
We will organize the proof into two parts. Each part is oriented around the process arrival times, as these fully determine the sample path of the Hawkes process. We will first show through induction that the distributions of inter-arrival times converge. Then, we will demonstrate that given the same arrival times, the dynamics of the processes converge.

To begin, let $A_i^\eta$ for $i \in \mathbb{Z}^+$ be the time of the $i^\text{th}$ arrival in the $n$-GESEP and similarly let $A^\lambda_i$ be the $i^\text{th}$ arrival time for the Hawkes process. We start with the base case: for the time of the first arrival, we can note that for all $n$-GESEP models,
$$
\PP{A_1^\eta > x} = e^{-\eta^* x},
$$
as $Q_0(n) = 0$ and thus the first arrival is driven by the external baseline rate. Likewise for the Hawkes process, since Equation~\ref{Hawkesdef} implies that  $\lambda_t = \nu^*$ for $0 \leq t < A_1^\lambda$, we can see that
$$
\PP{A_1^\lambda > x} = e^{-\eta^* x},
$$
and thus $\PP{A_1^\lambda > x} = \PP{A_1^\eta > x}$. As an inductive hypothesis, we now assume that $\{A_1^\eta, \dots, A_k^\eta\}$ converge in joint and marginal distributions to $\{A_1^\lambda, \dots, A_k^\lambda\}$ where $k \in \mathbb{Z}^+$. Now, for the Hawkes process we can observe that
\begin{align*}
\mathrm{P}_k\left(A_{k+1}^\lambda - A_{k}^\lambda > x\right)
&:=
\PP{A_{k+1}^\lambda - A_{k}^\lambda > x \mid \{A_1^\lambda, \dots, A_k^\lambda\} }
=
\mathrm{E}_k\left[ e^{-\int_{0}^{x} \lambda_{A_k^\lambda + t} \mathrm{d}t} \right],
\end{align*}
because when conditioned on the arrival history, the Hawkes process behaves like an imhogoneous Poisson process until the next arrival occurs. Using Equation~\ref{Hawkesdef}, we can express this as
\begin{align*}
\mathrm{E}_k\left[ e^{-\int_{0}^{x} \lambda_{A_k^\lambda + t} \mathrm{d}t} \right]
&=
e^{-\eta^* x} \mathrm{E}_k\left[ e^{-\int_{0}^{x} \sum_{i=1}^k M_i \bar G \left(A_k^\lambda - A_i^\lambda + t\right) \mathrm{d}t} \right]
=
e^{-\eta^* x} \prod_{i=1}^k \mathrm{E}_k\left[ e^{-  M_i \int_{0}^{x} \bar G \left(A_k^\lambda - A_i^\lambda + t\right) \mathrm{d}t} \right]
.
\end{align*}
Turning to the ESEP epochs, we define $N_{i,j}^\eta\left( (t, t+s]\right)$ as the number of arrivals on the time interval $(t, t+s]$ that are generated by the excitement caused by the $j^\text{th}$ entity within the $i^\text{th}$ batch. Furthermore, let $N_{*}^\eta\left( (t, t+s]\right)$ be the number of arrivals on $(t, t+s]$ that are generated by the external, baseline rate $\eta^*$. Then, using this notation we have that
\begin{align*}
\mathrm{P}_k\left(A_{k+1}^\eta - A_{k}^\eta > x\right)
&:=
\PP{A_{k+1}^\eta - A_{k}^\eta > x \mid \{A_1^\eta, \dots, A_k^\eta\} }
\\
&
=
\mathrm{P}_k\left(    \bigcap_{i=1}^k \bigcap_{j=1}^{B_i} \left\{N_{i,j}^\eta\left( (A_k^\eta, A_{k}^\eta+x]\right) = 0\right\} \cap  \left\{N_{*}^\eta\left( (A_k^\eta, A_{k}^\eta+x]\right) = 0\right\} \right)
\\
&
=
\mathrm{E}_k\left[    \prod_{i=1}^k \prod_{j=1}^{B_i} \mathbf{1}\left\{N_{i,j}^\eta\left( (A_k^\eta, A_{k}^\eta+x]\right) = 0\right\}   \mathbf{1}\left\{N_{*}^\eta\left( (A_k^\eta, A_{k}^\eta+x]\right) = 0\right\} \right]
.
\end{align*}
From the independence of each of these arrival processes, we can move the probability for no arrivals in the external arrival process and the product over $i$ outside of the expectation to receive
\begin{align*}
&\mathrm{E}_k\left[    \prod_{i=1}^k \prod_{j=1}^{B_i} \mathbf{1}\left\{N_{i,j}^\eta\left( (A_k^\eta, A_{k}^\eta+x]\right) = 0\right\}   \mathbf{1}\left\{N_{*}^\eta\left( (A_k^\eta, A_{k}^\eta+x]\right) = 0\right\} \right]
\\
&
\quad
=
e^{-\eta^* x}\prod_{i=1}^k \mathrm{E}_k\left[    \prod_{j=1}^{B_i} \mathbf{1}\left\{N_{i,j}^\eta\left( (A_k^\eta, A_{k}^\eta+x]\right) = 0\right\}   \right]
.
\end{align*}
Consider an arbitrary entity, say the $j^\text{th}$ entity in the $i^\text{th}$ batch. Let $S_{i,j}$ be its service duration. If this entity has departed from the queue before $A_k^\eta$, then it cannot generate further arrivals and thus
$$
\mathrm{P}_k\left( N_{i,j}^\eta\left( (A_k^\eta, A_{k}^\eta+x]\right) = 0 \mid S_{i,j} \leq A_k^\eta - A_i^\eta\right) = 1
.
$$
Likewise, if it does not depart until after $A_k^\eta + x$, then the probability that it generates an arrival on $(A_k^\eta, A_{k}^\eta+x]$ is
$$
\mathrm{P}_k\left( N_{i,j}^\eta\left( (A_k^\eta, A_{k}^\eta+x]\right) = 0 \mid S_{i,j} \geq A_k^\eta - A_i^\eta + x\right) = e^{-\frac{\alpha}{n}x}
.
$$
Finally, if the entity departs in the interval $(A_k^\eta, A_{k}^\eta+x]$, the probability it generates an arrival before departing is
$$
\mathrm{P}_k\left( N_{i,j}^\eta\left( (A_k^\eta, A_{k}^\eta+x]\right) = 0 \mid S_{i,j} = A_k^\eta - A_i^\eta + z\right) = e^{-\frac{\alpha}{n}z}
,
$$
where $0 < z < x$. Therefore through conditioning on each entity's service duration, we have that
\begin{align*}
&e^{-\eta^* x}\prod_{i=1}^k \mathrm{E}_k\left[    \prod_{j=1}^{B_i} \mathbf{1}\left\{N_{i,j}^\eta\left( (A_k^\eta, A_{k}^\eta+x]\right) = 0\right\}   \right]
\\
&
\quad
=
e^{-\eta^* x}\prod_{i=1}^k  \mathrm{E}_k\left[   \prod_{j=1}^{B_i} \left( G(A^\eta_k - A^\eta_i) + e^{-\frac{\alpha}{n}x}\bar{G}(A^\eta_k - A^\eta_i + x) + \int_0^x e^{-\frac{\alpha}{n}z}g(A_k^\eta - A_i^\eta +z)\mathrm{d}z\right)   \right],
\end{align*}
where $g(\cdot)$ is the density corresponding to $G(\cdot)$. Since the term inside the inner product does not depend on the specific entity within a batch but rather just the batch itself, we can evaluate this inside the expectation as
\begin{align*}
&e^{-\eta^* x}\prod_{i=1}^k  \mathrm{E}_k\left[   \prod_{j=1}^{B_i} \left( G(A^\eta_k - A^\eta_i) + e^{-\frac{\alpha}{n}x}\bar{G}(A^\eta_k - A^\eta_i + x) + \int_0^x e^{-\frac{\alpha}{n}z}g(A_k^\eta - A_i^\eta +z)\mathrm{d}z\right)   \right]
\\
&
\quad
=
e^{-\eta^* x}\prod_{i=1}^k  \mathrm{E}_k\left[    \left( G(A^\eta_k - A^\eta_i) + e^{-\frac{\alpha}{n}x}\bar{G}(A^\eta_k - A^\eta_i + x) + \int_0^x e^{-\frac{\alpha}{n}z}g(A_k^\eta - A_i^\eta +z)\mathrm{d}z\right)^{B_i}   \right].
\end{align*}
Since the base term of this exponent is deterministic, we will simplify it as follows. Using integration by parts on $\int_0^x e^{-\frac{\alpha}{n}z}g(A_k^\eta - A_i^\eta +z)\mathrm{d}z$ and expanding $\bar{G}(x) = 1 - G(x)$, this simplifies to
\begin{align*}
G(A^\eta_k - A^\eta_i) + e^{-\frac{\alpha}{n}x}\bar{G}(A^\eta_k - A^\eta_i + x) + \int_0^x e^{-\frac{\alpha}{n}z}g(A_k^\eta - A_i^\eta +z)\mathrm{d}z
&
=
e^{-\frac{\alpha}{n}x} + \frac{\alpha}{n}\int_0^x e^{-\frac{\alpha}{n}z}G(A_k^\eta - A_i^\eta +z)\mathrm{d}z
.
\end{align*}
If we express $e^{-\frac{\alpha}{n}x}$ in integral form via $e^{-\frac{\alpha}{n}x} = 1 - \frac{\alpha}{n}\int_0^x e^{-\frac{\alpha}{n}z}\mathrm{d}z$, we can further simplify this expression of the base to
\begin{align*}
e^{-\frac{\alpha}{n}x} + \frac{\alpha}{n}\int_0^x e^{-\frac{\alpha}{n}z}G(A_k^\eta - A_i^\eta +z)\mathrm{d}z
&
=
1 - \frac{\alpha}{n}\int_0^x e^{-\frac{\alpha}{n}z} \bar{G}(A_k^\eta - A_i^\eta + z) \mathrm{d}z
.
\end{align*}
This form makes it quick to observe that this base term is at most 1. Thus we are justified in taking the expectation of this term raised to $B_i$, since that is equivalent to the probability generating function of the batch size and this exists for all discrete random variables when evaluated on values less than or equal to 1 in absolute value. Returning to this expectation, we first note that for all $x$, rearranging the Taylor expansion of $e^x$ produces
\begin{align*}
1 + x
&
=
e^{x} - \sum_{j=2}^\infty \frac{x^j}{j!}
=
e^x\left(1 - e^{-x} \sum_{j=2}^\infty \frac{x^j}{j!}\right)
=
e^{x + \log\left(1 - e^{-x} \sum_{j=2}^\infty \frac{x^j}{j!}\right)}
.
\end{align*}
Thus we re-express the expectation in exponential function form as
\begin{align*}
&e^{-\eta^* x}\prod_{i=1}^k  \mathrm{E}_k\left[    \left( G(A^\eta_k - A^\eta_i) + e^{-\frac{\alpha}{n}x}\bar{G}(A^\eta_k - A^\eta_i) + \int_0^x e^{-\frac{\alpha}{n}z}g(A_k^\eta - A_i^\eta +z)\mathrm{d}z\right)^{B_i}   \right]
\\
&
\quad
=
e^{-\eta^* x}\prod_{i=1}^k  \mathrm{E}_k\left[    e^{- \frac{\alpha}{n}B_i\int_0^x e^{-\frac{\alpha}{n}z} \bar{G}(A_k^\eta - A_i^\eta + z) \mathrm{d}z + O\left(\frac{B_i}{n^2} \right)}  \right]
.
\end{align*}
Through use of a Taylor expansion on $e^{-\frac{\alpha}{n}z}$ and absorbing higher terms into the $O\left(\frac{B_i}{n^2} \right)$ notation, we can further simplify to
\begin{align*}
e^{-\eta^* x}\prod_{i=1}^k  \mathrm{E}_k\left[    e^{- \frac{\alpha}{n}B_i\int_0^x e^{-\frac{\alpha}{n}z} \bar{G}(A_k^\eta - A_i^\eta + z) \mathrm{d}z + O\left(\frac{B_i}{n^2} \right)}  \right]
&
=
e^{-\eta^* x}\prod_{i=1}^k  \mathrm{E}_k\left[    e^{- \frac{\alpha}{n}B_i\int_0^x  \bar{G}(A_k^\eta - A_i^\eta + z) \mathrm{d}z + O\left(\frac{B_i}{n^2} \right)}  \right]
.
\end{align*}
We can now take the limit as $n \to \infty$ and observe that
\begin{align*}
e^{-\eta^* x}\prod_{i=1}^k  \mathrm{E}_k\left[    e^{- \frac{\alpha}{n}B_i\int_0^x  \bar{G}(A_k^\eta - A_i^\eta + z) \mathrm{d}z + O\left(\frac{B_i}{n^2} \right)}  \right]
&
\longrightarrow
e^{-\eta^* x}\prod_{i=1}^k  \mathrm{E}_k\left[    e^{- M_i \int_0^x  \bar{G}(A_k^\eta - A_i^\eta + z) \mathrm{d}z }  \right]
,
\end{align*}
as we have that $\frac{\alpha}{n}B_1 \stackrel{D}{\Longrightarrow} M_1$ and $\frac{B_i}{n^2} \stackrel{D}{\Longrightarrow} 0$. This is now equal to the Hawkes process inter-arrival probability $\mathrm{P}_k\left(A_{k+1}^\lambda - A_{k}^\lambda > x\right)$. Hence by induction and total probability the arrival times converge, completing the first part of the proof.

For the second part of the proof, we now show that the dynamics of the processes converge when we condition on having the same fixed arrival times, which we now denote $\{A_i \mid i \in \mathbb{Z}^+\}$ for both processes. Since $N_t(n)$ is defined as the counting process of arrival epcochs rather than total number of arrivals, $N_t(n) = N_{t,\lambda}$ for all $n$ and all $t$. We now treat the intensity in two cases, the jump at arrivals and the dynamics between these times. For the first case, we take $k \in \mathbb{Z}^+$ and let $\lambda_{A_{k^-}} = \inf_{A_{k-1} \leq t < A_k} \lambda_t$ and $\eta_{A_{k^-}}(n) = \inf_{A_{k-1} \leq t < A_k} \eta_t(n)$ for all $n$, where $A_0 = 0$. Then, the jump in the $n$-GESEP intensity at the $k^\text{th}$ jump is such that
\begin{align*}
\eta_{A_k}(n) - \eta_{A_{k^-}}(n)
&
=
\frac{\alpha}{n}B_k
\stackrel{D}{\Longrightarrow}
M_k
=
\lambda_{A_k} - \lambda_{A_{k^-}}
,
\end{align*}
as $n \to \infty$. For the behavior between arrival times we first note that for $S_j$ independent and distributed with CDF $G(\cdot)$ for all $j \in \mathbb{Z}^+$,  the probability generating function of $\frac{1}{n}\sum_{j=1}^{B_1}\mathbf{1}\{y < S_j\}$ is
\begin{align*}
\E{z^{\frac{1}{n}\sum_{j=1}^{B_1}\mathbf{1}\{y < S_j\}}}
&
=
\E{\left(G(y) + \bar{G}(y) z^{\frac{1}{n}} \right)^{B_1}}
=
\E{\left(1 - \bar{G}(y) \left(1 - e^{\frac{1}{n}\log{z}}\right) \right)^{B_1}}
,
\end{align*}
and by a  Taylor expansion approach similar to what we used in the proof's first part, we can see that
\begin{align*}
\E{\left(1 - \bar{G}(y) \left(1 - e^{\frac{1}{n}\log{z}}\right) \right)^{B_1}}
&
=
\E{e^{- B_1\bar{G}(y) \left(1 - e^{\frac{1}{n}\log{z}}\right) + O\left(\frac{B_1}{n^2}\right)}}
=
\E{e^{\frac{1}{n}B_1\bar{G}(y) \log(z) + O\left(\frac{B_1}{n^2}\right)}}
.
\end{align*}
Taking the limit as $n \to \infty$, this yields
$
\E{z^{\frac{1}{n}\sum_{j=1}^{B_1}\mathbf{1}\{y < S_j\}}}
\longrightarrow
\E{z^{\bar{G}(y)M_1}}
$,
which is to say that
$$
\frac{1}{n}\sum_{j=1}^{B_1}\mathbf{1}\{y < S_j\} \stackrel{D}{\Longrightarrow} \bar{G}(y)M_1
.
$$
Using this, we can now see that for $k \in \mathbb{Z}^+$ and $0 \leq x < A_{k+1} - A_{k}$, the intensity of the $n$-GESEP satisfies
\begin{align*}
\eta_{A_k + x}(n)
&
=
\eta^* + \frac{\alpha}{n} \sum_{i=1}^k \sum_{j=1}^{B_i} \mathbf{1}\{A_k + x < A_i + S_{i,j} \}
\stackrel{D}{\Longrightarrow}
\eta^*
+
\sum_{i=1}^k M_i \bar{G}(A_k - A_i + x)
=
\lambda_{A_k + x}
,
\end{align*}
as $n \to \infty$. Thus, both the jump sizes of $\eta_t(n)$ and the behavior of $\eta_t(n)$ between jumps converge to that of $\lambda_t$, completing the proof.
\end{proof}
\end{theorem}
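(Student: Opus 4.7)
The plan is to reduce the theorem to two claims: (i) the sequence of arrival epochs $\{A_i^\eta\}$ converges in joint distribution to the Hawkes arrival epochs $\{A_i^\lambda\}$ as $n \to \infty$, and (ii) conditional on a common set of arrival times, the intensity process $\eta_t(n)$ converges in distribution to $\lambda_t$ (jumps and between-jump behavior). Together these characterize the law of the counting and intensity pair, since a simple point process is determined by its conditional intensity. I would structure (i) as an induction over arrival number, with the base case trivial because $Q_0(n)=0$ and the first inter-arrival in both processes is $\mathrm{Exp}(\eta^*)$.

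For the inductive step, condition on the history $\{A_1^\eta,\ldots,A_k^\eta\}$ and compute the conditional survival probability $\mathrm{P}_k(A_{k+1}^\eta - A_k^\eta > x)$. The key observation is that for each of the $B_i$ entities in the $i^\text{th}$ past batch, conditional on its activity duration $S_{i,j}$, the probability it generates no new arrival in $(A_k^\eta, A_k^\eta + x]$ is $1$ if it has already expired, $e^{-(\alpha/n)x}$ if it survives past $A_k^\eta + x$, and $e^{-(\alpha/n)z}$ if it expires at offset $z\in(0,x)$ from $A_k^\eta$. Integrating over $S_{i,j}$ with density $g$ and using independence across the $B_i$ entities in a batch yields a base term raised to the power $B_i$ that I can show simplifies (via integration by parts and rewriting $e^{-(\alpha/n)x}$ in integral form) to $1 - (\alpha/n)\int_0^x e^{-(\alpha/n)z}\bar G(A_k^\eta - A_i^\eta + z)\, dz$. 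Taking expectation over $B_i$ then amounts to evaluating the probability generating function of $B_i$ at that base, and for this I would invoke the identity $1+x = e^{x + \log(1 - e^{-x}\sum_{j\geq 2} x^j/j!)}$ to express $(1+x)^{B_i}$ in exponential form, matching the Hawkes conditional survival probability $e^{-\eta^* x} \prod_{i=1}^k \mathrm{E}_k[e^{-M_i \int_0^x \bar G(A_k^\lambda - A_i^\lambda + z) dz}]$ in the limit, using the joint hypotheses $\alpha B_1/n \Rightarrow M_1$ and $B_1/n^2 \to 0$.

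For part (ii) I would fix the common arrival sequence $\{A_i\}$ and handle jumps and the between-jump process separately. The $k^\text{th}$ jump of $\eta_t(n)$ is $(\alpha/n)B_k \Rightarrow M_k$ by hypothesis, matching the Hawkes jump. Between arrivals, I would write $\eta_{A_k + x}(n) = \eta^* + (\alpha/n)\sum_{i=1}^k \sum_{j=1}^{B_i}\mathbf{1}\{A_k + x < A_i + S_{i,j}\}$ and compute the probability generating function of $(1/n)\sum_{j=1}^{B_1}\mathbf{1}\{y < S_j\}$. Expanding the inner factor as $1 - \bar G(y)(1 - e^{(1/n)\log z})$ and raising to $B_1$ via the same exponential trick produces, after Taylor expanding $1-e^{(1/n)\log z}$, an expression of the form $\mathrm{E}[\exp((1/n) B_1 \bar G(y)\log z + O(B_1/n^2))]$. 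Under the two limit hypotheses this converges to $\mathrm{E}[z^{\bar G(y) M_1}]$, so the scaled sum converges in distribution to $\bar G(y) M_1$, which gives the required convergence $\eta_{A_k+x}(n) \Rightarrow \eta^* + \sum_{i=1}^k M_i \bar G(A_k - A_i + x) = \lambda_{A_k + x}$.

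The main obstacle, in my view, is the careful bookkeeping in part (i): showing that the cumbersome expectation over batch sizes of an integral involving $\bar G$ collapses, to leading order in $1/n$, to the clean Laplace-type functional that appears in the Hawkes conditional survival probability. The delicate point is controlling the Taylor-expansion remainder uniformly in $x$ on a bounded interval while only assuming $B_1/n^2 \stackrel{p}{\to} 0$ and the weak convergence $\alpha B_1/n \Rightarrow M_1$ — so I would need to verify that the $O(B_i/n^2)$ terms genuinely vanish in probability inside the exponential and that passage to the limit inside the expectation is justified (e.g.~by boundedness of the base term by $1$, which secures uniform integrability of the PGF at that argument).
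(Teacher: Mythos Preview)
Your proposal is correct and follows essentially the same approach as the paper's proof: the same two-part structure (inductive convergence of arrival epochs via conditional survival probabilities, then convergence of intensity dynamics conditional on common arrival times), the same case-by-case conditioning on each entity's activity duration, the same integration-by-parts simplification to $1 - (\alpha/n)\int_0^x e^{-(\alpha/n)z}\bar G(\cdot)\,dz$, the same exponential rewriting via $1+x = e^{x + \log(1 - e^{-x}\sum_{j\geq 2} x^j/j!)}$, and the same PGF argument for the between-jump behavior in part (ii). You have also correctly identified the delicate point the paper handles somewhat informally --- the uniform control of the $O(B_i/n^2)$ remainder and the justification for passing the limit inside the expectation via the base term being bounded by $1$.
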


For an empirical demonstrate of this convergence, in Figure~\ref{probComp} we plot cumulative distribution functions for the intensities and counts of the $n$-GESEP across increasing batch sizes and multiple different expiration distributions and compare them to the empirical distributions of the corresponding general Hawkes processes. Specifically, we consider three different choices of $\bar{G}(\cdot)$, all with a unit mean expiration time: a standard exponential $\bar G(x) = e^{-x}$, a two-phase Erlang distribution $\bar G(x) = (1+2x)e^{-2x}$, and a two-phase hyper-exponential $\bar G(x) = (e^{-2x} + e^{-2/3 x})/2$. Each experiment is conducted for deterministic batch sizes in the $n$-GESEP models and deterministic marks in the Hawkes processes. Similar performance was observed for other distributions such as the geometric (or exponential in limiting form) and are thus omitted for length. In each of the three distribution settings with batch size $n = 8$ in the $n$-GESEP model, the distribution of the intensity is quite close to that of the Hawkes intensity, as can be seen in the left-hand column. On the right, one can note that the count distributions are quite close in all the various settings, and again the performance is particularly strong by $n = 8$. The near-indistinguishable visible closeness of these count distributions lends added importance to the calculations in Proposition~\ref{countPMF}. Given these observations, it is of particular interest to study the convergence rates of these models in future work.

 \begin{figure}
\begin{center}	
\text{Exponential}
\includegraphics[width=.5\textwidth]{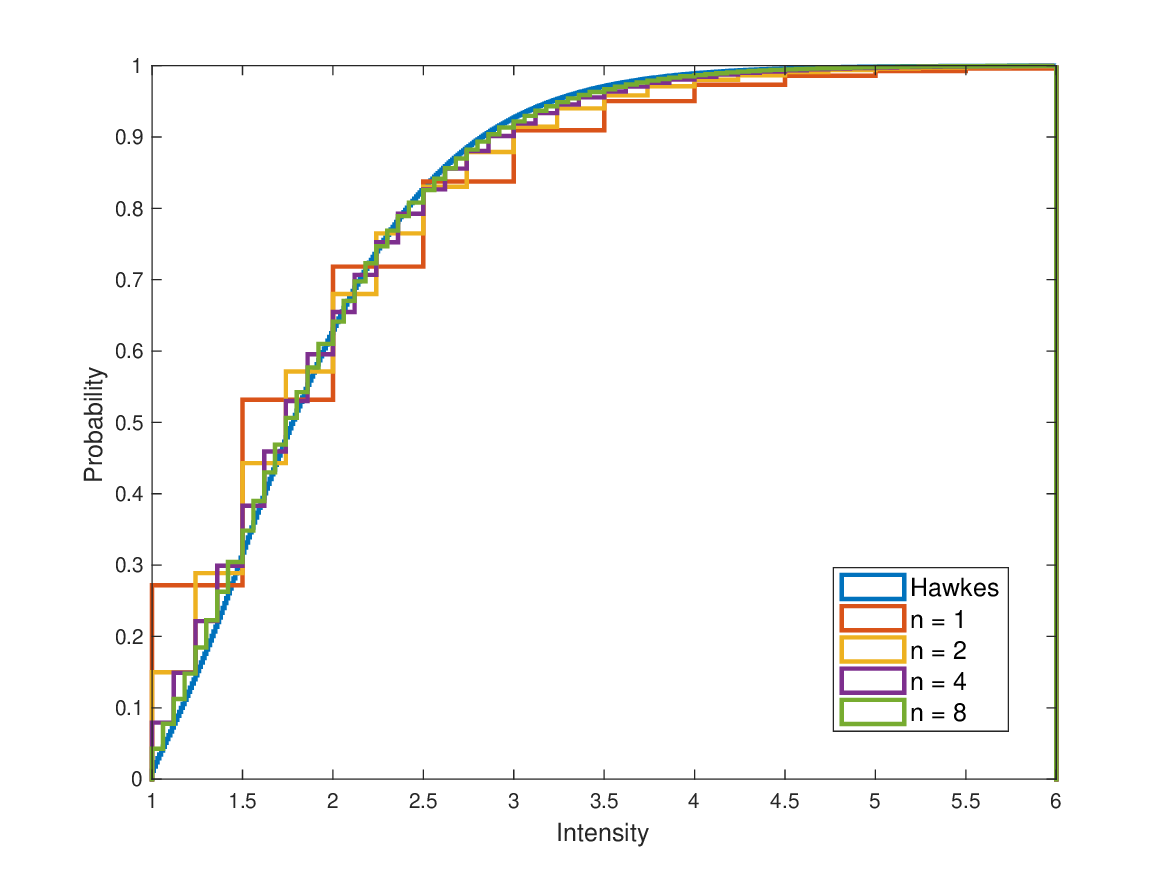}~\hspace{-.1in}~\includegraphics[width=.5\textwidth]{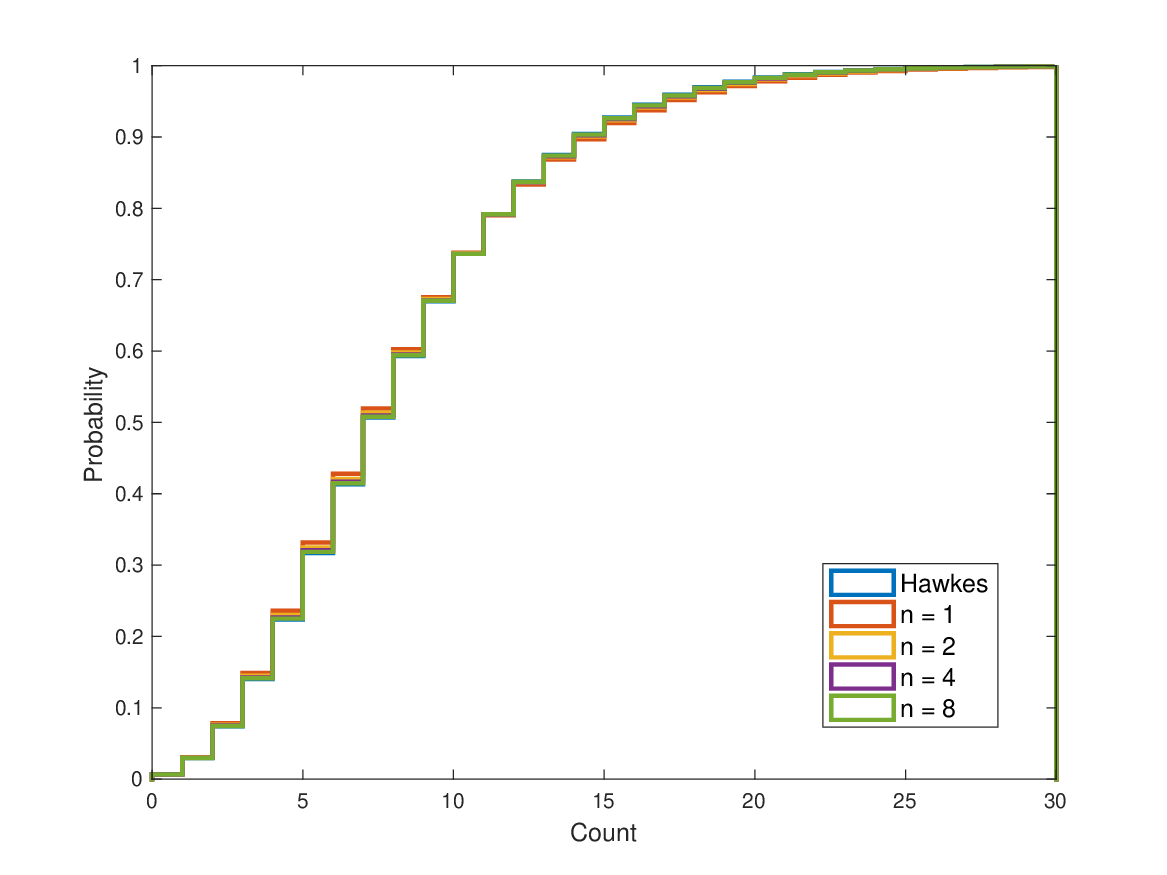}
\\
\text{Erlang}
\includegraphics[width=.5\textwidth]{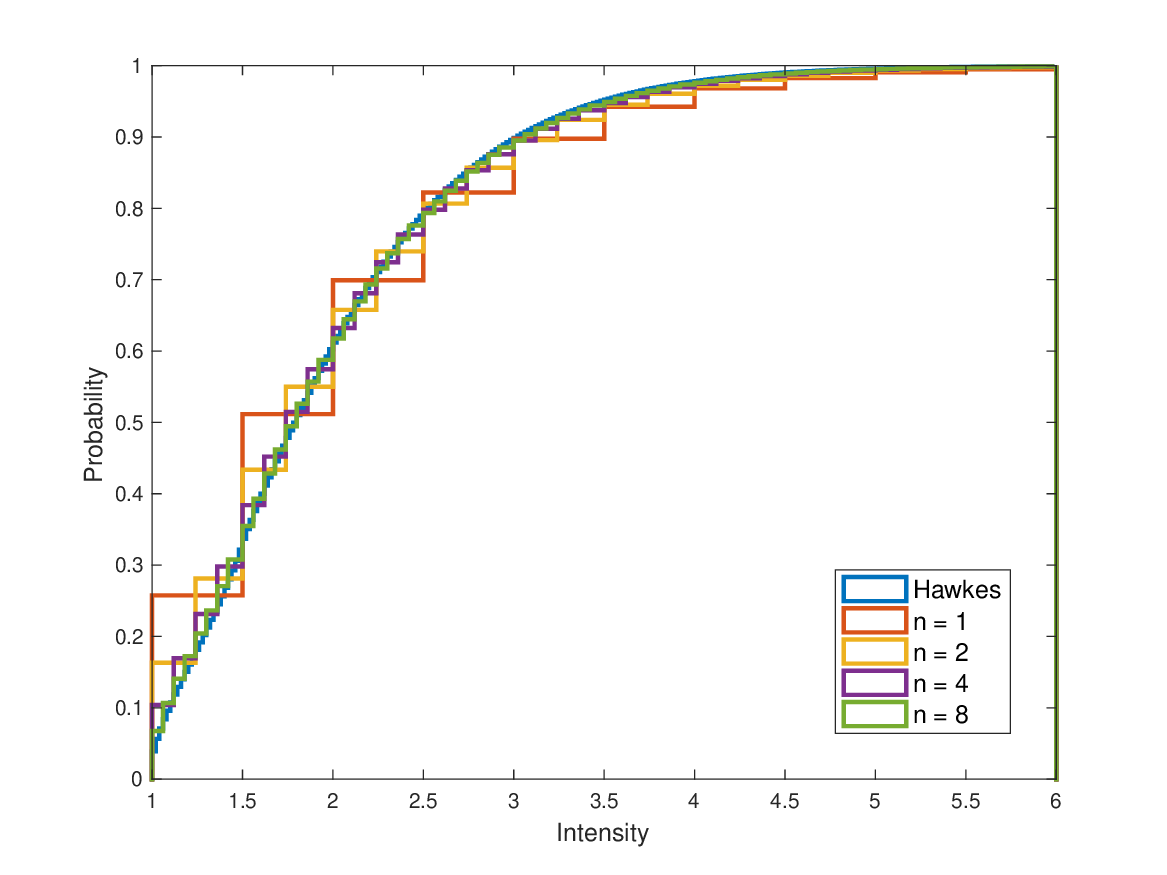}~\hspace{-.1in}~\includegraphics[width=.5\textwidth]{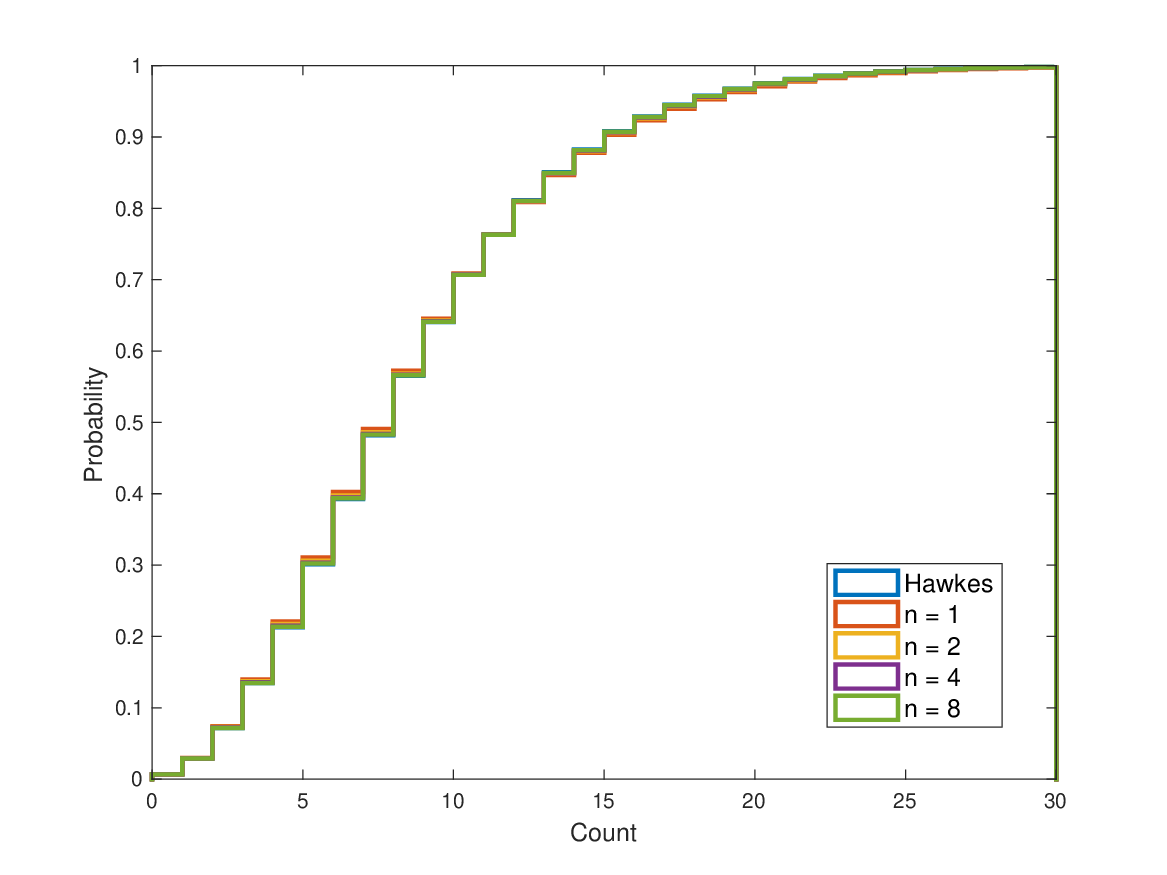}
\\
\text{Hyper-Exponential}
\includegraphics[width=.5\textwidth]{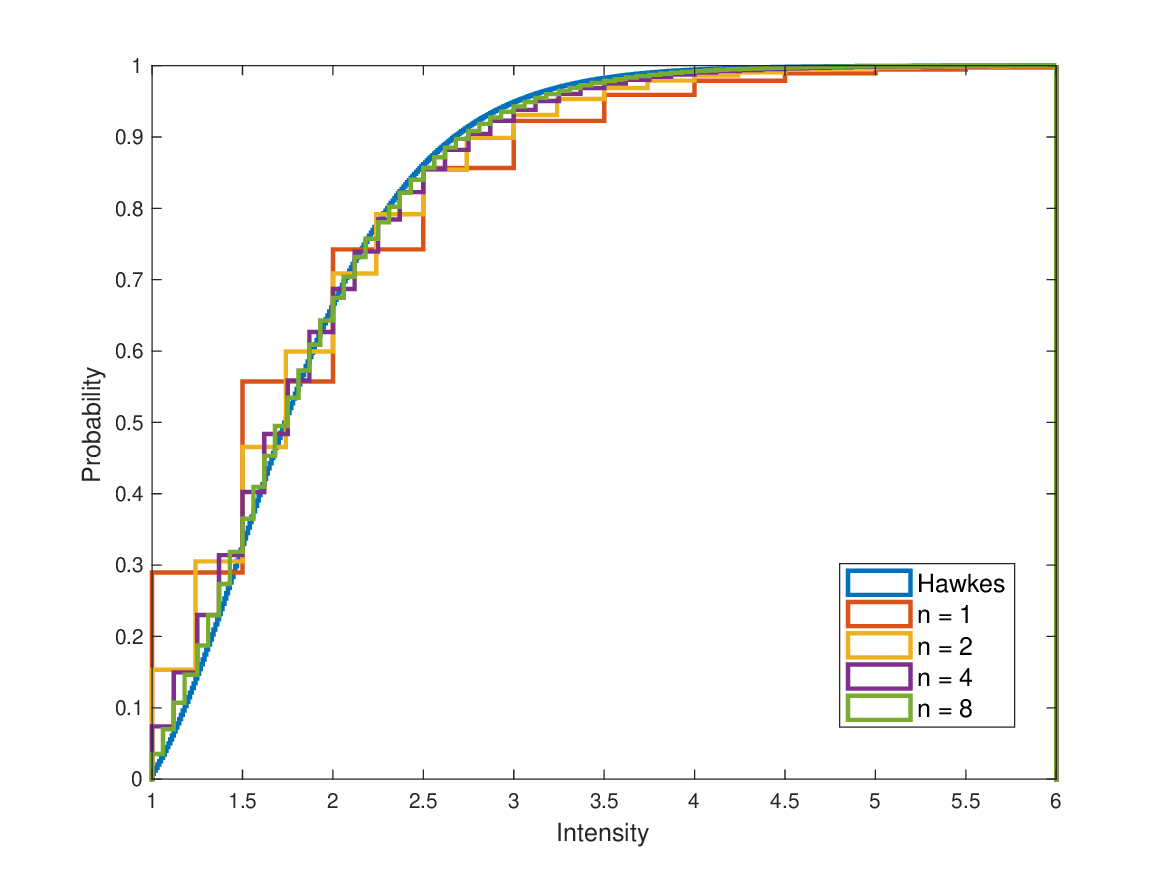}~\hspace{-.1in}~\includegraphics[width=.5\textwidth]{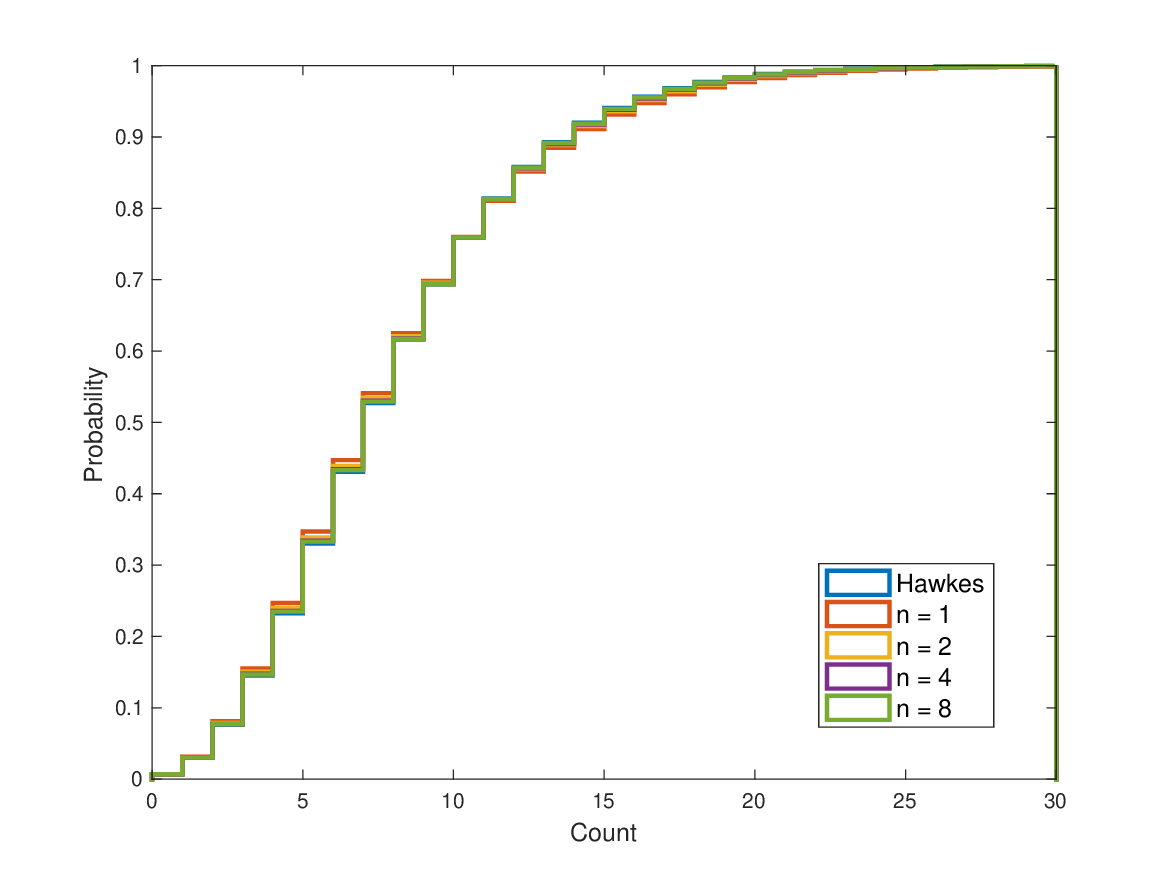}
\caption{Empirical transient distributions of the intensities (left) and counts (right) of $n$-GESEP with increasing batch size and for general Hawkes process simulations with $t = 5$, $\eta^* = 1$, $\alpha = 0.5$, and varying expiration distributions. These distributions are calculated from $2^{22}$ replications.} \label{probComp}
\end{center}
\end{figure}

As a reference, we list the components of the ephemerally self-exciting models and their corresponding limiting quantities in the general Hawkes process below in Table~\ref{convTable}. We can note that because the limiting excitation kernel given in  Theorem~\ref{hawkconv} is a complementary cumulative distribution function it is exclusively non-increasing, meaning that the excitement after each arrival immediately decays. It can be observed that this includes the two most popular excitation kernels, the exponential and power-law forms that we detailed in Subsection~\ref{subsecHawkes}. However, it does not include kernels that have a ``hump remote from the origin'' that Hawkes mentions briefly in the original paper \cite{hawkes1971spectra}. If desired, this can be remedied through extension to multi-phase service in the $n$-GESEP, with the intensity defined as an affine relationship with one of the later phases. 

\begin{table}[h]
$$
\begin{matrix}
 & n \longrightarrow \infty & \\
\hline
\text{Batch} & \Longrightarrow & \text{Mark}\\
\text{Expire} & \Longrightarrow & \text{Decay}\\
\hline
\text{ESEP} & \Longrightarrow & \text{Hawkes}
\end{matrix}
$$
\caption{Overview of convergence details in the batch-scaling of the ESEP.}\label{convTable}
\end{table}

Before concluding, let us remark that in addition to providing conceptual understanding into the Hawkes process itself, the alternate construction through the batch scaling in Theorem~\ref{hawkconv} is also of practical relevance in explaining the use of the Hawkes process in many application settings. For example, in biological applications such as the environmental management problem considered in \cite{gupta2018discrete}, one of the invasive species studied may produce multiple offspring simultaneously but only for the duration of its life cycle. That is, many species give birth in litters, creating batch arrivals, but of course only reproduce during their lifetime, yielding ephemerality. Furthermore, the numerical experiments in Figure~\ref{probComp} suggest that $n$ need not be overly large for the $n$-GESEP and the Hawkes process to be comparable in distribution.  

As another example, consider the spread of information on communication and social media platforms.  This setting has recently been a popular application of Hawkes processes, see e.g.~\cite{du2015dirichlet,farajtabar2017fake,halpin2013modelling,rizoiu2017expecting,rizoiu2018sir}.  When a user shares a post on these platforms, it is immediately and simultaneously dispatched to the real-time feeds of many other users, creating a batch increase of the response rate from the other users. The post then will typically only be seen on news feeds for a short period of time, as new content comes in to replace it. On top of this, social media administrators have been adopting a trend of intentionally introducing ephemerality into their platforms. For example, the expiration of posts and messages has been a defining feature of Snapchat since its inception.  Facebook and Instagram have recently adopted the same behavior with ``stories,'' and Twitter has responded in kind with the appropriately named ``fleets.'' Just as in the case of biological offspring processes, Theorem~\ref{hawkconv} offers an explanation of why the Hawkes process has become a popular and successful model in this space. Moreover, the insights from these connections can be deepened through the additional model relationships that we have discussed in Section~\ref{secRelate}.

\section{Conclusion}

Time is fleeting; excitement is ephemeral. In this paper we have introduced the \textit{ephemerally self-exciting process} (ESEP), a point process driven by the pieces of its history that remain presently active. That is, each arrival excites the arrival rate until the expiration of its randomly drawn activity duration, at which point its individual influence vanishes.  Throughout this work we have compared ephemeral self-excitement to eternal self-excitement through contrast with the well-known Hawkes process. These comparisons include an ordering of the moments of the two processes in Proposition~\ref{momentOrder} and through study of each process's branching structure in Subsection~\ref{subsecBranch}. We have also used the ESEP to relate ephemeral self-excitement to other well known stochastic models, including preferential attachment, random walks, and epidemics. Finally, we have also considered a generalized model with batch arrivals and general activity duration distributions, which we refer to as the \textit{$n^\text{th}$ general ephemerally self-exciting process} ($n$-GESEP). This $n$-GESEP model provides an alternate construction of general marked Hawkes processes through a batch scaling limit. ln Theorem~\ref{hawkconv} we prove that the $n$-GESEP model converges to a Hawkes process as its batch arrival size grows large, in which the limiting Hawkes process has an excitation kernel matching the tail CDF of the activity duration distribution and has marks given by the scaled limit of the batch sizes. As we have discussed, this limit both provides intuition for the occurrence of self-excitement in natural phenomena and relates the Hawkes process to the other stochastic models we connected to ephemeral self-excitement.

This presents many different directions for future research. First, we have frequently campaigned in this work that our results motivate the ESEP (and by extension, the $n$-GESEP) as a promising model for self-excitement in its own right. This follows from its tractability for analysis and its amenability to connection with other models. Because of this promise, we believe that further exploration and application of the ESEP holds great potential. Another natural and relevant avenue would be to continue to explore the connection between ephemeral self-excitement and the other stochastic models we have discussed. For example, one could study the connection between ESEP and epidemic models on more complex networks or with more complex dynamics. Doing so would give a point process representation for the times of infection in a more realistic epidemic model, which could be quite useful in practice for resource allocation and policy design. Similar deepened connections could also be pursued for other models such as preferential attachment.  In general, we believe the concept of ephemeral self-excitement merits further theoretical exploration and detailed empirical application, both of which we look forward to pursuing. Finally, to empower this model for full practical use, it is of great interest to study the estimation of the ESEP and $n$-GESEP processes. For example, likelihood-based estimation  could be pursued in a manner similar to the traditional Hawkes process, since the processes are also conditionally non-stationary Poisson when given the full history. In fact, similar estimation techniques may be achievable even if the data only contains arrival epochs (and not expiration times), perhaps through  missing data techniques such as Expectation-Maximization (EM) algorithms. These methods have applied successfully for both Hawkes processes and generally for branching processes (see, e.g.,~\citet{veen2008estimation,lewis2011nonparametric}), and thus also hold potential for the ephemerally self-exciting models we have studied here.

\section*{Acknowledgements}
We acknowledge the generous support of the National Science Foundation (NSF) for Andrew Daw's Graduate Research Fellowship under grant DGE-1650441, received during his doctoral studies at Cornell University when this research was initiated. Additionally, we are grateful for helpful discussions with Robert Hampshire, Emily Fischer, and Sam Gutekunst. Publicly available drafts of this work used the moniker ``Queue-Hawkes process,'' but we have re-titled  to avoid confusion and ambiguity. Alas, ``the process formerly known as the Queue-Hawkes'' is likely not an improvement in this regard.

\bibliographystyle{plainnat}
\bibliography{vanHawkes}

\appendix

\section{Lemmas and Auxiliaries}\label{secLemmas}

In this section of the appendix we give technical lemmas to support our analysis and brief auxiliary results that are of interest but not within the narrative of the body of this report. We begin by giving the infinitesimal generator form for time derivatives of the expectations of functions of our process. This is a valuable tool available to us because the ESEP is Markov, and it supports much of our analysis throughout this work.

\begin{lemma}\label{fubinifundQ}
For a sufficiently regular function $f:\left(\mathbb{R}^+ \times \mathbb{N}\right) \to \mathbb{R}$, the generator of the ESEP is given by
\begin{align}
\mathcal{L}f(\eta_t, N_t)
&
=
\underbrace{\sum_{i=1}^n \eta_t \left( f(\eta_t + \alpha, N_t + 1) - f(\eta_t, N_t) \right)}_{\mathrm{Arrivals}}
+
\underbrace{  \beta \left(\frac{\eta_t - \eta^*}{\alpha} \right)\left(f\left(\eta_t - \alpha,  N_t\right) - f(\eta_t,  N_t)\right)}_{\mathrm{Expirations}}
.
\end{align}
Then, the time derivative of the expectation of $f(\eta_t, N_t)$ is given by
\begin{equation}\label{generator}
\frac{\mathrm{d}}{\mathrm{d}t} \E{f(\eta_t, N_t)}
=
\E{\mathcal{L}f(\eta_t, N_t)}
\end{equation}
for all $t \geq 0$.
\begin{proof} This is a direct result of the ESEP belonging to the family of piece-wise deterministic Markov processes, as defined in \citet{davis1984piecewise}. Moreover, the specific regularity conditions are given in Theorem 5.5 of that work.
\end{proof}
\end{lemma}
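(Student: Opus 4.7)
The plan is to treat the ESEP as a pure-jump Markov process on the countable state space $\mathbb{N}\times\mathbb{N}$, where the state is $(Q_t, N_t)$ (or equivalently $(\eta_t, N_t)$ via the affine map $\eta_t = \eta^* + \alpha Q_t$), and to write down the generator by enumerating all possible transitions out of a given state together with their rates. There are exactly two transition types: an arrival, which sends $(\eta_t, N_t) \mapsto (\eta_t + \alpha, N_t + 1)$ and occurs at instantaneous rate $\eta_t$, and an expiration of one of the $Q_t = (\eta_t - \eta^*)/\alpha$ currently active exciters, which sends $(\eta_t, N_t) \mapsto (\eta_t - \alpha, N_t)$ and occurs at rate $\beta Q_t = \beta(\eta_t - \eta^*)/\alpha$ by the independence and memorylessness of the exponential activity-duration clocks. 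The generator is then the standard Markov-chain generator $(\mathcal{L}f)(x) = \sum_{y\neq x} q(x,y)\bigl(f(y) - f(x)\bigr)$, which immediately yields the displayed formula with the two labelled summands.

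For the identity $\tfrac{d}{dt}\E{f(\eta_t, N_t)} = \E{\mathcal{L}f(\eta_t, N_t)}$, the plan is to invoke Dynkin's formula: for a sufficiently regular $f$, the process
\[
M_t^f := f(\eta_t, N_t) - f(\eta_0, N_0) - \int_0^t \mathcal{L}f(\eta_s, N_s)\,\mathrm{d}s
\]
is a mean-zero local martingale; taking expectations and differentiating in $t$ gives the claim, provided we can justify interchanging derivative and expectation and ruling out explosion. Since the ESEP is a piecewise deterministic Markov process in the sense of \citet{davis1984piecewise} (with trivial deterministic flow between jumps), the cleanest route is to appeal directly to Theorem~5.5 of that reference, which specifies precisely the integrability and measurability conditions on $f$ under which \eqref{generator} holds.

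The main technical obstacle is that the jump rates $\eta_t$ and $\beta Q_t$ are unbounded in the state variable, so one must verify non-explosion of $N_t$ and integrability of $\mathcal{L}f(\eta_t, N_t)$ under $f$. Non-explosion follows from the stability assumption $\beta > \alpha$ (or for transient statements on bounded time intervals, from a standard Foster--Lyapunov argument using $V(\eta, N) = \eta + N$, whose generator grows at most linearly). The ``sufficient regularity'' phrasing in the lemma is simply shorthand for the hypotheses of Davis's theorem applied in our setting; I would state these explicitly as: $f$ is measurable, $t \mapsto f(\eta_t, N_t)$ has integrable total variation on compacts, and $\E{\int_0^t |\mathcal{L}f(\eta_s, N_s)|\,\mathrm{d}s} < \infty$. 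The remainder of the argument is then bookkeeping within the PDMP framework and does not require any new computation beyond identifying the two jump mechanisms above.
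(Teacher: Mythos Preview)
Your proposal is correct and takes essentially the same approach as the paper: both identify the ESEP as a piecewise deterministic Markov process in the sense of \citet{davis1984piecewise} and defer the regularity conditions and the Dynkin identity to Theorem~5.5 of that reference. You have supplied considerably more detail (the explicit enumeration of jumps, the non-explosion remark, and the integrability hypotheses), but the paper's proof is simply a two-sentence citation of the same result.
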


Note that this is also immediately applicable to the active number in system process perspective of the ESEP, as $Q_t = (\eta_t - \eta^*)\slash \alpha$. Thus, we will leverage this infinitesimal generator for studying each of $\eta_t$, $Q_t$, and $N_t$ throughout both the main body of the text and these appendices.


As another supporting lemma, let us summarize a result that can be used with the infinitesimal generator to relate two different Markov processes. Throughout this work we make comparisons between different processes, in particular between the ESEP and the Hawkes process. One way that we do this is to investigate the differential equations found with use of Lemma~\ref{fubinifundQ}. In Lemma~\ref{complemma} we provide the method by which we make such comparisons.

\begin{lemma}[A Comparison Lemma]\label{complemma}
Let $f: \mathbb{R}^2 \to \mathbb{R}$ be a continuous function in both variables.  If we assume that initial value problem
\begin{equation}
\frac{\mathrm{d}x(t)}{\mathrm{d}t} = f(t, x(t)), \ x(0) = x_0
\end{equation}
has a unique solution for the time interval $[0,T]$ and
\begin{equation}
\frac{\mathrm{d}y(t)}{\mathrm{d}t} \leq f(t, y(t)) \quad \mathrm{for} \ t \in [0,T] \ \mathrm{and} \ y(0) \leq x_0
\end{equation}
then $ x(t) \geq y(t) $ for all $t \in [0,T]$.
\begin{proof}
The the proof of this result is given in \citet{hale2013introduction}.
\end{proof}
\end{lemma}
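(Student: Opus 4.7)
The plan is to prove this classical ODE comparison result via a perturbation argument followed by a limit. First, for each $\varepsilon > 0$, I would introduce the perturbed initial value problem
\begin{equation*}
\frac{\mathrm{d}x_\varepsilon(t)}{\mathrm{d}t} = f(t, x_\varepsilon(t)) + \varepsilon, \qquad x_\varepsilon(0) = x_0 + \varepsilon.
\end{equation*}
By continuity of $f$ (Peano) this problem has a solution on some interval, and for $\varepsilon$ small enough this solution exists on all of $[0,T]$ by the standard maximal-interval argument (a small perturbation of a solution that already exists on the compact interval $[0,T]$ cannot blow up). The goal is then to establish the \emph{strict} inequality $x_\varepsilon(t) > y(t)$ for every $t \in [0,T]$, and finally let $\varepsilon \downarrow 0$.

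For the strict inequality, I would argue by contradiction. At $t=0$ we have $x_\varepsilon(0) = x_0 + \varepsilon > x_0 \geq y(0)$, so strict positivity of $x_\varepsilon - y$ holds initially. If the inequality were to fail somewhere, continuity would yield a first time $t^\ast \in (0,T]$ at which $x_\varepsilon(t^\ast) = y(t^\ast)$, with $x_\varepsilon(t) > y(t)$ for $t \in [0,t^\ast)$. Since $x_\varepsilon - y$ is non-negative on $[0,t^\ast]$ and vanishes at $t^\ast$, the left-hand derivative satisfies $(x_\varepsilon - y)'(t^\ast) \leq 0$. On the other hand, by the ODE and the differential inequality,
\begin{equation*}
x_\varepsilon'(t^\ast) - y'(t^\ast) \;\geq\; \bigl(f(t^\ast, x_\varepsilon(t^\ast)) + \varepsilon\bigr) - f(t^\ast, y(t^\ast)) \;=\; \varepsilon \;>\; 0,
\end{equation*}
since $x_\varepsilon(t^\ast) = y(t^\ast)$. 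This is the contradiction, so no such $t^\ast$ exists and $x_\varepsilon(t) > y(t)$ throughout $[0,T]$.

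To close the argument, I would pass to the limit $\varepsilon \downarrow 0$. The classical continuous-dependence theorem for ODEs, which is applicable precisely because the \emph{unperturbed} problem has a unique solution on $[0,T]$, guarantees that $x_\varepsilon \to x$ uniformly on $[0,T]$ as $\varepsilon \downarrow 0$ (continuous dependence on both the initial condition and bounded perturbations of the right-hand side). Taking the limit in $x_\varepsilon(t) > y(t)$ yields the desired (non-strict) bound $x(t) \geq y(t)$ for all $t \in [0,T]$.

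The main obstacle is the limiting step: continuous dependence of solutions on the vector field and initial data is a nontrivial consequence of uniqueness, and without the assumed uniqueness the conclusion can fail (the perturbed solutions could cluster around a different branch). The contradiction step itself is straightforward provided $y$ is differentiable pointwise (which is implicit in the hypothesis that $y$ satisfies a pointwise differential inequality); if instead $y$ were only absolutely continuous, the same argument goes through using upper Dini derivatives in place of $y'$, but under the stated regularity this technicality does not arise.
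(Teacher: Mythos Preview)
Your argument is correct and is precisely the classical perturbation proof of this comparison principle. The paper itself does not give a self-contained proof at all; it simply cites \citet{hale2013introduction} and moves on. What you have written is essentially the argument one finds in that reference: introduce a strictly dominating perturbed problem, rule out a first crossing by the sign contradiction at $t^\ast$, and then invoke continuous dependence on data (which is where the uniqueness hypothesis is used) to pass to the limit $\varepsilon\downarrow 0$. So you have not taken a different route so much as filled in what the paper leaves to the citation; your remarks about the role of uniqueness and about Dini derivatives in the absolutely continuous case are apt and would make the statement more robust than the bare version quoted here.
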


For a result that is both auxillary on the surface and beneficial in proofs, in Proposition~\ref{QDpgf} we give the probability generating function for the number in system and the number of departures, or expirations, in the ESEP. The departure process is largely outside of the scope of this work, but this result is instrumental in the proof of the probability generating function for the counting process in Proposition~\ref{Npgf}, which is given in Appendix~\ref{pgfProof}.

\begin{proposition}\label{QDpgf}
Let $Q_t$ be the active number in system at time $t \geq 0$ of an ESEP with baseline intensity $\eta^* > 0$, intensity jump size $\alpha > 0$, and expiration rate $\beta > \alpha$. Then, let $D_t$ be the number of arrivals by time $t$ that are no longer active. Then, the joint probability generating function of $Q_t$ and $D_t$, denoted $G(z_1, z_2, t) \equiv \E{z_1^{Q_t} z_2^{D_t}}$, is given by
\begin{align}
&
G(z_1, z_2, t)
=
z_2^{D_0} e^{\frac{\eta^*(\beta-\alpha)}{2\alpha}t}
\left(
1
-
\left(
\tanh\left(
\frac{t}2 \sqrt{(\beta+\alpha)^2 - 4\alpha\beta z_2}
+
\tanh^{-1}\left(\frac{\beta+\alpha-2\alpha z_1 }{\sqrt{(\beta+\alpha)^2 - 4\alpha\beta z_2}} \right)
\right)
\right)^2
\right)^{\frac{\eta^*}{2\alpha}}
\nonumber
\\
&
\quad
\cdot
\left(
\frac{\beta + \alpha}{2\alpha}
-
\frac{\sqrt{(\beta+\alpha)^2 - 4\alpha\beta z_2} }{2\alpha}
\tanh \left(\frac{t}2 \sqrt{(\beta+\alpha)^2 - 4\alpha\beta z_2} + \tanh^{-1}\left(\frac{ \beta+\alpha-2\alpha z_1}{\sqrt{(\beta+\alpha)^2 - 4\alpha\beta z_2}} \right) \right)
\right)^{Q_0}
\nonumber
\\
&
\quad
\cdot
\left(
\cosh \left(
\tanh^{-1}\left(\frac{2\alpha z_1 - \beta - \alpha}{\sqrt{(\beta+\alpha)^2 - 4\alpha\beta z_2}} \right)
\right)
\right)^{\frac{\eta^*}{\alpha}}
,
\end{align}
where $Q_0$ and $D_0$ are the active number in the system and the count of departures at time 0, respectively.
\begin{proof}
We will show this through the method of characteristics. We can first observe through Lemma~\ref{fubinifundQ} that
\begin{align*}
\frac{\mathrm{d}}{\mathrm{d}t}\E{z_1^{Q_t} z_2^{D_t}}
&
=
\E{(\eta^* + \alpha Q_t)(z_1-1)z_1^{Q_t}z_2^{D_t} + \beta Q_t \left(\frac{z_2}{z_1}-1\right)z_1^{Q_t}z_2^{D_t}},
\end{align*}
and so $G(z_1,z_2,t)$ is given by the following partial differential equation:
\begin{align*}
\frac{\partial}{\partial t} G(z_1,z_2,t)
+
\left(
\alpha (z_1-z_1^2) + \beta (z_1-z_2)
\right)
\frac{\partial}{\partial z_1} G(z_1,z_2,t)
&
=
\eta^* (z_1 - 1) G(z_1,z_2,t)
.
\end{align*}
To simplify our analysis, we will instead solve for $\log(G(z_1,z_2,t))$, which through the chain rule will by given by the solution to the partial differential equation expressed
\begin{align*}
\frac{\partial}{\partial t} \log(G(z_1,z_2,t))
+
\left(
\alpha (z_1-z_1^2) + \beta (z_1-z_2)
\right)
\frac{\partial}{\partial z_1} \log(G(z_1,z_2,t))
&
=
\eta^* (z_1 - 1)
,
\end{align*}
with initial condition $\log(G(z_1,z_2,0)) = \log(z_1^{Q_0}z_2^{D_0})$. This now gives us the characteristic equations as follows:
\begin{align*}
\frac{\mathrm{d}z_1}{\mathrm{d}s}(r,s)
&
=
\alpha (z_1-z_1^2) + \beta (z_1-z_2)
,
&&
z_1(r,0) = r
\\
\frac{\mathrm{d}t}{\mathrm{d}s}(r,s)
&
=
1
,
&&
t(r,0) = 0
\\
\frac{\mathrm{d}g}{\mathrm{d}s}(r,s)
&
=
\eta^* ( z_1 - 1)
,
&&
g(r,0) = \log(r^{Q_0}z_2^{D_0})
.
\end{align*}
Solving the first two equations we see that
\begin{align*}
z_1(r,s)
&
=
\frac{\beta + \alpha}{2\alpha}
+
\frac{\sqrt{(\beta+\alpha)^2-4\alpha\beta z_2}}{2\alpha}
\tanh\left(
\frac{s}{2} \sqrt{(\beta+\alpha)^2-4\alpha\beta z_2} - \tanh^{-1}\left(\frac{\beta+\alpha-2\alpha r}{\sqrt{(\beta+\alpha)^2-4\alpha\beta z_2}}\right)
\right)
\\
t(r,s)
&
=
s
,
\end{align*}
which allows us to now solve for $g(r,s)$. Using the solution to $z_1(r,s)$, the ordinary differential equation for $g(r,s)$ is given by
\begin{align*}
\frac{\mathrm{d}g}{\mathrm{d}s}(r,s)
&
=
\frac{\eta^*\sqrt{(\beta+\alpha)^2-4\alpha\beta z_2}}{2\alpha}
\tanh\Bigg(
\frac{s}{2} \sqrt{(\beta+\alpha)^2-4\alpha\beta z_2}
-
\tanh^{-1}\left(\frac{\beta+\alpha-2\alpha r}{\sqrt{(\beta+\alpha)^2-4\alpha\beta z_2}}\right)
\Bigg)
\\
&
\quad
+
\frac{\eta^*(\beta - \alpha)}{2\alpha}
,
\end{align*}
which yields a solution of
\begin{align*}
g(r,s)
&
=
\log(r^{Q_0} z_2^{D_0})
+
\frac{\eta^*(\beta-\alpha)}{2\alpha}s
+
\frac{\eta^*}{2\alpha}\log\left(
1
-
\frac{(\beta+\alpha - 2\alpha r)^2}{(\beta+\alpha)^2-4\alpha\beta z_2}
\right)
\\
&
\quad
+
\frac{\eta^*}{\alpha}
\log\left(
\cosh\left(
\frac{s}{2}\sqrt{(\beta+\alpha)^2 - 4\alpha\beta z_2}
-
\tanh^{-1}\left(\frac{\beta+\alpha-2\alpha r}{\sqrt{(\beta+\alpha)^2-4\alpha\beta z_2}}\right)
\right)
\right)
.
\end{align*}
Now, from these equations we can express the characteristics variables in terms of the original arguments as $s = t$ and
\begin{align*}
r
&
=
\frac{\beta+\alpha}{2\alpha}
-
\frac{\sqrt{(\beta+\alpha)^2 - 4\alpha\beta z_2} }{2\alpha}
\tanh \left(
\frac{t}{2}\sqrt{(\beta+\alpha)^2-4\alpha\beta z_2}
-
\tanh^{-1}\left(\frac{2\alpha z_1 -\beta-\alpha}{\sqrt{(\beta+\alpha)^2-4\alpha\beta z_2}}\right)
\right)
.
\end{align*}
Then, by performing the substitution
$
G(z_1,z_2,t)
=
e^{g(r(z_1,z_2,t),s(z_1,z_2,t))}
$
and simplifying, we achieve the stated result.
\end{proof}
\end{proposition}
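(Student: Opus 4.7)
My plan is to apply the infinitesimal generator of Lemma~\ref{fubinifundQ}, suitably extended to track the departure count $D_t$, and then solve the resulting first-order linear PDE via the method of characteristics. Setting $G(z_1, z_2, t) = \E{z_1^{Q_t} z_2^{D_t}}$, the only events that change the joint state are arrivals (at rate $\eta^* + \alpha Q_t$, which send $(Q_t, D_t) \mapsto (Q_t+1, D_t)$) and expirations (at rate $\beta Q_t$, which send $(Q_t, D_t) \mapsto (Q_t-1, D_t+1)$). Using $Q_t z_1^{Q_t} = z_1 \partial_{z_1} z_1^{Q_t}$ to replace the $Q_t$ factors by derivatives in $z_1$, this gives the transport-reaction PDE
\begin{equation*}
\frac{\partial G}{\partial t} + \bigl(\alpha(z_1 - z_1^2) + \beta(z_1 - z_2)\bigr)\frac{\partial G}{\partial z_1} = \eta^*(z_1 - 1) G,
\end{equation*}
with the initial datum $G(z_1, z_2, 0) = z_1^{Q_0} z_2^{D_0}$.

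The next step is to pass to $\log G$, which makes the forcing affine in $z_1$ rather than multiplicative in $G$, and then to write down the characteristic system. With parametrization $s$, one has $dt/ds = 1$, $dz_1/ds = \alpha(z_1 - z_1^2) + \beta(z_1 - z_2)$, and $d(\log G)/ds = \eta^*(z_1 - 1)$, with initial data $t(r,0) = 0$, $z_1(r,0) = r$, and $\log G(r,0) = \log(r^{Q_0} z_2^{D_0})$. The $t$-equation is trivial, and along each characteristic the variable $z_2$ is fixed, so the $z_1$-equation is a Riccati ODE with constant coefficients $-\alpha z_1^2 + (\alpha+\beta) z_1 - \beta z_2$. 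Its discriminant $(\alpha+\beta)^2 - 4\alpha\beta z_2$ is precisely the square-root quantity that appears throughout the stated formula, and standard Riccati techniques (or the substitution $z_1 = \tfrac{\alpha+\beta}{2\alpha} + \tfrac{\sqrt{(\alpha+\beta)^2-4\alpha\beta z_2}}{2\alpha}\tanh(\cdot)$) yield $z_1(r,s)$ in closed form involving $\tanh$ and $\tanh^{-1}$. Plugging $z_1(r,s)$ into the equation for $\log G$ and integrating in $s$ produces a closed form for $g(r,s)$: the $\tanh$ term integrates to a $\log\cosh$, and the constant term contributes the $\tfrac{\eta^*(\beta-\alpha)}{2\alpha}s$ linear piece.

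The final step is to invert the characteristic change of variables, i.e., to express $r = r(z_1, z_2, t)$ by solving the $z_1(r,s)$ relation for $r$. This is a routine but messy manipulation of $\tanh$ and $\tanh^{-1}$, and I expect it to be the main technical obstacle; in particular, one should be careful with the principal-branch conventions and with combining the $\log$ of the initial term $r^{Q_0}$ with the $\log\cosh$ and $\log(1 - (\cdots)^2)$ pieces so that everything recombines into the three-factor product in the statement. After this algebra, substituting $r(z_1, z_2, t)$ and $s = t$ back into $G = e^g$ and using $1 - \tanh^2 = \mathrm{sech}^2$ (to rewrite the $\log(1 - (\cdots)^2)$ term as a product of $\cosh$-type factors) should produce exactly the three-factor closed form stated in the proposition, with the $\cosh(\tanh^{-1}(\cdot))$ factor absorbing the initial $r^{Q_0}$-adjustment and the $\tanh^{-1}$ inside the first two factors tracking the shift in the characteristic argument.
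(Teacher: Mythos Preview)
Your proposal is correct and follows essentially the same route as the paper's proof: derive the transport-reaction PDE for $G$ via the generator, pass to $\log G$, solve the characteristic Riccati ODE for $z_1$ via the $\tanh$ substitution, integrate the $\tanh$ to a $\log\cosh$, and then invert and substitute. The only minor imprecision is in your last sentence about which piece absorbs what---the $r^{Q_0}$ from the initial datum becomes the factor raised to $Q_0$, while the $\cosh(\tanh^{-1}(\cdot))^{\eta^*/\alpha}$ factor arises from the $s=0$ constant of integration in the $\int\tanh\,ds$ step---but this is bookkeeping in the final algebraic recombination and not a gap.
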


As another auxiliary result, in Proposition~\ref{batchQ2} we give the steady-state moment generating function for the $2$-GESEP with exponentially distribution activity durations and deterministic batch sizes, meaning pairs of arrivals.

\begin{proposition} \label{batchQ2}
Consider the following 2-GESEP: arrivals occur at rate $\eta_t(2) = \eta^* + \frac{\alpha}{2}  Q_t(2)$, where $Q_t(2)$ receives arrivals batches of size $2$. Each activity duration is independent and exponentially distributed with rate $\beta > \alpha > 0$. Then, the steady-state moment generating function of $Q_t(2)$ is given by
\begin{align}\label{batchQ2eq}
&
\E{e^{\theta Q_\infty (2)}}
\equiv
\lim_{t\to\infty}
\E{e^{\theta Q_t (2)}}
\nonumber
\\
&
=
\exp \left(
\frac{
2\eta^*
}
{
 \sqrt{\alpha  (\alpha +8 \beta )}
}
  \left(
  \tanh^{-1}\left(\left(2 e^\theta +1\right) \sqrt{\frac{\alpha }{\alpha +8 \beta
   }}\right)
-
\tanh ^{-1}\left(
3 \sqrt{\frac{\alpha }{\alpha +8\beta }}
\right)
\right)
\right)
\left(
\frac{2\beta - 2\alpha}{2\beta - \alpha (e^\theta + e^{2\theta})}
\right)^{\frac{\eta^*}{\alpha}}
\end{align}
\begin{proof}
Using Lemma~\ref{fubinifundQ}, we see that the moment generating function will be given by the solution to
\begin{align*}
\frac{\mathrm{d}}{\mathrm{d}t}
\E{e^{\theta  Q_t (2)}}
&=
\E{
\left(
\eta^* + \frac{\alpha  Q_t (2)}{2}
\right)
\left(
e^{\theta ( Q_t (2)+2)}
-
e^{\theta  Q_t (2)}
\right)
+
\beta  Q_t(2)
\left(
e^{\theta ( Q_t (2)-1)}
-
e^{\theta  Q_t (2)}
\right)
}
,
\end{align*}
which can be equivalently expressed in PDE form as
\begin{align*}
\frac{\partial}{\partial t}
\mathcal{M}_2(\theta, t)
&=
\eta^*
\left(e^{2\theta} - 1\right)
\mathcal{M}_2(\theta, t)
+
\left(
\frac{\alpha}{2}
\left(e^{2\theta} - 1\right)
+
\beta
\left(e^{-\theta} - 1\right)
\right)
\frac{\partial}{\partial \theta}
\mathcal{M}_2(\theta, t)
,
\end{align*}
where $\mathcal{M}_2(\theta, t) = \E{e^{\theta  Q_t (2)}}$. To solve for the steady-state moment generating function we consider the ODE given by
\begin{align*}
\frac{\mathrm{d}}{\mathrm{d} \theta}
\mathcal{M}_2(\theta, \infty)
&=
\frac{
\eta^*
\left(1 - e^{2\theta} \right)
\mathcal{M}_2(\theta, \infty)
}
{
\frac{\alpha}{2}
\left(e^{2\theta} - 1\right)
+
\beta
\left(e^{-\theta} - 1\right)
}
,
\end{align*}
with the initial condition that $\mathcal{M}_2(0, \infty) = 1$. Through taking the derivative of the expression in Equation~\ref{batchQ2eq}, we verify the result.
\end{proof}
\end{proposition}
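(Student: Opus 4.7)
The plan is to apply the infinitesimal generator framework from Lemma~\ref{fubinifundQ} to the test function $f(Q_t(2)) = e^{\theta Q_t(2)}$. Since batches arrive in pairs (driving $Q \to Q+2$ at rate $\eta^* + \tfrac{\alpha}{2}Q_t(2)$) and each of the $Q_t(2)$ active exciters expires independently at rate $\beta$ (driving $Q \to Q-1$ at total rate $\beta Q_t(2)$), the generator yields
\[
\mathcal{L}e^{\theta Q_t(2)} = \left(\eta^* + \tfrac{\alpha}{2}Q_t(2)\right)(e^{2\theta}-1)e^{\theta Q_t(2)} + \beta Q_t(2)(e^{-\theta}-1)e^{\theta Q_t(2)}.
\]
Taking expectations and using $\E{Q_t(2)e^{\theta Q_t(2)}} = \partial_\theta \mathcal{M}_2(\theta,t)$, imposing the steady-state condition $\partial_t \mathcal{M}_2 = 0$ collapses the resulting PDE to the separable first-order linear ODE
\[
\left(\tfrac{\alpha}{2}(e^{2\theta}-1) + \beta(e^{-\theta}-1)\right)\frac{d\mathcal{M}_2(\theta,\infty)}{d\theta} = \eta^*(1 - e^{2\theta})\mathcal{M}_2(\theta,\infty),
\]
with boundary condition $\mathcal{M}_2(0,\infty) = 1$ fixing the constant of integration.

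Next I would carry out the integration by means of the substitution $u = e^\theta$, which converts the equation for $\log \mathcal{M}_2$ into the rational-function integrand $\frac{-2\eta^*(u+1)}{\alpha u^2 + \alpha u - 2\beta}\, du$, after cancelling a common $(e^\theta - 1)$ factor between numerator and denominator of the right-hand side. The natural partial-fraction-style split is $u+1 = \tfrac{1}{2}(2u+1) + \tfrac{1}{2}$: the first piece is a multiple of the logarithmic derivative of the denominator and integrates immediately to $-\tfrac{\eta^*}{\alpha}\log(2\beta - \alpha(e^\theta + e^{2\theta}))$, which (after absorbing the boundary constant) becomes the $\bigl(\tfrac{2\beta-2\alpha}{2\beta - \alpha(e^\theta + e^{2\theta})}\bigr)^{\eta^*/\alpha}$ factor in the claim. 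For the residual, completing the square writes $\alpha u^2 + \alpha u - 2\beta = \alpha\bigl[(u+\tfrac{1}{2})^2 - \tfrac{\alpha+8\beta}{4\alpha}\bigr]$, and the standard antiderivative $\int \frac{dv}{v^2 - a^2} = -a^{-1}\tanh^{-1}(v/a)$ produces the inverse hyperbolic tangent with argument $(2e^\theta + 1)\sqrt{\alpha/(\alpha+8\beta)}$ and prefactor $\tfrac{2\eta^*}{\sqrt{\alpha(\alpha+8\beta)}}$.

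The final step is to evaluate $\mathcal{M}_2(0,\infty) = 1$ at $\theta = 0$ (so $u = 1$); this subtracts off the boundary value $\tanh^{-1}(3\sqrt{\alpha/(\alpha+8\beta)})$ inside the exponential and fixes the $(2\beta-2\alpha)$ in the numerator of the algebraic factor, delivering exactly the stated formula. The main obstacle is purely bookkeeping rather than conceptual: keeping track of signs (noting that the stability assumption $\beta > \alpha$ together with $\theta < 0$ near the origin ensures $2\beta - \alpha(e^\theta + e^{2\theta}) > 0$, so the logarithm is real) and performing the partial-fraction manipulation cleanly enough that the inverse hyperbolic tangent emerges with the correct normalization. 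As the paper notes, one could alternatively bypass the integration entirely by differentiating the claimed expression and verifying that it satisfies the ODE together with the boundary condition.
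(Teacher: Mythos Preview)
Your proposal is correct and follows the paper's approach through the derivation of the steady-state ODE. Where you diverge is in the final step: the paper simply states that one verifies the formula by differentiating~\eqref{batchQ2eq} and checking it satisfies the ODE with $\mathcal{M}_2(0,\infty)=1$, whereas you actually solve the ODE constructively via the substitution $u=e^\theta$, the cancellation of the common factor $(e^\theta-1)$, the split $u+1=\tfrac{1}{2}(2u+1)+\tfrac{1}{2}$, and the completing-the-square reduction to a $\tanh^{-1}$ integral. Your computation is accurate (in particular, the check that $\beta>\alpha$ forces $3\sqrt{\alpha/(\alpha+8\beta)}<1$, so the $\tanh^{-1}$ branch is the right one, is a nice observation). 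The advantage of your route is that it explains where the otherwise opaque closed form comes from; the paper's verification is shorter to write but non-constructive. One small quibble: positivity of $2\beta-\alpha(e^\theta+e^{2\theta})$ holds in a full neighborhood of $\theta=0$ (not only for $\theta<0$), since at $\theta=0$ it equals $2(\beta-\alpha)>0$.
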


\section{Exploring a Hybrid Self-Exciting Model}\label{secHESEP}

In the main body of the text we have defined the ESEP, a model that features arrivals that self-excite but only for a finite period of time. By comparison to the traditional Hawkes process models for self-excitement, the effect from one arrival does not decay through time but is fixed at a constant value for as long as it remains active. In this way, the ESEP features ephemeral but piecewise constant self-excitement whereas the Hawkes process has eternal but ever decreasing self-excitement. One can note though that ephemeral self-excitement need not be piecewise constant. A model could feature both decay and down-jumps as manners of regulating its self-excitement. In this section of the appendix, we will consider such a model, specifically a Markovian one. To begin, let us now define the \textit{hybrid ephemerally self-exciting process} (HESEP) in Defintion~\ref{qhDef}.

\begin{definition}[Hybrid ephemerally self-exciting process]\label{qhDef}
Let $t \geq 0$ and suppose that $\nu^* > 0$, $\alpha > 0$, $\beta \geq 0$, and $\mu \geq 0$. Then, define $\nu_t$, $N_{t,\nu}$, and $Q_{t,\nu}$ such that:
\begin{enumerate}[i)]
\item $N_{t,\nu}$ is an arrival process driven by the intensity $\nu_t$,
\item $Q_{t,\nu}$ is the number of arrivals from $N_{t,\nu}$ that have not yet expired according to their i.i.d.~Exp($\mu$) activity durations,
\item $\nu_t$ is governed by
\begin{align*}
\mathrm{d}\nu_t
=
\beta(\nu^* - \nu_t)\mathrm{d}t
+
\alpha \mathrm{d}N_{t,\nu}
-
\frac{\nu_t - \nu^*}{Q_{t,\nu}}
\mathrm{d}D_{t,\nu}
\end{align*}
where $D_{t,\nu} = N_{t,\nu} - Q_{t,\nu}$.
\end{enumerate}
Then, we say that the intensity-queue-counting process triplet $(\nu_t, Q_{t,\nu}, N_{t,\nu})$  is a \textit{hybrid ephemerally self-exciting process} (HESEP) with baseline intensity $\nu^*$, intensity jump size $\alpha$, decay rate $\beta$, expiration rate $\mu$, and initial values $(\nu_0, Q_0^\nu, N_0^\nu)$.
\end{definition}

By definition, one can view the HESEP as a hybrid between the ESEP and Hawkes process models. If $\beta = 0$ then we recover the ESEP; if $\mu = 0$ then we recover the Hawkes process. In this way, much of the dynamics are quite familiar: up-jumps of size $\alpha$ at teach arrival, exponential decay between events at rate $\beta$, and down-jumps upon each activity duration expiration. Perhaps the least intuitive part of this definition is the size of the down-jump, as this depends on the current levels of the intensity and the number of active exciters in the system. This draws inspiration from Markovian infinite server queues. In an $M/M/\infty$, all jobs currently in the system are equally likely to be the next to leave, regardless of the order they entered the system. Similarly, in the HESEP, each exciter in the system is equally likely to be the next to leave. Moreover, the down-jump size is the same regardless of which exciter is next to leave. When an expiration of an activity durations means that there are no longer any presently active exciters, by definition the intensity will return to the baseline value. One can note that this down-jump size is actually always bounded below by 0 since the intensity decays down towards the baseline $\nu^*$ and bounded above by $\alpha$ since $\nu_t - \nu^* \leq \alpha Q_{t,\nu}$ due to the fact that each arrival increases $\nu_t$ by $\alpha$ before it decays. As a quick interesting fact regarding this process, in Proposition~\ref{nodowndownjumps} we show that the size of a downjump, $(\nu_t - \nu^*)\slash Q_{t,\nu}$, does not have down-jumps itself.

\begin{proposition}\label{nodowndownjumps}
Let $\phi_t = \frac{\nu_t - \nu^*}{Q_{t,\nu}}$ be the size of a down-jump occurring at time $t \geq 0$. Suppose that $b \geq a \geq 0$ is such that $Q_{t,\nu}$ is positive for all $t \in [a,b]$. Then, the $\phi_t$ has no downward jumps on $[a,b]$.
\begin{proof}
Suppose that $[a,b]$ is such as interval, and then for $t \in [a,b]$  we note that
\begin{align*}
\frac{\nu_t - \frac{\nu_t - \nu^*}{Q_{t,\nu}} - \nu^*}{Q_{t,\nu} - 1}
=
\frac{Q_t\nu_t - \nu_t + \nu^* - Q_{t,\nu}\nu^*}{Q_{t,\nu}(Q_{t,\nu} - 1)}
=
\frac{\nu_t - \nu^*}{Q_{t,\nu}},
\end{align*}
and this is equal to $\phi_t$.
\end{proof}
\end{proposition}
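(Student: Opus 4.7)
The plan is to enumerate every mechanism that can change $\phi_t$ on $[a,b]$ and verify that none produces a downward jump. Along a sample path, $\phi_t=(\nu_t-\nu^*)/Q_{t,\nu}$ can change only in three ways: (i) through the continuous exponential decay $\mathrm{d}\nu_t = \beta(\nu^*-\nu_t)\mathrm{d}t$ with $Q_{t,\nu}$ held constant; (ii) at an arrival, where $(\nu_t,Q_{t,\nu})$ jumps to $(\nu_t+\alpha, Q_{t,\nu}+1)$; and (iii) at an expiration, where $(\nu_t,Q_{t,\nu})$ jumps to $(\nu_t-\phi_t, Q_{t,\nu}-1)$. Case (i) is automatic since it is a continuous evolution and therefore contributes no jumps at all. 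Case (iii) is exactly the algebraic identity the author writes out: the direct computation
\begin{equation*}
\frac{\nu_t-\phi_t-\nu^*}{Q_{t,\nu}-1}
= \frac{Q_{t,\nu}\nu_t-\nu_t+\nu^*-Q_{t,\nu}\nu^*}{Q_{t,\nu}(Q_{t,\nu}-1)}
= \frac{\nu_t-\nu^*}{Q_{t,\nu}} = \phi_t
\end{equation*}
shows that $\phi_t$ is \emph{unchanged} across an expiration, which in particular rules out a downward jump.

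The remaining case is an arrival epoch, which the author's one-line calculation does not explicitly address, so I would spell it out. A direct calculation gives
\begin{equation*}
\phi_{t^+}-\phi_t
= \frac{\nu_t+\alpha-\nu^*}{Q_{t,\nu}+1} - \frac{\nu_t-\nu^*}{Q_{t,\nu}}
= \frac{\alpha Q_{t,\nu} - (\nu_t-\nu^*)}{Q_{t,\nu}(Q_{t,\nu}+1)},
\end{equation*}
which is non-negative exactly when $\nu_t-\nu^*\leq \alpha Q_{t,\nu}$, i.e.\ when $\phi_t\leq\alpha$. The author flags this pathwise bound in the paragraph immediately preceding the proposition, so I would quote it; combined with the three-case analysis above, this closes the proof.

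The only real obstacle is therefore the invariant $\phi_t\leq\alpha$ itself, and I would support it (if a reader pushes back) by a short induction over event epochs. Between events the bound is preserved because $\nu_t-\nu^*$ decays to $0$ while $Q_{t,\nu}$ is constant, so $\phi_t$ is non-increasing; at an arrival one has $\phi_{t^+}=\tfrac{Q_{t,\nu}}{Q_{t,\nu}+1}\phi_t+\tfrac{1}{Q_{t,\nu}+1}\alpha$, a convex combination of $\phi_t$ and $\alpha$, which stays $\leq\alpha$ whenever $\phi_t\leq\alpha$; and at an expiration $\phi$ is unchanged by case (iii). Under a natural initialization (e.g.\ $\nu_0=\nu^*+\alpha Q_0^\nu$ or any state reachable from an empty system) the invariant holds at time $0$, and the induction propagates it to all $t\in[a,b]$.
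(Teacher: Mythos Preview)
Your proof is correct and contains the paper's argument as its case~(iii): the author's proof consists solely of the algebraic identity showing $\phi_t$ is unchanged at an expiration. Your treatment is in fact more complete, since you also handle arrivals explicitly via the bound $\phi_t\leq\alpha$ (which the paper states in the paragraph before the proposition but does not invoke inside the proof) and dispose of the continuous decay trivially.
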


\subsection{Sandwiching the Hybrid Model}

As one might expect for a so-called ``hybrid'' model, the HESEP can be connected to the ESEP and the Hawkes process in many different ways. In Proposition~\ref{intenOrder}, we show that one can actual sandwich this model between its two extremes. That is, we show that the means of the processes are equal when given the same parameters, whereas the variances are ordered with Hawkes as the smallest and ESEP as the largest.

\begin{proposition}\label{intenOrder}
Let $\alpha > 0$, $\beta > 0$, and $\mu > 0$ be such that $\mu + \beta > \alpha > 0$. Additionally, let $\nu^* > 0$. Let $\nu_t$ be an HESEP with baseline intensity $\nu^*$, intensity jump size $\alpha$, decay rate $\beta$, and service rate $\mu$. Similarly, let $\lambda_t$ be the intensity of a Hawkes process with baseline intensity $\nu^*$, intensity jump $\alpha$, and decay rate $\mu + \beta$. Finally, let $\eta_t$ be the intensity of an ESEP with baseline intensity $\nu^*$, intensity jump $\alpha$, and service rate $\mu + \beta$, then the means of these process intensities are all equal:
\begin{align}
\E{\lambda_t} = \E{\nu_t} = \E{\eta_t}.
\end{align}
Furthermore, the process variances are ordered such that
\begin{align}
\Var{\lambda_t} \leq \Var{\nu_t} \leq \Var{\eta_t}.
\end{align}
Additionally, let $N_{t,\nu}$, $N_{t,\lambda}$, and $N_{t,\eta}$ be the counting processes of the HESEP, Hawkes process, and ESEP, respectively. Then, the means of these counting process are equal
\begin{align}
\E{N_{t,\lambda}} = \E{N_{t,\lambda}} = \E{N_{t,\eta}},
\end{align}
and the variances of these counting processes are again ordered such that
\begin{align}
\Var{N_{t,\lambda}} \leq \Var{N_{t,\nu}} \leq \Var{N_{t,\eta}}.
\end{align}
Finally, the covariances among each intensity and counting process pair are likewise ordered such that
\begin{align}
\Cov{\lambda_t, N_{t,\lambda}} \leq \Cov{\nu_t, N_{t,\nu}} \leq \Cov{\eta_t, N_{t,\eta}},
\end{align}
where $t \geq 0$ and where all intensities have the same initial value.
\begin{proof}
By a quick check of the differential equations for each mean, we can directly observe that $\E{\nu_t} = \E{\eta_t} = \E{\lambda_t}$. To show the variance ordering we begin by considering the ODE for the second moment of $\nu_t$:
\begin{align*}
\frac{\mathrm{d}}{\mathrm{d}t}\E{\nu_t^2}
&
=
2\beta
\left(
\nu^* \E{\nu_t}
-
\E{\nu_t^2}
\right)
+
\alpha^2 \E{\nu_t}
+
2\alpha \E{\nu_t^2}
+
\mu \E{\left(\left(\nu_t - \frac{\nu_t - \nu^*}{Q_{t,\nu}}\right)^2 - \nu_t^2\right)Q_{t,\nu}}
.
\end{align*}
Now, let's observe that
\begin{align*}
\E{\left(\left(\nu_t - \frac{\nu_t - \nu^*}{Q_{t,\nu}}\right)^2 - \nu_t^2\right)Q_{t,\nu}}
&
=
2\left(\nu^* \E{\nu_t} - \E{\nu_t^2}\right) + \E{\frac{(\nu_t - \nu^*)^2}{Q_{t,\nu}}}
,
\end{align*}
which follows by expanding the squared term. Because $\frac{(\nu_t - \nu^*)^2}{Q_{t,\nu}} \geq 0$, this gives us that
\begin{align}
\E{\left(\left(\nu_t - \frac{\nu_t - \nu^*}{Q_{t,\nu}}\right)^2 - \nu_t^2\right)Q_{t,\nu}}
&
\geq
2\left(\nu^* \E{\nu_t} - \E{\nu_t^2}\right)
.
\label{2ineq}
\end{align}
This inequality now allows us to directly compare $\frac{\mathrm{d}}{\mathrm{d}t}\E{\nu_t^2}$ to $\frac{\mathrm{d}}{\mathrm{d}t}\E{\lambda_t^2}$ and $\frac{\mathrm{d}}{\mathrm{d}t}\E{\eta_t^2}$. First, we can use Equation~\ref{2ineq} to see that
\begin{align*}
\frac{\mathrm{d}}{\mathrm{d}t}\E{\nu_t^2}
&
=
2\beta
\left(
\nu^* \E{\nu_t}
-
\E{\nu_t^2}
\right)
+
\alpha^2 \E{\nu_t}
+
2\alpha \E{\nu_t^2}
+
\mu \E{\left(\left(\nu_t - \frac{\nu_t - \nu^*}{Q_{t,\nu}}\right)^2 - \nu_t^2\right)Q_{t,\nu}}
\\
&
\geq
2(\mu+\beta)
\left(
\nu^* \E{\nu_t}
-
\E{\nu_t^2}
\right)
+
\alpha^2 \E{\nu_t}
+
2\alpha \E{\nu_t^2}
.
\end{align*}
Because we have already shown that $\E{\lambda_t} = \E{\nu_t}$, we see that $\frac{\mathrm{d}}{\mathrm{d}t}\E{\lambda_t^2} \leq \frac{\mathrm{d}}{\mathrm{d}t}\E{\nu_t^2}$ when evaluated at the same point and thus by Lemma~\ref{complemma}, $\Var{\lambda_t} \leq \Var{\nu_t}$. By analogous arguments for $\eta_t$, we achieve the stated result. For the counting process means, we can now observe that all the differential equations are such that
$$
\frac{\mathrm{d}}{\mathrm{d}t}\E{N_{t,\lambda}} = \E{\lambda_t}
= \frac{\mathrm{d}}{\mathrm{d}t}\E{N_{t,\nu}} = \E{\nu_t}
= \frac{\mathrm{d}}{\mathrm{d}t}\E{N_{t,\eta}} = \E{\eta_t}
.
$$
We assume that all counting processes start at 0 and thus we have that the counting process means are equal throughout time. This also implies that the products of means, $\E{\lambda_t}\E{N_{t,\lambda}}$, $\E{\nu_t}\E{N_{t,\nu}}$, and $\E{\eta_t}\E{N_{t,\eta}}$, are equal. Hence to show the ordering of the covariances we will focus solely on the expectations of the products. This differential equation is given by
\begin{align*}
\frac{\mathrm{d}}{\mathrm{d}t}\E{\nu_t N_{t,\nu}}
&
=
-(\mu + \beta - \alpha)\E{\nu_t N_{t,\nu}} + (\mu+\beta)\nu^*\E{N_{t,\nu}} + \alpha\E{\nu_t} + \E{\nu_t^2}
,
\end{align*}
and we can note that the coefficients are the same for each of the processes. Not including the function for which we want to solve, $\E{\nu_t N_{t,\nu}}$, we can also observe that every function is equivalent across the processes other than the second moments of the intensities. We have shown that these second moments are in fact ordered and therefore we receive the stated ordering of the covariances. Finally, we observe that the differential equation for the second moment of each counting process is of the form
$$
\frac{\mathrm{d}}{\mathrm{d}t}\E{(N_{t,\nu})^2} = \E{\nu_t} + 2\E{\nu_t N_{t,\nu}}
.
$$
From the ordering of the covariances and the equivalences of the means, we can conclude the proof.
\end{proof}
\end{proposition}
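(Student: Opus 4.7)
The strategy is to invoke Lemma~\ref{fubinifundQ} to derive moment ODEs for each of the three processes, show the mean ODEs coincide, and then compare the second-moment ODEs by identifying a non-negative ``extra'' term that discriminates between them; Lemma~\ref{complemma} will then close every resulting inequality. For the intensity means, applying the generator to $f(\nu_t)=\nu_t$ in the HESEP yields a decay contribution $-\beta(\nu_t-\nu^*)$ and an expiration contribution $-\mu Q_{t,\nu}\cdot(\nu_t-\nu^*)/Q_{t,\nu} = -\mu(\nu_t-\nu^*)$, so
\[
\frac{d}{dt}\E{\nu_t} = \alpha\E{\nu_t} - (\mu+\beta)(\E{\nu_t}-\nu^*),
\]
which is exactly the mean ODE satisfied by the Hawkes intensity with decay rate $\mu+\beta$ and by the ESEP intensity with expiration rate $\mu+\beta$. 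Matching initial conditions force $\E{\lambda_t}=\E{\nu_t}=\E{\eta_t}$, and since $\tfrac{d}{dt}\E{N_t}=\E{\mathrm{intensity}_t}$, this equality propagates to the counting process means.

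For the intensity variance ordering, expanding $(\nu_t-(\nu_t-\nu^*)/Q_{t,\nu})^2-\nu_t^2$ inside the HESEP generator and simplifying yields
\[
\frac{d}{dt}\E{\nu_t^2} = 2(\mu+\beta)(\nu^*\E{\nu_t}-\E{\nu_t^2}) + \alpha^2\E{\nu_t} + 2\alpha\E{\nu_t^2} + \mu\E{\tfrac{(\nu_t-\nu^*)^2}{Q_{t,\nu}}},
\]
whose first three right-hand-side terms exactly reproduce the Hawkes second-moment ODE. Dropping the non-negative surplus and applying Lemma~\ref{complemma} delivers $\E{\lambda_t^2}\leq\E{\nu_t^2}$, hence $\Var{\lambda_t}\leq\Var{\nu_t}$. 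For the opposite bound, the analogous ESEP second-moment ODE carries a surplus of $(\mu+\beta)\alpha(\E{\eta_t}-\nu^*)$ in place of the HESEP's $\mu\E{(\nu_t-\nu^*)^2/Q_{t,\nu}}$, so the task becomes bounding the latter above by the former. The critical ingredient is the sample-path inequality $\nu_t-\nu^*\leq\alpha Q_{t,\nu}$, which I would establish by induction over successive events: between events the decay strictly shrinks the left side, at arrivals both sides increase by $\alpha$, and at an expiration both sides scale by the common factor $(Q_{t,\nu}-1)/Q_{t,\nu}$ since the HESEP down-jump has size $(\nu_t-\nu^*)/Q_{t,\nu}$. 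This bound gives $(\nu_t-\nu^*)^2/Q_{t,\nu}\leq\alpha(\nu_t-\nu^*)$ pointwise, whence the HESEP surplus is at most $\mu\alpha(\E{\nu_t}-\nu^*)\leq(\mu+\beta)\alpha(\E{\eta_t}-\nu^*)$ via the mean equality, and Lemma~\ref{complemma} delivers $\Var{\nu_t}\leq\Var{\eta_t}$.

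For the covariances and counting process variances, the same generator approach gives a cross-ODE
\[
\frac{d}{dt}\E{\nu_t N_{t,\nu}} = -(\mu+\beta-\alpha)\E{\nu_t N_{t,\nu}} + (\mu+\beta)\nu^*\E{N_{t,\nu}} + \alpha\E{\nu_t} + \E{\nu_t^2},
\]
which takes the same functional form across all three processes; only the second-moment term $\E{\nu_t^2}$ differs. The intensity second-moment ordering from the previous paragraph, combined with Lemma~\ref{complemma}, therefore propagates directly to $\E{\lambda_t N_{t,\lambda}}\leq\E{\nu_t N_{t,\nu}}\leq\E{\eta_t N_{t,\eta}}$, which becomes the claimed covariance ordering upon subtracting the equal products of means. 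Finally, the identity $\tfrac{d}{dt}\E{N_t^2}=2\E{\mathrm{intensity}_t\,N_t}+\E{\mathrm{intensity}_t}$ lifts this to the counting process second-moment ordering and hence to the variance ordering.

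The main obstacle is the pathwise bound $\nu_t-\nu^*\leq\alpha Q_{t,\nu}$ used in the HESEP-versus-ESEP comparison: every other step is a direct application of the generator identity and Lemma~\ref{complemma}, but this bound is genuinely HESEP-specific and requires careful verification across all three event types, since the fractional down-jump $(\nu_t-\nu^*)/Q_{t,\nu}$ is exactly the feature distinguishing the hybrid process from either limiting model.
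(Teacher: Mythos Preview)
Your proof is correct and follows essentially the same route as the paper's: derive moment ODEs via the generator, identify the non-negative remainder $\mu\E{(\nu_t-\nu^*)^2/Q_{t,\nu}}$ separating the HESEP from the Hawkes second-moment ODE, and invoke Lemma~\ref{complemma} to close each comparison; then feed the intensity second-moment ordering into the common cross-moment ODE and finally into the counting-process second-moment ODE. The only difference is one of explicitness: the paper handles the upper bound $\Var{\nu_t}\leq\Var{\eta_t}$ with the phrase ``by analogous arguments,'' whereas you spell out that the required inequality $\mu\E{(\nu_t-\nu^*)^2/Q_{t,\nu}}\leq(\mu+\beta)\alpha(\E{\nu_t}-\nu^*)$ follows from the pathwise bound $\nu_t-\nu^*\leq\alpha Q_{t,\nu}$. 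That bound is not something you need to prove from scratch --- the paper records it immediately after Definition~\ref{qhDef} (it is exactly the statement that the down-jump size is at most $\alpha$), and uses it again in the diffusion-limit bound~\eqref{diffBound} --- so your ``main obstacle'' is already established in the text.
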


As a simple consequence of Propositon~\ref{intenOrder}, we can note that because the Hawkes process is over-dispersed, i.e. its variance is larger than its mean, so too are the ESEP and HESEP. One can note that these bounds on the variance of the HESEP are not only useful for comparison but also practical for the study of the HESEP itself, as the differential equations for the variance via the infinitesimal generator is not closed.


\subsection{Strong Convergence of the HESEP Counting Process}\label{subsecStrong}

In the final three subsections of analysis of the HESEP, we obtain a trio of limiting results. We begin with the almost sure convergence of the ratio of the HESEP counting process and time, which is an elementary renewal result in the style of \citet{blackwell1948renewal} or \citet{lindvall1977probabilistic}, for example. However, by comparison to the context of such works, we can bound the mean and variance of the HESEP via the ESEP and we are instead solely interested in establishing the convergence, as we will obtain additional results by consequence. Using these expressions for the first two moments, the proof of Theorem~\ref{bhthm} follows standard approaches using the Borel-Cantelli lemma. In Corollary~\ref{qhSLLN} we use this renewal result to find a strong law of large numbers for the dependent and non-identically distributed inter-arrival times of the HESEP by way of the continuous mapping theorem, which is another standard technique.

\begin{theorem}\label{bhthm}
Let $(\nu_t, Q_{t,\nu}, N_{t,\nu})$ be a HESEP with baseline intensity $\nu^*$, intensity jump $\alpha > 0$, intensity decay rate $\beta \geq 0$, and rate of exponentially distributed service $\mu \geq 0$, where $\mu + \beta > \alpha$. Then,
\begin{align}
 \frac{N_{t,\nu}}{t}
\stackrel{\mathrm{a.s.}}{\longrightarrow}
\nu_\infty
\end{align}
as $t \to \infty$, where $\nu_\infty = \frac{(\mu + \beta) \nu^*}{\mu + \beta - \alpha}$.
\begin{proof}
We will show this through use of the Borel-Cantelli Lemma. Let $\epsilon > 0$ be arbitrary and define the event $E_s$ for $s \in \mathbb{N}$ as
$$
E_s = \left\{\sup_{t \in (s^2, (s+1)^2]} \frac{|N_{t,\nu} - \E{N_{t,\nu}}|}{t} > \epsilon \right\}.
$$
We now note that $N_{t,\nu} - \E{N_{t,\nu}}$ is a martingale by definition, and so $|N_{t,\nu} - \E{N_{t,\nu}}|$ is a sub-martingale. Additionally, we can observe that
$$
\PP{E_s}
\leq
\PP{\sup_{t \in (s^2, (s+1)^2]} |N_{t,\nu} - \E{N_{t,\nu}}| > s^2 \epsilon}
$$
because $s^2 \leq t$ for any $t$. By Doob's martingale inequality, we have
$$
\PP{\sup_{t \in (s^2, (s+1)^2]} |N_{t,\nu} - \E{N_{t,\nu}}| > s^2 \epsilon}
\leq
\frac{\E{|N_{(s+1)^2,\nu} - \E{N_{(s+1)^2,\nu}}|^2}}{s^4 \epsilon^2}
=
\frac{\Var{N_{(s+1)^2,\nu}}}{s^4 \epsilon^2}
.
$$
From Proposition~\ref{intenOrder}, we note that the variance of an HESEP counting process with baseline intensity $\nu^*$, intensity jump size $\alpha$, decay rate $\beta$, and service rate $\mu$ is upper-bounded by the variance of an ESEP counting process with baseline $\nu^*$, jump size $\alpha$, and service rate $\mu + \beta$. Using the explicit form of the ESEP counting process variance as computed through Lemma~\ref{fubinifundQ}, we have the bound
\begin{align*}
&
\Var{N_{(s+1)^2,\nu}}
\leq
\Var{N_{(s+1)^2,\eta}}
=
\frac{((\mu+\beta)^2 + \alpha^2)\nu_\infty}{(\mu+\beta-\alpha)^2}(s+1)^2
-
\frac{2\alpha\mu(\nu_0 - \nu_\infty)}{(\mu+\beta-\alpha)^3}
\bigg(
e^{-(\mu+\beta - \alpha)(s+1)^2}
\\
&
\quad
+ (\mu+\beta-\alpha)(s+1)^2 e^{-(\mu+\beta -\alpha)(s+1)^2}
\bigg)
+
\left(
\frac{\nu_0 - \nu_\infty}{\mu+\beta-\alpha}
-
\frac{\alpha\mu \nu_\infty}{(\mu+\beta-\alpha)^3}
-
\frac{(\alpha^2 + \alpha(\mu+\beta))\nu_0}{(\mu+\beta-\alpha)^3}
\right)
\\
&
\quad
\cdot
\left(
1 - e^{-(\mu+\beta-\alpha)(s+1)^2}
\right)
\nonumber
+
\left(
\frac{(\alpha^2 + \alpha(\mu+\beta))\nu_0}{2(\mu+\beta-\alpha)^3} - \frac{\alpha (\mu+\beta)\nu_\infty}{2(\mu+\beta-\alpha)^3}
\right)
\left(
1 - e^{-2(\mu+\beta-\alpha)(s+1)^2}
\right)
.
\end{align*}
Together, this implies that $\PP{E_s} \in O\left(\frac{1}{s^2}\right)$. Therefore
$
\sum_{s = 0}^\infty \PP{E_s} < \infty
$,
and so by the Borel-Cantelli Lemma, $\frac{|N_{t,\nu} - \E{N_{t,\nu}}|}{t} \stackrel{\mathrm{a.s.}}{\longrightarrow} 0$. Since $\lim_{t \to \infty} \frac{\E{N_{t,\nu}}}{t} = \nu_\infty$, we complete the proof.
\end{proof}
\end{theorem}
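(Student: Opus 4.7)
The plan is to establish this strong law of large numbers via a Borel--Cantelli argument, leveraging the variance ordering from Proposition~\ref{intenOrder} to avoid working directly with the (non-closed) moment dynamics of the HESEP. The starting point is the observation that $N_{t,\nu} - \int_0^t \nu_s\, \mathrm{d}s$ is a mean-zero martingale (as $\nu_t$ is the stochastic intensity of $N_{t,\nu}$), so in particular $N_{t,\nu} - \E{N_{t,\nu}}$ is a martingale, and $|N_{t,\nu} - \E{N_{t,\nu}}|$ is a non-negative submartingale. This opens the door to applying Doob's martingale inequality to control suprema of the centered counting process.

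Next I would discretize time into blocks of the form $(s^2, (s+1)^2]$ for $s \in \mathbb{N}$, and define
\[
E_s = \Bigl\{\sup_{t \in (s^2,(s+1)^2]} \tfrac{|N_{t,\nu} - \E{N_{t,\nu}}|}{t} > \epsilon\Bigr\}
\]
for an arbitrary fixed $\epsilon > 0$. Since $t > s^2$ on this block, bounding the denominator below and applying Doob's inequality at the endpoint $(s+1)^2$ yields
\[
\PP{E_s} \leq \frac{\Var{N_{(s+1)^2,\nu}}}{s^4 \epsilon^2}.
\]
The goal is then to show $\Var{N_{(s+1)^2,\nu}} = O(s^2)$, which will give $\sum_s \PP{E_s} < \infty$ and thus, by Borel--Cantelli, $|N_{t,\nu} - \E{N_{t,\nu}}|/t \to 0$ almost surely. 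Combining this with the deterministic limit $\E{N_{t,\nu}}/t \to \nu_\infty$ (which follows from solving the linear ODE for $\E{\nu_t}$ and the stability condition $\mu + \beta > \alpha$) completes the proof.

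The key step -- and the main obstacle -- is bounding $\Var{N_{t,\nu}}$. Since closed-form second-moment dynamics are not directly available for the HESEP, the natural tool is Proposition~\ref{intenOrder}, which guarantees $\Var{N_{t,\nu}} \leq \Var{N_{t,\eta}}$ where $\eta$ is an ESEP with baseline intensity $\nu^*$, jump size $\alpha$, and expiration rate $\mu + \beta$. For this ESEP, the infinitesimal generator of Lemma~\ref{fubinifundQ} produces a closed linear system of ODEs for $\E{\eta_t}$, $\E{\eta_t^2}$, $\E{\eta_t N_{t,\eta}}$, and $\E{N_{t,\eta}^2}$ whose explicit solution (under stability) grows linearly in $t$, giving $\Var{N_{t,\eta}} = O(t)$. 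Plugging this back in yields $\PP{E_s} = O(1/s^2)$ as required.

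One subtlety worth flagging: the Doob bound controls the supremum of $|N_{t,\nu} - \E{N_{t,\nu}}|$ over the interval, not the ratio, so one must check that replacing $t$ by the left endpoint $s^2$ in the denominator is uniform enough to make the sum converge -- which it is, since $(s+1)^2/s^2 \to 1$ and the variance is merely linear in the right endpoint. Everything else is routine: monotone interpolation of $N_{t,\nu}$ between integer times is unnecessary here because Doob handles the entire sup over each block in one shot, and the mean convergence $\E{N_{t,\nu}}/t \to \nu_\infty$ follows from the exponential decay of $\E{\nu_t} - \nu_\infty$ to zero, a standard computation from the linear drift in the intensity ODE.
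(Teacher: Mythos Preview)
Your proposal is correct and follows essentially the same route as the paper's own proof: the same Borel--Cantelli argument with quadratic blocks $(s^2,(s+1)^2]$, the same use of Doob's submartingale inequality on $|N_{t,\nu}-\E{N_{t,\nu}}|$, and the same appeal to Proposition~\ref{intenOrder} to bound $\Var{N_{t,\nu}}$ by the explicitly computable ESEP variance (via Lemma~\ref{fubinifundQ}), yielding $\PP{E_s}\in O(1/s^2)$. The only cosmetic difference is that the paper writes out the full ESEP variance formula whereas you summarize it as $O(t)$, which is all that is actually needed.
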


As an immediate consequence of this, we achieve a law of large numbers for the dependent inter-arrival times.

\begin{corollary}\label{qhSLLN}
Let $(\nu_t, Q_{t,\nu}, N_{t,\nu})$ be an HESEP counting process with baseline intensity $\nu^* > 0$, intensity jump $\alpha > 0$, intensity decay rate $\beta \geq 0$, and rate of exponentially distributed service $\mu \geq 0$, where $\mu + \beta > \alpha$. Further, let $S_k^\nu$ denote the $k^\text{th}$ inter-arrival time for $k \in \mathbb{Z}^+$. Then,
\begin{align}
\frac{1}{n}\sum_{k=1}^n S_k^\nu \stackrel{\mathrm{a.s.}}{\longrightarrow} \frac{1}{\nu_\infty}
\end{align}
as $n \to \infty$, where $\nu_\infty = \frac{(\mu+\beta) \nu^*}{\mu+\beta - \alpha}$.
\begin{proof}
Let $A_n^\nu$ denote the time of the $n^\text{th}$ arrival for each $n \in \mathbb{Z}^+$, which is to say that $A_n^\nu = \sum_{k=1}^n S_k^\nu$. Now, observe that the time of the most recent arrival up to time $t$, $A_{N_{t,\nu}}^\nu$, can be bounded as
$$
t - S_{N_{t,\nu} + 1}^\nu \leq A_{N_{t,\nu}}^\nu \leq t,
$$
since if $t - S_{N_{t,\nu} + 1}^\nu > A_{N_{t,\nu}}^\nu$, then arrival $N_{t,\nu} + 1$ would have occurred before time $t$. Now, we also note that because $\nu^* > 0$ then $N_{t,\nu} \to \infty$ as $t \to \infty$ and this implies that
$$
\frac{S_{N_{t,\nu} + 1}}{N_{t,\nu}} \stackrel{\mathrm{a.s.}}{\longrightarrow} 0
$$
as $t \to \infty$. From Proposition~\ref{bhthm} and the continuous mapping theorem, we know that $\frac{t}{N_{t,\nu}} \to \frac{1}{\nu_\infty}$ and $\frac{t - S_{N_{t,\nu} + 1}}{N_{t,\nu}} \to \frac{1}{\nu_\infty}$ almost surely. By the sandwiching $A_{N_{t,\nu}}$, this yields the stated result.
\end{proof}
\end{corollary}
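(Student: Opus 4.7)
The plan is to convert the statement about inter-arrival times into a statement about the arrival epochs, and then invert the almost-sure limit provided by Theorem~\ref{bhthm}. Writing $A_n^\nu = \sum_{k=1}^n S_k^\nu$ for the time of the $n^{\text{th}}$ arrival, the telescoping identity $\frac{1}{n}\sum_{k=1}^n S_k^\nu = \frac{A_n^\nu}{n}$ reduces the goal to showing $A_n^\nu/n \to 1/\nu_\infty$ almost surely as $n \to \infty$.

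Before inverting, I would first establish two background facts. One, since $\nu_t \geq \nu^* > 0$ at all times, the HESEP dominates a homogeneous Poisson process of rate $\nu^*$, which gives $N_{t,\nu} \to \infty$ a.s. as $t \to \infty$. Two, $A_n^\nu \to \infty$ a.s. as $n \to \infty$; this is the dual non-explosion property, and it follows from the finiteness of $\E{N_{t,\nu}}$ for every finite $t$, which is in turn bounded above by the corresponding ESEP mean via Proposition~\ref{intenOrder}. Together these tell us that the random times $t = A_n^\nu$ form an almost sure subsequence diverging to infinity, along which $N_{t,\nu}$ hits every positive integer.

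The cleanest finish is then to evaluate the limit of Theorem~\ref{bhthm} along this subsequence: with the counting convention that an arrival at time $A_n^\nu$ contributes to $N_{A_n^\nu,\nu}$, we have $N_{A_n^\nu,\nu} = n$, so Theorem~\ref{bhthm} specializes to $n/A_n^\nu \to \nu_\infty$ almost surely. Applying the continuous mapping theorem to $x \mapsto 1/x$ on the event $\{\nu_\infty > 0\}$ then yields $A_n^\nu/n \to 1/\nu_\infty$ a.s., which is the desired conclusion. Equivalently, one can use the sandwich $t - S_{N_{t,\nu}+1}^\nu \leq A_{N_{t,\nu}}^\nu \leq t$, divide by $N_{t,\nu}$, and combine with $t/N_{t,\nu} \to 1/\nu_\infty$.

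The main obstacle, in the sandwich version, is showing $S_{N_{t,\nu}+1}^\nu/N_{t,\nu} \to 0$ a.s., since the inter-arrival times are neither independent nor identically distributed and could in principle contain large outliers as the intensity fluctuates. I would sidestep this difficulty by using the subsequence approach above, which never needs to control the residual boundary term and relies only on Theorem~\ref{bhthm} together with the a.s. divergence of $A_n^\nu$.
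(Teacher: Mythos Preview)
Your proposal is correct and rests on the same ingredients as the paper's proof: the telescoping identity $A_n^\nu=\sum_{k=1}^n S_k^\nu$, Theorem~\ref{bhthm}, and the continuous mapping theorem. The only difference is in how you pass from $N_{t,\nu}/t\to\nu_\infty$ to $A_n^\nu/n\to 1/\nu_\infty$. The paper uses the sandwich $t-S_{N_{t,\nu}+1}^\nu\le A_{N_{t,\nu}}^\nu\le t$, divides by $N_{t,\nu}$, and asserts $S_{N_{t,\nu}+1}^\nu/N_{t,\nu}\to 0$ a.s.\ from $N_{t,\nu}\to\infty$ alone; you instead establish non-explosion ($A_n^\nu\to\infty$ a.s.) and then read off the limit of Theorem~\ref{bhthm} along the random times $t=A_n^\nu$, where $N_{A_n^\nu,\nu}=n$. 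Your subsequence route is arguably cleaner, since the paper's step $S_{N_{t,\nu}+1}^\nu/N_{t,\nu}\to 0$ is not an automatic consequence of $N_{t,\nu}\to\infty$ (it needs, for instance, the stochastic domination of each $S_k^\nu$ by an $\mathrm{Exp}(\nu^*)$ variable that you allude to). Both approaches are standard renewal-theory inversions and arrive at the same conclusion.
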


Because the Hawkes process and the ESEP are special cases of this hybrid model, we can note that both the renewal result and the law of large numbers apply directly to each.

\begin{corollary}
Let $(\lambda_t, N_{t,\lambda})$ be the intensity and count of a Hawkes process with baseline intensity $\lambda^* > 0$, intensity jump $\alpha > 0$, and decay rate $\beta > \alpha$. Similarly, let $(\eta_t, N_{t,\eta})$ be the intensity and counting process pair for an ESEP with baseline intensity $\nu^* > 0$, intensity jump $\alpha > 0$, and rate of exponentially distributed service $\mu > \alpha$. Then, for $S^\lambda_k$ and $S^\eta_k$ as the $k^\text{th}$ inter-arrival times for the  Hawkes process and the ESEP process, respectively, we have that
\begin{align}
&
\quad
 \frac{N_{t,\lambda}}{t}
\stackrel{\mathrm{a.s.}}{\longrightarrow}
\lambda_\infty
,
&
&
\quad
\frac{N_{t,\eta}}{t}
\stackrel{\mathrm{a.s.}}{\longrightarrow}
\eta_\infty
,
\intertext{and}
&
\frac{1}{n}\sum_{k=1}^n S_k^\lambda \stackrel{\mathrm{a.s.}}{\longrightarrow} \frac{1}{\lambda_\infty}
,&
&
\frac{1}{n}\sum_{k=1}^n S_k^\eta \stackrel{\mathrm{a.s.}}{\longrightarrow} \frac{1}{\eta_\infty}
,
\end{align}
where $\lambda_\infty = \frac{\beta \lambda^*}{\beta - \alpha}$ and $\eta_\infty = \frac{\mu \nu^*}{\mu - \alpha}$.
\end{corollary}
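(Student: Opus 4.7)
The plan is to observe that this corollary is an immediate specialization of Theorem~\ref{bhthm} and Corollary~\ref{qhSLLN}. From Definition~\ref{qhDef}, the HESEP parameters $\mu$ and $\beta$ are each allowed to vanish. Setting $\mu = 0$ eliminates the random-expiration mechanism (the final term of the stochastic differential equation drops out because $D_{t,\nu}$ no longer jumps), leaving precisely the Markovian Hawkes intensity SDE $\mathrm{d}\nu_t = \beta(\nu^* - \nu_t)\mathrm{d}t + \alpha\,\mathrm{d}N_{t,\nu}$. Conversely, setting $\beta = 0$ removes the deterministic drift, leaving only up-jumps of size $\alpha$ at arrivals and the full down-jump $\nu_t - \nu^*$ upon each expiration — and since $\beta = 0$ implies $\nu_t - \nu^* = \alpha Q_{t,\nu}$ at all times, the down-jump size $(\nu_t - \nu^*)/Q_{t,\nu}$ is exactly $\alpha$, recovering the ESEP of Definition~\ref{esepDef}.

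With this identification in hand, I would read the conclusions off directly. The limiting constant in Theorem~\ref{bhthm} is $\nu_\infty = (\mu+\beta)\nu^*/(\mu+\beta-\alpha)$, which reduces to $\beta\lambda^*/(\beta-\alpha) = \lambda_\infty$ when $\mu = 0$ (and the baseline is relabeled $\lambda^*$), and to $\mu\nu^*/(\mu-\alpha) = \eta_\infty$ when $\beta = 0$. The stability requirement $\mu+\beta > \alpha$ of Theorem~\ref{bhthm} is supplied by the standing hypotheses $\beta > \alpha$ and $\mu > \alpha$ of the Hawkes and ESEP processes, respectively, so the renewal-type almost-sure limits $N_{t,\lambda}/t \to \lambda_\infty$ and $N_{t,\eta}/t \to \eta_\infty$ follow without further work. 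The two strong laws of large numbers for the dependent inter-arrival sequences are then obtained by the same substitutions in Corollary~\ref{qhSLLN}, whose proof passed from the counting-process limit to the inter-arrival-time limit by sandwiching $A^\nu_{N_{t,\nu}}$ and invoking the continuous mapping theorem; nothing in that argument is specific to the hybrid parameters.

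There is essentially no obstacle beyond confirming these two reductions. The most one would want to verify is that the variance ordering of Proposition~\ref{intenOrder} — which underlies the Doob-plus-Borel-Cantelli estimate in the proof of Theorem~\ref{bhthm} — remains available when one of $\mu$ or $\beta$ is zero. This is transparent: in the Hawkes specialization the HESEP variance coincides with the Hawkes variance (the upper ESEP bound with service rate $\beta$ is still finite and still dominates), and in the ESEP specialization the HESEP variance coincides with the ESEP variance itself. Hence both parts of the corollary are corollaries in the strict sense, and the proof can be stated in a single short paragraph.
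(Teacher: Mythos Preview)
Your proposal is correct and matches the paper's approach exactly: the paper states this corollary without proof, noting just before it that ``the Hawkes process and the ESEP are special cases of this hybrid model,'' so the result follows immediately by specializing Theorem~\ref{bhthm} and Corollary~\ref{qhSLLN} to $\mu = 0$ and $\beta = 0$ respectively. Your additional remarks on why the variance bound survives these degenerations are more detail than the paper supplies, but they are accurate and harmless.
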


\subsection{Baseline Fluid Limit of the HESEP} \label{fluidlim}

In this subsection and in the sequel, we consider a baseline scaling of the HESEP. That is, we  investigate limiting properties of the process as the baseline intensity grows large and the intensity and queue length are normalized in some fashion. To begin, we take the normalization as directly proportional to the baseline scaling, which is the fluid limit. The derivation of this is empowered by the following lemma, which allows us to make use of Taylor expansions.

\begin{lemma} \label{explemma}
Suppose that for some $b > 0$, $ -b \leq z_n(t) \leq 0$ for all values of $n$. Then there exists constants $C_1$ and $C_2$ where $C_1 \leq C_2$, which imply the following bounds for sufficiently large values of $n$
\begin{eqnarray}
z_{n}(t) + \frac{C_1}{n} \leq n \cdot \left( e^{ \frac{ z_n(t) }{n} } - 1 \right) \leq z_{n}(t) + \frac{C_2}{n}
.
\end{eqnarray}
\begin{proof}
The proof follows by performing a second order Taylor expansion for the exponential function and observing that since $z_n(t)$ lies in a compact interval, we can construct uniform lower and upper bounds for the exponential function.
\end{proof}
\end{lemma}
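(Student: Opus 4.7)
The plan is to apply Taylor's theorem with Lagrange remainder to $e^x$ centered at $x = 0$, and then use the compactness of the interval $[-b, 0]$ containing $z_n(t)$ to convert the remainder term into uniform constants $C_1$ and $C_2$. Specifically, I will first recall that by Taylor's theorem, for every $x \in \mathbb{R}$ there exists some $\xi$ between $0$ and $x$ such that $e^x = 1 + x + \tfrac{x^2}{2}e^{\xi}$. Setting $x = z_n(t)/n$ and multiplying both sides by $n$ will yield the key identity
$$n\left(e^{z_n(t)/n} - 1\right) \;=\; z_n(t) \;+\; \frac{z_n(t)^2}{2n}\,e^{\xi_n},$$
where $\xi_n$ lies between $0$ and $z_n(t)/n$, so that in particular $-b/n \leq \xi_n \leq 0$.

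The second step is to bound the remainder term $\frac{z_n(t)^2}{2n}e^{\xi_n}$ uniformly in $t$ and $n$. Using $-b \leq z_n(t) \leq 0$, I obtain $0 \leq z_n(t)^2 \leq b^2$, and using $-b/n \leq \xi_n \leq 0$, I obtain $e^{-b/n} \leq e^{\xi_n} \leq 1$. Multiplying these bounds together gives
$$0 \;\leq\; \frac{z_n(t)^2}{2n}\,e^{\xi_n} \;\leq\; \frac{b^2}{2n},$$
so taking $C_1 = 0$ and $C_2 = b^2/2$ delivers the claimed inequalities. In fact, these choices work for every $n \geq 1$, so the phrase ``sufficiently large $n$'' in the statement is not strictly needed; I would note this in passing but keep the stated hypothesis intact.

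There is no substantial obstacle here — the argument is essentially just Taylor's theorem plus a compactness observation. The only points requiring care are: (i) making sure the remainder comes out non-negative, which follows immediately from $z_n(t)^2 \geq 0$ and $e^{\xi_n} > 0$; and (ii) verifying $C_1 \leq C_2$, which holds trivially with the choices above. Thus the entire proof reduces to writing down the Taylor expansion, invoking the bound $|z_n(t)| \leq b$, and concluding.
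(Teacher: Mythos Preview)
Your proposal is correct and follows exactly the approach indicated in the paper's proof sketch: a second-order Taylor expansion of the exponential combined with the uniform bound $|z_n(t)| \leq b$ to control the remainder. You have simply filled in the details that the paper omits, including the explicit choices $C_1 = 0$ and $C_2 = b^2/2$.
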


With this lemma in hand, we now proceed to finding the fluid limit in Theorem~\ref{basefluid}. In this case, we scale the baseline intensity by $n$, whereas we scale the intensity and the queue length by $\frac{1}{n}$. As one would expect to see, we find that the fluid limit converges to the means of the intensity and queue.

\begin{theorem} \label{basefluid}
For $n \in \mathbb{Z}$, let the $n^\text{th}$ fluid-scaled HESEP $(\nu_t(n), Q_{t,\nu}(n))$ be defined such that the baseline intensity is $n\nu^*$, the intensity jump size is $\alpha > 0$, the intensity decay rate is $\beta \geq 0$, and  the rate of exponentially distributed service is $\mu > 0$, where $\mu + \beta > \alpha$. Then, for the scaled quantities $(\frac{\nu_{t,\nu}(n)}{n}, \frac{Q_{t,\nu}(n)}{n} )$, the limit of the moment generating function
\begin{align}
\tilde {\mathcal{M}}^{\infty}(t,\theta_{\nu},\theta_Q)
\equiv
 \lim_{n \to \infty} \E{e^{\frac{\theta_{\nu}}{n} \nu_t(n) + \frac{\theta_Q}{n} Q_{t,\nu}(n)} }
 ,
\end{align}
is given by
\begin{align}
\tilde {\mathcal{M}}^{\infty}(t,\theta_{\nu},\theta_Q)
&
=
e^{\theta_\nu \E{\nu_t} + \theta_Q \E{Q_{t,\nu}}}
,
\end{align}
for all $t \geq 0$.
\begin{proof}
The proof will follow in two steps.  The first step is to show that the limiting moment generating function converges to a PDE given by $\tilde{\mathcal{M}}^{\infty}$ using properties of the exponential function and Lemma \ref{explemma}.  The second step is to solve this PDE using the method of characteristics.  Finally, by the uniqueness of moment generating functions, we can assert that the random variables to which our limit converges are deterministic functions of time, which are also known as the fluid limit. We begin with the infinitesimal generator form which simplifies through the linearity of expectation as
\begin{align*}
&
\frac{\partial}{\partial t} \tilde{\mathcal{M}}^n(t, \theta_\nu, \theta_Q)
\equiv
\frac{\partial }{\partial t} \E{e^{\frac{\theta_{\nu}} {n} \nu_t(n)  + \frac{\theta_Q }{n} Q_{t,\nu}(n)  } }
\\
&
\quad
=
\E{\beta (\nu^* n - \nu_t(n)) \frac{\theta_\nu}{n}  e^{\frac{\theta_{\nu}} {n} \nu_t(n)  + \frac{\theta_Q }{n} Q_{t,\nu}(n)  } }
+
\E{\nu_t(n) \left( e^{\frac{\alpha\theta_\nu}{n} + \frac{\theta_Q}{n}} - 1\right) e^{\frac{\theta_{\nu}} {n} \nu_t(n)  + \frac{\theta_Q }{n} Q_{t,\nu}(n)  } }
\\
&
\quad
+
\E{\mu Q_{t,\nu}(n) \left( e^{-\frac{\theta_\nu(\nu_t(n) - \nu^*n)}{n Q_{t,\nu}(n)} - \frac{\theta_Q }{n}} - 1 \right) e^{\frac{\theta_{\nu}} {n} \nu_t(n)  + \frac{\theta_Q }{n} Q_{t,\nu}(n)  } }
\\
&
=
\beta \nu^* \theta_\nu \E{  e^{\frac{\theta_{\nu}} {n} \nu_t(n)  + \frac{\theta_Q }{n} Q_t(n)  } }
-
\beta \theta_\nu
\E{ \frac{\nu_t(n)}{n}  e^{\frac{\theta_{\nu}} {n} \nu_t(n)  + \frac{\theta_Q }{n} Q_{t,\nu}(n)  } }
\\
&
\quad
+
n\left( e^{\frac{\alpha\theta_\nu}{n} + \frac{\theta_Q}{n}} - 1\right)
\E{\frac{\nu_t(n)}{n}  e^{\frac{\theta_{\nu}} {n} \nu_t(n)  + \frac{\theta_Q }{n} Q_t(n)  } }
\\
&
\quad
+
\frac{\mu}{n}
\E{ Q_{t,\nu}(n) n\left( e^{-\frac{\theta_\nu(\nu_t(n) - \nu^*n)}{n Q_{t,\nu}(n)} - \frac{\theta_Q }{n}} - 1 \right) e^{\frac{\theta_{\nu}} {n} \nu_t(n)  + \frac{\theta_Q }{n} Q_{t,\nu}(n)  } }
\\
&
=
\beta \nu^* \theta_\nu \tilde{\mathcal{M}}(t, \theta_\nu, \theta_Q)
+
\left(
n\left( e^{\frac{\alpha\theta_\nu}{n} + \frac{\theta_Q}{n}} - 1\right)
-
\beta \theta_\nu
\right)
\frac{\partial}{\partial \theta_\nu} \tilde{\mathcal{M}}(t, \theta_\nu, \theta_Q)
\\
&
\quad
+
\frac{\mu}{n}
\E{ Q_{t,\nu}(n) \left( -\frac{\theta_\nu(\nu_t(n) - \nu^*n)}{ Q_{t,\nu}(n)} - \theta_Q  + \frac{\epsilon_n}{n} \right) e^{\frac{\theta_{\nu}} {n} \nu_t(n)  + \frac{\theta_Q }{n} Q_{t,\nu}(n)  } }
,
\intertext{where the last equality holds for sufficiently large $n$, where $\epsilon_n$ is in some bounded interval as according to Lemma~\ref{explemma}. Then, by rearranging further we can see that in limit this becomes}
&
\frac{\partial}{\partial t} \tilde{\mathcal{M}}^n(t, \theta_\nu, \theta_Q)
\\
&
=
\beta \nu^* \theta_\nu M^n(t, \theta_\nu, \theta_Q)
+
\left(
n\left( e^{\frac{\alpha\theta_\nu}{n} + \frac{\theta_Q}{n}} - 1\right)
-
\beta \theta_\nu
\right)
\frac{\partial}{\partial \theta_\nu} \tilde{\mathcal{M}}^n(t, \theta_\nu, \theta_Q)
-
\mu \theta_\nu
\E{ \frac{\nu_t(n)}{n}  e^{\frac{\theta_{\nu}} {n} \nu_t(n)  + \frac{\theta_Q }{n} Q_{t,\nu}(n)  } }
\\
&
\quad
+
\mu \theta_\nu  \nu^*
\E{    e^{\frac{\theta_{\nu}} {n} \nu_t(n)  + \frac{\theta_Q }{n} Q_{t,\nu}(n)  } }
-
\mu \theta_Q
\E{ \frac{Q_{t,\nu}(n)}{n} e^{\frac{\theta_{\nu}} {n} \nu_t(n)  + \frac{\theta_Q }{n} Q_{t,\nu}(n)  } }
+
\frac{\mu \epsilon_n}{n}
\E{ Q_{t,\nu}(n)  e^{\frac{\theta_{\nu}} {n} \nu_t(n)  + \frac{\theta_Q }{n} Q_{t,\nu}(n)  } }
\\
&
=
(\mu + \beta) \nu^* \theta_\nu \tilde{\mathcal{M}}^n(t, \theta_\nu, \theta_Q)
+
\left(
n\left( e^{\frac{\alpha\theta_\nu}{n} + \frac{\theta_Q}{n}} - 1\right)
-
(\mu + \beta) \theta_\nu
\right)
\frac{\partial}{\partial \theta_\nu} \tilde{\mathcal{M}}^n(t, \theta_\nu, \theta_Q)
\\
&
\quad
-
\mu \theta_Q
\frac{\partial}{\partial \theta_Q} \tilde{\mathcal{M}}^n(t, \theta_\nu, \theta_Q)
+
\frac{\mu \epsilon_n}{n^2}
\E{ Q_{t,\nu}(n)  e^{\frac{\theta_{\nu}} {n} \nu_t(n)  + \frac{\theta_Q }{n} Q_{t,\nu}(n)  } }
\\
&
\stackrel{n \to \infty}{\longrightarrow}
(\mu + \beta) \nu^* \theta_\nu \tilde{\mathcal{M}}^\infty(t, \theta_\nu, \theta_Q)
+
\left(
\theta_Q
-
(\mu + \beta - \alpha) \theta_\nu
\right)
\frac{\partial}{\partial \theta_\nu} \tilde{\mathcal{M}}^\infty(t, \theta_\nu, \theta_Q)
-
\mu \theta_Q
\frac{\partial}{\partial \theta_Q} \tilde{\mathcal{M}}^\infty(t, \theta_\nu, \theta_Q)
.
\end{align*}
We now solve this partial differential equation for $\tilde{\mathcal{M}}^\infty(t, \theta_\nu, \theta_Q)$ through the method of characteristics. For simplicity's sake we will instead use this procedure to solve for $G(t,\theta_\nu,\theta_Q) = \log\left(\tilde{\mathcal{M}}^\infty(t, \theta_\nu, \theta_Q)\right)$. This PDE is given by
$$
(\mu + \beta) \nu^* \theta_\nu
=
\frac{\partial}{\partial t}G(t,\theta_\nu,\theta_Q)
+
\mu \theta_Q
\frac{\partial}{\partial \theta_Q }G(t,\theta_\nu,\theta_Q)
+
\left(
(\mu + \beta - \alpha) \theta_\nu - \theta_Q
\right)
\frac{\partial}{\partial \theta_\nu }G(t,\theta_\nu,\theta_Q)
,
$$
with boundary condition $G(0,\theta_\nu,\theta_Q) = \theta_Q Q_0 + \theta_\nu \nu_0$. This corresponds to the following system of characteristics equations:
\begin{align*}
\frac{\mathrm{d} \theta_Q}{\mathrm{d}z}(x,y,z)
&
=
\mu \theta_Q
,
&&
\theta_Q(x,y,0) = x
\\
\frac{\mathrm{d} \theta_\nu}{\mathrm{d}z}(x,y,z)
&
=
(\mu + \beta - \alpha) \theta_\nu - \theta_Q
,
&&
\theta_\nu(x,y,0) = y
\\
\frac{\mathrm{d} t}{\mathrm{d}z}(x,y,z)
&
=
1
,
&&
t(x,y,0) = 0
\\
\frac{\mathrm{d} g}{\mathrm{d}z}(x,y,z)
&
=
(\mu + \beta) \nu^* \theta_\nu
=
(\mu+\beta-\alpha)\nu_\infty \theta_\nu
,
&&
g(x,y,0) = x Q_0 + y \nu_0
.
\end{align*}
If $\beta \ne \alpha$, the solutions to these initial value problems are given by:
\begin{align*}
 \theta_Q(x,y,z)
&
=
x e^{\mu z}
,
\\
 \theta_\nu(x,y,z)
&
=
y e^{(\mu+\beta-\alpha)z}
+
\frac{x}{\beta-\alpha}\left(e^{\mu z} - e^{(\mu+\beta-\alpha)z}\right)
,
\\
&
=
\left( y - \frac{x}{\beta - \alpha}\right)e^{(\mu + \beta - \alpha)z}
+
\frac{x e^{\mu z}}{\beta-\alpha}
,
\\
 t(x,y,z)
&
=
z
,
\\
g(x,y,z)
&
=
x Q_0 + y \nu_0
+
\nu_\infty \left( y - \frac{x}{\beta - \alpha}\right)\left( e^{(\mu + \beta - \alpha)z} - 1 \right)
+
\frac{x \nu_\infty (\mu+\beta -\alpha) (e^{\mu z}-1)}{\mu(\beta-\alpha)}
.
\end{align*}
Now, we can solve for the characteristic variables in terms of the original variables and find
$
x
=
\theta_Q e^{-\mu t}
$,
$
y
=
\theta_\nu e^{-(\mu+\beta-\alpha)t}
+
\frac{\theta_Q}{\beta-\alpha}\left(e^{-\mu t} - e^{-(\mu+\beta-\alpha)t}\right)
$,
and
$
z
=
t
$,
so this gives a PDE solution of
\begin{align*}
G(t,\theta_Q, \theta_\nu)
&
=
g\left(
\theta_Q e^{-\mu t}
,
\theta_\nu e^{-(\mu+\beta-\alpha)t} + \frac{\theta_Q}{\beta-\alpha}\left(e^{-\mu t} - e^{-(\mu+\beta-\alpha)t}\right)
,
t
\right)
\\
&
=
Q_0
\theta_Q e^{-\mu t}
+
\nu_0
\left(
\theta_\nu e^{-(\mu+\beta-\alpha)t} + \frac{\theta_Q}{\beta-\alpha}\left(e^{-\mu t} - e^{-(\mu+\beta-\alpha)t}\right)
\right)
\\
&
\quad
+
\nu_\infty
\left(
\theta_\nu  - \frac{\theta_Q}{\beta-\alpha}
\right)
\left( 1 - e^{-(\mu+\beta-\alpha)t} \right)
+
\frac{\theta_Q  \nu_\infty (\mu+\beta -\alpha) (1-e^{-\mu t})}{\mu(\beta-\alpha)}
.
\end{align*}
If instead $\beta = \alpha$, the solutions to the characteristic ODE's are as follows:
\begin{align*}
 \theta_Q(x,y,z)
&
=
x e^{\mu z}
,
\\
 \theta_\nu(x,y,z)
&
=
e^{\mu z} \left( y  - x z \right)
,
\\
 t(x,y,z)
&
=
z
,
\\
g(x,y,z)
&
=
x Q_0 + y \nu_0
+
\nu_\infty y \left( e^{\mu z} - 1 \right)
-
\frac{x \nu_\infty}{\mu}  \left(e^{\mu z}(\mu z - 1) + 1\right)
.
\end{align*}
This makes our expressions for the characteristic variables
$
x
=
\theta_Q e^{-\mu t}
$,
$
y
=
\theta_\nu e^{-\mu t} + \theta_Q t e^{-\mu t}
$,
and
$
z
=
t
$
.
This now makes the PDE solution
\begin{align*}
G(t,\theta_Q, \theta_\nu)
&
=
g\left(
\theta_Q e^{-\mu t}
,
\theta_\nu e^{-\mu t} + \theta_Q t e^{-\mu t}
,
t
\right)
\\
&
=
Q_0 \theta_Q e^{-\mu t}
+
\nu_0 \theta_\nu e^{-\mu t}
+
\nu_0 \theta_Q t e^{-\mu t}
+
\nu_\infty \left( \theta_\nu  + \theta_Q t  \right) \left( 1 - e^{-\mu t} \right)
-
\frac{ \nu_\infty \theta_Q }{\mu}  \left(\mu t - 1 + e^{-\mu t}\right)
,
\end{align*}
and we can observe that each of these cases simplify to the corresponding means of the queue and the intensity, which yields the stated result.
\end{proof}
\end{theorem}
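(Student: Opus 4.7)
The plan is to execute the two-step strategy hinted at in the statement: first, derive the limiting PDE for $\tilde{\mathcal{M}}^\infty(t,\theta_\nu,\theta_Q)$ from the infinitesimal generator by using Taylor expansions to control the $1/n$-scaled exponential factors; second, solve the resulting first-order linear PDE by the method of characteristics and verify that its solution is the exponential of a linear function of $(\theta_\nu,\theta_Q)$, which by uniqueness of MGFs identifies the limit as a deterministic fluid trajectory equal to the means of the unscaled HESEP. To carry out the first step, I would apply Lemma~\ref{fubinifundQ} with the test function $f(\nu,Q) = e^{\frac{\theta_\nu}{n}\nu + \frac{\theta_Q}{n}Q}$ on the scaled process, producing three contributions: a continuous drift term $\beta(n\nu^* - \nu_t(n))$ hitting $\partial f/\partial \nu$, an arrivals multiplicative factor $n(e^{\alpha\theta_\nu/n + \theta_Q/n} - 1)$ once $f$ is pulled out of the expectation, and an expirations multiplicative factor $n(e^{-\theta_\nu(\nu_t(n)-n\nu^*)/(nQ_{t,\nu}(n)) - \theta_Q/n} - 1)$. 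Lemma~\ref{explemma} handles both exponential factors once their arguments are shown to lie in a uniform compact interval: for arrivals this is immediate, and for expirations one uses that every arrival increases $\nu_t(n)$ by exactly $\alpha$ while between arrivals $\nu_t(n)$ only drifts toward $n\nu^*$, so the ratio $(\nu_t(n)-n\nu^*)/Q_{t,\nu}(n)$ lies a.s.\ in $[0,\alpha]$. Expanding, the terms linear in $\nu_t(n)/n$ and $Q_{t,\nu}(n)/n$ are recognized as $\partial/\partial\theta_\nu$ and $\partial/\partial\theta_Q$ applied to $\tilde{\mathcal{M}}^n$, and the remainder contributes at most a multiple of $\E{Q_{t,\nu}(n)/n^2}$, which vanishes as $n\to\infty$ via the $O(n)$ mean bound inherited from the ESEP domination in Proposition~\ref{intenOrder}. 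This yields the stated limiting PDE
\begin{align*}
\frac{\partial \tilde{\mathcal{M}}^\infty}{\partial t}
&= (\mu+\beta)\nu^*\theta_\nu\, \tilde{\mathcal{M}}^\infty + \bigl(\theta_Q - (\mu+\beta-\alpha)\theta_\nu\bigr)\frac{\partial \tilde{\mathcal{M}}^\infty}{\partial\theta_\nu} - \mu\theta_Q\frac{\partial \tilde{\mathcal{M}}^\infty}{\partial\theta_Q},
\end{align*}
with linear initial data $\log\tilde{\mathcal{M}}^\infty(0,\theta_\nu,\theta_Q) = \theta_\nu\nu_0 + \theta_Q Q_0$.

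For the second step, I would work with $G = \log\tilde{\mathcal{M}}^\infty$ to eliminate the multiplicative coupling, yielding a linear transport equation with a linear-in-$\theta_\nu$ source. The characteristic system is linear: $\dot\theta_Q = \mu\theta_Q$ decouples and $\dot\theta_\nu = (\mu+\beta-\alpha)\theta_\nu - \theta_Q$ is a scalar linear inhomogeneous ODE, both solvable in closed form. Because the source along characteristics remains linear in the initial coordinates $(x,y)$, integrating keeps $g$ linear in $(x,y)$ at every time $z$; inverting the characteristic map therefore produces $G(t,\theta_\nu,\theta_Q) = A(t)\theta_\nu + B(t)\theta_Q$ for deterministic coefficients $A,B$. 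The subcase $\beta = \alpha$ is the degenerate eigenvalue case and must be handled separately but yields the same linear-in-$(\theta_\nu,\theta_Q)$ structure. Finally, one observes that the mean equations of the unscaled HESEP are actually closed and linear, because the expirations generator contributes $\mu\E{Q_{t,\nu}\cdot(\nu_t-\nu^*)/Q_{t,\nu}} = \mu(\E{\nu_t}-\nu^*)$, so the nonlinear down-jump cancels at the level of means. Matching $A(t)$ and $B(t)$ against this linear ODE system (or equivalently against the coefficients computed from the closed-form characteristic integrations) confirms $A(t) = \E{\nu_t}$ and $B(t) = \E{Q_{t,\nu}}$. Uniqueness of MGFs then gives $(\nu_t(n)/n, Q_{t,\nu}(n)/n) \stackrel{D}{\Longrightarrow} (\E{\nu_t},\E{Q_{t,\nu}})$ as desired.

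The hard part, I expect, will be the expirations bookkeeping. The Taylor remainder in Lemma~\ref{explemma} is only $O(1/n)$, but in the generator it is multiplied by $Q_{t,\nu}(n)$, so the actual contribution carries a $Q_{t,\nu}(n)/n$ prefactor and one needs a uniform-integrability argument against the exponential weight $e^{\frac{\theta_\nu}{n}\nu_t(n) + \frac{\theta_Q}{n}Q_{t,\nu}(n)}$ to conclude that it vanishes in the limit. The ESEP moment dominations from Proposition~\ref{intenOrder} supply the needed bound, but the combinatorics of carrying the remainder through while simultaneously extracting the clean $-\mu\theta_Q\partial/\partial\theta_Q$ structure is the most delicate accounting in the proof. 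Once that step is secure, the recognition of the partial-derivative structure, the passage to the limit, and the characteristic-based solution of the PDE are essentially mechanical.
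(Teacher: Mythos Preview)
Your proposal is correct and follows essentially the same approach as the paper: apply the generator to the scaled MGF, use Lemma~\ref{explemma} to linearize the exponential increments (with the $[0,\alpha]$ bound on the down-jump size providing the compact interval), pass to the limiting first-order PDE, and solve for $G=\log\tilde{\mathcal{M}}^\infty$ by characteristics, treating $\beta=\alpha$ as a separate degenerate case. If anything you are more careful than the paper about the remainder bookkeeping, explicitly flagging the need for a moment bound on $Q_{t,\nu}(n)/n$ against the exponential weight, which the paper handles only formally.
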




\subsection{Baseline Diffusion Limit of the HESEP} \label{difflim}

To now consider a diffusion limit we will still scale the baseline intensity by $n$, but we now instead scale the process intensity and the queue length by $\frac{1}{\sqrt{n}}$. More specifically, we scale the centered version of the processes by $\frac{1}{\sqrt{n}}$. While we can make use of some of the techniques used for the fluid limit in Theorem~\ref{basefluid}, the diffusion scaling also involves second order terms. It is challenging to calculate such quantities for the HESEP. Thus, we will use the same idea bound the quantities above and below via
\begin{align}
0 \leq \frac{(\nu_t - \nu^*)^2}{Q_{t,\nu}} \leq \alpha (\nu_t - \nu^*) \label{diffBound}
,
\end{align}
which follows from our previously discussed bounds on the down-jump size.
By doing so, we create upper and lower bounds for the true diffusion limit of the HESEP. To facilitate a variety of approximations that fit within these bounds, we introduce the parameter $\gamma \in [0,1]$,  with $\gamma = 0$ corresponding to the lower bound and $\gamma = 1$ as the upper.

\begin{theorem} \label{basediffusion}
For $n \in \mathbb{Z}$, let the $n^\text{th}$ diffusion-scaled HESEP $(\nu_t(n), Q_{t,\nu}(n))$ be defined such that the baseline intensity is $n\nu^*$, the intensity jump size is $\alpha > 0$, the intensity decay rate is $\beta \geq 0$, and  the rate of exponentially distributed service is $\mu > 0$, where $\mu + \beta > \alpha$. For the scaled quantities $(\frac{\nu_t(n)}{\sqrt{n}}, \frac{Q_{t,\nu}(n)}{\sqrt{n}} )$, let  $\hat{\mathcal{M}}^{\infty}(t,\theta_{\nu},\theta_Q)$ be defined
\begin{align}
\hat{\mathcal{M}}^{\infty}(t,\theta_{\nu},\theta_Q)
&
\equiv
\lim_{n \to \infty} \E{e^{\frac{\theta_{\nu}} {\sqrt{n}} \left(  \nu_t(n)  - n\nu_\infty \right)+ \frac{\theta_Q }{\sqrt{n}} \left( Q_{t,\nu}(n)  -  \frac{n\nu_\infty}{\mu } \right) } }
,
\end{align}
if the limit converges. Then for $\beta \neq \alpha$, this is bounded above and below by $\mathcal{B}_0(t,\theta_{\nu},\theta_Q) \leq \hat{\mathcal{M}}^{\infty}(t,\theta_{\nu},\theta_Q) \leq \mathcal{B}_1(t,\theta_{\nu},\theta_Q)$, where $\mathcal{B}_\gamma(t,\theta_{\nu},\theta_Q)$ is given by
\begin{align}
\mathcal{B}_\gamma(t,\theta_{\nu},\theta_Q)
&
=
\nonumber
e^{
\nu_0\theta_\nu e^{-(\mu+\beta-\alpha)t} + \frac{\nu_0\theta_Q}{\beta-\alpha}\left(e^{-\mu t} - e^{-(\mu+\beta-\alpha)t}\right)
+
Q_0 \theta_Q e^{-\mu t}
+
\left(\theta_\nu - \frac{\theta_Q}{\beta - \alpha}\right)^2
\left(
\frac{\gamma \alpha \mu (\nu_\infty - \nu^*) }{2}
+
\frac{\alpha^2 \nu_\infty }{2 }
\right)
\frac{1 - e^{-2(\mu + \beta - \alpha)t}}{2(\mu + \beta - \alpha)}
}
\\
&
\quad
\cdot
e^{
\left(\theta_\nu\theta_Q - \frac{\theta_Q^2}{\beta - \alpha}\right)
\left(
\left(
\frac{\gamma \alpha\mu}{\beta-\alpha}
+
\mu
\right)(\nu_\infty - \nu^*)
+
\frac{\alpha \beta \nu_\infty}{\beta-\alpha}
\right)
\frac{1-e^{-(2\mu + \beta-\alpha)t}}{2\mu+\beta-\alpha}
}
\nonumber
\\
&
\quad
\cdot
e^{
\theta_Q^2
\left(
\frac{\gamma \alpha \mu (\nu_\infty- \nu^*) }{2(\beta-\alpha)^2}
+
\frac{\mu(\nu_\infty - \nu^*) }{\beta-\alpha}
+
\frac{\nu_\infty }{2}
+
\frac{\nu_\infty \beta^2 }{2(\beta - \alpha)^2}
\right)\frac{1 - e^{-2\mu t}}{2\mu}
}
,
\end{align}
 whereas if $\beta = \alpha$, it is instead
\begin{align}
\mathcal{B}_\gamma(t,\theta_{\nu},\theta_Q)
&
=
\nonumber
e^{
\nu_0 \theta_\nu e^{-\mu t} + \nu_0 \theta_Q t e^{- \mu t}
+
Q_0 \theta_Q e^{-\mu t}
+
\bigg(
\left(\frac{\gamma \alpha  (\theta_\nu + \theta_Q t)^2}{2}  +  \theta_\nu \theta_Q + \theta_Q^2 t\right) \frac{1 - e^{-2\mu t}}{2}
-
\left(\gamma \alpha (\theta_\nu \theta_Q + \theta_Q^2 t)  +  \theta_Q^2 \right) \frac{2\mu t - 1 + e^{-2\mu t} }{4 \mu}
}
\\
&
\quad
\cdot
e^{
\frac{\gamma \alpha \theta_Q^2}{2}
\left(
\frac{2\mu t(\mu t-1)+1 - e^{-2\mu t}}{4 \mu^2}
\right)
\bigg) (\nu_\infty - \nu^*)
+
\frac{\nu_\infty }{2 }
\bigg(
\left(
\theta_Q^2 + (\theta_Q + \alpha \theta_\nu)^2
+
2\left(\alpha^2 \theta_\nu \theta_Q   + \alpha \theta_Q^2 \right) t
+
\alpha^2 \theta_Q^2 t^2
\right) \frac{1 - e^{-2\mu t}}{2\mu}
}
\nonumber
\\
&
\quad
\cdot
e^{
-
2 \left(
\alpha^2 \theta_\nu \theta_Q  + \alpha \theta_Q^2 + \alpha^2 \theta_Q^2 t
\right)
\left(\frac{2\mu t - 1 + e^{-2\mu t} }{4 \mu^2}\right)
+
\alpha^2 \theta_Q^2 \left(  \frac{ 2\mu t(\mu t-1) +1  - e^{-2\mu t}}{4 \mu^3} \right)
\bigg)
}
,
\end{align}
for $\gamma \in [0,1]$ with $t \geq 0$ and $\nu_\infty = \frac{(\mu + \beta)\nu^*}{\mu+\beta-\alpha}$.
\begin{proof}
We begin by bounding the quantity $Q_{t,\nu}(n) \left(\frac{\nu_t(n) - n\nu^*}{Q_{t,\nu}(n)}\right)^2$ above and below by observing
$$
0
\leq
Q_{t,\nu}(n) \left(\frac{\nu_t(n) - n\nu^*}{Q_{t,\nu}(n)}\right)^2
=
(\nu_t(n) - n\nu^*)\left(\frac{\nu_t(n) - n\nu^*}{Q_{t,\nu}(n)}\right)
\leq
\alpha (\nu_t(n) - n\nu^*)
.
$$
To consolidate the development of the two bounds into one approach, we introduce the extra parameter $\gamma \in \{0,1\}$ and replace $Q_{t,\nu}(n) \left(\frac{\nu_t(n) - n\nu^*}{Q_{t,\nu}(n)}\right)^2$ by $\gamma \alpha (\nu_t(n) - n\nu^*)$ in the following diffusion limit derivation. In this notation, $\gamma = 0$ yields the lower bound and $\gamma = 1$ the upper. These two cases share the same start -- identifying the moment generating function form of the pre-limit object. By Lemma~\ref{fubinifundQ}, this is
\begin{align*}
&
\frac{\partial}{\partial t}\hat{\mathcal{M}}^n(\theta_\nu, \theta_Q, t)
=
\frac{\partial}{\partial t}\E{e^{\theta_\nu \left(\frac{\nu_t(n) - n \nu_\infty}{\sqrt{n}}\right) + \theta_Q \left(\frac{Q_{t,\nu}(n) - \frac{n \nu_\infty}{\mu}}{\sqrt{n}}\right)}}
\\
&
=
\E{\nu_t(n)\left(e^{\frac{\alpha\theta_\nu + \theta_Q}{\sqrt{n}}} - 1 \right)e^{\theta_\nu \left(\frac{\nu_t(n) - n \nu_\infty}{\sqrt{n}}\right) + \theta_Q \left(\frac{Q_{t,\nu}(n) - \frac{n \nu_\infty}{\mu}}{\sqrt{n}}\right)}}
\\
&
\quad
+
\E{\mu Q_{t,\nu}(n)\left(e^{-\frac{\theta_\nu}{\sqrt{n}}\left(\frac{\nu_t(n)-n\nu^*}{Q_{t,\nu}(n)}\right) - \frac{\theta_Q}{\sqrt{n}}} - 1 \right)e^{\theta_\nu \left(\frac{\nu_t(n) - n \nu_\infty}{\sqrt{n}}\right) + \theta_Q \left(\frac{Q_{t,\nu}(n) - \frac{n \nu_\infty}{\mu}}{\sqrt{n}}\right)}}
\\
&
\quad
+
\E{\frac{\beta \theta_\nu}{\sqrt{n}}\left(n\nu^* - \nu_t(n)\right)e^{\theta_\nu \left(\frac{\nu_t(n) - n \nu_\infty}{\sqrt{n}}\right) + \theta_Q \left(\frac{Q_{t,\nu}(n) - \frac{n \nu_\infty}{\mu}}{\sqrt{n}}\right)}}
.
\end{align*}
As a first step, we simplify this expression through the linearity of expectation. Moving deterministic terms outside of the expectation and re-scaling, we have
\begin{align*}
&
\frac{\partial}{\partial t}\hat{\mathcal{M}}^n(\theta_\nu, \theta_Q, t)
=
\sqrt{n}\left(e^{\frac{\alpha\theta_\nu + \theta_Q}{\sqrt{n}}} - 1 \right)\E{\frac{\nu_t(n)}{\sqrt{n}}e^{\theta_\nu \left(\frac{\nu_t(n) - n \nu_\infty}{\sqrt{n}}\right) + \theta_Q \left(\frac{Q_{t,\nu}(n) - \frac{n \nu_\infty}{\mu}}{\sqrt{n}}\right)}}
\\
&
\quad
+
\E{\mu Q_{t,\nu}(n)\left(e^{-\frac{\theta_\nu}{\sqrt{n}}\left(\frac{\nu_t(n)-n\nu^*}{Q_{t,\nu}(n)}\right) - \frac{\theta_Q}{\sqrt{n}}} - 1 \right)e^{\theta_\nu \left(\frac{\nu_t(n) - n \nu_\infty}{\sqrt{n}}\right) + \theta_Q \left(\frac{Q_{t,\nu}(n) - \frac{n \nu_\infty}{\mu}}{\sqrt{n}}\right)}}
\\
&
\quad
+
\beta \theta_\nu \nu^* \sqrt{n} \E{ e^{\theta_\nu \left(\frac{\nu_t(n) - n \nu_\infty}{\sqrt{n}}\right) + \theta_Q \left(\frac{Q_{t,\nu}(n) - \frac{n \nu_\infty}{\mu}}{\sqrt{n}}\right)}}
-
\beta \theta_\nu \E{\frac{\nu_t(n)}{\sqrt{n}}e^{\theta_\nu \left(\frac{\nu_t(n) - n \nu_\infty}{\sqrt{n}}\right) + \theta_Q \left(\frac{Q_{t,\nu}(n) - \frac{n \nu_\infty}{\mu}}{\sqrt{n}}\right)}}
.
\end{align*}
For the terms on the first and third lines in the right-hand side of the above equation, we are able to re-express the expectation in terms of the moment generating function or its derivatives. For the first and second lines, we perform Taylor expansions and truncate terms from the third order and above. This now yields
\begin{align*}
&
\frac{\partial}{\partial t}\hat{\mathcal{M}}^n(\theta_\nu, \theta_Q, t)
=
\left(\alpha\theta_\nu + \theta_Q + \frac{(\alpha\theta_\nu + \theta_Q)^2}{2 \sqrt{n}} + O\left(\frac{1}{n}\right) \right)\left( \frac{\partial}{\partial \theta_\nu}\hat{\mathcal{M}}^n(\theta_\nu, \theta_Q, t) + \nu_\infty\sqrt{n} \hat{\mathcal{M}}^n(\theta_\nu, \theta_Q, t) \right)
\\
&
\quad
+
\mathrm{E}\Bigg[\mu Q_{t,\nu}(n)\left( -\frac{\theta_\nu}{\sqrt{n}}\left(\frac{\nu_t(n)-n\nu^*}{Q_{t,\nu}(n)}\right) - \frac{\theta_Q}{\sqrt{n}} + \frac{1}{2n}\left(\theta_\nu\left(\frac{\nu_t(n)-n\nu^*}{Q_{t,\nu}(n)}\right) + \theta_Q\right)^2 + O\left(n^{-\frac{3}{2}}\right) \right)
\\
&
\quad
\cdot
e^{\theta_\nu \left(\frac{\nu_t(n) - n \nu_\infty}{\sqrt{n}}\right) + \theta_Q \left(\frac{Q_{t,\nu}(n) - \frac{n \nu_\infty}{\mu}}{\sqrt{n}}\right)}\Bigg]
+
\beta \theta_\nu \nu^* \sqrt{n} \hat{\mathcal{M}}^n(\theta_\nu, \theta_Q, t)
-
\beta \theta_\nu \left( \frac{\partial}{\partial \theta_\nu}\hat{\mathcal{M}}^n(\theta_\nu, \theta_Q, t) + \nu_\infty\sqrt{n} \hat{\mathcal{M}}^n(\theta_\nu, \theta_Q, t) \right)
.
\end{align*}
We now begin distributing and combining like terms through linearity of expectation. Moreover, we distribute within the expectation on the second line and cancel $Q_{t,\nu}(n)$ across the numerator and denominator where possible.
\begin{align*}
&
\frac{\partial}{\partial t}\hat{\mathcal{M}}^n(\theta_\nu, \theta_Q, t)
=
\left(\alpha\theta_\nu + \theta_Q + \frac{(\alpha\theta_\nu + \theta_Q)^2}{2 \sqrt{n}} - \beta \theta_\nu + O\left(\frac{1}{n}\right) \right)\left( \frac{\partial}{\partial \theta_\nu}\hat{\mathcal{M}}^n(\theta_\nu, \theta_Q, t) + \nu_\infty\sqrt{n} \hat{\mathcal{M}}^n(\theta_\nu, \theta_Q, t) \right)
\\
&
\quad
-
\mu\theta_\nu \E{\frac{\nu_t(n)}{\sqrt{n}}e^{\theta_\nu \left(\frac{\nu_t(n) - n \nu_\infty}{\sqrt{n}}\right) + \theta_Q \left(\frac{Q_{t,\nu}(n) - \frac{n \nu_\infty}{\mu}}{\sqrt{n}}\right)}}
+
\mu \theta_\nu \nu^* \sqrt{n} \E{ e^{\theta_\nu \left(\frac{\nu_t(n) - n \nu_\infty}{\sqrt{n}}\right) + \theta_Q \left(\frac{Q_{t,\nu}(n) - \frac{n \nu_\infty}{\mu}}{\sqrt{n}}\right)}}
\\
&
\quad
-
\mu \theta_Q \E{\frac{ Q_{t,\nu}(n)}{\sqrt{n}}e^{\theta_\nu \left(\frac{\nu_t(n) - n \nu_\infty}{\sqrt{n}}\right) + \theta_Q \left(\frac{Q_{t,\nu}(n) - \frac{n \nu_\infty}{\mu}}{\sqrt{n}}\right)}}
\\
&
\quad
+
 \frac{\mu\theta_\nu^2}{2n}\E{ Q_{t,\nu}(n) \left(\frac{\nu_t(n)-n\nu^*}{Q_{t,\nu}(n)}\right)^2 e^{\theta_\nu \left(\frac{\nu_t(n) - n \nu_\infty}{\sqrt{n}}\right) + \theta_Q \left(\frac{Q_{t,\nu}(n) - \frac{n \nu_\infty}{\mu}}{\sqrt{n}}\right)}}
\\
&
\quad
+
 \frac{\mu\theta_\nu\theta_Q}{\sqrt{n}}\E{ \frac{\nu_t(n)}{\sqrt{n}}e^{\theta_\nu \left(\frac{\nu_t(n) - n \nu_\infty}{\sqrt{n}}\right) + \theta_Q \left(\frac{Q_{t,\nu}(n) - \frac{n \nu_\infty}{\mu}}{\sqrt{n}}\right)}}
-
 \mu\theta_\nu\theta_Q \nu^* \E{e^{\theta_\nu \left(\frac{\nu_t(n) - n \nu_\infty}{\sqrt{n}}\right) + \theta_Q \left(\frac{Q_{t,\nu}(n) - \frac{n \nu_\infty}{\mu}}{\sqrt{n}}\right)}}
\\
&
\quad
+
 \frac{\mu \theta_Q^2}{2\sqrt{n}}\E{ \frac{Q_{t,\nu}(n)}{\sqrt{n}}e^{\theta_\nu \left(\frac{\nu_t(n) - n \nu_\infty}{\sqrt{n}}\right) + \theta_Q \left(\frac{Q_{t,\nu}(n) - \frac{n \nu_\infty}{\mu}}{\sqrt{n}}\right)}}
+
 O\left(\frac{1}{n}\right)\E{\frac{Q_{t,\nu}(n)}{\sqrt{n}}  e^{\theta_\nu \left(\frac{\nu_t(n) - n \nu_\infty}{\sqrt{n}}\right) + \theta_Q \left(\frac{Q_{t,\nu}(n) - \frac{n \nu_\infty}{\mu}}{\sqrt{n}}\right)}}
\\
&
\quad
+
\beta \theta_\nu \nu^* \sqrt{n} \hat{\mathcal{M}}^n(\theta_\nu, \theta_Q, t)
.
\end{align*}
For all remaining components of this equation that are still expressed in terms of the expectation, we substitute equivalent forms in terms of the moment generating function or its partial derivatives. Furthermore, we will now replace   $Q_{t,\nu}(n) \left(\frac{\nu_t(n) - n\nu^*}{Q_{t,\nu}(n)}\right)^2$ by $\gamma \alpha (\nu_t(n) - n\nu^*)$ inside the expectation and  re-express the expectation in terms of the moment generating function accordingly. To denote that we have now made this replacement and changed the function, we add $\gamma$ as a subscript to the moment generating function, i.e. $\hat{\mathcal{M}}_\gamma^n(\theta_\nu, \theta_Q, t)$.
\begin{align*}
&
\frac{\partial}{\partial t}\hat{\mathcal{M}}_\gamma^n(\theta_\nu, \theta_Q, t)
=
\left(\alpha\theta_\nu + \theta_Q + \frac{(\alpha\theta_\nu + \theta_Q)^2}{2 \sqrt{n}} - \beta \theta_\nu + O\left(\frac{1}{n}\right) \right)\left( \frac{\partial}{\partial \theta_\nu}\hat{\mathcal{M}}_\gamma^n(\theta_\nu, \theta_Q, t) + \nu_\infty\sqrt{n} \hat{\mathcal{M}}_\gamma^n(\theta_\nu, \theta_Q, t) \right)
\\
&
\quad
-
\mu\theta_\nu \left( \frac{\partial}{\partial \theta_\nu}\hat{\mathcal{M}}_\gamma^n(\theta_\nu, \theta_Q, t) + \nu_\infty\sqrt{n} \hat{\mathcal{M}}_\gamma^n(\theta_\nu, \theta_Q, t) \right)
+
\mu \theta_\nu \nu^* \sqrt{n} \hat{\mathcal{M}}_\gamma^n(\theta_\nu, \theta_Q, t)
\\
&
\quad
-
\mu \theta_Q \left( \frac{\partial}{\partial \theta_Q}\hat{\mathcal{M}}_\gamma^n(\theta_\nu, \theta_Q, t)
+
\frac{\nu_\infty\sqrt{n}}{\mu} \hat{\mathcal{M}}_\gamma^n(\theta_\nu, \theta_Q, t) \right)
\\
&
\quad
+
 \frac{\gamma \alpha \mu\theta_\nu^2}{2\sqrt{n}}\left( \frac{\partial}{\partial \theta_\nu}\hat{\mathcal{M}}_\gamma^n(\theta_\nu, \theta_Q, t) + \nu_\infty\sqrt{n} \hat{\mathcal{M}}_\gamma^n(\theta_\nu, \theta_Q, t) \right)
-
 \frac{\gamma \alpha \mu \nu^* \theta_\nu^2}{2}\hat{\mathcal{M}}_\gamma^n(\theta_\nu, \theta_Q, t)
\\
&
\quad
+
 \frac{\mu\theta_\nu\theta_Q}{\sqrt{n}}\left( \frac{\partial}{\partial \theta_\nu}\hat{\mathcal{M}}_\gamma^n(\theta_\nu, \theta_Q, t) + \nu_\infty\sqrt{n} \hat{\mathcal{M}}_\gamma^n(\theta_\nu, \theta_Q, t) \right)
-
 \mu\theta_\nu\theta_Q \nu^* \hat{\mathcal{M}}_\gamma^n(\theta_\nu, \theta_Q, t)
\\
&
\quad
+
 \frac{\mu \theta_Q^2}{2\sqrt{n}}\left( \frac{\partial}{\partial \theta_Q}\hat{\mathcal{M}}_\gamma^n(\theta_\nu, \theta_Q, t) + \frac{\nu_\infty\sqrt{n}}{\mu} \hat{\mathcal{M}}_\gamma^n(\theta_\nu, \theta_Q, t) \right)
+
 O\left(\frac{1}{n}\right) \left( \frac{\partial}{\partial \theta_Q}\hat{\mathcal{M}}_\gamma^n(\theta_\nu, \theta_Q, t) + \frac{\nu_\infty\sqrt{n}}{\mu} \hat{\mathcal{M}}_\gamma^n(\theta_\nu, \theta_Q, t) \right)
\\
&
\quad
+
\beta \theta_\nu \nu^* \sqrt{n} \hat{\mathcal{M}}_\gamma^n(\theta_\nu, \theta_Q, t)
.
\end{align*}
Before we find the limiting object, we first combine like terms of the moment generating function, consolidating coefficients and absorbing into $O(\cdot)$ notation where possible.
\begin{align*}
&
\frac{\partial}{\partial t}\hat{\mathcal{M}}_\gamma^n(\theta_\nu, \theta_Q, t)
=
\left(  \theta_Q - (\mu + \beta - \alpha)\theta_\nu    + O\left(\frac{1}{\sqrt{n}}\right) \right) \frac{\partial}{\partial \theta_\nu}\hat{\mathcal{M}}_\gamma^n(\theta_\nu, \theta_Q, t)
-
\left( \mu \theta_Q - O\left(\frac{1}{\sqrt{n}}\right) \right) \frac{\partial}{\partial \theta_Q}\hat{\mathcal{M}}_\gamma^n(\theta_\nu, \theta_Q, t)
\\
&
\quad
\left(
\frac{\gamma \alpha \mu(\nu_\infty - \nu^*) \theta_\nu^2}{2}
+
\mu\theta_\nu\theta_Q (\nu_\infty - \nu^*)
+
\frac{\theta_Q^2 \nu_\infty}{2}
+
\frac{(\alpha\theta_\nu + \theta_Q)^2\nu_\infty }{2 }
+
O\left(\frac{1}{\sqrt{n}}\right) \right) \hat{\mathcal{M}}_\gamma^n(\theta_\nu, \theta_Q, t)
.
\end{align*}
Taking the limit as $n \to \infty$, we receive
\begin{align*}
&
\frac{\partial}{\partial t}\hat{\mathcal{M}}_\gamma^\infty(\theta_\nu, \theta_Q, t)
=
\left(  \theta_Q - (\mu + \beta - \alpha)\theta_\nu  \right) \frac{\partial}{\partial \theta_\nu}\hat{\mathcal{M}}_\gamma^\infty(\theta_\nu, \theta_Q, t)
-
 \mu \theta_Q  \frac{\partial}{\partial \theta_Q}\hat{\mathcal{M}}_\gamma^\infty(\theta_\nu, \theta_Q, t)
\\
&
\quad
+
\left(
\frac{\gamma \alpha \mu(\nu_\infty - \nu^*) \theta_\nu^2}{2}
+
\mu\theta_\nu\theta_Q (\nu_\infty - \nu^*)
+
\frac{\theta_Q^2 \nu_\infty}{2}
+
\frac{(\alpha\theta_\nu + \theta_Q)^2\nu_\infty }{2 }
 \right) \hat{\mathcal{M}}_\gamma^\infty(\theta_\nu, \theta_Q, t)
.
\end{align*}
We will now solve this limiting partial differential equation through the method of characteristics. To simplify this approach, we let $G_\gamma(\theta_\nu, \theta_Q, t) = \log\left(\hat{\mathcal{M}}_\gamma^\infty(\theta_\nu, \theta_Q, t)\right)$, which is the cumulant generating function. The resulting PDE for the cumulant generating function is then
\begin{align*}
&
\frac{\partial}{\partial t}G_\gamma(\theta_\nu, \theta_Q, t)
+
\left(  (\mu + \beta - \alpha)\theta_\nu - \theta_Q \right) \frac{\partial}{\partial \theta_\nu}G_\gamma(\theta_\nu, \theta_Q, t)
+
 \mu \theta_Q  \frac{\partial}{\partial \theta_Q}G_\gamma(\theta_\nu, \theta_Q, t)
\\
&
\quad
=
\frac{\gamma \alpha \mu(\nu_\infty - \nu^*) \theta_\nu^2}{2}
+
\mu\theta_\nu\theta_Q (\nu_\infty - \nu^*)
+
\frac{\theta_Q^2 \nu_\infty}{2}
+
\frac{(\alpha\theta_\nu + \theta_Q)^2\nu_\infty }{2 }
,
\end{align*}
with initial condition $G_\gamma(\theta_\nu, \theta_Q, 0) = \theta_\nu \nu_0 + \theta_Q Q_0$. Thus, the resulting system of characteristic equations is
\begin{align*}
\frac{\mathrm{d}t}{\mathrm{d}z}(x,y,z)
&
=
1
,
&t(x,y,0) = 0
,
\\
\frac{\mathrm{d}\theta_\nu}{\mathrm{d}z}(x,y,z)
&
=
(\mu + \beta - \alpha)\theta_\nu - \theta_Q
,
&\theta_\nu(x,y,0) = x
,
\\
\frac{\mathrm{d}\theta_Q}{\mathrm{d}z}(x,y,z)
&
=
\mu \theta_Q
,
&\theta_Q(x,y,0) = y
,
\\
\frac{\mathrm{d}g}{\mathrm{d}z}(x,y,z)
&
=
\left(\frac{\gamma \alpha \mu \theta_\nu^2}{2}
+
\mu\theta_\nu\theta_Q \right) (\nu_\infty - \nu^*)
+
\left(\theta_Q^2 + (\alpha\theta_\nu + \theta_Q)^2\right) \frac{\nu_\infty }{2 }
,
&g(x,y,0) = x \nu_0 + y Q_0
.
\end{align*}
Assuming $\beta \ne \alpha$, we can solve these first three ordinary differential equations to find that
$$
t = z,\qquad
\theta_Q = y e^{\mu z},\qquad
\theta_\nu = \left(x - \frac{y}{\beta - \alpha}\right)e^{(\mu+\beta-\alpha)z} + \frac{y}{\beta-\alpha}e^{\mu z},
$$
which we now use to solve the remaining equation. Re-writing the characteristic equation for $g$, we have
\begin{align*}
\frac{\mathrm{d}g}{\mathrm{d}z}(x,y,z)
&
=
\frac{\gamma \alpha \mu (\nu_\infty - \nu^*) }{2}
\left(
\left(x - \frac{y}{\beta - \alpha}\right)^2e^{2(\mu+\beta-\alpha)z} + \frac{2}{\beta-\alpha}\left( xy - \frac{y^2}{\beta - \alpha}\right)e^{(2\mu+\beta-\alpha)z} + \frac{y^2}{(\beta-\alpha)^2}e^{2\mu z}
\right)
\\
&
\quad
+
\mu (\nu_\infty - \nu^*) \left(\left(xy - \frac{y^2}{\beta - \alpha}\right)e^{(2\mu+\beta-\alpha)z} + \frac{y^2}{\beta-\alpha}e^{2\mu z} \right)
+
\frac{\nu_\infty }{2 }
\Bigg(
\alpha^2 \left(x - \frac{y}{\beta - \alpha}\right)^2 e^{2(\mu+\beta-\alpha)z}
\\
&
\quad
+
\frac{2\alpha\beta}{\beta-\alpha}\left( xy
-
\frac{ y^2}{\beta - \alpha}\right)e^{(2\mu+\beta-\alpha)z}
+
\left(1 + \frac{\beta^2 }{(\beta-\alpha)^2}  \right) y^2 e^{2\mu z}
\Bigg)
,
\end{align*}
and so by grouping coefficients of like exponential functions and then integrating with respect to $z$, this solves to
\begin{align*}
g(x,y,z)
&
=
x\nu_0 + y Q_0
+
\left(x - \frac{y}{\beta - \alpha}\right)^2
\left(
\frac{\gamma \alpha \mu (\nu_\infty - \nu^*) }{2}
+
\frac{\alpha^2 \nu_\infty }{2 }
\right)
\frac{e^{2(\mu + \beta - \alpha)z}-1}{2(\mu + \beta - \alpha)}
\\
&
\quad
+
\left(xy - \frac{y^2}{\beta - \alpha}\right)
\left(
\left(
\frac{\gamma \alpha\mu}{\beta-\alpha}
+
\mu
\right)(\nu_\infty - \nu^*)
+
\frac{\alpha \beta \nu_\infty}{\beta-\alpha}
\right)
\frac{e^{(2\mu + \beta-\alpha)z}-1}{2\mu+\beta-\alpha}
\\
&
\quad
+
y^2
\left(
\frac{\gamma \alpha \mu (\nu_\infty- \nu^*) }{2(\beta-\alpha)^2}
+
\frac{\mu(\nu_\infty - \nu^*) }{\beta-\alpha}
+
\frac{\nu_\infty }{2}
+
\frac{\nu_\infty \beta^2 }{2(\beta - \alpha)^2}
\right)\frac{e^{2\mu z}-1}{2\mu}
.
\end{align*}
From the solutions to the characteristic equations, we can express each of $x$, $y$, and $z$ in terms of the three cumulant generating function parameters:
$$
z = t,\qquad
y = \theta_Q e^{-\mu t},\qquad
x = \theta_\nu e^{-(\mu+\beta-\alpha)t} + \frac{\theta_Q}{\beta-\alpha}\left(e^{-\mu t} - e^{-(\mu+\beta-\alpha)t}\right)
.
$$
Thus, we can then solve for $G_\gamma(\theta_\nu, \theta_Q, t)$ via
\begin{align*}
G_\gamma(\theta_\nu, \theta_Q, t)
&
=
g\left(\theta_\nu e^{-(\mu+\beta-\alpha)t} + \frac{\theta_Q}{\beta-\alpha}\left(e^{-\mu t} - e^{-(\mu+\beta-\alpha)t}\right), \theta_Q e^{-\mu t}, t \right)
\\
&
=
\nu_0\theta_\nu e^{-(\mu+\beta-\alpha)t} + \frac{\nu_0\theta_Q}{\beta-\alpha}\left(e^{-\mu t} - e^{-(\mu+\beta-\alpha)t}\right)
+
Q_0 \theta_Q e^{-\mu t}
\\
&
\quad
+
\left(\theta_\nu - \frac{\theta_Q}{\beta - \alpha}\right)^2
\left(
\frac{\gamma \alpha \mu (\nu_\infty - \nu^*) }{2}
+
\frac{\alpha^2 \nu_\infty }{2 }
\right)
\frac{1 - e^{-2(\mu + \beta - \alpha)t}}{2(\mu + \beta - \alpha)}
\\
&
\quad
+
\left(\theta_\nu\theta_Q - \frac{\theta_Q^2}{\beta - \alpha}\right)
\left(
\left(
\frac{\gamma \alpha\mu}{\beta-\alpha}
+
\mu
\right)(\nu_\infty - \nu^*)
+
\frac{\alpha \beta \nu_\infty}{\beta-\alpha}
\right)
\frac{1-e^{-(2\mu + \beta-\alpha)t}}{2\mu+\beta-\alpha}
\\
&
\quad
+
\theta_Q^2
\left(
\frac{\gamma \alpha \mu (\nu_\infty- \nu^*) }{2(\beta-\alpha)^2}
+
\frac{\mu(\nu_\infty - \nu^*) }{\beta-\alpha}
+
\frac{\nu_\infty }{2}
+
\frac{\nu_\infty \beta^2 }{2(\beta - \alpha)^2}
\right)\frac{1 - e^{-2\mu t}}{2\mu}
.
\end{align*}
By Lemma~\ref{complemma}, we have that $\hat{\mathcal{M}}^\infty_0(\theta_\nu, \theta_Q, t) \leq \hat{\mathcal{M}}^\infty(\theta_\nu, \theta_Q, t) \leq \hat{\mathcal{M}}^\infty_1(\theta_\nu, \theta_Q, t)$ and since $\hat{\mathcal{M}}^\infty_\gamma(\theta_\nu, \theta_Q, t) = e^{G_\gamma(\theta_\nu, \theta_Q, t)}$, we have completed the proof of the joint moment generating function bounds when $\beta \ne \alpha$. We now apply this to the two marginal generating functions by setting the opposite space parameter to 0. That is, for the intensity we let $\theta_Q = 0$, yielding
\begin{align*}
\hat{\mathcal{M}}^\infty_\gamma(\theta_\nu, 0, t)
&
=
\mathrm{exp}\left(
\nu_0\theta_\nu e^{-(\mu+\beta-\alpha)t}
+
\frac{\theta_\nu^2}2
\left(
\gamma \alpha \mu (\nu_\infty - \nu^*) + \alpha^2 \nu_\infty
\right)
\frac{1 - e^{-2(\mu + \beta - \alpha)t}}{2(\mu + \beta - \alpha)}
\right)
,
\end{align*}
whereas for the queue we take $\theta_\nu = 0$ and receive
\begin{align*}
\hat{\mathcal{M}}^\infty_\gamma(0,\theta_Q, t)
&
=
\mathrm{exp}\Bigg(
\frac{\nu_0\theta_Q}{\beta-\alpha}\left(e^{-\mu t} - e^{-(\mu+\beta-\alpha)t}\right)
+
\frac{\theta_Q^2}{(\beta - \alpha)^2}
\left(
\frac{\gamma \alpha \mu (\nu_\infty - \nu^*) }{2}
+
\frac{\alpha^2 \nu_\infty }{2 }
\right)
\frac{1 - e^{-2(\mu + \beta - \alpha)t}}{2(\mu + \beta - \alpha)}
\\
&
\quad
+
Q_0 \theta_Q e^{-\mu t}
-
\frac{\theta_Q^2}{\beta - \alpha}
\left(
\left(
\frac{\gamma \alpha\mu}{\beta-\alpha}
+
\mu
\right)(\nu_\infty - \nu^*)
+
\frac{\alpha \beta \nu_\infty}{\beta-\alpha}
\right)
\frac{1-e^{-(2\mu + \beta-\alpha)t}}{2\mu+\beta-\alpha}
\\
&
\quad
+
\theta_Q^2
\left(
\frac{\gamma \alpha \mu (\nu_\infty- \nu^*) }{2(\beta-\alpha)^2}
+
\frac{\mu(\nu_\infty - \nu^*) }{\beta-\alpha}
+
\frac{\nu_\infty }{2}
+
\frac{\nu_\infty \beta^2 }{2(\beta - \alpha)^2}
\right)\frac{1 - e^{-2\mu t}}{2\mu}
\Bigg)
.
\end{align*}
Now if $\beta = \alpha$, the solution to the characteristic ODE for $\theta_\nu$ is instead
$$
\theta_\nu = x e^{\mu z} - y z e^{\mu z}
,
$$
whereas the solutions for $\theta_Q$ and $t$ are unchanged: $\theta_Q = y e^{\mu z}$ and $t = z$. This then implies that ODE for $g$ is given by
\begin{align*}
\frac{\mathrm{d}g}{\mathrm{d}z}(x,y,z)
&
=
\left(\frac{\gamma \alpha \mu }{2}
\left(  x^2 e^{2\mu z} - 2x y z e^{2\mu z} + y^2z^2 e^{2\mu z} \right)
+
\mu \left( x y e^{2\mu z} - y^2 z e^{2\mu z}\right)
\right) (\nu_\infty - \nu^*)
\\
&
\quad
+
\frac{\nu_\infty }{2 }
\left(
(2y^2 + \alpha^2 x^2 + 2\alpha x y) e^{2\mu z}
-
2(\alpha^2 x y + \alpha y^2) z e^{2\mu z}
+
\alpha^2 y^2 z^2 e^{2\mu z}
\right)
,
\end{align*}
which yields a solution of
\begin{align*}
g(x,y,z)
&
=
x\nu_0
+
y Q_0
+
\Bigg(
\left(\frac{\gamma \alpha  x^2}{2}  +  x y\right) \frac{e^{2\mu z} - 1}{2}
-
\left(\gamma \alpha x y  +  y^2 \right) \frac{e^{2 \mu z}(2\mu z - 1) + 1 }{4 \mu}
\\
&
\quad
+
\frac{\gamma \alpha y^2}{2}
\left(
\frac{e^{2\mu z}\left(2\mu z(\mu z-1)+1\right) - 1}{4 \mu^2}
\right)
\Bigg) (\nu_\infty - \nu^*)
+
\frac{\nu_\infty }{2 }
\Bigg(
\left(2y^2 + \alpha^2 x^2 + 2\alpha x y \right)  \frac{e^{2\mu z} - 1}{2\mu}
\\
&
\quad
-
2(\alpha^2 x y + \alpha y^2) \left(\frac{e^{2 \mu z}(2\mu z - 1) + 1 }{4 \mu^2}\right)
+
\alpha^2 y^2   \left(  \frac{e^{2\mu z}\left(2\mu z(\mu z-1)+1\right) - 1}{4 \mu^3} \right)
\Bigg)
.
\end{align*}
In this case the inverse solutions are
$$
z = t,\qquad
y = \theta_Q e^{-\mu t},\qquad
x = \theta_\nu e^{-\mu t} + \theta_Q t e^{- \mu t}
,
$$
and so $G_\gamma(\theta_\nu, \theta_Q, t)$ is given by
\begin{align*}
G_\gamma(\theta_\nu, \theta_Q, t)
&=
g(\theta_\nu e^{-\mu t} + \theta_Q t e^{- \mu t}, \theta_Q e^{-\mu z}, t)
\\
&
=
\nu_0 \theta_\nu e^{-\mu t} + \nu_0 \theta_Q t e^{- \mu t}
+
Q_0 \theta_Q e^{-\mu t}
+
\Bigg(
\left(\frac{\gamma \alpha  (\theta_\nu + \theta_Q t)^2}{2}  +  \theta_\nu \theta_Q + \theta_Q^2 t\right) \frac{1 - e^{-2\mu t}}{2}
\\
&
\quad
-
\left(\gamma \alpha (\theta_\nu \theta_Q + \theta_Q^2 t)  +  \theta_Q^2 \right) \frac{2\mu t - 1 + e^{-2\mu t} }{4 \mu}
+
\frac{\gamma \alpha \theta_Q^2}{2}
\left(
\frac{2\mu t(\mu t-1)+1 - e^{-2\mu t}}{4 \mu^2}
\right)
\Bigg) (\nu_\infty - \nu^*)
\\
&
\quad
+
\frac{\nu_\infty }{2 }
\Bigg(
\left(2\theta_Q^2 + \alpha^2 \theta_\nu^2 + 2\alpha^2 \theta_\nu \theta_Q t + \alpha^2 \theta_Q^2 t^2 + 2 \alpha \theta_\nu \theta_\nu + 2\alpha \theta_Q^2 t \right) \frac{1 - e^{-2\mu t}}{2\mu}
\\
&
\quad
-
2 \left(\alpha^2 \theta_\nu \theta_Q + \alpha^2 \theta_Q^2 t + \alpha \theta_Q^2 \right) \left(\frac{2\mu t - 1 + e^{-2\mu t} }{4 \mu^2}\right)
+
\alpha^2 \theta_Q^2 \left(  \frac{ 2\mu t(\mu t-1) +1  - e^{-2\mu t}}{4 \mu^3} \right)
\Bigg)
.
\end{align*}
By taking $\hat{\mathcal{M}}^\infty_\gamma(\theta_\nu, \theta_Q, t) = e^{G_\gamma(\theta_\nu, \theta_Q, t)}$, we complete the proof.
\end{proof}
\end{theorem}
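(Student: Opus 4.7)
The plan is to mirror the approach used for the fluid limit in Theorem~\ref{basefluid}, but now retain second-order terms in the Taylor expansion of the exponentials that appear after applying the infinitesimal generator. Specifically, I will start by writing
\[
\frac{\partial}{\partial t}\hat{\mathcal{M}}^n(\theta_\nu,\theta_Q,t)=\E{\mathcal{L}\,e^{\frac{\theta_\nu}{\sqrt n}(\nu_t(n)-n\nu_\infty)+\frac{\theta_Q}{\sqrt n}(Q_{t,\nu}(n)-n\nu_\infty/\mu)}}
\]
using the HESEP generator analogous to Lemma~\ref{fubinifundQ}, which contains three contributions: the deterministic drift $\beta(n\nu^*-\nu_t(n))$, the arrivals (up-jumps of $\alpha$ in $\nu_t$ and $+1$ in $Q_{t,\nu}$), and the expirations (down-jumps of $(\nu_t(n)-n\nu^*)/Q_{t,\nu}(n)$ in $\nu_t$ and $-1$ in $Q_{t,\nu}$).

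Next, I would Taylor-expand $e^{(\alpha\theta_\nu+\theta_Q)/\sqrt n}-1$ and the corresponding down-jump exponential through second order, invoking Lemma~\ref{explemma} (or an analog allowing a random argument within a bounded interval) to control the remainder uniformly. The first-order terms reproduce exactly the fluid PDE from Theorem~\ref{basefluid}; because the processes are already centered at their fluid means $n\nu_\infty$ and $n\nu_\infty/\mu$, these first-order contributions cancel to leading order in $\sqrt n$. The second-order terms contribute the diffusion coefficients. Here the main obstacle arises: the down-jump expansion produces a term proportional to
\[
Q_{t,\nu}(n)\left(\frac{\nu_t(n)-n\nu^*}{Q_{t,\nu}(n)}\right)^2=(\nu_t(n)-n\nu^*)\cdot\frac{\nu_t(n)-n\nu^*}{Q_{t,\nu}(n)},
\]
which cannot be rewritten as a derivative of $\hat{\mathcal{M}}^n$ since the ratio $(\nu_t-\nu^*)/Q_{t,\nu}$ is not affine in $(\nu_t,Q_{t,\nu})$.

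To circumvent this, I would use the deterministic sandwich in \eqref{diffBound}, namely $0\le(\nu_t-\nu^*)^2/Q_{t,\nu}\le\alpha(\nu_t-\nu^*)$, introducing $\gamma\in[0,1]$ and replacing the problematic term by $\gamma\alpha(\nu_t(n)-n\nu^*)$. This yields a family of tractable limiting PDEs indexed by $\gamma$, whose solutions $\hat{\mathcal{M}}^\infty_0$ and $\hat{\mathcal{M}}^\infty_1$ bound the true MGF from below and above by Lemma~\ref{complemma}. Letting $n\to\infty$ in the linearized PDE produces a first-order linear PDE in $(t,\theta_\nu,\theta_Q)$ with quadratic forcing in $(\theta_\nu,\theta_Q)$, specifically a transport term $(\theta_Q-(\mu+\beta-\alpha)\theta_\nu)\partial_{\theta_\nu}-\mu\theta_Q\partial_{\theta_Q}$ plus a zeroth-order source built from $\nu^*$, $\nu_\infty$, and $\gamma\alpha\mu(\nu_\infty-\nu^*)$.

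The remaining work is mechanical: apply the method of characteristics with initial data $G_\gamma(\theta_\nu,\theta_Q,0)=\theta_\nu\nu_0+\theta_Q Q_0$ to $G_\gamma:=\log\hat{\mathcal{M}}^\infty_\gamma$. The $t$- and $\theta_Q$-characteristics decouple immediately ($t=z$, $\theta_Q=ye^{\mu z}$), while the $\theta_\nu$-characteristic gives a linear ODE with source $-\theta_Q$ whose solution splits into the generic case $\beta\neq\alpha$ (with both $e^{\mu z}$ and $e^{(\mu+\beta-\alpha)z}$ modes) and the resonant case $\beta=\alpha$ (which produces a polynomial-times-exponential mode $ze^{\mu z}$). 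In each case I would integrate $\mathrm{d}g/\mathrm{d}z$, which by construction is a sum of terms of the form (quadratic in $x,y$)$\cdot e^{kz}$ for $k\in\{2\mu,2\mu+\beta-\alpha,2(\mu+\beta-\alpha)\}$ (or, in the resonant case, further multiplied by $z$ or $z^2$), and invert the characteristic map to recover $G_\gamma(\theta_\nu,\theta_Q,t)$. Finally exponentiating yields the stated expressions $\mathcal{B}_\gamma(t,\theta_\nu,\theta_Q)$; the sandwiching conclusion $\mathcal{B}_0\le\hat{\mathcal{M}}^\infty\le\mathcal{B}_1$ follows from the $\gamma$-interpolation and Lemma~\ref{complemma}. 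The hardest part is the bookkeeping in the two resonance cases and verifying that the higher-order Taylor remainders vanish after the $1/\sqrt n$ cancellation; the rest is linear-PDE algebra.
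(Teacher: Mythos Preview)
Your proposal is correct and follows essentially the same approach as the paper: apply the generator to the centered MGF, Taylor-expand the jump exponentials to second order, replace the intractable down-jump quadratic term via the sandwich \eqref{diffBound} parametrized by $\gamma$, pass to the limiting first-order PDE, and solve for $G_\gamma=\log\hat{\mathcal{M}}^\infty_\gamma$ by characteristics in the two cases $\beta\neq\alpha$ and $\beta=\alpha$, with Lemma~\ref{complemma} yielding the ordering. The paper carries out exactly this program, with the only cosmetic difference being that it tracks remainders directly in $O(\cdot)$ notation rather than appealing to Lemma~\ref{explemma}.
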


As a consequence of these diffusion approximations, we can give normally distributed approximations for the steady-state distributions of the HESEP intensity and queue length. These are stated below in Corollary~\ref{diffss} again in terms of $\gamma$. One can note that the approximate intensity variance in Equation~\ref{varnugamma} can be used to provide upper and lower bounds on the HESEP variance that may be tighter than the bounds from the ESEP and the Hawkes process in Proposition~\ref{intenOrder}

\begin{corollary}\label{diffss}
Let $(\nu_t, Q_{t,\nu})$ be an HESEP with baseline intensity $\nu^* > 0$, intensity jump $\alpha > 0$, decay rate $\beta > 0$, and rate of exponential service $\mu > 0$, with $\mu + \beta > \alpha$. Then, the steady-state distributions of processes $\nu_t$ and $Q_{t,\nu}$ are approximated by the random variables $X_\nu(\gamma) \sim N(\nu_\infty, \sigma^2_\nu(\gamma))$ and $X_Q(\gamma) \sim N(\frac{\nu_\infty}{\mu}, \sigma^2_Q(\gamma))$,  respectively, where
\begin{align}
\sigma^2_\nu(\gamma)
=
\frac{\gamma \alpha \mu (\nu_\infty - \nu^*) + \alpha^2 \nu_\infty }{2(\mu+\beta-\alpha)}
\label{varnugamma}
,
\end{align}
and if $\beta \ne \alpha$ then
\begin{align}
\sigma^2_Q(\gamma)
&
=
\frac{\gamma \alpha \mu (\nu_\infty - \nu^*) + \alpha^2 \nu_\infty }{2(\beta-\alpha)^2(\mu+\beta-\alpha)}
-
\frac{\left(2\gamma \alpha\mu + 2\mu(\beta-\alpha)\right)(\nu_\infty - \nu^*) + 2\alpha \beta \nu_\infty}{(\beta-\alpha)^2(2\mu+\beta-\alpha)}
\nonumber
\\
&
\quad
+
\frac{\gamma \alpha \mu (\nu_\infty- \nu^*) + \nu_\infty \beta^2  }{2\mu(\beta-\alpha)^2}
+
\frac{\nu_\infty - \nu^* }{\beta-\alpha}
+
\frac{\nu_\infty }{2\mu}
,
\end{align}
whereas if $\beta = \alpha$ then
\begin{align}
\sigma^2_Q(\gamma)
=
\left(
\frac{1}{2\mu} + \frac{\gamma\alpha}{4\mu^2}
\right)(\nu_\infty - \nu^*)
+
\left(
\frac{1}{\mu} + \frac{\alpha }{2 \mu^2} + \frac{\alpha^2}{4\mu^3}
\right)\nu_\infty
,
\end{align}
with $\nu_\infty = \frac{(\mu + \beta)\nu^*}{\mu+\beta-\alpha}$ and $\gamma \in [0,1]$.
\end{corollary}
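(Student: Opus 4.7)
The plan is to obtain the steady-state Gaussian approximations by sending $t \to \infty$ in the moment generating function bounds $\mathcal{B}_\gamma(t, \theta_\nu, \theta_Q)$ supplied by Theorem~\ref{basediffusion}, and then matching the resulting exponential-quadratic expressions to the MGFs of Gaussian random variables. Because the diffusion scaling has already re-centered the process at its fluid mean $\nu_\infty$ (and the associated queue mean $\nu_\infty/\mu$), the limit MGF will have no linear term in $(\theta_\nu, \theta_Q)$, so the mean parameters are read off directly from the centering and only the quadratic coefficients require work.

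For the intensity marginal I would set $\theta_Q = 0$ in $\mathcal{B}_\gamma(t,\theta_\nu,0)$. Under the stability assumption $\mu+\beta > \alpha$, every transient factor $e^{-(\mu+\beta-\alpha)t}$ and $e^{-2(\mu+\beta-\alpha)t}$ decays to zero, so only the $\theta_\nu^2$ contribution from the second line of the $\beta \ne \alpha$ form of $\mathcal{B}_\gamma$ survives. Simplifying the surviving coefficient gives $\exp\bigl(\theta_\nu^2 \sigma^2_\nu(\gamma)/2\bigr)$ with $\sigma^2_\nu(\gamma)$ exactly as claimed, identifying the centered limit as $N(0,\sigma^2_\nu(\gamma))$ and hence the stated approximation $X_\nu(\gamma) \sim N(\nu_\infty, \sigma^2_\nu(\gamma))$; one can note that the answer is identical in form for the $\beta=\alpha$ case once the $t e^{-\mu t}$ terms are set to zero.

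For the queue marginal in the case $\beta \ne \alpha$, I would set $\theta_\nu = 0$ in $\mathcal{B}_\gamma(t,0,\theta_Q)$ and again send $t \to \infty$. Contributions from three distinct decay rates, $e^{-2(\mu+\beta-\alpha)t}$, $e^{-(2\mu+\beta-\alpha)t}$, and $e^{-2\mu t}$, each leave a $\theta_Q^2$ coefficient behind; summing these with the signs inherited from $\bigl(\theta_\nu - \theta_Q/(\beta-\alpha)\bigr)^2$ and $\bigl(\theta_\nu\theta_Q - \theta_Q^2/(\beta-\alpha)\bigr)$ evaluated at $\theta_\nu = 0$ yields the stated form of $\sigma^2_Q(\gamma)$. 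For the case $\beta = \alpha$ the second form of $\mathcal{B}_\gamma$ carries polynomial prefactors $t$ and $t^2$ multiplied by $e^{-2\mu t}$; all such products vanish in the limit, and grouping the residual constants from the $(1-e^{-2\mu t})/(2\mu)$, $(2\mu t - 1 + e^{-2\mu t})/(4\mu^2)$, and $(2\mu t(\mu t - 1) + 1 - e^{-2\mu t})/(4\mu^3)$ factors produces the stated $\beta = \alpha$ expression.

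The main obstacle will be the algebraic consolidation of $\sigma^2_Q(\gamma)$, since several groups of coefficients arising from different exponential rates must be combined and reconciled with the claimed closed form, especially in the degenerate $\beta=\alpha$ case where three polynomial-times-exponential limits contribute simultaneously. No new probabilistic ideas are required beyond the Gaussian identification: Theorem~\ref{basediffusion} has already done the heavy lifting, and Lemma~\ref{complemma} justifies passing the $\gamma \in [0,1]$ parameterization of the bounds through to the approximation.
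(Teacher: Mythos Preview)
Your proposal is correct and matches the paper's approach: the corollary is stated without proof as an immediate consequence of Theorem~\ref{basediffusion}, obtained by taking $t\to\infty$ in the marginal bounds $\mathcal{B}_\gamma(t,\theta_\nu,0)$ and $\mathcal{B}_\gamma(t,0,\theta_Q)$ (which the paper already isolates at the end of that theorem's proof) and reading off the Gaussian variance from the surviving $\theta^2$ coefficient. The only minor remark is that Lemma~\ref{complemma} is not needed here; the $\gamma\in[0,1]$ parameterization is inherited directly from Theorem~\ref{basediffusion}.
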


In Figures~\ref{diffhist100} and~\ref{diffhist} we plot the simulated steady-state distributions of an HESEP with large baseline intensities, as calculated from 100,000 replications. We then also plot the densities corresponding to the upper and lower approximate diffusion distributions as well as an additional candidate approximation with $\gamma = \frac{\mu}{\mu+\beta}$. We motivate this choice by a ratio of mean approximations of the terms in Equation~\ref{diffBound}:
$$
\frac{\frac{(\nu_\infty - \nu^*)^2}{\frac{\nu_\infty}\mu}}{\alpha(\nu_\infty - \nu^*)}
=
\frac{\mu (\nu_\infty - \nu^*)}{\alpha \nu_\infty}
=
\frac{\mu}{\mu+\beta}
.
$$
 In Figure~\ref{diffhist100} the baseline intensity is equal to 100, whereas in Figure~\ref{diffhist} it is 1,000. While there are known limitations of Gaussian approximations for queueing processes such as is discussed in \citet{massey2013gaussian}, we see that these approximations appear to be quite close, particularly so for the $\nu^* = 1,000$ case. The upper and lower bounds predictably over- and under-approximate the tails, while the case of $\gamma = \frac{\mu}{\mu+\beta}$ closely mimics the true distribution.



 \begin{figure}[h]
\centering
\includegraphics[width=.5\textwidth]{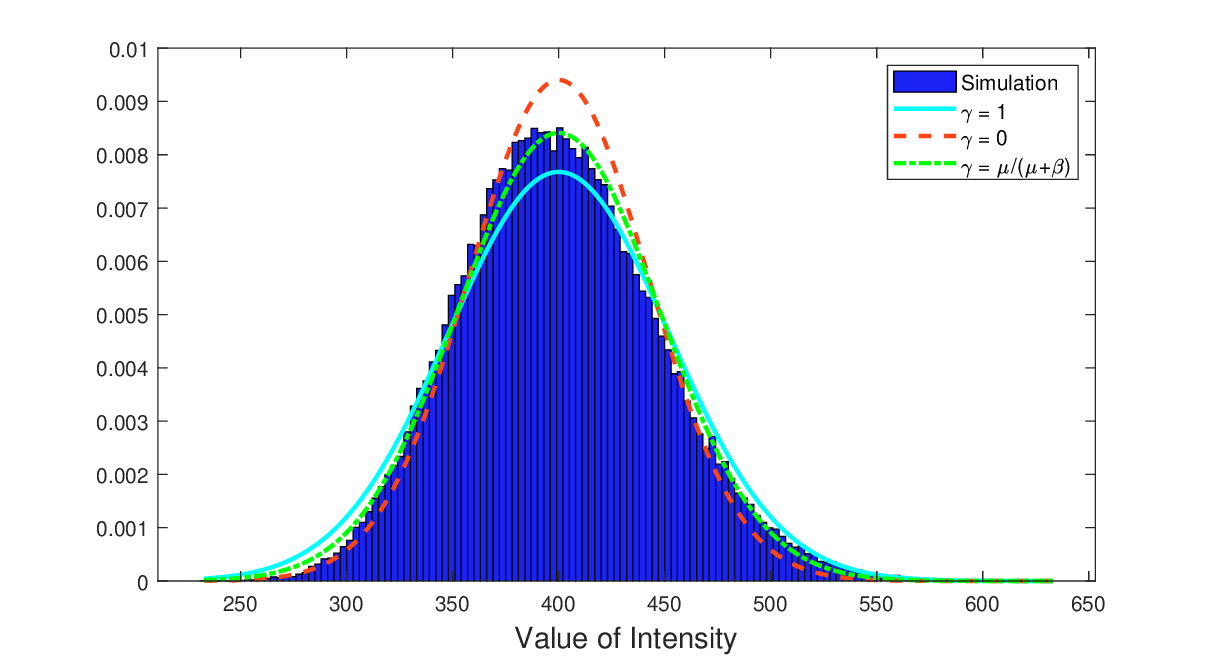}~\hspace{-.35in}~\includegraphics[width=.5\textwidth]{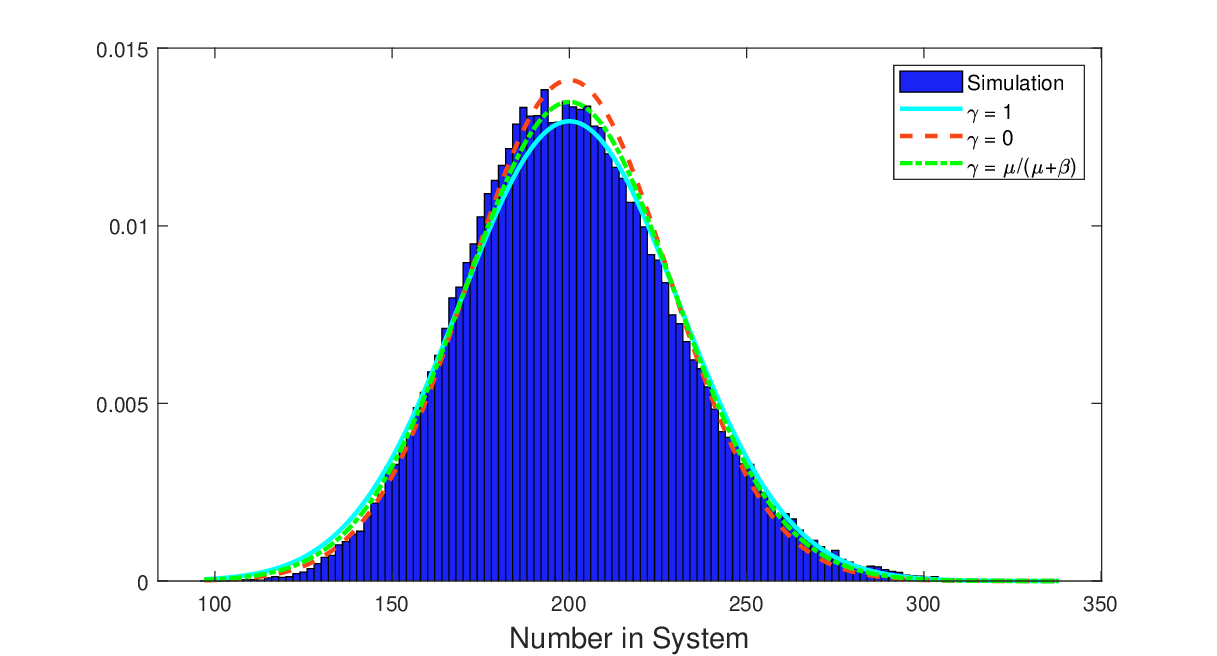}
\caption{Histogram comparing the simulated steady-state HESEP intensity (left) and queue (right) to their diffusion approximations evaluated at multiple values of $\gamma$, where $\nu^* = 100$, $\alpha = 3$, $\beta = 2$, and $\mu = 2$.} \label{diffhist100}
\end{figure}

 \begin{figure}[h]
\centering
\includegraphics[width=.5\textwidth]{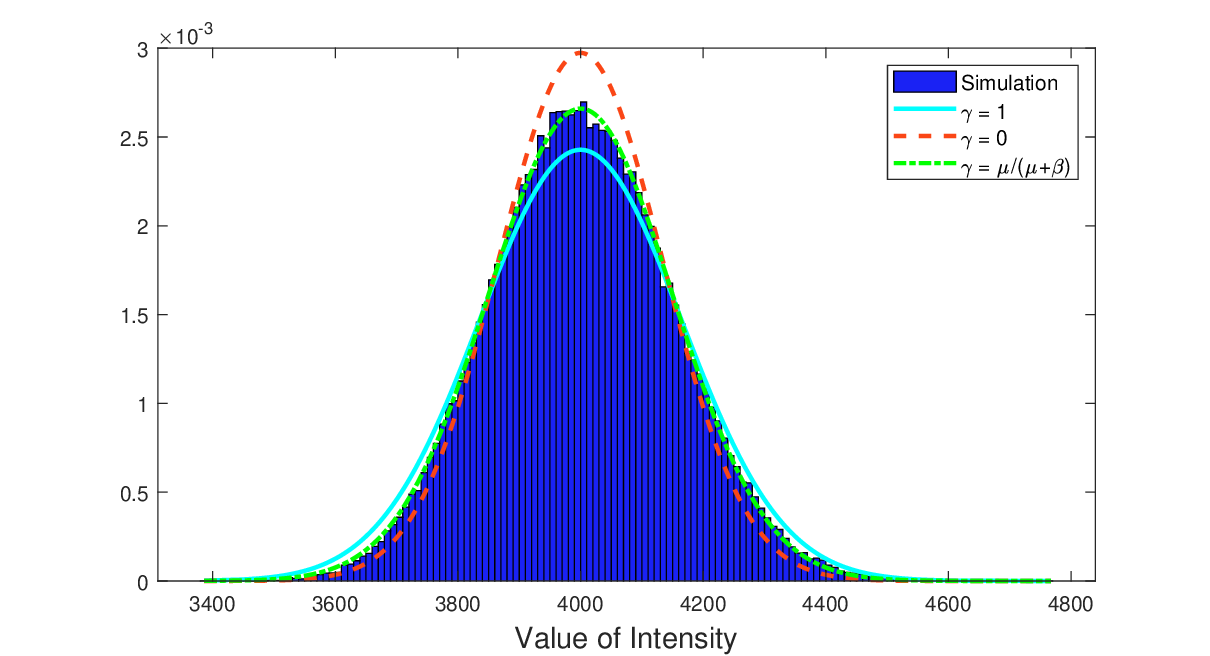}~\hspace{-.35in}~\includegraphics[width=.5\textwidth]{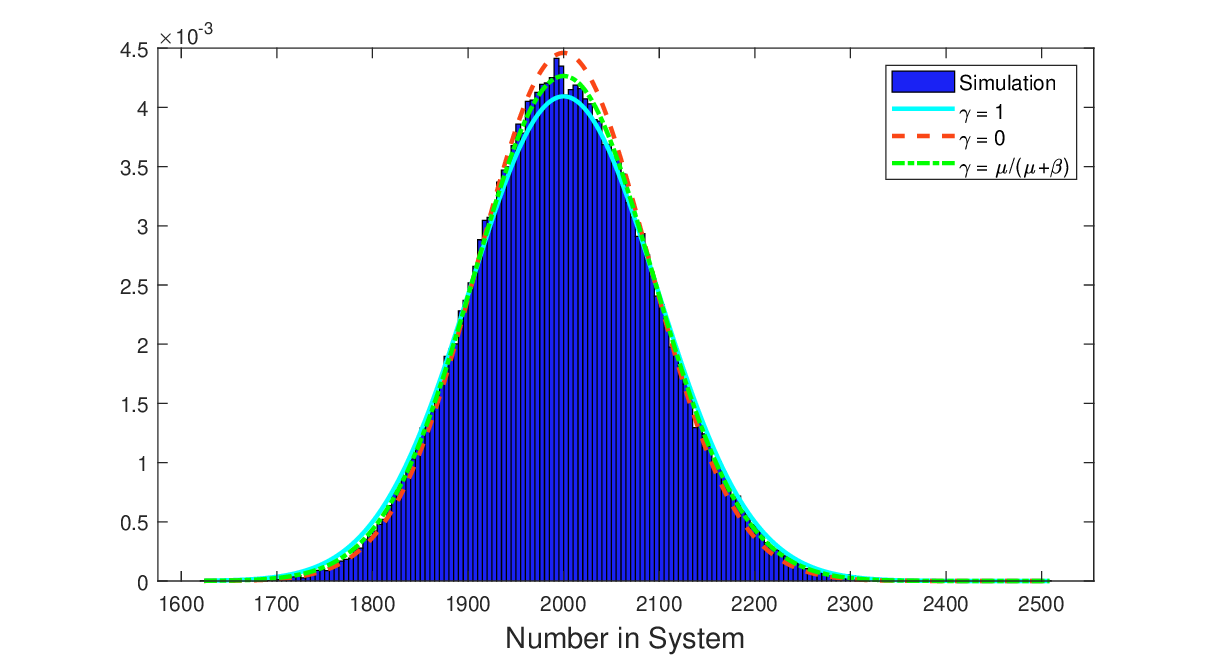}
\caption{Histogram comparing the simulated steady-state HESEP intensity (left) and queue (right) to their diffusion approximations evaluated at multiple values of $\gamma$, where $\nu^* = 1,000$, $\alpha = 3$, $\beta = 2$, and $\mu = 2$.} \label{diffhist}
\end{figure}

\section{An Ephemeral Self-Exciting Process with Finite Capacity and Blocking}\label{subsecBlock}

Drawing inspiration from the works that originated queueing theory, we will now consider the change in the ESEP if there is an upper bound on the total number of active exciters. That is, we suppose that there is a finite capacity and no excess buffer beyond them, so that   any entities that arrive and find the system full are blocked from entry, thus not registering an arrival nor causing any excitement. As an employee of the Copenhagen Telephone company, A.K. Erlang developed these pioneering queueing models to determine the probability that a call would be blocked based on the capacity of the telephone network trunk line. Often referred to as the Erlang-B model, this queueing system remains relevant not just modern telecommunication systems, but broadly across industries as varied as healthcare operations and transportation. For English translations of the seminal Erlang papers and a biography of the author, see \citet{erlang1948life}. In those original works, Erlang supposed that calls arrive perfectly independently, that they have no influence or relationship with one another. In the remainder of this subsection we investigate the scenario where these calls instead exhibit self-excitement, which is a potential explanation for the over-dispersion that has been seen in industrial call center data, as detailed in e.g. \citet{ibrahim2016modeling}. Another potential application for this model is a website that may receive viral traffic but is also liable to crash if there are too many simultaneous visitors. Additionally, this finite capacity model could be used to represent a restaurant that becomes more enticing the more patrons it has in its limited seating area, like we have discussed in the introduction. To begin, we find the steady-state distribution of this process in Proposition~\ref{blockprop}. Drawing further inspiration from Erlang's work, we will refer to this finite capacity ESEP model as the \textit{blocking ephemerally self-exciting process} (ESEP-B).

\begin{proposition}\label{blockprop}
Let $\eta_t^\mathrm{B} = \eta^* + \alpha Q_t^\mathrm{B}$ be a ESEP-B, with baseline intensity $\eta^* > 0$, intensity jump $\alpha > 0$, expiration rate $\beta > \alpha$, and capacity $c \in \mathbb{Z}^+$. That is, if $Q_t^\mathrm{B} = c$ any arrivals that occur will be blocked and not recorded. Then, the steady-state distribution of the active number in system is given by
\begin{align}
\PP{Q_\infty^\mathrm{B} = n}
=
\frac{ \PP{Q_\infty^\eta = n}  }{1 - I_{\frac{\alpha}{\beta}}\left(c+1, \frac{\eta^*}{\alpha}\right)}
=
\frac{
\Gamma\left(n + \frac{\eta^*}{\alpha}\right)
\left(\frac{\beta - \alpha}{\beta}\right)^{\frac{\eta^*}{\alpha}}
 \left(\frac{\alpha}{\beta}\right)^{n} }{\Gamma\left(\frac{\eta^*}{\alpha}\right) n!\left(1 - I_{\frac{\alpha}{\beta}}\left(c+1, \frac{\eta^*}{\alpha}\right)\right)}
,
\end{align}
for $0 \leq n \leq c$ and 0 otherwise, where $\PP{Q_\infty^\eta = n}$ is as stated in Theorem~\ref{negbinthm}. Furthermore, the mean and variance of the number in system are given by
\begin{align}
\E{Q_\infty^\mathrm{B}}
=
 \frac{\eta_\infty}{\beta}
\left(
\frac{
1 - I_{\frac{\alpha}{\beta}}\left(c, \frac{\eta^*+\alpha}{\alpha}\right)
}
{
1 - I_{\frac{\alpha}{\beta}}\left(c+1, \frac{\eta^*}{\alpha}\right)
}
\right)
,
\end{align}
\begin{align}
\Var{Q_\infty^\mathrm{B}}
&
=
 \frac{\eta_\infty}{\beta}
 \left(
 \frac{\eta_\infty}{\beta}
 +
 \frac{\alpha}{\beta - \alpha}
 \right)
 \left(
 \frac{
 1 - I_{\frac{\alpha}{\beta}}\left(c-1, \frac{\eta^*+ 2\alpha}{\alpha}\right)
 }
 {
1 - I_{\frac{\alpha}{\beta}}\left(c+1, \frac{\eta^*}{\alpha}\right)
 }
 \right)
-
 \frac{\eta_\infty^2}{\beta^2}
\left(
\frac{
1 - I_{\frac{\alpha}{\beta}}\left(c, \frac{\eta^*+\alpha}{\alpha}\right)
}
{
1 - I_{\frac{\alpha}{\beta}}\left(c+1, \frac{\eta^*}{\alpha}\right)
}
\right)^2
\nonumber
\\
&
\quad
+
 \frac{\eta_\infty}{\beta}
\left(
\frac{
1 - I_{\frac{\alpha}{\beta}}\left(c, \frac{\eta^*+\alpha}{\alpha}\right)
}
{
1 - I_{\frac{\alpha}{\beta}}\left(c+1, \frac{\eta^*}{\alpha}\right)
}
\right),
\end{align}
where $\eta_\infty = \frac{\beta \eta^*}{\beta - \alpha}$  and $I_z(a,b) = \frac{\Gamma (a+b)}{\Gamma (a) \Gamma (b)} \int_0^z x^{a-1}(1-x)^{b-1} \mathrm{d}x$ for $z \in [0,1]$, $a > 0$ and $b > 0$ is the regularized incomplete beta function.
\begin{proof}
To show each of these, we first note that for $k \in \mathbb{Z}^+$, $x > 0$, and $p \in (0,1)$,
\begin{align}\label{sumEQ}
\sum_{n=0}^k
\frac{\Gamma\left(n + x\right)}{\Gamma\left(x\right) n! }
\left(1 - p\right)^{x}
p^n
=
1 - I_{p}\left(k+1, x\right).
\end{align}
Hence, we can use Equation~\ref{sumEQ} to see that
$$
\sum_{n=0}^c
\PP{Q_\infty^\eta = n}
=
\sum_{n=0}^c
\frac{\Gamma\left(n + \frac{\eta^*}{\alpha}\right)}{\Gamma\left(\frac{\eta^*}{\alpha}\right) n! }
\left(\frac{\beta - \alpha}{\beta}\right)^{\frac{\eta^*}{\alpha}}
\left(\frac{\alpha}{\beta}\right)^n
=
1 - I_{\frac{\alpha}{\beta}}\left(c+1, \frac{\eta^*}{\alpha}\right)
.
$$
Because the ESEP is a birth-death process it is reversible. Thus, by truncation we achieve the steady-state distribution, see e.g. Corollary 1.10 in \citet{kelly2011reversibility}. Then, the steady-state mean of the number in system is given by
\begin{align*}
\E{Q_\infty^B}
&
=
\sum_{n=1}^c
\frac{
n
\Gamma\left(n + \frac{\eta^*}{\alpha}\right)
\left(\frac{\beta - \alpha}{\beta}\right)^{\frac{\eta^*}{\alpha}}
 \left(\frac{\alpha}{\beta}\right)^{n} }{\Gamma\left(\frac{\eta^*}{\alpha}\right) n!\left(1 - I_{\frac{\alpha}{\beta}}\left(c+1, \frac{\eta^*}{\alpha}\right)\right)}
 \\
 &
 =
 \frac{
 \frac{\eta^*}{\beta-\alpha}
 }
 {
1 - I_{\frac{\alpha}{\beta}}\left(c+1, \frac{\eta^*}{\alpha}\right)
 }
\sum_{n=1}^c
\frac{
\Gamma\left(n - 1 + \frac{\eta^* + \alpha}{\alpha}\right)
\left(\frac{\beta - \alpha}{\beta}\right)^{\frac{\eta^* + \alpha}{\alpha}}
 \left(\frac{\alpha}{\beta}\right)^{n-1} }{\Gamma\left(\frac{\eta^*+\alpha}{\alpha}\right) (n-1)!}
  \\
 &
 =
 \frac{\eta_\infty}{\beta}
\left(
\frac{
1 - I_{\frac{\alpha}{\beta}}\left(c, \frac{\eta^*+\alpha}{\alpha}\right)
}
{
1 - I_{\frac{\alpha}{\beta}}\left(c+1, \frac{\eta^*}{\alpha}\right)
}
\right),
\end{align*}
where we have again used Equation~\ref{sumEQ} to simplify the summation. Likewise, the second moment in steady-state can be written
\begin{align*}
\E{\left(Q_\infty^B\right)^2}
&
=
\sum_{n=1}^c
\frac{
n^2
\Gamma\left(n + \frac{\eta^*}{\alpha}\right)
\left(\frac{\beta - \alpha}{\beta}\right)^{\frac{\eta^*}{\alpha}}
 \left(\frac{\alpha}{\beta}\right)^{n} }{\Gamma\left(\frac{\eta^*}{\alpha}\right) n!\left(1 - I_{\frac{\alpha}{\beta}}\left(c+1, \frac{\eta^*}{\alpha}\right)\right)}
 \\
 &
 =
 \sum_{n=2}^c
 \frac{
\Gamma\left(n + \frac{\eta^*}{\alpha}\right)
\left(\frac{\beta - \alpha}{\beta}\right)^{\frac{\eta^*}{\alpha}}
 \left(\frac{\alpha}{\beta}\right)^{n} }{\Gamma\left(\frac{\eta^*}{\alpha}\right) (n-2)!\left(1 - I_{\frac{\alpha}{\beta}}\left(c+1, \frac{\eta^*}{\alpha}\right)\right)}
 +
 \sum_{n=1}^c
 \frac{
\Gamma\left(n + \frac{\eta^*}{\alpha}\right)
\left(\frac{\beta - \alpha}{\beta}\right)^{\frac{\eta^*}{\alpha}}
 \left(\frac{\alpha}{\beta}\right)^{n} }{\Gamma\left(\frac{\eta^*}{\alpha}\right) (n-1)!\left(1 - I_{\frac{\alpha}{\beta}}\left(c+1, \frac{\eta^*}{\alpha}\right)\right)}
  \\
 &
 =
 \frac{\eta^*(\eta^* + \alpha)}{(\beta-\alpha)^2}
\sum_{n=2}^c
\frac{
\Gamma\left(n - 2 + \frac{\eta^*+ 2\alpha}{\alpha}\right)
\left(\frac{\beta - \alpha}{\beta}\right)^{\frac{\eta^* + 2\alpha}{\alpha}}
 \left(\frac{\alpha}{\beta}\right)^{n-2} }{\Gamma\left(\frac{\eta^* + 2 \alpha}{\alpha}\right) (n-2)!\left(1 - I_{\frac{\alpha}{\beta}}\left(c+1, \frac{\eta^*}{\alpha}\right)\right)}
 +
 \E{Q_\infty^\mathrm{B}}
 \\
 &
 =
 \frac{\eta_\infty}{\beta}
 \left(
 \frac{\eta_\infty}{\beta}
 +
 \frac{\alpha}{\beta - \alpha}
 \right)
 \frac{
 1 - I_{\frac{\alpha}{\beta}}\left(c-1, \frac{\eta^*+2\alpha}{\alpha}\right)
 }{
 1 - I_{\frac{\alpha}{\beta}}\left(c+1, \frac{\eta^*}{\alpha}\right)
 }
  +
 \E{Q_\infty^\mathrm{B}}
\end{align*}
where once more these sums have been simplified through Equation~\ref{sumEQ}.
\end{proof}
\end{proposition}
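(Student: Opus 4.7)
The plan is to exploit the birth--death structure of the ESEP. Since blocking simply removes the transitions between state $c$ and state $c+1$, the restricted chain is still a reversible birth--death process on $\{0, 1, \ldots, c\}$ with the same local rates as the unrestricted ESEP. By the standard truncation principle for reversible Markov processes (Corollary 1.10 of \citet{kelly2011reversibility}), the stationary distribution of the ESEP-B is simply the unrestricted stationary distribution from Theorem~\ref{negbinthm} conditioned on $\{Q_\infty^\eta \leq c\}$. Everything then reduces to evaluating the normalizing constant and two moment sums. Stability $\beta > \alpha$ is actually not needed for the existence of the ESEP-B stationary distribution (a finite, irreducible CTMC is automatically positive recurrent), and is inherited only from the statement of Theorem~\ref{negbinthm}.

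The algebraic engine that makes the sums tractable is the identity
\[
\sum_{n=0}^k \frac{\Gamma(n+x)}{\Gamma(x)\, n!}(1-p)^x p^n \;=\; 1 - I_{p}(k+1, x),
\]
valid for $k \in \mathbb{N}$, $x > 0$, and $p \in (0,1)$. This is the standard expression of the negative binomial CDF as a regularized incomplete beta, and follows either from repeated integration by parts of the beta integral or from the well-known symmetry between negative binomial and binomial tail probabilities. Applying it with $x = \eta^*/\alpha$ and $p = \alpha/\beta$ evaluates the normalizing constant as $1 - I_{\alpha/\beta}(c+1, \eta^*/\alpha)$ and yields the stated form of $\PP{Q_\infty^\mathrm{B} = n}$.

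For the mean, I would cancel $n$ against $n!$, reindex $n \mapsto n-1$, and invoke the Gamma recursion $\Gamma(n + \eta^*/\alpha) = (n - 1 + \eta^*/\alpha)\,\Gamma(n - 1 + \eta^*/\alpha)$ to absorb the resulting prefactor of $\eta^*/(\beta - \alpha)$ (which is just $\eta_\infty/\beta$) into a shifted shape parameter $(\eta^* + \alpha)/\alpha$. The remaining sum runs from $0$ to $c-1$ and, after renormalizing by $(\beta-\alpha)/\beta$ raised to the new shape, matches the identity above, giving the $1 - I_{\alpha/\beta}(c, (\eta^*+\alpha)/\alpha)$ factor. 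For the second moment, I would use the factorial decomposition $n^2 = n(n-1) + n$: the $n(n-1)$ piece needs two index shifts and two Gamma recursions, reducing to a sum of the same form but with shape parameter $(\eta^* + 2\alpha)/\alpha$ and summation limit $c-1$ becoming $c-2$, producing the $1 - I_{\alpha/\beta}(c-1,(\eta^*+2\alpha)/\alpha)$ factor; the linear part is exactly the mean already computed. Finally, $\Var{Q_\infty^\mathrm{B}} = \E{(Q_\infty^\mathrm{B})^2} - \E{Q_\infty^\mathrm{B}}^2$ assembles into the stated expression.

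The step that requires genuine care is not conceptual but bookkeeping: at each reindexing, the shape parameter of the incomplete beta shifts by one unit of $\alpha$ and the upper limit decreases by one, while the $\Gamma$-function identity peels off a prefactor of $\eta^*/(\beta-\alpha)$, $(\eta^* + \alpha)/(\beta - \alpha)$, etc., that must be recombined into clean expressions involving $\eta_\infty/\beta$ and $\alpha/(\beta-\alpha)$. This is the only real obstacle; once one checks that the prefactors telescope correctly, the three displayed formulas in the proposition drop out.
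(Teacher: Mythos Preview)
Your proposal is correct and follows essentially the same approach as the paper's own proof: both invoke reversibility of the birth--death chain plus Kelly's truncation result for the stationary law, both rely on the same negative-binomial CDF identity $\sum_{n=0}^k \frac{\Gamma(n+x)}{\Gamma(x)\,n!}(1-p)^x p^n = 1 - I_p(k+1,x)$ for the normalizer, and both compute the moments by the same cancel--reindex--Gamma-shift routine (with $n^2 = n(n-1)+n$ for the second moment). Your side remark that $\beta>\alpha$ is not actually needed for existence of the ESEP-B stationary law is a nice extra observation not made in the paper.
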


As a demonstration of these findings, we now plot both the steady-state distribution and the mean and variance of this blocking system in Figure~\ref{blockfig}. As can be observed in the figure, this system remains over-dispersed even when truncated. We can observe further that this holds in generality as follows. To observe this, we state two known properties of the regularized incomplete beta function:
\begin{align}
I_z(a,b) = I_z(a+1,b) + \frac{z^a(1-z)^b}{a B(a,b)}, & & I_z(a,b+1) = I_z(a,b) + \frac{z^a(1-z)^b}{b B(a,b)} ,
\label{betaI}
\end{align}
where $B(a,b) = \frac{\Gamma (a+b)}{\Gamma(a)\Gamma(b)}$ is the beta function. Using these together, we can observe that
$$
I_z(a,b) > I_z(a+1,b-1).
$$
Thus, we can see that $I_{\frac{\alpha}{\beta}}\left(c+1, \frac{\eta^*}{\alpha}\right) < I_{\frac{\alpha}{\beta}}\left(c, \frac{\eta^* + \alpha}{\alpha}\right) < I_{\frac{\alpha}{\beta}}\left(c-1, \frac{\eta^*+2\alpha}{\alpha}\right) < 1$, and this implies
$$
1
>
\frac{
1 - I_{\frac{\alpha}{\beta}}\left(c, \frac{\eta^*+\alpha}{\alpha}\right)
}
{
1 - I_{\frac{\alpha}{\beta}}\left(c+1, \frac{\eta^*}{\alpha}\right)
}
>
\frac{
1 - I_{\frac{\alpha}{\beta}}\left(c-1, \frac{\eta^*+2\alpha}{\alpha}\right)
}
{
1 - I_{\frac{\alpha}{\beta}}\left(c+1, \frac{\eta^*}{\alpha}\right)
}
.
$$
We now note that the variance is written as the sum of the mean and a positive term and is thus over-dispersed.

\begin{figure}[h]
\begin{center}	
\includegraphics[width=.5\textwidth]{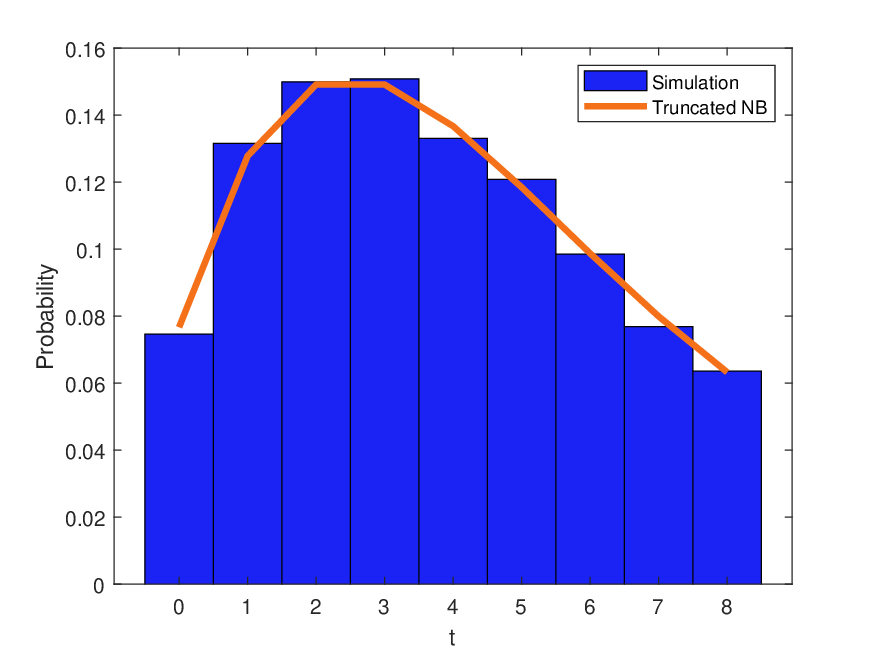}~\hspace{-.1in}~\includegraphics[width=.5\textwidth]{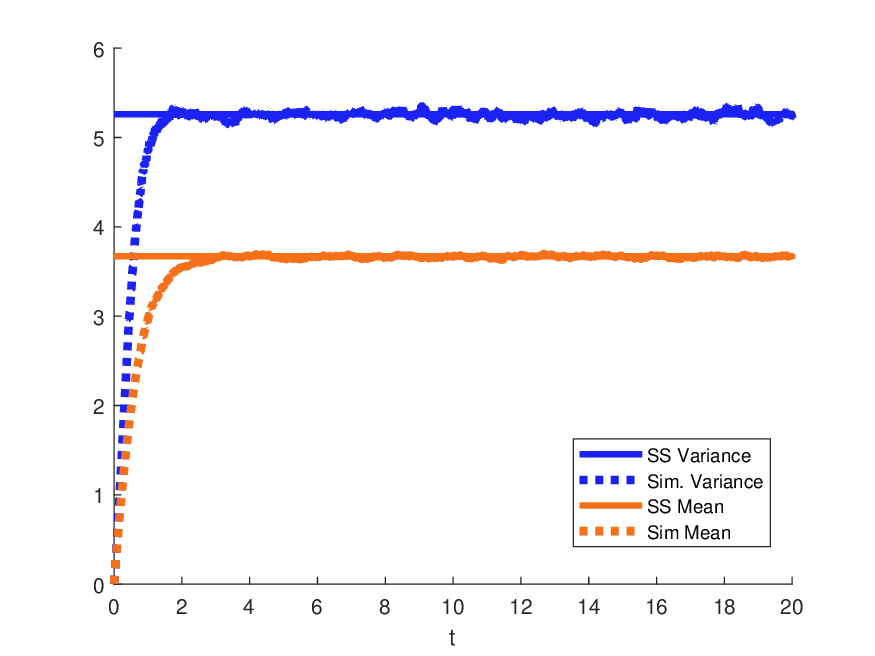}
\caption{Steady-state distribution (left) and mean and variance (right) of the ESEP-B with $\eta^* = 5$, $\alpha = 2$, $\beta = 3$, and $c = 8$ (Right), based on 10,000 replications. } \label{blockfig}
\end{center}
\end{figure}


We can also note that in the classical Erlang-B model, the famous ``Poisson arrivals see time averages'' (PASTA) result implies that the steady-state fraction of arrivals that are blocked is equal to the probability that the queue is at capacity in steady-state, see \citet{wolff1982poisson}. This is not so for the ESEP-B, as the arrival rate is state-dependent and, more specifically, increases with the queue length. However, in Proposition~\ref{hasta} we find that an equivalent result holds asymptotically as the baseline intensity and the capacity grow large simultaneously.  We note that large baseline intensity and capacity are realistic scenarios for many practically relevant applications, including the aforementioned website crashing scenario.

\begin{proposition}\label{hasta}
Let $\eta_t^\mathrm{B} = \eta^* + \alpha Q_t^\mathrm{B}$ be an ESEP-B, with baseline intensity $\eta^* > 0$, intensity jump $\alpha > 0$, exponential service rate $\beta > \alpha$, and capacity $c \in \mathbb{Z}^+$. Then, the fraction of arrivals in steady-state that are blocked $\pi_{\mathrm{B}}$ is given by
\begin{align}
\pi_{\mathrm{B}}
&
=
\frac{(\eta^* + \alpha c)\PP{Q_\infty^\eta = c }}{\sum_{k=0}^c (\eta^* + \alpha k) \PP{Q_\infty^\eta = k}}
=
\frac{(\eta^* + \alpha c) \PP{Q_\infty^\mathrm{B} = c}}{\eta^* + \alpha \E{Q_\infty^\mathrm{B}}}
\label{hastaVal}
,
\end{align}
where $\PP{Q_\infty^\eta = k}$ is as given in Theorem~\ref{negbinthm} and $\PP{Q_\infty^\mathrm{B} = c}$ and $\E{Q_\infty^\mathrm{B}}$ are as given in Proposition~\ref{blockprop}. Moreover, if the baseline intensity and the capacity are redefined to be $\eta^* n$ and $c n$ for $n \in \mathbb{Z}^+$, then
\begin{align}
\frac{\pi_{\mathrm{B}}}{\PP{Q_\infty^\mathrm{B} = c}}
&
\longrightarrow
1
\label{hastaConv}
,
\end{align}
as $n \to \infty$.
\begin{proof}
The expression for steady-state fraction of arrivals blocked $\pi_\mathrm{B}$  in Equation~\ref{hastaVal} follows as a direct consequence from observing that the $\eta^* + \alpha k$ is the arrival rate when the queue is in state $k$. We are thus left to show Equation~\ref{hastaConv}. By use of Equation~\ref{hastaVal}, we have that the ratio of $\pi_{B}$ and $\PP{Q_\infty^\mathrm{B} = c}$ is
\begin{align*}
\frac{\pi_\mathrm{B}}{\PP{Q_\infty^\mathrm{B} = c}}
&
=
\frac{\eta^* + \alpha c }{\eta^* + \alpha \E{Q_\infty^\mathrm{B}}}
=
\frac{\eta^* + \alpha c }{\eta^* +  \frac{\alpha \eta^*}{\beta - \alpha}
\left(
\frac{
1 - I_{\frac{\alpha}{\beta}}\left(c, \frac{\eta^*}{\alpha}+1\right)
}
{
1 - I_{\frac{\alpha}{\beta}}\left(c+1, \frac{\eta^*}{\alpha}\right)
}
\right)}
,
\end{align*}
by use of Proposition~\ref{blockprop}. Substituting in the scaled forms of the baseline intensity and capacity $\eta^* n$ and $c n$ and then dividing the numerator and denominator by $c n$, this is
\begin{align*}
\frac{\eta^*n + \alpha cn }{\eta^*n +  \frac{\alpha \eta^*n}{\beta - \alpha}
\left(
\frac{
1 - I_{\frac{\alpha}{\beta}}\left(cn, \frac{\eta^*n}{\alpha}+1\right)
}
{
1 - I_{\frac{\alpha}{\beta}}\left(cn+1, \frac{\eta^*n}{\alpha}\right)
}
\right)}
&
=
\frac{\frac{\eta^*}{c} + \alpha  }{\frac{\eta^*}{c} +  \frac{\eta^*}{c}\left(\frac{\alpha }{\beta - \alpha}\right)
\left(
\frac{
1 - I_{\frac{\alpha}{\beta}}\left(cn, \frac{\eta^*n}{\alpha}+1\right)
}
{
1 - I_{\frac{\alpha}{\beta}}\left(cn+1, \frac{\eta^*n}{\alpha}\right)
}
\right)}
.
\end{align*}
From the definition and symmetry of the regularized incomplete beta function, we can note that the ratio of these functions is such that
\begin{align*}
\frac{
1 - I_{\frac{\alpha}{\beta}}\left(cn, \frac{\eta^*n}{\alpha}+1\right)
}
{
1 - I_{\frac{\alpha}{\beta}}\left(cn+1, \frac{\eta^*n}{\alpha}\right)
}
&
=
\frac{
I_{1-\frac{\alpha}{\beta}}\left(\frac{\eta^*n}{\alpha}+1, cn\right)
}
{
I_{1-\frac{\alpha}{\beta}}\left(\frac{\eta^*n}{\alpha}, cn + 1\right)
}
=
\frac{\alpha c}{\eta^*}
\left(
\frac{
 \int_0^{1-\frac{\alpha}{\beta}} x^{\frac{n\eta^*}{\alpha}}\left(1-x\right)^{cn-1}\mathrm{d}x
}
{
 \int_0^{1-\frac{\alpha}{\beta}} x^{\frac{n\eta^*}{\alpha}-1}\left(1-x\right)^{cn}\mathrm{d}x
}
\right)
.
\end{align*}
We can now recognize an identity for the hypergeometric function $\,_2F_1(a,b;c;z)$, and thus re-express this ratio as
\begin{align*}
\frac{\alpha c}{\eta^*}
\left(
\frac{
 \int_0^{1-\frac{\alpha}{\beta}} x^{\frac{n\eta^*}{\alpha}}\left(1-x\right)^{cn-1}\mathrm{d}x
}
{
 \int_0^{1-\frac{\alpha}{\beta}} x^{\frac{n\eta^*}{\alpha}-1}\left(1-x\right)^{cn}\mathrm{d}x
}
\right)
&
=
\frac{\alpha c}{\eta^*}
\left(
\frac{
\frac{1}{\frac{\eta^* n}{\alpha} + 1} \left(1-\frac{\alpha}{\beta}\right)^{\frac{\eta^* n}{\alpha}+1}\left(\frac{\alpha}{\beta}\right)^{c n} \,_2F_1 \left(c+\frac{\eta^* n}{\alpha}+1,1;cn+2;1-\frac{\alpha}{\beta}\right)
}
{
\frac{1}{\frac{\eta^* n}{\alpha}} \left(1-\frac{\alpha}{\beta}\right)^{\frac{\eta^* n}{\alpha}}\left(\frac{\alpha}{\beta}\right)^{c n + 1} \,_2F_1 \left(c+\frac{\eta^* n}{\alpha}+1,1;cn+1;1-\frac{\alpha}{\beta}\right)
}
\right)
\\
&
=
\frac{\alpha c}{\eta^*}
\left(
\frac{\eta^* n}{\eta^* n + \alpha}
\right)
\left(
\frac{\beta - \alpha}{\alpha}
\right)
\frac{
\,_2F_1 \left(c+\frac{\eta^* n}{\alpha}+1,1;cn+2;1-\frac{\alpha}{\beta}\right)
}
{
 \,_2F_1 \left(c+\frac{\eta^* n}{\alpha}+1,1;cn+1;1-\frac{\alpha}{\beta}\right)
}
.
\end{align*}
As $n \to \infty$, this yields
$$
\frac{\alpha c}{\eta^*}
\left(
\frac{\eta^* n}{\eta^* n + \alpha}
\right)
\left(
\frac{\beta - \alpha}{\beta}
\right)
\frac{
\,_2F_1 \left(c+\frac{\eta^* n}{\alpha}+1,1;cn+2;1-\frac{\alpha}{\beta}\right)
}
{
 \,_2F_1 \left(c+\frac{\eta^* n}{\alpha}+1,1;cn+1;1-\frac{\alpha}{\beta}\right)
}
\longrightarrow
\frac{\alpha c}{\eta^*}
\left(
\frac{\beta - \alpha}{\alpha}
\right)
,
$$
which thus implies that
$$
\frac{\frac{\eta^*}{c} + \alpha  }{\frac{\eta^*}{c} +  \frac{\eta^*}{c}\left(\frac{\alpha }{\beta - \alpha}\right)
\left(
\frac{
1 - I_{\frac{\alpha}{\beta}}\left(cn, \frac{\eta^*n}{\alpha}+1\right)
}
{
1 - I_{\frac{\alpha}{\beta}}\left(cn+1, \frac{\eta^*n}{\alpha}\right)
}
\right)}
\longrightarrow
\frac{\frac{\eta^*}{c} + \alpha  }{\frac{\eta^*}{c} +  \frac{\eta^*}{c}\left(\frac{\alpha }{\beta - \alpha}\right)
\frac{\alpha c}{\eta^*}
\left(
\frac{\beta - \alpha}{\alpha}
\right)
}
=
1
,
$$
and this completes the proof.
\end{proof}
\end{proposition}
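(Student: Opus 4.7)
The proof splits along the two parts of the statement. For the identity~\eqref{hastaVal}, I would argue by rate conservation for the ergodic CTMC underlying the ESEP-B. Arrival attempts occur at state-dependent rate $\eta^* + \alpha k$ in state $k$, so by the ergodic theorem the long-run fraction of attempts that are blocked equals the time-averaged rate of attempts made while at capacity, $(\eta^* + \alpha c)\PP{Q_\infty^{\mathrm{B}} = c}$, divided by the total attempt rate, $\sum_{k=0}^c (\eta^* + \alpha k)\PP{Q_\infty^{\mathrm{B}} = k} = \eta^* + \alpha \E{Q_\infty^{\mathrm{B}}}$. Recognizing from Proposition~\ref{blockprop} that $\PP{Q_\infty^{\mathrm{B}} = k} = \PP{Q_\infty^\eta = k}/(1 - I_{\alpha/\beta}(c+1,\eta^*/\alpha))$ lets the common normalization cancel between numerator and denominator, yielding the first form; substituting the closed-form expression for $\E{Q_\infty^{\mathrm{B}}}$ gives the second.

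For the limit~\eqref{hastaConv}, I would plug the closed form for $\E{Q_\infty^{\mathrm{B}}}$ into~\eqref{hastaVal}, divide by $\PP{Q_\infty^{\mathrm{B}} = c}$, and apply the scaling $\eta^* \mapsto n\eta^*$, $c \mapsto nc$. After dividing numerator and denominator by $n$, the ratio simplifies to
\begin{equation*}
\frac{\pi_{\mathrm{B}}}{\PP{Q_\infty^{\mathrm{B}} = c}} = \frac{\eta^* + \alpha c}{\eta^* + \dfrac{\alpha\eta^*}{\beta-\alpha}\,R_n}, \qquad R_n = \frac{1 - I_{\alpha/\beta}\!\bigl(cn,\tfrac{\eta^* n}{\alpha}+1\bigr)}{1 - I_{\alpha/\beta}\!\bigl(cn+1,\tfrac{\eta^* n}{\alpha}\bigr)},
\end{equation*}
reducing the problem to showing that $R_n \to c(\beta-\alpha)/\eta^*$ as $n \to \infty$.

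To analyze $R_n$, I would first apply the symmetry $I_z(a,b) = 1 - I_{1-z}(b,a)$ so both complements become regularized incomplete beta functions evaluated at $1-\alpha/\beta$, and then express each as an Euler integral of $x^{p-1}(1-x)^{q-1}$ on $[0,1-\alpha/\beta]$ divided by $B(p,q)$. The ratio of the gamma prefactors is elementary and contributes the leading constant $\alpha c(\beta-\alpha)/(\alpha\eta^*)$. A standard transformation then casts the ratio of the two integrals as a ratio of Gauss hypergeometric functions $\,_2F_1(A_n,1;C_n;1-\alpha/\beta)$, where $A_n = cn + \eta^* n/\alpha + 1$ is common to both and $C_n$ differs by $1$ between numerator and denominator.

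The main obstacle will be showing that this ratio of hypergeometrics tends to $1$. My plan is to use the absolutely convergent series representation $\,_2F_1(A_n,1;C_n;w) = \sum_{j\geq 0}(A_n)_j w^j/(C_n)_j$, which converges uniformly in $n$ because $w = 1-\alpha/\beta \in (0,1)$ dominates each summand by $w^j$. Incrementing $C_n$ by $1$ perturbs each factor $(A_n+i)/(C_n+i)$ in the $j$th term by $O(1/n)$ uniformly in $i \leq j$, so dominated convergence on the series index $j$ yields that the two hypergeometric series share a common limit and their ratio tends to $1$. Back-substitution then gives $R_n \to c(\beta-\alpha)/\eta^*$, and plugging this into the displayed ratio collapses the right-hand side to $1$ as required. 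A cleaner alternative would be to interpret each incomplete beta as the tail probability of a Beta random variable whose parameters grow linearly in $n$ and then to use concentration estimates; I would fall back on this route if the bare hypergeometric manipulation does not pin down the error terms cleanly.
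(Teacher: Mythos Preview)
Your proposal follows essentially the same route as the paper's own proof: both derive \eqref{hastaVal} from the state-dependent arrival rates, reduce \eqref{hastaConv} to controlling the ratio $R_n$ of incomplete beta complements, pass to the symmetric form $I_{1-\alpha/\beta}$, rewrite as a ratio of Euler integrals and then of $\,_2F_1$ functions with common first argument, and finish by arguing that this hypergeometric ratio tends to $1$. One small caution: your domination claim that each series term is bounded by $w^j$ is not quite right, since $(A_n)_j/(C_n)_j > 1$ when $A_n > C_n$; you will need a bound of the form $(r w)^j$ for some $r>1$, which is only summable when $\eta^*(\beta-\alpha) < c\alpha^2$, so your Beta-concentration fallback (or a Laplace-type asymptotic on the integrals) is the safer route for the general case --- and indeed the paper itself simply asserts the limiting value of the hypergeometric ratio without justification.
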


As an example of the convergence stated in Proposition~\ref{hasta}, we compare the probability of the system being at capacity and the fraction of blocked arrivals below in Figure~\ref{blockproblimit}. In this figure, $\eta^*$ and $c$ are increased simultaneously according to a fixed ratio. Although at the initial values it is clear that a PASTA-esque result does not hold, as the baseline intensity and capacity both increase one can see that the two curves tend toward one another in each of the different parameter settings.

\begin{figure}[h]
\begin{center}	
\includegraphics[scale=.85]{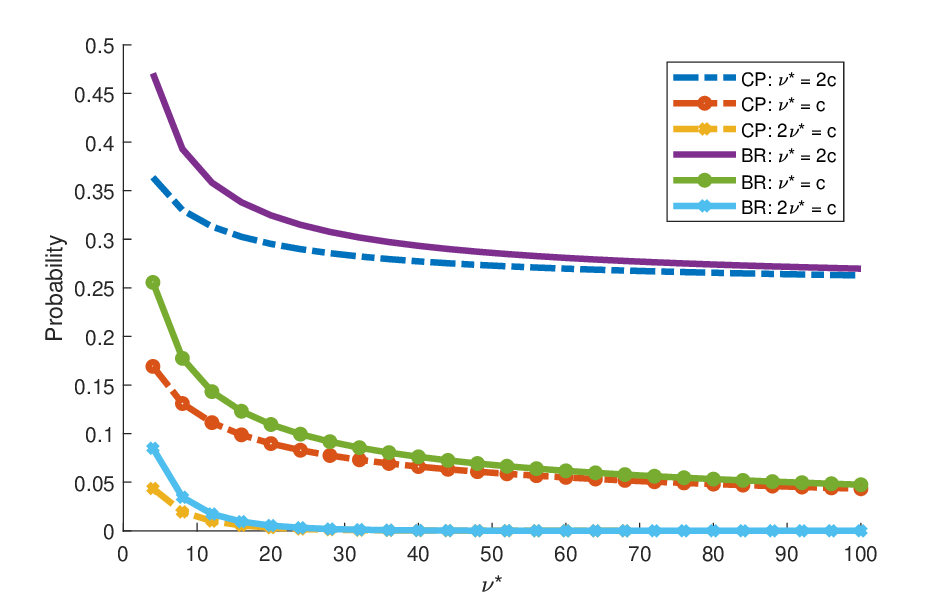}
\caption{Comparison of the ratio of blocked arrivals (BR) and the probability of system being at capacity (CP) when increasing $\eta^*$ and $c$ simultaneously, where $\alpha = 2$ and $\beta = 3$.} \label{blockproblimit}
\end{center}
\end{figure}

\section{Proof of Proposition \ref{Npgf}}\label{pgfProof}
\begin{proof}
Using Proposition~\ref{QDpgf}, we proceed through use of exponential identities for the hyperbolic functions. Specifically, we will make use of the following:
\begin{align}
\tanh(x)
&
=
\frac{e^x - e^{-x}}{e^x + e^{-x}}
,
\label{tanh}
\\
\cosh(x)
&
=
\frac{e^x + e^{-x}}2
,
\label{cosh}
\intertext{and}
\tanh^{-1}(x)
&
=
\frac{1}{2}
\log\left(
\frac{1 + x}{1-x}
\right)
.
\label{tanhinv}
\end{align}
Using these identities we can further observe that
\begin{align*}
\cosh\left(\tanh^{-1}(x)\right)
&
=
\frac{e^{\tanh^{-1}(x)} +  e^{-\tanh^{-1}(x)}}2
=
\frac{\left(
\frac{1 + x}{1-x}
\right)^{\frac{1}{2}}
+
\left(
\frac{1 - x}{1 + x}
\right)^{\frac{1}{2}}
}2
.
\end{align*}
Now, for any time $t \geq 0$ we can note that $N_t = Q_t + D_t$. Thus, we have that
$$
\E{z^{N_{t}}} = \E{z^{Q_t}z^{D_t}} = G(z,z,t)
,
$$
where $G(z_1,z_2,t)$ is as given in Proposition~\ref{QDpgf}. Setting $z_1 = z_2 = z$ and $D_0 = N_0 - Q_0$, this is
\begin{align}
&
G(z, z, t)
=
z^{N_0 - Q_0} e^{\frac{\eta^*(\beta-\alpha)}{2\alpha}t}
\left(
1
-
\left(
\tanh\left(
\frac{t}2 \sqrt{(\beta+\alpha)^2 - 4\alpha\beta z}
+
\tanh^{-1}\left(\frac{\beta+\alpha-2\alpha z }{\sqrt{(\beta+\alpha)^2 - 4\alpha\beta z}} \right)
\right)
\right)^2
\right)^{\frac{\eta^*}{2\alpha}}
\nonumber
\\
&
\quad
\cdot
\left(
\frac{\beta + \alpha}{2\alpha}
-
\frac{\sqrt{(\beta+\alpha)^2 - 4\alpha\beta z}}{2\alpha}
\tanh \left(\frac{t}2 \sqrt{(\beta+\alpha)^2 - 4\alpha\beta z} + \tanh^{-1}\left(\frac{ \beta+\alpha-2\alpha z}{\sqrt{(\beta+\alpha)^2 - 4\alpha\beta z}} \right) \right)
\right)^{Q_0}
\nonumber
\\
&
\quad
\cdot
\left(
\cosh \left(
\tanh^{-1}\left(\frac{2\alpha z - \beta - \alpha}{\sqrt{(\beta+\alpha)^2 - 4\alpha\beta z}} \right)
\right)
\right)^{\frac{\eta^*}{\alpha}}
.
\label{presimp}
\end{align}
Using the hyperbolic identities and simplifying, this is
\begin{align*}
&
G(z, z, t)
=
z^{N_0 - Q_0} e^{\frac{\eta^*\eta^*(\beta-\alpha)}{2\alpha}t}
\left(
\frac{
2
e^{
\frac{t}{2} \sqrt{(\beta+\alpha)^2 - 4\alpha\beta z}}
}
{
1-\frac{ \beta+\alpha-2\alpha z}{\sqrt{(\beta+\alpha)^2 - 4\alpha\beta z}}
+
\left(
1 + \frac{ \beta+\alpha-2\alpha z}{\sqrt{(\beta+\alpha)^2 - 4\alpha\beta z}}
\right)
e^{
t \sqrt{(\beta+\alpha)^2 - 4\alpha\beta z}}
}
\right)^{\frac{\eta^*}{\alpha}}
\nonumber
\\
&
\quad
\cdot
\left(
\frac{\beta + \alpha}{2\alpha}
+
\frac{\sqrt{(\beta+\alpha)^2 - 4\alpha\beta z}}{2\alpha}
\left(
\frac{
1-\frac{ \beta+\alpha-2\alpha z}{\sqrt{(\beta+\alpha)^2 - 4\alpha\beta z}}
-
\left(
1 + \frac{ \beta+\alpha-2\alpha z}{\sqrt{(\beta+\alpha)^2 - 4\alpha\beta z}}
\right)
e^{
t \sqrt{(\beta+\alpha)^2 - 4\alpha\beta z}}
}
{
1-\frac{ \beta+\alpha-2\alpha z}{\sqrt{(\beta+\alpha)^2 - 4\alpha\beta z}}
+
\left(
1 + \frac{ \beta+\alpha-2\alpha z}{\sqrt{(\beta+\alpha)^2 - 4\alpha\beta z}}
\right)
e^{
t \sqrt{(\beta+\alpha)^2 - 4\alpha\beta z}}
}
\right)
\right)^{Q_0}
,
\end{align*}
which is the stated result. However, the simplifications used to reach this form require multiple parts and several steps and so we can these individually now. We start with the hyperbolic tangent function that appears on the first and second lines of Equation~\ref{presimp}. Using Equations~\ref{tanh} and~\ref{tanhinv}, this is
\begin{align*}
&
-
\tanh \left(\frac{t}2 \sqrt{(\beta+\alpha)^2 - 4\alpha\beta z} + \tanh^{-1}\left(\frac{ \beta+\alpha-2\alpha z}{\sqrt{(\beta+\alpha)^2 - 4\alpha\beta z}} \right) \right)
\\
&
=
-
\frac{
e^{\left(
\frac{t}2 \sqrt{(\beta+\alpha)^2 - 4\alpha\beta z}
+
\frac{1}{2}
\log\left(
\frac{1 + \frac{ \beta+\alpha-2\alpha z}{\sqrt{(\beta+\alpha)^2 - 4\alpha\beta z}}}{1-\frac{ \beta+\alpha-2\alpha z}{\sqrt{(\beta+\alpha)^2 - 4\alpha\beta z}}}
\right)
\right)}
-
e^{-\left(
\frac{t}2 \sqrt{(\beta+\alpha)^2 - 4\alpha\beta z}
+
\frac{1}{2}
\log\left(
\frac{1 + \frac{ \beta+\alpha-2\alpha z}{\sqrt{(\beta+\alpha)^2 - 4\alpha\beta z}}}{1-\frac{ \beta+\alpha-2\alpha z}{\sqrt{(\beta+\alpha)^2 - 4\alpha\beta z}}}
\right)
\right)}
}
{
e^{\left(
\frac{t}2 \sqrt{(\beta+\alpha)^2 - 4\alpha\beta z}
+
\frac{1}{2}
\log\left(
\frac{1 + \frac{ \beta+\alpha-2\alpha z}{\sqrt{(\beta+\alpha)^2 - 4\alpha\beta z}}}{1-\frac{ \beta+\alpha-2\alpha z}{\sqrt{(\beta+\alpha)^2 - 4\alpha\beta z}}}
\right)
\right)}
+
e^{-\left(
\frac{t}2 \sqrt{(\beta+\alpha)^2 - 4\alpha\beta z}
+
\frac{1}{2}
\log\left(
\frac{1 + \frac{ \beta+\alpha-2\alpha z}{\sqrt{(\beta+\alpha)^2 - 4\alpha\beta z}}}{1-\frac{ \beta+\alpha-2\alpha z}{\sqrt{(\beta+\alpha)^2 - 4\alpha\beta z}}}
\right)
\right)}
}
\\
&
=
-
\frac{
e^{\left(
t \sqrt{(\beta+\alpha)^2 - 4\alpha\beta z}
+
\log\left(
\frac{1 + \frac{ \beta+\alpha-2\alpha z}{\sqrt{(\beta+\alpha)^2 - 4\alpha\beta z}}}{1-\frac{ \beta+\alpha-2\alpha z}{\sqrt{(\beta+\alpha)^2 - 4\alpha\beta z}}}
\right)
\right)}
-
1
}
{
e^{\left(
t \sqrt{(\beta+\alpha)^2 - 4\alpha\beta z}
+
\log\left(
\frac{1 + \frac{ \beta+\alpha-2\alpha z}{\sqrt{(\beta+\alpha)^2 - 4\alpha\beta z}}}{1-\frac{ \beta+\alpha-2\alpha z}{\sqrt{(\beta+\alpha)^2 - 4\alpha\beta z}}}
\right)
\right)}
+
1
}
\\
&
=
\frac{
1
-
\left(
\frac{1 + \frac{ \beta+\alpha-2\alpha z}{\sqrt{(\beta+\alpha)^2 - 4\alpha\beta z}}}{1-\frac{ \beta+\alpha-2\alpha z}{\sqrt{(\beta+\alpha)^2 - 4\alpha\beta z}}}
\right)
e^{
t \sqrt{(\beta+\alpha)^2 - 4\alpha\beta z}
}
}
{
1
+
\left(
\frac{1 + \frac{ \beta+\alpha-2\alpha z}{\sqrt{(\beta+\alpha)^2 - 4\alpha\beta z}}}{1-\frac{ \beta+\alpha-2\alpha z}{\sqrt{(\beta+\alpha)^2 - 4\alpha\beta z}}}
\right)
e^{
t \sqrt{(\beta+\alpha)^2 - 4\alpha\beta z}}
}
\\
&
=
\frac{
1-\frac{ \beta+\alpha-2\alpha z}{\sqrt{(\beta+\alpha)^2 - 4\alpha\beta z}}
-
\left(
1 + \frac{ \beta+\alpha-2\alpha z}{\sqrt{(\beta+\alpha)^2 - 4\alpha\beta z}}
\right)
e^{
t \sqrt{(\beta+\alpha)^2 - 4\alpha\beta z}}
}
{
1-\frac{ \beta+\alpha-2\alpha z}{\sqrt{(\beta+\alpha)^2 - 4\alpha\beta z}}
+
\left(
1 + \frac{ \beta+\alpha-2\alpha z}{\sqrt{(\beta+\alpha)^2 - 4\alpha\beta z}}
\right)
e^{
t \sqrt{(\beta+\alpha)^2 - 4\alpha\beta z}}
}
.
\end{align*}
Thus, the second line of Equation~\ref{presimp} simplifies as
\begin{align*}
&
\left(
\frac{\beta + \alpha}{2\alpha}
-
\frac{\sqrt{(\beta+\alpha)^2 - 4\alpha\beta z}}{2\alpha}
\tanh \left(\frac{t}2 \sqrt{(\beta+\alpha)^2 - 4\alpha\beta z} + \tanh^{-1}\left(\frac{ \beta+\alpha-2\alpha z}{\sqrt{(\beta+\alpha)^2 - 4\alpha\beta z}} \right) \right)
\right)^{Q_0}
\\
&
=
\left(
\frac{\beta + \alpha}{2\alpha}
+
\frac{\sqrt{(\beta+\alpha)^2 - 4\alpha\beta z}}{2\alpha}
\left(
\frac{
1-\frac{ \beta+\alpha-2\alpha z}{\sqrt{(\beta+\alpha)^2 - 4\alpha\beta z}}
-
\left(
1 + \frac{ \beta+\alpha-2\alpha z}{\sqrt{(\beta+\alpha)^2 - 4\alpha\beta z}}
\right)
e^{
t \sqrt{(\beta+\alpha)^2 - 4\alpha\beta z}}
}
{
1-\frac{ \beta+\alpha-2\alpha z}{\sqrt{(\beta+\alpha)^2 - 4\alpha\beta z}}
+
\left(
1 + \frac{ \beta+\alpha-2\alpha z}{\sqrt{(\beta+\alpha)^2 - 4\alpha\beta z}}
\right)
e^{
t \sqrt{(\beta+\alpha)^2 - 4\alpha\beta z}}
}
\right)
\right)^{Q_0}
\\
&
=
\left(
\frac{\beta + \alpha}{2\alpha}
+
\frac{
\frac{\sqrt{(\beta+\alpha)^2 - 4\alpha\beta z}}{2\alpha} - \frac{ \beta+\alpha-2\alpha z}{2\alpha}
-
\left(
\frac{\sqrt{(\beta+\alpha)^2 - 4\alpha\beta z}}{2\alpha} + \frac{ \beta+\alpha-2\alpha z}{2\alpha}
\right)
e^{
t \sqrt{(\beta+\alpha)^2 - 4\alpha\beta z}}
}
{
1-\frac{ \beta+\alpha-2\alpha z}{\sqrt{(\beta+\alpha)^2 - 4\alpha\beta z}}
+
\left(
1 + \frac{ \beta+\alpha-2\alpha z}{\sqrt{(\beta+\alpha)^2 - 4\alpha\beta z}}
\right)
e^{
t \sqrt{(\beta+\alpha)^2 - 4\alpha\beta z}}
}
\right)^{Q_0}
\\
&
=
\left(
\frac{
\frac{ (\beta+\alpha)^2-2\beta\alpha z-2\alpha^2z}{2\alpha\sqrt{(\beta+\alpha)^2 - 4\alpha\beta z}}
\left(
e^{
t \sqrt{(\beta+\alpha)^2 - 4\alpha\beta z}}
-
1
\right)
+
\frac{\sqrt{(\beta+\alpha)^2 - 4\alpha\beta z}}{2\alpha} + z
-
\left(
\frac{\sqrt{(\beta+\alpha)^2 - 4\alpha\beta z}}{2\alpha} - z
\right)
e^{
t \sqrt{(\beta+\alpha)^2 - 4\alpha\beta z}}
}
{
1-\frac{ \beta+\alpha-2\alpha z}{\sqrt{(\beta+\alpha)^2 - 4\alpha\beta z}}
+
\left(
1 + \frac{ \beta+\alpha-2\alpha z}{\sqrt{(\beta+\alpha)^2 - 4\alpha\beta z}}
\right)
e^{
t \sqrt{(\beta+\alpha)^2 - 4\alpha\beta z}}
}
\right)^{Q_0}
\\
&
=
\left(
\frac{
\left(
\frac{ (\beta -\alpha) z}{\sqrt{(\beta+\alpha)^2 - 4\alpha\beta z}}
\right)
\left(
e^{
t \sqrt{(\beta+\alpha)^2 - 4\alpha\beta z}}
-
1
\right)
+
 z
+
z
e^{
t \sqrt{(\beta+\alpha)^2 - 4\alpha\beta z}}
}
{
1-\frac{ \beta+\alpha-2\alpha z}{\sqrt{(\beta+\alpha)^2 - 4\alpha\beta z}}
+
\left(
1 + \frac{ \beta+\alpha-2\alpha z}{\sqrt{(\beta+\alpha)^2 - 4\alpha\beta z}}
\right)
e^{
t \sqrt{(\beta+\alpha)^2 - 4\alpha\beta z}}
}
\right)^{Q_0}
.
\end{align*}
Following the same approach, the first line of Equation~\ref{presimp} rearranges to
\begin{align*}
&
\left(
1
-
\left(
\frac{
e^{\left(
\frac{t}2 \sqrt{(\beta+\alpha)^2 - 4\alpha\beta z}
+
\frac{1}{2}
\log\left(
\frac{1 + \frac{ \beta+\alpha-2\alpha z}{\sqrt{(\beta+\alpha)^2 - 4\alpha\beta z}}}{1-\frac{ \beta+\alpha-2\alpha z}{\sqrt{(\beta+\alpha)^2 - 4\alpha\beta z}}}
\right)
\right)}
-
e^{-\left(
\frac{t}2 \sqrt{(\beta+\alpha)^2 - 4\alpha\beta z}
+
\frac{1}{2}
\log\left(
\frac{1 + \frac{ \beta+\alpha-2\alpha z}{\sqrt{(\beta+\alpha)^2 - 4\alpha\beta z}}}{1-\frac{ \beta+\alpha-2\alpha z}{\sqrt{(\beta+\alpha)^2 - 4\alpha\beta z}}}
\right)
\right)}
}
{
e^{\left(
\frac{t}2 \sqrt{(\beta+\alpha)^2 - 4\alpha\beta z}
+
\frac{1}{2}
\log\left(
\frac{1 + \frac{ \beta+\alpha-2\alpha z}{\sqrt{(\beta+\alpha)^2 - 4\alpha\beta z}}}{1-\frac{ \beta+\alpha-2\alpha z}{\sqrt{(\beta+\alpha)^2 - 4\alpha\beta z}}}
\right)
\right)}
+
e^{-\left(
\frac{t}2 \sqrt{(\beta+\alpha)^2 - 4\alpha\beta z}
+
\frac{1}{2}
\log\left(
\frac{1 + \frac{ \beta+\alpha-2\alpha z}{\sqrt{(\beta+\alpha)^2 - 4\alpha\beta z}}}{1-\frac{ \beta+\alpha-2\alpha z}{\sqrt{(\beta+\alpha)^2 - 4\alpha\beta z}}}
\right)
\right)}
}
\right)^2
\right)^{\frac{\eta^*}{2\alpha}}
\\
&
=
\left(
1
-
\left(
\frac{
1-\frac{ \beta+\alpha-2\alpha z}{\sqrt{(\beta+\alpha)^2 - 4\alpha\beta z}}
-
\left(
1 + \frac{ \beta+\alpha-2\alpha z}{\sqrt{(\beta+\alpha)^2 - 4\alpha\beta z}}
\right)
e^{
t \sqrt{(\beta+\alpha)^2 - 4\alpha\beta z}}
}
{
1-\frac{ \beta+\alpha-2\alpha z}{\sqrt{(\beta+\alpha)^2 - 4\alpha\beta z}}
+
\left(
1 + \frac{ \beta+\alpha-2\alpha z}{\sqrt{(\beta+\alpha)^2 - 4\alpha\beta z}}
\right)
e^{
t \sqrt{(\beta+\alpha)^2 - 4\alpha\beta z}}
}
\right)^2
\right)^{\frac{\eta^*}{2\alpha}}
\\
&
=
\left(
\frac{
4
\left(
1-\frac{ (\beta+\alpha-2\alpha z)^2}{(\beta+\alpha)^2 - 4\alpha\beta z}
\right)
e^{
t \sqrt{(\beta+\alpha)^2 - 4\alpha\beta z}}
}
{
\left(
1-\frac{ \beta+\alpha-2\alpha z}{\sqrt{(\beta+\alpha)^2 - 4\alpha\beta z}}
+
\left(
1 + \frac{ \beta+\alpha-2\alpha z}{\sqrt{(\beta+\alpha)^2 - 4\alpha\beta z}}
\right)
e^{
t \sqrt{(\beta+\alpha)^2 - 4\alpha\beta z}}
\right)^2
}
\right)^{\frac{\eta^*}{2\alpha}}
\\
&
=
\left(
\frac{
\frac{ 4\alpha \sqrt{z-z^2}}{\sqrt{(\beta+\alpha)^2 - 4\alpha\beta z}}
e^{
\frac{t}{2} \sqrt{(\beta+\alpha)^2 - 4\alpha\beta z}}
}
{
1-\frac{ \beta+\alpha-2\alpha z}{\sqrt{(\beta+\alpha)^2 - 4\alpha\beta z}}
+
\left(
1 + \frac{ \beta+\alpha-2\alpha z}{\sqrt{(\beta+\alpha)^2 - 4\alpha\beta z}}
\right)
e^{
t \sqrt{(\beta+\alpha)^2 - 4\alpha\beta z}}
}
\right)^{\frac{\eta^*}{\alpha}}
.
\end{align*}
Finally, the third line of Equation~\ref{presimp} is simplified through use of Equations~\ref{cosh} and~\ref{tanhinv}. This expression is then given by
\begin{align*}
&
\left(
\frac{\left(
\frac{1 + \frac{2\alpha z - \beta - \alpha}{\sqrt{(\beta+\alpha)^2 - 4\alpha\beta z}}}{1-\frac{2\alpha z - \beta - \alpha}{\sqrt{(\beta+\alpha)^2 - 4\alpha\beta z}}}
\right)^{\frac{1}{2}}
+
\left(
\frac{1 - \frac{2\alpha z - \beta - \alpha}{\sqrt{(\beta+\alpha)^2 - 4\alpha\beta z}}}{1 + \frac{2\alpha z - \beta - \alpha}{\sqrt{(\beta+\alpha)^2 - 4\alpha\beta z}}}
\right)^{\frac{1}{2}}
}2
\right)^{\frac{\eta^*}{\alpha}}
=
\left(
\left(
\frac{\left(
\frac{1 - \frac{\beta+\alpha-2\alpha z}{\sqrt{(\beta+\alpha)^2 - 4\alpha\beta z}}}{1+\frac{\beta+\alpha-2\alpha z}{\sqrt{(\beta+\alpha)^2 - 4\alpha\beta z}}}
\right)^{\frac{1}{2}}
+
\left(
\frac{1 + \frac{\beta+\alpha-2\alpha z}{\sqrt{(\beta+\alpha)^2 - 4\alpha\beta z}}}{1 - \frac{\beta+\alpha-2\alpha z}{\sqrt{(\beta+\alpha)^2 - 4\alpha\beta z}}}
\right)^{\frac{1}{2}}
}2
\right)^2
\right)^{\frac{\eta^*}{2\alpha}}
\\
&
=
\left(
\frac{
\frac{1 - \frac{\beta+\alpha-2\alpha z}{\sqrt{(\beta+\alpha)^2 - 4\alpha\beta z}}}{1+\frac{\beta+\alpha-2\alpha z}{\sqrt{(\beta+\alpha)^2 - 4\alpha\beta z}}}
+
2
+
\frac{1 + \frac{\beta+\alpha-2\alpha z}{\sqrt{(\beta+\alpha)^2 - 4\alpha\beta z}}}{1 - \frac{\beta+\alpha-2\alpha z}{\sqrt{(\beta+\alpha)^2 - 4\alpha\beta z}}}
}4
\right)^{\frac{\eta^*}{2\alpha}}
\\
&
=
\left(
\frac{
\frac{1 - \frac{\beta+\alpha-2\alpha z}{\sqrt{(\beta+\alpha)^2 - 4\alpha\beta z}}}{1+\frac{\beta+\alpha-2\alpha z}{\sqrt{(\beta+\alpha)^2 - 4\alpha\beta z}}}
+
\frac{1 + \frac{\beta+\alpha-2\alpha z}{\sqrt{(\beta+\alpha)^2 - 4\alpha\beta z}}}{1+\frac{\beta+\alpha-2\alpha z}{\sqrt{(\beta+\alpha)^2 - 4\alpha\beta z}}}
+
\frac{1 - \frac{\beta+\alpha-2\alpha z}{\sqrt{(\beta+\alpha)^2 - 4\alpha\beta z}}}{1-\frac{\beta+\alpha-2\alpha z}{\sqrt{(\beta+\alpha)^2 - 4\alpha\beta z}}}
+
\frac{1 + \frac{\beta+\alpha-2\alpha z}{\sqrt{(\beta+\alpha)^2 - 4\alpha\beta z}}}{1 - \frac{\beta+\alpha-2\alpha z}{\sqrt{(\beta+\alpha)^2 - 4\alpha\beta z}}}
}4
\right)^{\frac{\eta^*}{2\alpha}}
\\
&
=
\left(
\frac{
1 - \frac{\beta+\alpha-2\alpha z}{\sqrt{(\beta+\alpha)^2 - 4\alpha\beta z}}
+
1 + \frac{\beta+\alpha-2\alpha z}{\sqrt{(\beta+\alpha)^2 - 4\alpha\beta z}}
}
{
2
\left(
1 - \frac{(\beta+\alpha-2\alpha z)^2}{(\beta+\alpha)^2 - 4\alpha\beta z}
\right)
}
\right)^{\frac{\eta^*}{2\alpha}}
\\
&
=
\left(
\frac{\sqrt{(\beta+\alpha)^2 - 4\alpha\beta z}}{ 2\alpha \sqrt{z-z^2}}
\right)^{\frac{\eta^*}{\alpha}}
.
\end{align*}
Together these forms give the stated result.
\end{proof}

\end{document}